\DeclareFontShape{T1}{lmr}{bx}{sc} { <-> ssub * cmr/bx/sc }{}
\pgfplotsset{compat=newest}
\numberwithin{equation}{section}
\setlist[enumerate]{label=(\roman*)}
\theoremstyle{plain}
\newtheorem{theorem}{Theorem}[section]
\newtheorem{lemma}[theorem]{Lemma}
\newtheorem{corollary}[theorem]{Corollary}
\newtheorem{remark}[theorem]{Remark}
\newtheorem{definition}[theorem]{Definition}
\newtheorem{assumption}[theorem]{Assumption}
\newtheorem{example}[theorem]{Example}
\newtheorem{hypothesis}[theorem]{Hypothesis}
\newcommand{\lineWidth}{2pt} 
\definecolor{myBlue}{RGB}{30,144,255} 
\definecolor{myGreen}{RGB}{69,169,0} 
\definecolor{myRed}{RGB}{165,12,42}
\definecolor{myOrange}{RGB}{225,92,22}
\definecolor{color0}{rgb}{0.121568,0.46666,0.70588}
\definecolor{color1}{rgb}{1,0.4980,0.0549}
\definecolor{color2}{rgb}{0.17254,0.62745,0.17254901}
\definecolor{color3}{rgb}{0.83921,0.15294,0.156862}
\definecolor{color4}{rgb}{0.580392,0.4039215,0.7411764}
\definecolor{color5}{rgb}{0,0,0}
\newcommand{\delete}[1]{ }
\def\N{\mathbb{N}}
\def\R{\mathbb{R}}
\def\C{\mathbb{C}}
\newcommand\dt{\,\text{d}t}
\newcommand{\dsigma}{\,\text{d}\sigma}
\newcommand{\ddt}{\ensuremath{\tfrac{\mathrm{d}}{\mathrm{d}t}} }
\DeclareMathOperator{\sspan}{span}
\DeclareMathOperator{\diag}{diag}
\DeclareMathOperator{\rank}{rank}
\DeclareMathOperator{\real}{Re}
\DeclareMathOperator{\corank}{corank}
\DeclareMathOperator{\cokernel}{cokernel}
\newcommand\independent{\protect\mathpalette{\protect\independenT}{\perp}}
\def\independenT#1#2{\mathrel{\rlap{$#1#2$}\mkern2mu{#1#2}}}
\newsavebox{\@brx}
\newcommand{\llangle}[1][]{\savebox{\@brx}{\(\m@th{#1\langle}\)}%
  \mathopen{\copy\@brx\kern-0.5\wd\@brx\usebox{\@brx}}}
\newcommand{\rrangle}[1][]{\savebox{\@brx}{\(\m@th{#1\rangle}\)}%
  \mathclose{\copy\@brx\kern-0.5\wd\@brx\usebox{\@brx}}}
\newcommand{\bilp}[1]{\llangle #1 \rrangle}
\newcommand{\dualp}[2]{\left\langle\left. #1 \,\right|\, #2 \right\rangle}
\newcommand{\calA}{\ensuremath{\mathcal{A}} }
\newcommand{\calC}{\ensuremath{\mathcal{C}}}
\newcommand{\calL}{\ensuremath{\mathcal{L}} }
\newcommand{\V}{\ensuremath{\mathcal{V}}}
\newcommand{\eg}{\emph{e.g.}\xspace}
\newcommand{\ie}{\emph{i.e.}\xspace}
\newenvironment{smallbmatrix}{\left[\begin{smallmatrix}}{\end{smallmatrix}\right]}
\newcommand{\state}{z} 
\newcommand{\timeInt}{\mathbb{T}}
\newcommand{\stateSpace}{\mathcal{Z}}
\newcommand{\stateDim}{n}
\newcommand{\extSpace}{\mathcal{S}}
\newcommand{\inpVar}{u}
\newcommand{\outVar}{y}
\newcommand{\outputDim}{p}
\newcommand{\inputDim}{m}
\newcommand{\hamiltonian}{\mathcal{H}}
\newcommand{\structureMatrix}{\Gamma}
\newcommand{\dissipationMatrix}{W}
\newcommand{\G}{\mathcal{G}}
\newcommand{\E}{\mathcal{E}}
\newcommand{\Vi}{\mathcal V_i}
\newcommand{\Vb}{\mathcal V_b}
\newcommand{\transferFunc}{\mathcal{G}}
\newcommand{\transferFuncRed}{\widehat{\transferFunc}}
\newcommand{\pHODE}{\textsf{pHODE}\xspace}
\newcommand{\pHODEs}{\textsf{pHODEs}\xspace}
\newcommand{\pHDAE}{\textsf{pHDAE}\xspace}
\newcommand{\pHDAEs}{\textsf{pHDAEs}\xspace}
\newcommand{\DAE}{\textsf{DAE}\xspace}
\newcommand{\DAEs}{\textsf{DAEs}\xspace}
\newcommand{\dHDAE}{\textsf{dHDAE}\xspace}
\newcommand{\ECRM}{\textsf{ECRM}\xspace}
\newcommand{\FCRM}{\textsf{FCRM}\xspace}
\newcommand{\IRKA}{\textsf{IRKA}\xspace}
\newcommand{\LTI}{\textsf{LTI}\xspace}
\newcommand{\LTV}{\textsf{LTV}\xspace}
\newcommand{\MM}{\textsf{MM}\xspace}
\newcommand{\MOR}{\textsf{MOR}\xspace}
\newcommand{\ROM}{\textsf{ROM}\xspace}
\newcommand{\ROMs}{\textsf{ROMs}\xspace}
\newcommand{\ODE}{\textsf{ODE}\xspace}
\newcommand{\ODEs}{\textsf{ODEs}\xspace}
\newcommand{\pH}{\textsf{pH}\xspace}
\newcommand{\PDE}{\textsf{PDE}\xspace}
\newcommand{\PDEs}{\textsf{PDEs}\xspace}
\newcommand{\MSO}{\textsf{MSO}\xspace}
\title[Control of port-Hamiltonian DAEs]{Control of  port-Hamiltonian differential-algebraic systems and applications}
\author{Volker Mehrmann${}^\dagger$ \and Benjamin Unger${}^\star$}
\address{${}^{\dagger}$  Institut für Mathematik, Technische Universität Berlin\\ Strasse des 17. Juni 136, 10623 Berlin, Germany}
\email{mehrmann@math.tu-berlin.de}
\address{${}^{\star}$ Stuttgart Center for Simulation Science (SC SimTech), University of Stuttgart, Universit\"{a}tsstr.~32, 70569 Stuttgart, Germany}
\email{benjamin.unger@simtech.uni-stuttgart.de}
\date{\today}
\keywords{port-Hamiltonian systems; structure-preserving model-order reduction; passivity; spectral factorization; $\mathcal{H}_2$-optimal}
\begin{document}

\begin{abstract}
	    The modeling framework of port-Hamiltonian descriptor systems and their use in numerical simulation and control are discussed. The structure is ideal for automated network-based modeling since it is invariant under power-conserving interconnection, congruence transformations, and Galerkin projection. Moreover, stability and passivity properties are easily shown. Condensed forms under orthogonal transformations present easy analysis tools for existence, uniqueness, regularity, and numerical methods to check these properties.  
    
    After recalling the concepts for general linear and nonlinear descriptor systems, we demonstrate that many difficulties that arise in general descriptor systems can be easily overcome within the port-Hamiltonian framework. The properties of port-Hamiltonian descriptor systems are analyzed,  time-discretization, and numerical linear algebra techniques are discussed.  Structure-preserving regularization procedures for descriptor systems are presented to make them suitable for simulation and control. Model reduction techniques that preserve the structure and stabilization and optimal control techniques are discussed.
    
    The properties of port-Hamiltonian descriptor systems and their use in modeling simulation and control methods are illustrated with several examples from different physical domains. The survey concludes with open problems and research topics that deserve further attention.
\end{abstract}

\maketitle
{\footnotesize \textsc{Keywords:} port-Hamiltonian systems, descriptor system, differential-algebraic equation, energy-based modeling, passivity, stability, interconnectability, condensed form, Dirac structure, structure-preserving model-order reduction, time discretization, linear system solves, optimal control,  feedback control}

{\footnotesize \textsc{AMS subject classification:} 37J06, 37M99,49M05, 65L80, 65P99,93A30,93A15, 93B11, 93B17, 93B52}
%

\section{Introduction}
\label{sec:introduction}
Modern key technologies in science and technology require modeling, simulation, optimization or control (\MSO) of complex dynamical systems. Most real-world systems are multi-physics systems, combining components from different physical domains, and with different accuracies and scales in the components. To address these requirements, there exist many commercial and open source \MSO software packages for simulation and control in all physical domains, \eg Abaqus\footnote{\url{https://www.3ds.com/products-services/simulia/products/abaqus/}}, Ansys\footnote{\url{https://www.ansys.com/}}, COMSOL\footnote{\url{https://www.comsol.com}}, Dymola\footnote{\url{https://www.3ds.com/products-services/catia/products/dymola/}}, FEniCS\footnote{\url{https://fenicsproject.org}}, and Simulink\footnote{\url{https://www.mathworks.com/products/simulink.html}}.
Several of these also have multi-physics components, but all are still very limited when it comes to applications, such as digital twins, which require a cross-domain evolutionary modeling process, the coupling of different domain-specific tools, the incorporation of model hierarchies consisting of coarse and fine discretizations and reduced-order models as well as the incorporation of (optimal) control techniques. The latter point, in particular,  requires tools to be open to performing easy and automatized model modifications.

Furthermore, flexible compromises between different accuracies and computational speed have to be possible to allow uncertainty quantification procedures, as well as error estimates that balance model, discretization, optimization, approximation, or roundoff errors, combined with sensitivity, stability, and robustness measures. Finally, with modern data science tools becoming increasingly powerful, it is necessary to have models and methods that allow pure data-based approaches and have the flexibility to recycle and reuse components in different applications. On top of all these requirements, the \MSO tools cannot be separated from the available computing environments, ranging from process controllers, data, sensor, and visualization interfaces, linked up with high performance and cloud computing facilities.

To address all these challenges in the future and in an increasingly digitized world, a fundamental paradigm shift in \MSO is necessary. For every scientific and technological product or process, and the whole life cycle from the design phase to the waste recycling, it is necessary to build digital twins with multi-fidelity model hierarchies or catalogs of several models that range from very fine descriptions that help to understand the behavior via detailed and repeated simulations, to very coarse (reduced or surrogate) models used for real-time control and optimization. Furthermore, the \MSO tools should as much as possible be open for interaction, automatized, and allow the linking of subsystems or numerical methods in a network fashion. They should also allow the combination with methods that deal with large nets of real-time data that can and should be employed in a modeling or data assimilation process. Because of all this, it is necessary that mathematical modeling, analysis, numerics, control, optimization, model reduction methods, and data science techniques work hand in hand.

To illustrate these general comments, let us consider a major societal application.
In order to reduce global warming, it is necessary to replace the emissions arising in the production of energy from fossil sources by increasing renewable energy production such as wind or solar energy. At the same time it is essential to allow energy-efficient multi-directional  sector coupling  such as power-to-heat or power-to-mobility, see \Cref{figcoupling}.

\begin{figure}
	\centering
	\includegraphics[width=.7\linewidth]{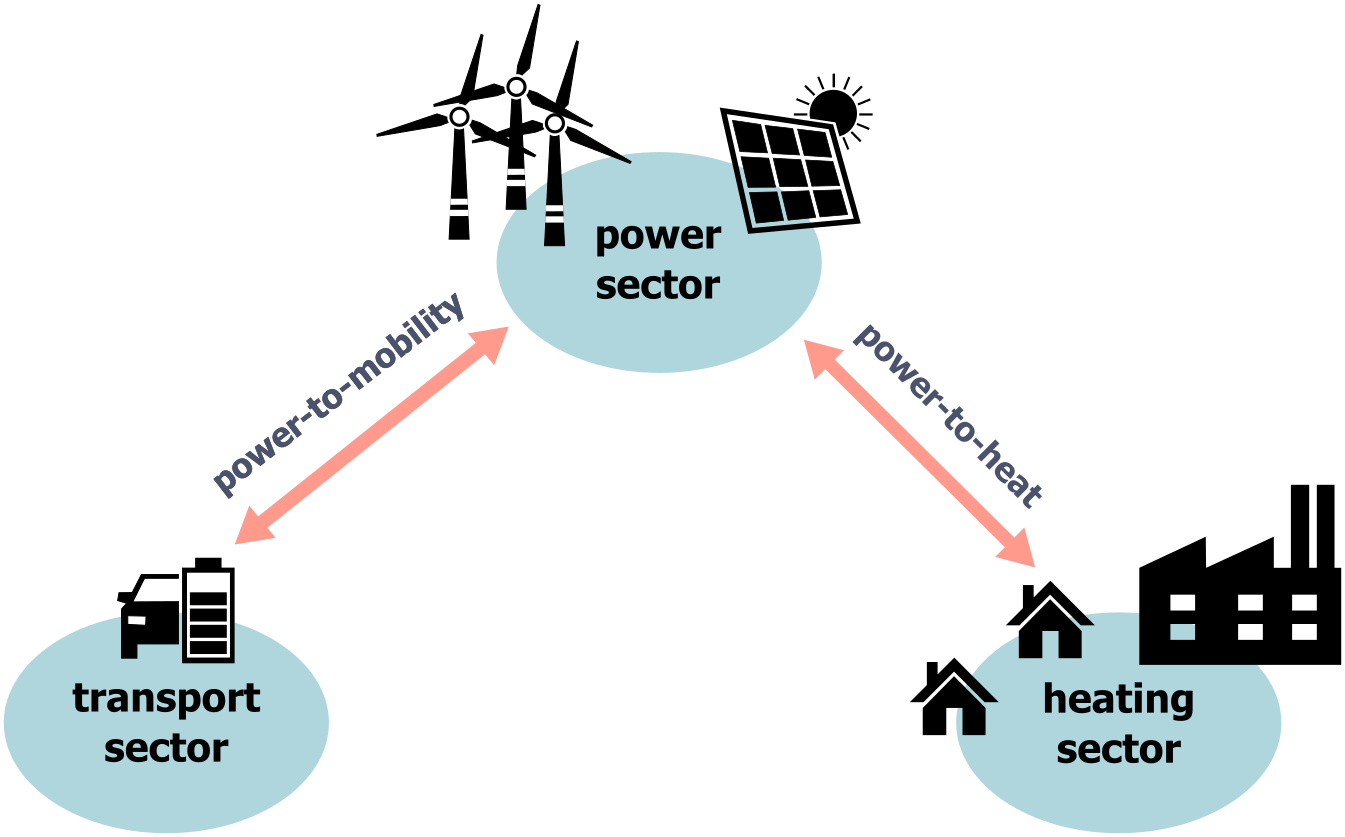}
	\caption{Sector coupling and the power-to-X concept}
	\label{figcoupling}
\end{figure}

The coupling of energy sectors includes the storage or transformation to another energy carrier (like hydrogen) of superfluous electrical energy, as well the layout and operation of energy transportation networks,  see \eg \cite{BroSKG18,ConCC20,RamHAW21}.

On the mathematical/computational side, challenges arise because mathematical models of different energy conversion processes and energy transport networks live on very different time scales such as \eg, gas transport networks or electrical power networks. Furthermore, while most energy transport networks are currently operated in a stationary regime, in the future dynamic approaches are required that allow control and optimization of energy production and transport in real time, see \eg \cite{Bie15,MacLB20}. 
These further challenges lead to the following \emph{model class wish-list} for a new flexible modeling, simulation, optimization, and control framework.
\begin{itemize}
	\item The model class should allow for automated modeling, in a modularized, and network based fashion, including pure data-based models.
	\item The mathematical representation should allow coupling of mathematical models across different scales and physical domains, in continuous and discrete time.
	\item The mathematical models should be close to the real (open or closed) physical system and easily extendable if further effects have to be considered.
	\item The model class should have  nice algebraic, geometric, and analytical properties. The models should be easy to analyze concerning existence, uniqueness, robustness, stability, uncertainty, perturbation theory, and  error analysis.
	\item The class should be invariant under local variable (coordinate) transformations (in space and time) to understand the local behavior, e.g. via local normal forms.
	\item The model class should allow for simple space and time  discretization and model reduction methods as well as fast solvers for the resulting linear and nonlinear systems of equations.
    \item The model class should be feasible for simulation, control, optimization and data assimilation.
\end{itemize}
Can there be such a \emph{Jack of all trades}? 
The main goal of this paper is to show that, even though many aspects are still under investigation, energy-based modeling via the model class of \emph{dissipative port-Hamiltonian (\pH) descriptor systems} has many great features and comes very close to being such a model class. 
Let us emphasize that the field of \pHDAE systems is a highly active research area and many developments are just taking place. In this survey we thus focus only on selected topics.

\subsection*{Structure of the manuscript}
The paper is organized as follows. We first review 
general nonlinear descriptor systems and its solution theory in \Cref{sec:modelClass} and associated control theoretical results in \Cref{sec:controlaspects}. Dissipative port-Hamiltonian (\pH) descriptor systems are introduced in \Cref{sec:phdae} and illustrated with several examples from different application domains in \Cref{sec:examples}. In \Cref{sec:properties} we start to analyze \pH descriptor systems in terms of our model class wish-list by discussing the inherent properties of \pH descriptor systems. Condensed forms are presented in \Cref{sec:normalForm}. We then turn to structure-preserving model order reduction in \Cref{sec:MOR} and discuss time-discretization and associated linear system solves in \Cref{sec:TimeDiscretization}. We conclude our presentation with a discussion of control methods in \Cref{sec:control} and a summary (including open problems and future work) in \Cref{sec:summary}. We emphasize that within this manuscript, we mainly focus on finite-dimensional problems. Nevertheless, the \pH model class can be extended to the infinite-dimensional setting, and we provide a brief discussion and an (incomplete) list of references at the end of \Cref{sec:summary}. 

\subsection*{Notation}
The sets $\N$, $\N_0$, $\R$, and $\C$ denote the natural numbers, non-negative integers, real numbers,  and complex numbers, respectively. For a complex number~$z\in\C$ we denote its real part as $\real(z)$. The set of $n\times m$ matrices with values in a field $\mathbb{F}$ is denoted with $\mathbb{F}^{n,m}$. The symbol $I$ is used for the identity matrix, whose dimension is clear from the context. The rank and corank of a matrix $M\in\mathbb{F}^{n,m}$ are denoted by $\rank M$ and $\corank M$, where the latter is defined as 
\begin{equation}
    \label{eqn:corank}
    \corank M \vcentcolon= n-\rank M.
\end{equation}
The transpose of a matrix and the conjugate transpose (if $\mathbb{F} = \C$) are denoted by $M^T$ and $M^H$, respectively. To indicate that a matrix $M\in\mathbb{F}^{n,n}$ is positive definite, or positive semi-definite, we write $M>0$ and $M\geq 0$, respectively. The Moore-Penrose inverse  of a matrix $M\in\mathbb{F}^{m,n}$, \ie the unique matrix $A$ satisfying 
$MAM=M$, $AMA=A$, $(AM)^H =AM$, and $(MA)^H=MA$,  is denoted by $A=M^\dagger$. A block diagonal matrix with diagonal blocks $B_1,\ldots,B_k$ is denoted by $\diag(B_1,\ldots,B_k)$, and the span of a list of vectors $v_1,\ldots,v_k$ is denoted by $\mathrm{span}(v_1,\ldots,v_k)$.

The spaces of continuous and $k$-times continuously differentiable functions (with $k\in\N$) from the time interval $\timeInt$ to some Banach space $\mathcal{X}$ are denoted by $\mathcal{C}(\timeInt,\mathcal{X})$ and $\mathcal{C}^k(\timeInt,\mathcal{X})$, respectively. For a function $f\in\mathcal{C}^1(\timeInt,\mathcal{X})$ we write $\dot{f} \vcentcolon= \tfrac{\mathrm{d}}{\mathrm{d}t}$ to denote the (time) derivative. Similarly, we use the notation $\ddot{f}$ for the second derivative and $f^{(k)}$ for the $k$th derivative.
The Jacobian of a function $f\colon \R^\ell \to \R^\stateDim$ is denoted by $\tfrac{\partial}{\partial {\state}}f(\state)$.

\subsection*{Abbreviations}
Throughout the manuscript, we use the following abbreviations.

\begin{center}
\begin{tabular}{rl}
    \DAE & differential-algebraic equation\\
    \dHDAE & dissipative Hamiltonian differential-algebraic equation\\
    \ECRM & effort constraint reduction method\\
    \FCRM & flow constraint reduction method\\
    \IRKA & iterative rational Krylov algorithm\\
    \LTI & linear time-invariant\\
    \LTV & linear time-varying\\
    \MM & moment matching\\
    \MSO & modeling, simulation, and optimization\\
    \MOR & model order reduction\\
    \ODE & ordinary differential equation\\
    \PDE & partial differential equation\\
    \pH & port-Hamiltonian\\
    \pHODE & port-Hamiltonian ordinary differential equation\\
    \pHDAE & port-Hamiltonian differential-algebraic equation\\
    \ROM & reduced order model\\
\end{tabular}
\end{center}

\section{The model class of descriptor systems}
\label{sec:modelClass}
To allow network-based automated modularized modeling via interconnection, constraint-preserving simulation, optimization, and control of dynamic models,  it is common practice in many application domains to use the class of (implicit) control systems, called \emph{descriptor systems} or \emph{differential-algebraic equation (\DAE) systems}, of the form
\begin{subequations}
	\label{eqn:descriptorSystem}
	\begin{align}
		\label{eqn:descriptorSystem:stateEq}F(t,\state(t),\dot{\state}(t),\inpVar(t)) &= 0,\\
		\label{eqn:descriptorSystem:output}\outVar(t) - G(t,\state(t),\inpVar(t)) &= 0,
	\end{align}
\end{subequations}
on some time interval $\timeInt \vcentcolon= [t_0,t_\mathrm{f}]$
with
\begin{align*}
    F\colon \timeInt\times\mathbb{D}_{\mathrm{\state}}\times\mathbb{D}_{\dot{\mathrm{\state}}}\times\mathbb{D}_{\mathrm{\inpVar}}\to \R^\ell\qquad\text{and}\qquad
    G\colon\timeInt\times\mathbb{D}_{\mathrm{\state}}\times\mathbb{D}_{\mathrm{\inpVar}}\to \R^p,
\end{align*}
with open domains, vector spaces, or manifolds
$\mathbb{D}_{\mathrm{\state}},\mathbb{D}_{\dot{\mathrm{\state}}},\mathbb{D}_{\mathrm{\inpVar}}$. In the finite dimensional case of real systems, which is predominantly discussed in this paper, we assume for the ease of presentation that
\begin{displaymath}
    F\colon \timeInt\times\R^{\stateDim}\times\R^{\stateDim}\times\R^{m}\to\R^\ell\qquad \text{and}\qquad G\colon\timeInt\times\R^{\stateDim}\times\R^m\to\R^p.
\end{displaymath}
We refer to $\state$, $\inpVar$, and $\outVar$, as the \emph{state}, \emph{input}, and \emph{output}, respectively.

Note that there can be very different roles of inputs in different applications, \eg, to deal with control actions, interconnections, or disturbances,  and different roles of outputs \eg, for measurements, interconnection, or observer design.  Also, the models typically have parameters and/or may have random uncertain components such as \eg, unmodeled quantities, uncertainty in parameters, or disturbances. 

We remark that most of the results and methods we present also hold for complex systems, but we restrict ourselves to the real case in this survey. In the following, for the sake of a simpler presentation, we will also often omit the time- or space argument whenever this is appropriate or clear from the context.

\subsection{Solution concept}
It is clear that depending on the application, different solution concepts for \eqref{eqn:descriptorSystem} may be  necessary, see \eg \cite{BreCP96,KunM06,LamMT13}. 

In the finite dimensional setting we restrict ourselves to classical function spaces of continuous or continuously differentiable functions. For control problems as in~\eqref{eqn:descriptorSystem} we often follow the behavior framework, see for instance \cite{PolW98}, in which a new combined state vector 
\begin{equation}
    \label{eqn:behaviorState}
    \xi \vcentcolon= \begin{bmatrix}\state^T & \inpVar^T & \outVar^T\end{bmatrix}^T
\end{equation}
is introduced (or $\xi \vcentcolon= [\state^T, \inpVar^T]^T$ if only the state-equation~\eqref{eqn:descriptorSystem:stateEq} is considered).
The descriptor system~\eqref{eqn:descriptorSystem} is then turned into  an under-determined \DAE, see for instance \cite{KunM01}, \ie, the meaning of the variables is not distinguished any more. 
\begin{definition}[Solution concept]
    \label{def:solutionConcept}
	Consider the \DAE~\eqref{eqn:descriptorSystem} on the time interval $\timeInt$ with open subsets
$\mathbb{D}_{\mathrm{\state}}\subseteq\R^{\stateDim}$, $\mathbb{D}_{\dot{\mathrm{\state}}}\subseteq\R^{\stateDim}$, $\mathbb{D}_{\mathrm{\inpVar}}\subseteq\R^m$.
\begin{enumerate}
\item[(i)] Let $u\colon\timeInt\to\R^m$ be a given input. We call a function $\state\in \calC^1(\timeInt,\R^\stateDim)$ a \emph{solution} of the \DAE~\eqref{eqn:descriptorSystem:stateEq}, if it satisfies~\eqref{eqn:descriptorSystem:stateEq} pointwise. It is called a \emph{solution of the initial value problem}~\eqref{eqn:descriptorSystem} with initial condition
\begin{equation}
		\label{eqn:initialCondition}
	\state(t_0) = \state_0\in\R^{\stateDim}
\end{equation}
if it furthermore satisfies~\eqref{eqn:initialCondition}. 
\item[(ii)] An initial value $\state_0\in\R^{\stateDim}$ is called \emph{consistent} with~\eqref{eqn:descriptorSystem:stateEq}, if the associated initial value problem has at least one solution.
\item[(iii)] The control problem~\eqref{eqn:descriptorSystem} is called \emph{consistent}, if there exists an input $u\colon\timeInt\to\R^m$, such that the resulting \DAE~\eqref{eqn:descriptorSystem:stateEq} has a solution. It is called \emph{regular}, if for every sufficiently smooth input function $u\colon\timeInt\to\R^m$ the corresponding \DAE~\eqref{eqn:descriptorSystem:stateEq} is solvable and the solution is unique for every consistent initial value.
\item[(iv)] We call $\xi=[\state^T,u^T,y^T]^T$ a \emph{behavior solution of the descriptor system}~\eqref{eqn:descriptorSystem}, if $\state$ is a solution of the \DAE~\eqref{eqn:descriptorSystem:stateEq} for this $u$ and $\xi$ satisfies~\eqref{eqn:descriptorSystem} pointwise.
	\end{enumerate}
\end{definition}

Let us emphasize that for \DAE systems, typically, not every initial value is consistent. This is due to the fact that in order to deal with algebraic constraints as well as over- and under-determined systems, we allow the Jacobian $\tfrac{\partial}{\partial \dot{\xi}} {F}$
to be singular or even rectangular. We refer to the forthcoming \Cref{sec:DAEtheory} for further details. 
For inconsistent initial values and systems with jumps in the coefficients one may still obtain a solution using weaker solution concepts, see \eg \cite{KunM06,RabR96a,RabR96b,Tre13}. However, for ease of presentation, we will not cover these weaker solution concepts in this survey.

\subsection{Solution theory for general nonlinear descriptor systems}
\label{sec:DAEtheory}

In this subsection we recall the solution theory for general  \DAE systems
\begin{equation}
    \label{eq:rDAE}
    \mathcal{F}(t,\xi(t),\dot{\xi}(t)) = 0,
\end{equation}
with $\mathcal{F}\colon \timeInt \times \mathbb{D}_{\xi}\times\mathbb{D}_{\dot{\xi}} \to \R^{{L}}$ and open sets $\mathbb{D}_{\xi},\mathbb{D}_{\dot{\xi}}\subseteq\R^{N}$. Here $\xi$ is the standard state or an extended behavior vector as in~\eqref{eqn:behaviorState}.

If the Jacobian $\tfrac{\partial}{\partial \dot{\xi}} \mathcal{F}$ is not square or singular, then a solution $\xi$ of \eqref{eq:rDAE}, provided such a solution exists, may depend on derivatives of $\mathcal{F}$. This is illustrated in the following example.
\begin{example}
    \label{ex:DAEvanishingEquations}
    Consider a linear \DAE of the form
    \begin{equation}
        \label{eqn:DAEvanishingEquations}
        \begin{bmatrix}
            1 & 0 & 0\\
            0 & 0 & 0\\
            0 & 0 & 0
        \end{bmatrix}\begin{bmatrix}
            \dot{\state}_1(t)\\
            \dot{\state}_2(t)\\
            \dot{\state}_3(t)
        \end{bmatrix} = \begin{bmatrix}
            0 & 1 & 0\\
            1 & 0 & 0\\
            0 & 0 & 0
        \end{bmatrix}\begin{bmatrix}
            \state_1(t)\\
            \state_2(t)\\
            \state_3(t)
        \end{bmatrix} + \begin{bmatrix}
            f_1(t)\\
            f_2(t)\\
            f_3(t)
        \end{bmatrix}.
    \end{equation}
    We immediately notice that $\state_3$ does not contribute to the equations and hence can be chosen arbitrarily. Moreover, the third equation dictates $f_3\equiv 0$, thus detailing that a smooth function $f = \begin{bmatrix}
        f_1 & f_2 & f_3
    \end{bmatrix}^T$ is not sufficient for a solution to exist. The second equation yields $\state_1(t) = -f_2(t)$, and hence the only valid initial value for $\state_1$ is determined by $-f_2(t_0)$. Substituting $\state_1 = -f_2$ into the first equation yields
    \begin{equation}
        \label{eqn:DAEvanishingEquations:hiddenAlgebraic}
        \state_2(t) = -f_1(t) - \dot{f}_2(t),
    \end{equation}
    showing that the solution depends on the derivative of $f_2$. Moreover, we notice that~\eqref{eqn:DAEvanishingEquations:hiddenAlgebraic} constitutes another algebraic equation that is implicitly encoded in~\eqref{eqn:DAEvanishingEquations}.
\end{example}
The difficulties arising with these differentations are classified by so-called \emph{index} concepts, see~\cite{Meh15} for a survey. In this paper, we mainly make use of the \emph{strangeness index} concept \cite{KunM06}, which is, roughly speaking, a generalization of the \emph{differentiation index}, cf.~\cite{BreCP96}, to under- and overdetermined systems. The strangeness index is based on the \emph{derivative array} of level $\mu$, see~\cite{Cam87a},  defined as
\begin{equation}
	\label{eq:derivativeArray}
	\widetilde{\mathcal F}_\mu\left(t,\xi,\eta\right) \vcentcolon= \begin{bmatrix}
		\mathcal{F}(t,\xi,\dot{\xi})\\
		\ddt\mathcal{F}(t,\xi,\dot{\xi})\\
		\vdots\\
		\left(\ddt\right)^{\!\mu} \mathcal{F}(t,\xi,\dot{\xi})
	\end{bmatrix}\in\mathbb{R}^{(\mu+1)L}\ \text{with}\ \eta\vcentcolon=\begin{bmatrix}
		\dot{\xi}\\
		\ddot{\xi}\\
		\vdots\\
		\xi^{(\mu+1)}
\end{bmatrix}\in \mathbb R^{(\mu+1)N}.
\end{equation}

Since it is a-priori not clear, that the \DAE~\eqref{eq:rDAE} is solvable and that the dimension of the solution manifold in terms of the algebraic vaiables $t,\xi,\ldots,\xi^{(\mu+1)}$ is invariant over time, we need to assume that the set
\begin{equation}
    \label{eqn:nonlinDAE:manifold}
	\mathcal{M}_\mu \vcentcolon= \left\{\left(t,\xi,\eta\right)\in\mathbb{R}^{(\mu+2)N+1}\ \bigg|\ \widetilde{\mathcal F}_{\mu}\left(t,\xi,\eta\right) = 0\right\}
\end{equation}
is nonempty and (locally) forms a manifold. For notational convenience
we assume that $\mathcal{M}_\mu$ is a manifold of dimension $(\mu+2)N+1-r$. The number~$r$ will later correspond to the dimension of the regular part of the \DAE. Following \cite{KunM98}, we introduce the Jacobians
\begin{subequations}
\label{eqn:nonlinDAE:Jacobians}
\begin{align}
	{\mathcal{E}}_\mu &\vcentcolon= \begin{bmatrix}
		\frac{\partial \widetilde{\mathcal F}_{\mu}}{\partial \dot{\xi}} & \dots & \frac{\partial \widetilde{\mathcal F}_{\mu}}{\partial \xi^{(\mu+1)}}
	\end{bmatrix}\in\mathbb{R}^{(\mu+1)L,(\mu+1)N},\\
{\mathcal{A}}_\mu &\vcentcolon= -\begin{bmatrix}
		\frac{\partial \widetilde{\mathcal F}_{\mu}}{\partial \xi} & 0 & \dots & 0
	\end{bmatrix}\in\mathbb{R}^{(\mu+1)L,(\mu+1)N}.
\end{align}
\end{subequations}
In the following, we will make some constant rank assumptions, which in turn is the basis for a (local) smooth full rank decomposition as provided in the next theorem, see \cite[Thm.~4.3]{KunM06}.

\begin{theorem}
    \label{thm:smoothFullRankDecomposition}
    For open sets $\mathbb{M}\subseteq\mathbb{D}\subseteq\mathbb{R}^{k}$ let $E\in\calC^\mu(\mathbb{D},\mathbb{R}^{\ell,\stateDim})$. Furthermore, assume that $\rank E(\state) \equiv r$ for all $\state\in\mathbb{M}$. Then, for every $\widehat{\state}\in\mathbb{M}$ there exists a sufficiently small neighborhood $\mathbb{V}\subseteq\mathbb{D}$ of $\widehat{\state}$, and matrix functions $T\in\calC^\mu\left(\mathbb{V},\mathbb{R}^{\stateDim,(\stateDim-r)}\right)$ and $Z\in\calC^\mu\left(\mathbb{V},\mathbb{R}^{\ell,(\ell-r)}\right)$ with pointwise orthonormal columns such that
    \begin{displaymath}
            E(\state)T(\state) = 0\qquad\text{and}\qquad Z^T(\state)E(\state) = 0\qquad\text{for all $\state\in\mathbb{V}$}.
    \end{displaymath}
\end{theorem}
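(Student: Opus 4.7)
The plan is to first reduce $E$ locally to a convenient block form via an SVD at $\widehat{\state}$, then exploit the constant-rank hypothesis to identify a Schur complement as identically zero, extract explicit smooth bases for $\ker E(\state)$ and $\ker E(\state)^T$, and finally orthonormalize them via a smooth Cholesky step.

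To start, I would compute the SVD $E(\widehat{\state}) = U_0 \Sigma V_0^T$ with constant orthogonal matrices $U_0 \in \R^{\ell,\ell}$, $V_0 \in \R^{\stateDim,\stateDim}$, and $\Sigma = \diag(\sigma_1,\ldots,\sigma_r,0,\ldots,0)$, $\sigma_i>0$. Set $\widetilde{E}(\state) \vcentcolon= U_0^T E(\state) V_0 \in \calC^\mu(\mathbb{D},\R^{\ell,\stateDim})$, which has the same rank as $E(\state)$, and partition
\begin{equation*}
\widetilde{E}(\state) = \begin{bmatrix} A(\state) & B(\state) \\ C(\state) & D(\state) \end{bmatrix}, \qquad A(\state) \in \R^{r,r}.
\end{equation*}
Since $A(\widehat{\state}) = \diag(\sigma_1,\ldots,\sigma_r)$ is invertible and $\det A$ is continuous, there exists an open neighborhood $\mathbb{V} \subseteq \mathbb{D}$ of $\widehat{\state}$ on which $A(\state)$ is invertible, and Cramer's rule yields $A^{-1} \in \calC^\mu(\mathbb{V},\R^{r,r})$.

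The key step is the block factorization
\begin{equation*}
\widetilde{E}(\state) = \begin{bmatrix} I & 0 \\ C(\state) A(\state)^{-1} & I \end{bmatrix} \begin{bmatrix} A(\state) & B(\state) \\ 0 & D(\state) - C(\state) A(\state)^{-1} B(\state) \end{bmatrix}.
\end{equation*}
Since the left factor is invertible and $\rank \widetilde{E}(\state) = r$ on $\mathbb{V}$, while the first $r$ rows of the right factor already contribute rank $r$, the Schur complement must vanish identically: $D(\state) = C(\state) A(\state)^{-1} B(\state)$ on $\mathbb{V}$. A direct calculation then shows that the $\calC^\mu$ matrix functions
\begin{equation*}
\widehat{T}(\state) \vcentcolon= V_0 \begin{bmatrix} -A(\state)^{-1} B(\state) \\ I_{\stateDim-r} \end{bmatrix}, \qquad \widehat{Z}(\state) \vcentcolon= U_0 \begin{bmatrix} -A(\state)^{-T} C(\state)^T \\ I_{\ell-r} \end{bmatrix}
\end{equation*}
satisfy $E(\state)\widehat{T}(\state) = 0$ and $\widehat{Z}(\state)^T E(\state) = 0$, and their identity blocks guarantee pointwise linear independence of columns; by rank-nullity their columns therefore form pointwise bases of $\ker E(\state)$ and $\ker E(\state)^T$, respectively.

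It remains to orthonormalize. The Gram matrices $\widehat{T}(\state)^T \widehat{T}(\state)$ and $\widehat{Z}(\state)^T \widehat{Z}(\state)$ are $\calC^\mu$ and pointwise symmetric positive definite, hence admit unique Cholesky factors $R_T(\state)$ and $R_Z(\state)$ with positive diagonal that depend $\calC^\mu$-smoothly on $\state$, since the Cholesky map is $\calC^\infty$ on the cone of positive-definite matrices. Setting $T(\state) \vcentcolon= \widehat{T}(\state) R_T(\state)^{-1}$ and $Z(\state) \vcentcolon= \widehat{Z}(\state) R_Z(\state)^{-1}$ yields the desired matrix functions with pointwise orthonormal columns spanning the same subspaces. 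The main obstacle I anticipate is the Schur-complement vanishing argument: it is precisely here that the constant-rank hypothesis plays an essential role, since without it the dimension of $\ker E(\state)$ could jump in any neighborhood of $\widehat{\state}$ and no smooth basis of the required size could exist.
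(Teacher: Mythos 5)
Your proof is correct. The paper itself gives no proof of this theorem --- it is quoted from Kunkel--Mehrmann \cite[Thm.~4.3]{KunM06} --- and your argument (constant orthogonal transformation from the SVD at $\widehat{\state}$, invertibility of the leading block on a neighborhood, vanishing Schur complement from the constant-rank hypothesis, explicit kernel/cokernel bases, smooth orthonormalization) is essentially the standard proof of that cited result. One small point to tighten: the neighborhood on which $A(\state)$ is invertible must also be intersected with $\mathbb{M}$ before you invoke $\rank E(\state)\equiv r$ to kill the Schur complement, since outside $\mathbb{M}$ the rank could exceed $r$ (it cannot drop, by lower semicontinuity, but it can jump up); as $\mathbb{M}$ is open this costs nothing, but without it the identity $D = CA^{-1}B$, and hence $E(\state)T(\state)=0$, is only guaranteed on $\mathbb{V}\cap\mathbb{M}$.
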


To analyze the nonlinear \DAE~\eqref{eq:rDAE} we now make the following assumption, taken from~\cite{KunM01} and presented similarly as in \cite{Ung20b}, to filter out the regular part. Note that we use the terminology $\corank$ to denote the difference between the size of a matrix and its rank; see also~\eqref{eqn:corank} for a formal definition.
\begin{assumption}
    \label{ass:nonlinDAE:regularPart}
    Assume that the set $\mathcal{M}_\mu$ in~\eqref{eqn:nonlinDAE:manifold} forms a manifold of dimension $(\mu+2)N+1-r$ and the Jacobians defined in~\eqref{eqn:nonlinDAE:Jacobians} satisfy
    \begin{equation}
        \label{eqn:nonlinDAE:regularPartAss}
        \rank \begin{bmatrix}
            \mathcal{E}_\mu & \mathcal{A}_\mu
        \end{bmatrix} = r \qquad \text{on $\mathcal{M}_\mu$.}
    \end{equation}
    Moreover, we have
    \begin{equation}
        \corank \begin{bmatrix}
            \mathcal{E}_\mu & \mathcal{A}_\mu
        \end{bmatrix} - \corank \begin{bmatrix}
            \mathcal{E}_{\mu-1} & \mathcal{A}_{\mu-1}
        \end{bmatrix} = v\qquad \text{on $\mathcal{M}_\mu$}
    \end{equation}
    with the convention that $\corank \tfrac{\partial F_{-1}}{\partial \xi} = 0$.
\end{assumption}

The quantity $v$ in \Cref{ass:nonlinDAE:regularPart} measures the number of equations in the original system that give rise to trivial equations $0=0$, \ie, it counts the number of redundancies in the system.
After the quantification of the regular and redundant parts of the \DAE~\eqref{eq:rDAE}, we use the next assumption to filter out the algebraic equations.

\begin{assumption}
    \label{ass:nonlinDAE:algebraicPart}
    Suppose the \DAE~\eqref{eq:rDAE} satisfies \Cref{ass:nonlinDAE:regularPart}. Then the matrix $\mathcal{E}_{\mu}$ defined in~\eqref{eqn:nonlinDAE:Jacobians} satisfies
    \begin{equation}
        \rank \mathcal{E}_{\mu} = r-a\qquad\text{on $\mathcal{M}_\mu$}.
    \end{equation}
\end{assumption}

\Cref{ass:nonlinDAE:algebraicPart} together with \Cref{thm:smoothFullRankDecomposition} ensures (locally) the existence of a smooth matrix function $Z_\mathrm{a}\colon \mathcal{M}_{\mu}\to \R^{(\mu+1)L,a}$ with pointwise maximal rank on $\mathcal{M}_\mu$ that satisfies
\begin{equation}
    \label{eqn:nonlinDAE:hyp:algebraicSelector}
    Z_\mathrm{a}^T \mathcal{E}_\mu = 0\qquad\text{on $\mathcal{M}_\mu$}.
\end{equation}
The (linearized) algebraic equations are thus encoded in the matrix function
\begin{equation}
    \label{eqn:nonlinDAE:hyp:linearizedAlgebraicEquations}
    Z_{\mathrm{a}}^T \tfrac{\partial \widetilde{\mathcal{F}}_{\mu}}{\partial \xi}.
\end{equation}
To ensure that we are able to solve the algebraic equations for $a$ unknowns,  requires the matrix in~\eqref{eqn:nonlinDAE:hyp:linearizedAlgebraicEquations} to have full rank. This is indeed the case, since \eqref{eqn:nonlinDAE:hyp:algebraicSelector} together with \Cref{ass:nonlinDAE:regularPart} implies that
\begin{displaymath}
    \rank Z_{\mathrm{a}}^T \mathcal{A}_{\mu} = \rank Z_{\mathrm{a}} \tfrac{\partial \widetilde{\mathcal{F}}_{\mu}}{\partial \xi} = a.
\end{displaymath}
Again, \Cref{thm:smoothFullRankDecomposition} implies (locally) the existence of a smooth matrix function
\begin{displaymath}
	T_\mathrm{a}\colon \mathcal{M}_\mu \to \R^{N,N-a}
\end{displaymath}
with pointwise maximal rank satisfying
\begin{equation}
    \label{eqn:Ta}
    Z_\mathrm{a}^T \tfrac{\partial \widetilde{\mathcal{F}}_\mu}{\partial \xi}T_\mathrm{a} = 0\qquad\text{on $\mathcal{M}_\mu$}.
\end{equation}
The remaining differential equations must be contained in the original \DAE~\eqref{eq:rDAE} (in contrast to the algebraic equations, which are contained in the derivative array), and thus, we make the following additional assumption.

\begin{assumption}
    \label{ass:nonlinDAE:differentialPart}
    Let Assumptions~\ref{ass:nonlinDAE:regularPart} and \ref{ass:nonlinDAE:algebraicPart} hold and let $T_\mathrm{a}$ be constructed as in \eqref{eqn:Ta}. Define $d\vcentcolon= L-a-v$ and assume that
    \begin{displaymath}
        \rank \tfrac{\partial \mathcal{F}}{\partial \dot{\xi}}T_\mathrm{a} = d\qquad \text{on $\mathcal{M}_\mu$}.
    \end{displaymath}
\end{assumption}

Once again, we employ \Cref{thm:smoothFullRankDecomposition} to (locally) obtain a smooth matrix function $Z_{\mathrm{d}}$ of size $N\times d$ with pointwise maximal rank that satisfies $Z_{\mathrm{d}}^T \tfrac{\partial \mathcal{F}}{\partial \dot{\xi}}T_\mathrm{a} = d$. The matrix function $Z_\mathrm{d}$ will later be used  to filter out the differential equations.

To summarize the previous discussion, we make the following assumption, which for historical reasons (cf.~\cite{KunM01}) and since in the linear case it is actually a theorem,  is referred to as a hypothesis. Note that due to the local character of \Cref{thm:smoothFullRankDecomposition} all assumptions hold only in a suitable neighborhood.
\begin{hypothesis}
    \label{hyp:nonLin:nonRegular}
    There exists integers $\mu$, $r$, $a$, and $v$ such that $\mathcal{M}_\mu$ defined in \eqref{eqn:nonlinDAE:manifold} is nonempty and such that for every $(t_0,\xi_0,\eta_0)\in\mathcal{M}_\mu$ there exists a (sufficiently small) neighborhood $\mathcal{U}$ in which the following properties hold:
    \begin{enumerate}
        \item[(i)] The set $\mathcal{M}_\mu$ forms a manifold of dimension $(\mu+2)N+1-r$.
        \item[(ii)] We have $\rank\begin{bmatrix}
            \mathcal{E}_\mu & \mathcal{A}_\mu
            \end{bmatrix} = r$ on $\mathcal{M}_\mu\cap \mathcal{U}$.
        \item[(iii)] We have $\corank \begin{bmatrix}
            \mathcal{E}_\mu & \mathcal{A}_\mu
            \end{bmatrix} - \corank \begin{bmatrix}
            \mathcal{E}_{\mu-1} & \mathcal{A}_{\mu-1}
            \end{bmatrix} = v$ on $\mathcal{M}_\mu\cap \mathcal{U}$ (with the convention $\corank\begin{bmatrix}
                \mathcal{E}_{-1} & \mathcal{A}_{-1}
            \end{bmatrix} = 0$).
        \item[(iv)] We have $\rank \mathcal{E}_\mu = r-a$ on $\mathcal{M}_\mu\cap\mathcal{U}$, such that there exist smooth matrix functions $Z_\mathrm{a}$ and $T_\mathrm{a}$ of size $(\mu+1)L\times a$ and $N\times (N-a)$, respectively, and pointwise maximal rank, satisfying $Z_\mathrm{a}^T\mathcal{E}_\mu = 0$, $\rank Z_{\mathrm{a}}^T\mathcal{A}_\mu = a$, and $Z_\mathrm{a}^T\tfrac{\partial \widetilde{F}_\mu}{\partial \xi}T_\mathrm{a} = 0$ on $\mathcal{M}_\mu\cap\mathcal{U}$.
        \item[(v)] We have $\rank \tfrac{\partial \mathcal{F}}{\partial \dot{\xi}}T_{\mathrm{a}} = d \vcentcolon= L-a-v$ on $\mathcal{M}_\mu\cap\mathcal{U}$ such that there exists a smooth matrix function $Z_\mathrm{d}$ of size $N\times d$ and pointwise maximal rank, satisfying $\rank Z_\mathrm{d}^T \tfrac{\partial \widetilde{F}}{\partial \dot{\xi}}T_\mathrm{a} = d$.
    \end{enumerate}
\end{hypothesis}

\begin{definition}
    Given the \DAE~\eqref{eq:rDAE}, the smallest value $\mu$ such that $\mathcal{F}$ satisfies \Cref{hyp:nonLin:nonRegular} is called the \emph{strangeness index} of~\eqref{eq:rDAE}. If $\mu=0$, then the \DAE is called \emph{strangeness-free}.
\end{definition}

\begin{remark}
    \Cref{hyp:nonLin:nonRegular} is invariant under a large class of equivalence transformations, see \cite[Sec.~4.1]{KunM06}, which is why the numbers $\mu$, $r$, $a$, $v$, and $d$ are referred to as \emph{characteristic values} for the \DAE~\eqref{eq:rDAE}.
\end{remark}

Following the discussion in \cite{KunM01}, we can use the matrix functions $Z_\mathrm{a}$ and $Z_\mathrm{d}$ to construct the \DAE
\begin{equation}
    \label{eqn:nonlinDAE:sfree}
    \widehat{\mathcal{F}}(t,\xi,\dot{\xi}) \vcentcolon= \begin{bmatrix}
    \widehat{\mathcal{F}}_\mathrm{d}(t,\xi,\dot{\xi})\\
    \widehat{\mathcal{F}}_\mathrm{a}(t,\xi)
    \end{bmatrix}
\end{equation}
with 
\begin{align*}
    \widehat{F}_\mathrm{d}(t,\xi,\dot{\xi}) \vcentcolon= (Z_\mathrm{d}^T \mathcal{F})(t,\xi,\dot{\xi}) \qquad\text{and}\qquad
    \widehat{F}_{\mathrm{a}}(t,\xi) \vcentcolon= (Z_\mathrm{a}^T \widetilde{\mathcal{F}}_\mu)(t,\xi).
\end{align*}
Note, that although the matrix functions $Z_\mathrm{a}$ and $Z_\mathrm{d}$ depend on derivatives of~$\xi$, it is possible to show (cf.~\cite{KunM01}) that the reduced quantities $\widehat{\mathcal{F}}_\mathrm{a}$ and $\widehat{\mathcal{F}}_\mathrm{d}$ are independent of higher derivatives of~$\xi$. In addition, one can show that \eqref{eqn:nonlinDAE:sfree} satisfies \Cref{hyp:nonLin:nonRegular} with characteristic values $\mu=0$, $r$, $a$, and $v$. In particular, \eqref{eqn:nonlinDAE:sfree} is strangeness-free.

In the regular case, where $N = L$ and $v=0$, we can simplify \Cref{hyp:nonLin:nonRegular} as follows, see also \cite{KunM98}.

\begin{hypothesis}
    \label{hyp:nonLin:regular}
    There exist integers $\mu$ and $a$ such that the set $\mathcal{M}_\mu$ defined in~\eqref{eqn:nonlinDAE:manifold} is nonempty and such that for every $(t_0,\xi_0,\eta_0)\in\mathcal{M}_\mu$ there exists a (sufficiently small) neighborhood $\mathcal{U}$ in which the following properties hold:
    \begin{enumerate}
        \item[(i)] We have $\rank \mathcal{E}_\mu = (\mu+1)N-a$ on     $\mathcal{M}_\mu\cap \mathcal{U}$ such that there exists a smooth matrix function $Z_\mathrm{a}$ of size $(\mu+1)N\times a$ and pointwise maximal rank such that $Z_\mathrm{a}^T\mathcal{E}_\mu = 0$ on $\mathcal{M}_\mu\cap\mathcal{U}$.
        \item[(ii)] We have $\rank Z_{\mathrm{a}}^T\mathcal{A}_\mu = a$ on $\mathcal{M}_\mu\cap\mathcal{U}$ such that there exists a smooth matrix function $T_\mathrm{a}$ of size $N\times d$ with $d\vcentcolon=N-a$ and pointwise maximal rank, satisfying $Z_\mathrm{a}^T\tfrac{\partial \widetilde{F}_\mu}{\partial \xi}T_\mathrm{a} = 0$ on $\mathcal{M}_\mu\cap\mathcal{U}$.
        \item[(iii)] We have $\rank \tfrac{\partial \mathcal{F}}{\partial \dot{\xi}}T_{\mathrm{a}} = d \vcentcolon= L-a-v$ on $\mathcal{M}_\mu\cap\mathcal{U}$ such that there exists a smooth matrix function $Z_\mathrm{d}$ of size $N\times d$ and pointwise maximal rank, satisfying $\rank Z_\mathrm{d}^T \tfrac{\partial \widetilde{F}}{\partial \dot{\xi}}T_\mathrm{a} = d$.
    \end{enumerate}
\end{hypothesis}

The relation between the original \DAE~\eqref{eq:rDAE} and the strangeness-free reformulation~\eqref{eqn:nonlinDAE:sfree} is given in the following theorem, taken from \cite[Thm.~4.11 and Thm.~4.13]{KunM06}. For the ease of presentation, we focus here on the regular case using \Cref{hyp:nonLin:regular} and remark that a similar result is also available for the more general setting described in \Cref{hyp:nonLin:nonRegular}, see \cite{KunM01} for further details.

\begin{theorem}
    Let $\mathcal{F}$ as in~\eqref{eq:rDAE} be sufficiently smooth and satisfy \Cref{hyp:nonLin:regular} with characteristic values $\mu$, $a$, and $d \vcentcolon=N-a$. Then the following statements hold.
    \begin{enumerate}
        \item[(i)] Every sufficiently smooth solution of~\eqref{eq:rDAE} is also a solution of the strangeness-free \DAE~\eqref{eqn:nonlinDAE:sfree}.
        \item[(ii)] Suppose additionally that $\mathcal{F}$ satisfies \Cref{hyp:nonLin:regular} with characteristic values $\mu+1$, $a$, and $d$. Then, for every $(t_0,\xi_0,\eta_0)\in\mathcal{M}_{\mu+1}$, the strangeness-free problem~\eqref{eqn:nonlinDAE:sfree} has a unique solution satisfying the initial condition $\xi(t_0) = \xi_0$. Moreover, this solution locally solves the original problem~\eqref{eq:rDAE}.
    \end{enumerate}
\end{theorem}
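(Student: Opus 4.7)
The plan is to establish (i) by a direct differentiation argument and then to construct a unique local solution for (ii) by applying the implicit function theorem twice to the strangeness-free system~\eqref{eqn:nonlinDAE:sfree}, followed by a consistency argument that leverages the additional hypothesis at level $\mu+1$ to transfer the solution back to the original \DAE.

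For (i), assume $\xi\in\calC^{\mu+1}(\timeInt,\R^N)$ satisfies $\mathcal{F}(t,\xi(t),\dot{\xi}(t))=0$ pointwise. Successive time-differentiation up to order $\mu$ then yields $\widetilde{\mathcal{F}}_\mu(t,\xi(t),\dot{\xi}(t),\dots,\xi^{(\mu+1)}(t))=0$ for all $t\in\timeInt$, so the trajectory lies on $\mathcal{M}_\mu$. Pre-multiplying by $Z_\mathrm{a}^T$ and $Z_\mathrm{d}^T$ gives $\widehat{\mathcal{F}}_\mathrm{a}=0$ and $\widehat{\mathcal{F}}_\mathrm{d}=Z_\mathrm{d}^T\mathcal{F}=0$ along the trajectory, hence $\xi$ also solves~\eqref{eqn:nonlinDAE:sfree}. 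The content of (i) is thus essentially tautological given the construction of $\widehat{\mathcal{F}}$.

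For the existence and uniqueness part of (ii), I would reduce~\eqref{eqn:nonlinDAE:sfree} to an explicit \ODE in two steps. First, by \Cref{hyp:nonLin:regular}(ii) the Jacobian $\tfrac{\partial \widehat{\mathcal{F}}_\mathrm{a}}{\partial \xi}$ has full row rank $a$ at $(t_0,\xi_0)$, so after a smooth change of coordinates we can split $\xi=(\xi_1,\xi_2)$ with $\xi_2\in\R^a$ and invoke the implicit function theorem to solve $\widehat{\mathcal{F}}_\mathrm{a}(t,\xi_1,\xi_2)=0$ locally as $\xi_2=G(t,\xi_1)$; the columns of $T_\mathrm{a}$ can be arranged to parameterize the $\xi_1$-directions tangent to this constraint manifold. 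By \Cref{hyp:nonLin:regular}(iii) the matrix $Z_\mathrm{d}^T\tfrac{\partial \mathcal{F}}{\partial \dot{\xi}}T_\mathrm{a}$ is invertible, so substituting $\xi_2=G(t,\xi_1)$ into $\widehat{\mathcal{F}}_\mathrm{d}=0$ produces an implicit relation in $(t,\xi_1,\dot{\xi}_1)$ whose Jacobian with respect to $\dot{\xi}_1$ is nonsingular. A second application of the implicit function theorem then yields an explicit \ODE $\dot{\xi}_1=H(t,\xi_1)$, and since $\xi_0$ is consistent (as $(t_0,\xi_0,\eta_0)\in\mathcal{M}_\mu$), Picard--Lindelöf delivers a unique local solution $\xi_1$, hence a unique local solution $\xi=(\xi_1,G(\cdot,\xi_1))$ of~\eqref{eqn:nonlinDAE:sfree} with $\xi(t_0)=\xi_0$.

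The delicate and main obstacle is the final claim that this $\xi$ also solves the original \DAE~\eqref{eq:rDAE}. By construction only the $d$ projected equations $Z_\mathrm{d}^T\mathcal{F}=0$ hold automatically, and one must verify that the remaining $a=N-d$ components of $\mathcal{F}$ vanish as well. This is precisely where the strengthened assumption that \Cref{hyp:nonLin:regular} holds with the \emph{same} characteristic values at level $\mu+1$ enters: it guarantees that one further differentiation uncovers no new algebraic relations, so that~\eqref{eq:rDAE} and $\widehat{\mathcal{F}}=0$ describe the same zero set near $(t_0,\xi_0,\eta_0)$. Concretely, using the constant-rank theorem together with the projectors $Z_\mathrm{a},Z_\mathrm{d},T_\mathrm{a}$ and suitable complements, one expresses the components of $\mathcal{F}$ not captured by $Z_\mathrm{d}^T\mathcal{F}$ as smooth combinations of $\widehat{\mathcal{F}}_\mathrm{a}$ and its derivatives, which all vanish along the constructed $\xi$. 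Making this projector bookkeeping rigorous—so that the passage from $\widehat{\mathcal{F}}=0$ back to $\mathcal{F}=0$ really uses only the data available at level $\mu+1$—is the most technical part of the argument, but once it is in place the local solution of~\eqref{eqn:nonlinDAE:sfree} is seen to solve~\eqref{eq:rDAE} as well.
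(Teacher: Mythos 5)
First, note that the paper does not prove this theorem at all: it is quoted verbatim from \cite[Thm.~4.11 and Thm.~4.13]{KunM06}, so there is no in-paper argument to compare yours against. Judged on its own merits, your part (i) and the existence/uniqueness portion of part (ii) are correct and follow the standard route of the cited reference: differentiation puts any smooth solution on $\mathcal{M}_\mu$, so the projected equations $Z_\mathrm{a}^T\widetilde{\mathcal{F}}_\mu=0$ and $Z_\mathrm{d}^T\mathcal{F}=0$ hold automatically; and the two applications of the implicit function theorem (first resolving $\widehat{\mathcal{F}}_\mathrm{a}=0$ for $\xi_2$, then using the nonsingularity of $Z_\mathrm{d}^T\tfrac{\partial\mathcal{F}}{\partial\dot{\xi}}T_\mathrm{a}$ restricted to the tangent directions of the constraint manifold to obtain an explicit \ODE) are exactly the right mechanism, with Picard--Lindel\"of closing the existence and uniqueness claim for~\eqref{eqn:nonlinDAE:sfree}.

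The genuine gap is in the final transfer step, which you correctly identify as the crux but then dispatch with an assertion rather than an argument. Your claim that ``the components of $\mathcal{F}$ not captured by $Z_\mathrm{d}^T\mathcal{F}$'' can be written ``as smooth combinations of $\widehat{\mathcal{F}}_\mathrm{a}$ and its derivatives'' is not justified and is not how the implication actually works: $\widehat{\mathcal{F}}_\mathrm{a}$ lives in the range of $Z_\mathrm{a}^T$ applied to the full derivative array, and there is no reason the complementary $a$ components of the original $\mathcal{F}$ (an $N$-vector, not the $(\mu+1)N$-vector $\widetilde{\mathcal{F}}_\mu$) should be recoverable from $\widehat{\mathcal{F}}_\mathrm{a}$ and its time derivatives alone. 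The argument in the reference instead proceeds by \emph{lifting}: one shows that along the constructed solution $\xi$ of~\eqref{eqn:nonlinDAE:sfree} there exists a function $\eta(t)$ with $(t,\xi(t),\eta(t))\in\mathcal{M}_{\mu+1}$, and the heart of the matter is proving that the block of $\eta(t)$ playing the role of $\dot{\xi}$ in the (purely algebraic) derivative-array variables can be identified with the actual time derivative $\dot{\xi}(t)$ of the constructed solution. This identification is exactly what the unchanged characteristic values $a$ and $d$ at level $\mu+1$ buy, via a dimension count on the solution manifold; once it is established, $\mathcal{F}(t,\xi(t),\dot{\xi}(t))=0$ follows because $\mathcal{F}$ is the leading block row of $\widetilde{\mathcal{F}}_{\mu+1}$. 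Without this lifting-and-identification step your proof does not close, so as written the proposal establishes (i) and the solvability of the reduced system but not the ``locally solves the original problem'' claim in (ii).
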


\begin{remark}\label{rem:index}
    The relation of the strangeness index concept as presented above to other index concepts commonly used in the theory of \DAEs, such as the \emph{differentiation index} \cite{CamG95}, the \emph{perturbation index} \cite{HaiW96}, the \emph{tractability index} \cite{LamMT13}, 
    the \emph{geometric index} \cite{Rhe84,Rei90}, and the \emph{structural index} \cite{Pan88,Pry01}, is discussed in \cite{Meh15}.
\end{remark}

\subsection{Linear time-varying \DAE systems}
\label{sec:timevar}

If the \DAE~\eqref{eq:rDAE} is linear time-varying, \ie, of the form
\begin{equation}
    \label{eqn:dae:ltv}
    E(t)\dot{\xi}(t) = A(t)\xi(t) + f(t),\qquad \state(t_0) = \state_0,
\end{equation}
with smooth matrix functions $E,A\colon\timeInt\to\R^{L,N}$, 
then the analysis of the previous subsection can be further simplified. In this case, the Jacobians~\eqref{eqn:nonlinDAE:Jacobians} are given as
\begin{align*}
    (\mathcal{E}_\mu)_{i,j} &= {\textstyle\binom{i}{j}} E^{(i-j)}- {\textstyle\binom{i}{j+1}} A^{(i-j-1)},\quad
i,j=0,\ldots,\mu,\\
    (\mathcal{A}_\mu)_{i,j} &= \begin{cases} 
        A^{(i)} &\text{ for }  i=0,\ldots,\mu,\ j=0,\\
        0 & \text{ otherwise.}
    \end{cases}
\end{align*}
Since these matrix functions do not depend on the state variable $\state$ nor its derivatives, we can get rid of the local character of \Cref{hyp:nonLin:nonRegular} (respectively \Cref{hyp:nonLin:regular}) by using the following simplified version of the smooth rank-revealing decomposition (cf.~\Cref{thm:smoothFullRankDecomposition}), see for instance \cite[Thm.~3.9]{KunM06}.

\begin{theorem}
    \label{thm:smoothRankRevealingDecomposition}
    Let $E\in \mathcal{C}^\mu({\timeInt},\R^{L, N})$, $\mu\in\N_0\cup\{\infty\}$, with $\rank E(t)=r$ for all $t\in\timeInt$. Then there exist pointwise orthogonal functions
    $U\in \mathcal{C}^\mu(\timeInt,\R^{L,L})$ and $V\in \mathcal{C}^\mu(\timeInt,\R^{N,N})$, such that
    \begin{equation*}
        U^T EV= \begin{bmatrix} \Sigma&0\\0&0\end{bmatrix}
    \end{equation*}
    with pointwise nonsingular $\Sigma\in \mathcal{C}^\mu(\timeInt,\R^{r,r})$.
\end{theorem}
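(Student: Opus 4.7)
The plan is to construct $V$ and $U$ column-block-wise from smooth bases of $\ker E(t)$, $\ker E(t)^T$, and their orthogonal complements, then read off the block diagonal form. First I would invoke Theorem~\ref{thm:smoothFullRankDecomposition} to obtain smooth matrix functions $T \in \mathcal{C}^\mu(\timeInt, \R^{N,N-r})$ and $Z \in \mathcal{C}^\mu(\timeInt, \R^{L,L-r})$ with pointwise orthonormal columns satisfying $E(t) T(t) = 0$ and $Z(t)^T E(t) = 0$ for all $t \in \timeInt$. Theorem~\ref{thm:smoothFullRankDecomposition} is only local, so to produce $T$ and $Z$ defined on the whole interval one patches the local bases, using that every smooth vector bundle over the contractible base $\timeInt$ is trivial; the kernel and cokernel bundles, which have constant rank by the hypothesis $\rank E(t) \equiv r$, therefore admit global smooth orthonormal frames.

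Next I would complete $T$ to a smooth orthogonal matrix $V = \begin{bmatrix} V_1 & T\end{bmatrix} \in \mathcal{C}^\mu(\timeInt, \R^{N,N})$, and $Z$ to $U = \begin{bmatrix} U_1 & Z\end{bmatrix} \in \mathcal{C}^\mu(\timeInt, \R^{L,L})$. A concrete construction is to take any smooth matrix function whose columns pointwise span a complement of $\mathrm{span}\, T(t)$ (e.g.\ by applying a smooth version of Theorem~\ref{thm:smoothFullRankDecomposition} to $I - T T^T$, or by local selection followed by patching) and then apply Gram--Schmidt. Since Gram--Schmidt preserves $\mathcal{C}^\mu$-regularity whenever its inputs remain pointwise linearly independent, and $T(t)^\perp$ has constant dimension $r$, the output $V_1$ is smooth and orthonormal; the same recipe yields $U_1$.

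Finally, I would compute $U^T E V$ blockwise:
\begin{displaymath}
U^T E V = \begin{bmatrix} U_1^T E V_1 & U_1^T E T \\ Z^T E V_1 & Z^T E T\end{bmatrix} = \begin{bmatrix} \Sigma & 0 \\ 0 & 0\end{bmatrix},
\end{displaymath}
where the three zero blocks are immediate from $ET = 0$ and $Z^T E = 0$, and $\Sigma \vcentcolon= U_1^T E V_1 \in \mathcal{C}^\mu(\timeInt, \R^{r,r})$ inherits smoothness from its factors. Pointwise nonsingularity of $\Sigma(t)$ is a rank count: $V_1(t)$'s columns span $(\ker E(t))^\perp$, so $E(t) V_1(t)$ has pointwise rank $r$ with image equal to $\range E(t) = (\ker E(t)^T)^\perp$, which is precisely the column span of $U_1(t)$. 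Hence $\Sigma(t)$ represents the restricted isomorphism $E(t)|_{(\ker E(t))^\perp} \colon (\ker E(t))^\perp \to \range E(t)$ in orthonormal bases and is therefore invertible.

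The main obstacle is the globalization step: turning the local smooth null-space bases provided by Theorem~\ref{thm:smoothFullRankDecomposition}, and their orthogonal complements obtained by Gram--Schmidt, into matrix functions defined on the full interval $\timeInt$. Pointwise completion is trivial and Gram--Schmidt is smooth locally, but merging these patches into a single $\mathcal{C}^\mu$ orthogonal frame requires either a partition-of-unity argument adapted to the orthogonal group or the bundle-triviality observation over the one-dimensional base $\timeInt$. Once this is in place, everything else is bookkeeping on block matrices.
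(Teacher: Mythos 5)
Your argument is correct in outline, but note first that the paper itself does not prove this statement: it is quoted verbatim from \cite[Thm.~3.9]{KunM06}, so there is no in-paper proof to match against. Measured against the proof in that reference---which stays entirely inside elementary linear algebra by fixing a constant rank-revealing decomposition at a point, using block elimination together with the constant-rank hypothesis to force the Schur complement to vanish on a neighborhood, restoring orthogonality by a smooth Gram--Schmidt process, and then gluing finitely many such local decompositions over the compact interval---your route is genuinely different: you build $V$ and $U$ from smooth orthonormal frames of the kernel and cokernel bundles obtained from \Cref{thm:smoothFullRankDecomposition} and a bundle-triviality argument over $\timeInt$. The block computation and the rank count showing $\Sigma = U_1^TEV_1$ is the matrix of the isomorphism $E(t)|_{(\kernel E(t))^\perp}\to\range E(t)$ in orthonormal bases are exactly right. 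What the classical route buys is constructiveness (which matters for the numerical algorithms built on this theorem later in the survey) and independence from bundle theory; what yours buys is brevity once triviality of a $\mathcal C^\mu$ bundle over an interval is granted. Be aware, however, that you have concentrated essentially all of the difficulty into that globalization step---it is the real content of the theorem, and an honest write-up must either cite the triviality result explicitly (including for the case $\mu=0$, where an ODE/parallel-transport construction is unavailable) or carry out the finite-subcover patching, which is the same gluing the classical proof performs.

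One concrete slip: applying \Cref{thm:smoothFullRankDecomposition} to $I - TT^T$ does not produce a complement of $\mathrm{span}\,T(t)$. That matrix is the orthogonal projector onto $T(t)^\perp$; it has rank $r$, and both its kernel and its left null space equal $\mathrm{span}\,T(t)$, so the theorem simply hands $T$ back to you. You should instead apply it to $TT^T$, whose null space is $T(t)^\perp$, or equivalently extract a smooth orthonormal frame for the range of $I-TT^T$ by the same bundle argument; either repairs the completion of $T$ to the orthogonal matrix $V$, and likewise for $Z$ and $U$.
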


In general, we can now proceed as in \Cref{hyp:nonLin:nonRegular} and construct the matrix functions $Z_\mathrm{a}$ and $Z_\mathrm{d}$. To avoid checking that $\mathcal{M}_\mu$ is nonempty, we further construct a matrix function $Z_\mathrm{v}$ of size $(\mu+1)N \times v$ with pointwise maximal rank that filters out equations that do not depend on $\xi$ and its derivatives. If this number of equations is nonzero, we check whether the right-hand side vanishes as well. If this is the case, then we omit these equations. If not, then $\mathcal{M}_\mu=\emptyset$ and the problem has to be regularized \cite{KunM06}. Defining
\begin{displaymath}
    g_\mu \vcentcolon= \begin{bmatrix}
        f^T &
        (\ddt f)^T & 
        \cdots &
        (f^{(\mu)})^T
    \end{bmatrix}^T
\end{displaymath}
and
\begin{align*}
    \widehat{E}_1 &\vcentcolon= Z_\mathrm{d}^T E, & 
    \widehat{A}_1 &\vcentcolon= Z_\mathrm{d}^T A, & 
    \widehat{A}_2 &\vcentcolon= Z_\mathrm{a}^T \mathcal{A}_\mu, \\
    \widehat{f}_1 &\vcentcolon= Z_\mathrm{d}^T f, &
    \widehat{f}_2 &\vcentcolon= Z_\mathrm{a}^T g_{\mu}, & 
    \widehat{f}_3 &\vcentcolon= Z_\mathrm{v}^T g_{\mu},
\end{align*}
we obtain the solution equivalent strangeness-free system
\begin{equation}
    \label{eqn:ltvDAE:sfree}
    \begin{bmatrix}
        \widehat{E}_1(t)\\
        0\\
        0
    \end{bmatrix}\dot{\xi}(t) = \begin{bmatrix}
        \widehat{A}_1(t)\\
        \widehat{A}_2(t)\\
        0
    \end{bmatrix}\xi(t) + \begin{bmatrix}
        \widehat{f}_1(t)\\
        \widehat{f}_2(t)\\
        \widehat{f}_3(t)
    \end{bmatrix}.
\end{equation}
\begin{remark}
    \label{rem:sfreeReformulationBlockStructure}
    In the behavior case for a control system, where the state variable is given as $\xi = \begin{smallbmatrix}
        \state^T & \inpVar^T
    \end{smallbmatrix}^T$, the matrix functions $E$ and $A$ have a block column structure, where the second block column corresponds to the control. Since the constructed coefficients $\widehat{A}_1$ and $\widehat{A}_2$ are obtained by transformations of the derivative array from the left, the block column structure of $A$ is retained in these matrices. Moreover, the strangeness-free reformulation does not depend on derivatives of the control $\inpVar$.
\end{remark}

If we further allow transformations of the solution space via a pointwise nonsingular matrix function, then we can obtain the following solvability result for the \DAE~\eqref{eqn:dae:ltv}.
\begin{theorem}
    \label{thm:LTV:daered}
    Under some constant rank assumptions the \DAE~\eqref{eqn:dae:ltv} is equivalent, in the sense that there is a change of basis in the solution space via a pointwise nonsingular matrix function, to a \DAE of the form
    \begin{align*}
        \dot{\xi}_1(t) &= \widehat{A}_{13}(t)\xi_3 + \widehat{f}_1(t), \\
        0 &= \xi_2(t) + \widehat{f}_2(t),  \\
        0 &= \widehat{f}_3(t),     
    \end{align*}
    where $A_{13}\in \mathcal{C}(\timeInt,\R^{d,N-d-a})$ and $\widehat{f}_1\in \mathcal{C}(\timeInt,\R^{d}), \widehat{f}_2\in \mathcal{C}(\timeInt,\R^{a}), \widehat{f}_3\in \mathcal{C}(\timeInt,\R^{v})$ are determined from $g_\mu$.
    \begin{enumerate}
        \item[(i)] If $f\in \mathcal{C}^{\mu+1}(\timeInt,\R^L)$, then \eqref{eqn:dae:ltv} is solvable if and only if $\widehat f_3=0$. 
        \item[(ii)] An initial value is consistent if and only if in addition the condition $\xi_2(t_0)=-\widehat{f}_2(t_0)$ is implied by the initial condition.
        \item[(iii)] The initial value problem is uniquely solvable if and only if in addition $N-d-a=0$.
\end{enumerate}
\end{theorem}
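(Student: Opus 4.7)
The plan is to start from the strangeness-free reformulation \eqref{eqn:ltvDAE:sfree} and to construct two smooth pointwise nonsingular changes of basis on $\R^N$, together with pointwise nonsingular left-multiplications on the three equation blocks, that decouple the differential, algebraic, and redundant parts. All decompositions below are obtained from \Cref{thm:smoothRankRevealingDecomposition}; since the notion of equivalence in the statement, namely a pointwise nonsingular change of basis in the solution space combined with invertible row operations, preserves the solution set, the result will follow once the target block form is reached.

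For the first reduction I would process the algebraic block. By \Cref{hyp:nonLin:nonRegular}(iv), $\widehat{A}_2 = Z_\mathrm{a}^T \mathcal{A}_\mu$ has pointwise rank $a$, so \Cref{thm:smoothRankRevealingDecomposition} yields a pointwise orthogonal $V_1 \in \calC(\timeInt,\R^{N,N})$ and a pointwise nonsingular $U_2 \in \calC(\timeInt,\R^{a,a})$ with $U_2 \widehat{A}_2 V_1 = \begin{bmatrix} I_a & 0 \end{bmatrix}$. Moreover, $V_1$ can be chosen so that its last $N-a$ columns span $\ker \widehat{A}_2$, which by \Cref{hyp:nonLin:nonRegular}(iv) coincides, up to a smooth orthogonal adjustment, with the range of the matrix function $T_\mathrm{a}$ from the hypothesis. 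Writing $\xi = V_1 \tilde{\xi}$ with $\tilde{\xi} = \begin{bmatrix} \xi_2^T & \zeta^T \end{bmatrix}^T$ and left-multiplying the middle block of \eqref{eqn:ltvDAE:sfree} by $U_2$, this block becomes $0 = \xi_2 + \widehat{f}_2$ after relabelling.

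For the second reduction I would rewrite the differential block in the new basis,
\begin{equation*}
\widehat{E}_1 V_1 \, \dot{\tilde{\xi}} = \bigl(\widehat{A}_1 V_1 - \widehat{E}_1 \dot{V}_1\bigr)\tilde{\xi} + \widehat{f}_1,
\end{equation*}
partition compatibly with $(\xi_2,\zeta)$, and use the algebraic relation $\dot{\xi}_2 = -\dot{\widehat{f}}_2$ to eliminate $\dot{\xi}_2$. This produces an equation of the form $\widetilde{E}_{13}\,\dot{\zeta} = \widetilde{A}_{13}\,\zeta + \widetilde{f}_1$, where the $d\times(N-a)$ coefficient $\widetilde{E}_{13}$ inherits pointwise rank $d$ from \Cref{hyp:nonLin:nonRegular}(v) and the alignment of $V_1$ with $T_\mathrm{a}$. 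Applying \Cref{thm:smoothRankRevealingDecomposition} once more yields a pointwise orthogonal $V_2$ acting on $\zeta$ and a pointwise nonsingular $U_1$ with $U_1 \widetilde{E}_{13} V_2 = \begin{bmatrix} I_d & 0 \end{bmatrix}$. Setting $V_2^T \zeta = \begin{bmatrix} \xi_1^T & \xi_3^T \end{bmatrix}^T$, left-multiplying by $U_1$, moving the $\xi_3$-coupling coming from $U_1 \widetilde{E}_{13} \dot{V}_2$ to the right-hand side, and collecting all inhomogeneities into a new $\widehat{f}_1$ produces $\dot{\xi}_1 = \widehat{A}_{13}\,\xi_3 + \widehat{f}_1$ with some $\widehat{A}_{13} \in \calC(\timeInt,\R^{d,N-d-a})$. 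The third block of \eqref{eqn:ltvDAE:sfree} is untouched by the state transformations and reads $0 = \widehat{f}_3$.

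Once this decoupled form is established, the three items follow by inspection. For (i), under $f \in \calC^{\mu+1}(\timeInt,\R^L)$ the quantities $\widehat{f}_1,\widehat{f}_2,\widehat{f}_3$ are well defined and continuous, the $\xi_1$-equation is an \ODE admitting a solution for any smooth choice of $\xi_3$, and $\xi_2$ is prescribed explicitly; hence solvability is equivalent to $\widehat{f}_3 \equiv 0$. For (ii), an initial value is consistent precisely when $\xi_2(t_0) = -\widehat{f}_2(t_0)$, since $\xi_1(t_0)$ and $\xi_3(t_0)$ are otherwise free. For (iii), the only source of non-uniqueness is the free variable $\xi_3$, so the initial value problem is uniquely solvable iff $N-d-a = 0$. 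I expect the main technical obstacle to be verifying that $\widetilde{E}_{13}$ retains pointwise rank $d$ after the first change of basis, which is where the compatibility of $V_1$ with $T_\mathrm{a}$ from \Cref{hyp:nonLin:nonRegular}(iv)--(v) is essential; all remaining steps are routine applications of \Cref{thm:smoothRankRevealingDecomposition} and the chain rule.
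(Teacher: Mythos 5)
Your overall route --- start from the strangeness-free form~\eqref{eqn:ltvDAE:sfree}, compress $\widehat{A}_2$ to $\begin{bmatrix} I_a & 0\end{bmatrix}$ by a smooth rank-revealing decomposition, then compress the leading coefficient of the differential block on the complementary variables --- is exactly the standard derivation that the paper has in mind (it states the theorem without proof, as a consequence of the construction of~\eqref{eqn:ltvDAE:sfree}); your verification that $\widetilde{E}_{13}$ keeps rank $d$ via the alignment of $V_1$ with $T_\mathrm{a}$ is also the right justification, and the role of the extra derivative of $f$ (needed to differentiate the algebraic relation $\xi_2=-\widehat f_2$) is correctly localized in item~(i).

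There is, however, one genuine gap. After your second decomposition the differential block reads $\dot{\xi}_1 = \bar{A}_{11}\xi_1 + \bar{A}_{13}\xi_3 + \bar{f}_1$: the self-coupling term $\bar{A}_{11}\xi_1$ does not disappear by "collecting inhomogeneities," since it is a genuine state-dependent term arising both from $\widehat{A}_1V_1$ and from the derivative-of-basis contributions $\widehat{E}_1\dot V_1$ and $U_1\widetilde{E}_{13}\dot V_2$. The theorem asserts the form $\dot{\xi}_1 = \widehat{A}_{13}\xi_3 + \widehat{f}_1$ with no $\xi_1$ term, and removing it requires one further idea that is \emph{not} an application of \Cref{thm:smoothRankRevealingDecomposition}: a (generally non-orthogonal) pointwise nonsingular change of basis $\xi_1 = \Phi(t)\tilde{\xi}_1$, where $\Phi\in\calC^1(\timeInt,\R^{d,d})$ is the fundamental solution of $\dot{\Phi} = \bar{A}_{11}\Phi$, $\Phi(t_0)=I_d$. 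This integrates the homogeneous part away and yields the stated form. The omission does not affect your arguments for (i)--(iii), which go through equally well with the $\bar{A}_{11}\xi_1$ term present, but it is needed to prove the equivalence claim as literally stated.
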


\subsection{Linear time-invariant DAE systems}
\label{sec:LTIdae}

In principle, we can perform the analysis for the linear time-varying case also in the case of general constant coefficient linear \DAE systems 
\begin{equation}
    \label{daeko}
    E\dot{\xi}(t) = A\xi(t) + f(t),\qquad \xi(t_0) = \xi_0,
\end{equation}
with matrices $E,A\in\R^{L,N}$, also referred to \emph{linear time-invariant} (\LTI) \DAE systems. However, in this setting 
it is common to work with an equivalence transformation and a corresponding canonical form. For notational convenience, for the next result we also allow complex valued matrices in~\eqref{daeko} and work in the field of complex numbers. 

We call the matrix pencils $s E_i-A_i$ with $E_i,A_i\in\C^{L,N}$, $i=1,2$ \emph{(strongly) equivalent}, if there exists nonsingular matrices $S\in\C^{L,L}$ and $T\in\C^{N,N}$ such that
\begin{displaymath}
    S(\lambda E_1 - A_1)T = \lambda E_2 - A_2\qquad \text{for all $\lambda\in\C$}.
\end{displaymath}
In this case, we write $\lambda E_1-A_1 \sim \lambda E_2 - A_2$.
The associated canonical form is given by the Kronecker canonical form, see \eg \cite{Gan59b}.
\begin{theorem}[Kronecker canonical form]
    \label{th:kcf}
    Let $E,A\in \C^{L,N}$. Then 
    \begin{equation*}
        \lambda E-A \sim \diag(\mathcal{L}_{\epsilon_1},\ldots,\mathcal{L}_{\epsilon_p},
\mathcal{L}^T_{\eta_1},\ldots,\mathcal{L}^T_{\eta_q},
\mathcal{J}_{\rho_1}^{\lambda_1},\ldots,\mathcal{J}_{\rho_r}^{\lambda_r},\mathcal{N}_{\sigma_1},\ldots,\mathcal{N}_{\sigma_s}),
    \end{equation*}
where the block entries have the following properties:
\begin{enumerate}
\item[(i)] Every entry $\mathcal{L}_{\epsilon_j}$ is a bidiagonal block of size $\epsilon_j \times (\epsilon_j+1)$, $\epsilon_j\in\N_0$,
of the form
\begin{equation*}
\lambda\left[\begin{array}{cccc}
1&0\\&\ddots&\ddots\\&&1&0
\end{array}\right]-\left[\begin{array}{cccc}
0&1\\&\ddots&\ddots\\&&0&1
\end{array}\right].
\end{equation*}
\item[(ii)]
Every entry ${\mathcal L}^T_{\eta_j}$ is a bidiagonal block of size
$({\eta_j+1})\times {\eta_j}$, $\eta_j\in{\mathbb N}_0$,
of the form
\begin{equation*}
\lambda\left[\begin{array}{ccc}
1\\0&\ddots\\&\ddots&1\\&&0
\end{array}\right]-
\left[\begin{array}{ccc}
0\\1&\ddots\\&\ddots&0\\&&1
\end{array}\right].
\end{equation*}
\item[(iii)]
Every entry ${\mathcal J}_{\rho_j}^{\lambda_j}$ is a Jordan block of size
${\rho_j}\times{\rho_j}$, $\rho_j\in\N$, $\lambda_j\in\C$,
of the form
\begin{equation*}
\lambda\left[\begin{array}{cccc}
1\\&\ddots\\&&\ddots\\&&&1
\end{array}\right]-
\left[\begin{array}{cccc}
\lambda_j&1\\&\ddots&\ddots\\&&\ddots&1\\&&&\lambda_j
\end{array}\right].
\end{equation*}
\item[(iv)]
Every entry ${\mathcal N}_{\sigma_j}$ is a nilpotent block of size
${\sigma_j}\times {\sigma_j}$, $\sigma_j\in{\mathbb N}$,
of the form
\begin{equation*}
\lambda\left[\begin{array}{cccc}
0&1\\&\ddots&\ddots\\&&\ddots&1\\&&&0
\end{array}\right]-
\left[\begin{array}{cccc}
1\\&\ddots\\&&\ddots\\&&&1
\end{array}\right].
\end{equation*}
\end{enumerate}
The Kronecker canonical form is unique up to permutation of the blocks.
\end{theorem}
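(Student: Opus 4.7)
The plan is to prove the theorem in three stages: first establish the canonical form in the regular square case via a Weierstrass-type argument, then reduce the general case to the regular one by successively peeling off singular blocks, and finally argue uniqueness by invariants of the pencil.

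For the regular case ($L=N$ and $\det(\lambda E-A)\not\equiv 0$) I would pick $\mu\in\C$ so that $\mu E-A$ is invertible, which is possible since the determinant is a nonzero polynomial in $\lambda$, and factor $\lambda E-A = (\mu E-A)\bigl[I+(\lambda-\mu)(\mu E-A)^{-1}E\bigr]$. Setting $M\vcentcolon=(\mu E-A)^{-1}E$ and transforming $M$ into Jordan normal form splits $\C^N$ into the generalized eigenspace of $M$ at $0$ and its complement. On the complement, $M$ is invertible and further strict equivalences produce blocks of the form $\lambda I-J$ with $J$ in Jordan canonical form, yielding the $\mathcal{J}^{\lambda_j}_{\rho_j}$ blocks. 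On the generalized eigenspace at $0$, $M$ is nilpotent and the same factorization, after trading the roles of $E$ and $A$, yields the nilpotent blocks $\mathcal{N}_{\sigma_j}$ corresponding to the eigenvalue at infinity.

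For the general (possibly rectangular, possibly singular) case I would use the classical minimal-index argument. If the rational right null space of $\lambda E-A$ is nontrivial, pick a polynomial vector $x(\lambda)=\sum_{j=0}^{\epsilon}\lambda^j x_j$ of minimal degree $\epsilon$ in this null space. Equating coefficients in $(\lambda E-A)x(\lambda)=0$ forces $A x_0=0$, $A x_j=E x_{j-1}$ for $j=1,\ldots,\epsilon$, and $E x_\epsilon=0$. A minimality argument shows that $x_0,\ldots,x_\epsilon$ are linearly independent in $\C^N$ and $E x_0,\ldots,E x_{\epsilon-1}$ are linearly independent in $\C^L$; completing each family to a basis and applying the resulting change of basis, the pencil takes a block-triangular form with an $\mathcal{L}_\epsilon$ leading slab that, after further equivalences eliminating the coupling blocks, reduces to $\diag(\mathcal{L}_\epsilon,\lambda E'-A')$. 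Inducting on $N$ extracts all right singular blocks, and applying the analogous reduction to the transposed pencil extracts the left singular blocks $\mathcal{L}^T_{\eta_j}$. The residual pencil has trivial rational left and right null spaces, is therefore regular and square, and the first step applies.

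The hardest step will be the singular-block extraction: one must show not only that the minimal-degree null vector generates a block of the precise shape $\mathcal{L}_\epsilon$, but also that the coupling blocks in the row and column of $\mathcal{L}_\epsilon$ can be zeroed out by further strict equivalences so that the residual pencil has the same Kronecker invariants in strictly smaller dimension and the induction closes. Uniqueness up to permutation of blocks then follows from the observation that the minimal indices $\epsilon_j$, $\eta_j$ are determined by the dimensions of polynomial null spaces of $\lambda E-A$ of bounded degree, the finite elementary divisors are read off the Smith normal form of the regular part of the pencil, and the infinite elementary divisors from the Smith form of the dual pencil $\tau A-E$ at $\tau=0$; all of these quantities are manifestly preserved under strict equivalence.
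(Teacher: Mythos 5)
The paper does not prove this theorem at all: it is quoted as a classical result with a pointer to \cite{Gan59b}, so there is no in-text argument to compare against. Your outline is precisely the classical Kronecker--Gantmacher proof from that reference --- Weierstrass factorization $\lambda E-A=(\mu E-A)\bigl[I+(\lambda-\mu)(\mu E-A)^{-1}E\bigr]$ for the regular part, extraction of minimal-index blocks $\mathcal{L}_{\epsilon}$ from minimal-degree polynomial null vectors for the singular part, and uniqueness via the minimal indices together with the finite and infinite elementary divisors --- and it is correct as an outline, with the decoupling of the $\mathcal{L}_\epsilon$ slab from the residual pencil rightly flagged as the one step requiring real work (it rests on the minimality of $\epsilon$ to solve the resulting Sylvester-type equations).
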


\begin{remark}
If the matrices are real-valued and we want to stay within the field of real numbers, then only real-valued transformation matrices $S,T$ may be used. The corresponding canonical form is called the \emph{real Kronecker canonical form}. Here, the blocks $\mathcal{J}_{\rho_j}^{\lambda_j}$ with $\lambda_j\in\C \setminus\R$ are in real Jordan canonical form instead, but the other
blocks are as in the complex case.
\end{remark}

A value $\lambda_0\in\C$ is called \emph{(finite) eigenvalue} of $\lambda E-A$ if 
\begin{displaymath}
    \rank(\lambda_0E-A)<\max_{\alpha\in\C} \rank(\alpha E-A).
\end{displaymath}
If zero is an eigenvalue of $\lambda A-E$, then $\lambda_0=\infty$ is said to be an eigenvalue of $\lambda E-A$ .
The blocks $\mathcal{J}_{\rho_j}$ correspond to
finite eigenvalues and the blocks~$\mathcal{N}_{\sigma_j}$  to the
eigenvalue $\infty$.
The size of the largest block ${\mathcal N}_{\sigma_j}$ is
called the \emph{(Kronecker) index} $\nu$ of the pencil $\lambda E-A$, where, by convention,  $\nu=0$ if $E$ is invertible.
A finite eigenvalue is called \emph{semisimple} if the largest Jordan block $\mathcal J_{\rho_j}$ associated with this block has $\rho_j=1$.
The pencil $\lambda E-A$ is called \emph{regular} if $N=L$ and
$\det(\lambda_0 E-A)\neq 0$ for some 
$\lambda_0 \in \C$. 
For regular pencils $\lambda E-A$ the Kronecker canonical form simplifies to the \emph{Weierstra\ss} canonical form.

\begin{theorem}[Weierstraß canonical form]
    \label{thm:WCF}
    Assume that the pencil $\lambda E-A$ with matrices $E,A\in\C^{N,N}$ ($E,A\in\R^{N,N})$ is regular. Then
    \begin{equation}
        \label{eqn:WCF}
        \lambda E-A \sim \lambda \begin{bmatrix}
            I & 0\\
            0 & \mathcal{N}
        \end{bmatrix} - \begin{bmatrix}
            \mathcal{J} & 0\\
            0 & I
        \end{bmatrix},
    \end{equation}
    where $\mathcal{J}$ and $\mathcal{N}$ are in Jordan (real Jordan) canonical form and $\mathcal{N}$ is nilpotent.
\end{theorem}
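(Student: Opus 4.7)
My plan is to derive the Weierstraß canonical form as a corollary of the Kronecker canonical form (\Cref{th:kcf}), which is the immediately preceding, stronger statement. The task thus reduces to showing that the regularity assumption eliminates the rectangular singular blocks and then to regrouping the remaining blocks.

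First I would apply \Cref{th:kcf} to the pencil $\lambda E - A$, producing matrices $S\in\C^{N,N}$ and $T\in\C^{N,N}$ (using $L=N$ since the pencil is regular, hence square) such that $S(\lambda E - A)T$ is a block diagonal of the listed types. Next I would rule out the presence of any $\mathcal{L}_{\epsilon_j}$ or $\mathcal{L}_{\eta_j}^T$ blocks. The block sizes contribute $\sum\epsilon_j + \sum(\eta_j + 1)$ rows and $\sum(\epsilon_j + 1) + \sum\eta_j$ columns from the rectangular part, so squareness of the whole pencil forces the count of $\mathcal{L}$ blocks to equal the count of $\mathcal{L}^T$ blocks. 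To kill them entirely, I would use a rank argument: for every $\lambda\in\C$, the block $\mathcal{L}_\epsilon(\lambda)$ is $\epsilon\times(\epsilon+1)$ and has rank exactly $\epsilon$ (its last $\epsilon$ columns form a triangular matrix with $-1$ on the diagonal), so it admits a nonzero right null vector for every $\lambda$. Padding this vector with zeros produces a nonzero right null vector of the entire block diagonal pencil, hence $\det(S(\lambda E - A)T) \equiv 0$ in $\lambda$, contradicting regularity; an analogous argument with left null vectors eliminates $\mathcal{L}_\eta^T$.

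Once only the $\mathcal{J}_{\rho_j}^{\lambda_j}$ and $\mathcal{N}_{\sigma_j}$ blocks remain, I would collect all the Jordan blocks for finite eigenvalues into a single block diagonal matrix $\mathcal{J}$ in Jordan canonical form and all the nilpotent blocks into a single block diagonal matrix $\mathcal{N}$, which is nilpotent by construction. A suitable permutation (absorbed into $S$ and $T$) brings the result into the two-by-two block shape displayed in \eqref{eqn:WCF}. For the real version, I would simply invoke the real Kronecker canonical form referenced in the remark following \Cref{th:kcf}, noting that complex-conjugate pairs of finite eigenvalues are collected into real Jordan blocks inside $\mathcal{J}$, while the nilpotent blocks are already real.

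The substantive step is the exclusion of the rectangular blocks; the rest is bookkeeping. The only subtlety I anticipate is making the null-vector construction precise: one must verify that the right null vector of a single $\mathcal{L}_\epsilon(\lambda)$ indeed extends to a right null vector of the full pencil (which it does trivially because of the block diagonal structure), and then conclude that regularity, meaning $\det(\lambda_0 E - A)\neq 0$ for some $\lambda_0$, is violated. Everything else is a direct consequence of \Cref{th:kcf} and a regrouping of blocks.
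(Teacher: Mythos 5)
Your proposal is correct and follows exactly the route the paper intends: the paper states \Cref{thm:WCF} without proof as the specialization of the Kronecker canonical form (\Cref{th:kcf}) to regular pencils, and your null-vector argument (e.g. $(1,\lambda,\dots,\lambda^{\epsilon})^T$ annihilates $\mathcal{L}_\epsilon(\lambda)$ for every $\lambda$, forcing $\det(\lambda E-A)\equiv 0$) is the standard way to exclude the rectangular blocks. The regrouping of the remaining $\mathcal{J}_{\rho_j}^{\lambda_j}$ and $\mathcal{N}_{\sigma_j}$ blocks and the real-case remark are likewise as expected.
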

If the pencil is not regular then there may not exist a solution of~\eqref{daeko} or it may not be unique, see for instance \Cref{ex:DAEvanishingEquations}, while in the regular case one has the following theorem, see~\cite{KunM06} for the complex case.
\begin{theorem}\label{th:daeko}
Consider a regular matrix pencil $\lambda E-A$ of real square matrices $E,A$ and let~$S$ and~$T$ be nonsingular matrices which transform~\eqref{daeko} to its real Weierstra\ss\ canonical form~\eqref{eqn:WCF}, \ie
\begin{equation*}
    SET= \begin{bmatrix} I&0\\0&\mathcal{N}\end{bmatrix},\qquad
    SAT= \begin{bmatrix} \mathcal{J}&0\\0&I\end{bmatrix},\qquad
    Sf= \begin{bmatrix} f_1\\ f_2\end{bmatrix},
\end{equation*}
where $\mathcal{J},\mathcal{N}$ are in real Jordan canonical form and $\mathcal{N}$ is nilpotent of nilpotency index $\nu$. Set
\begin{equation*}
    T^{-1}\xi = \begin{bmatrix} \xi_1\\ \xi_2\end{bmatrix}, \qquad
    T^{-1}\xi_0= \begin{bmatrix} \xi_{1,0}\\ \xi_{2,0}\end{bmatrix} 
\end{equation*}
with analogous partitioning. If $f\in \mathcal{C}^\nu(\timeInt,\R^{N})$, then the \DAE~\eqref{daeko} is solvable. 
An initial value is consistent if and only if
\begin{displaymath}
    \xi_{2,0}=-\sum_{i=0}^{\nu-1}\mathcal{N}^i f_2^{(i)}(t_0).
\end{displaymath}
In particular, the set of consistent initial values $\xi_0$ is nonempty and every initial value problem with consistent initial condition is uniquely solvable.
\end{theorem}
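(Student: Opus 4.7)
The plan is to use the Weierstraß canonical form to decouple the \DAE~\eqref{daeko} into two independent subsystems and solve each explicitly. After multiplying by $S$ on the left and changing variables $\xi = T[\xi_1^T,\xi_2^T]^T$, the system~\eqref{daeko} is strongly equivalent to the two decoupled problems
\begin{align*}
    \dot{\xi}_1(t) &= \mathcal{J}\xi_1(t) + f_1(t),\\
    \mathcal{N}\dot{\xi}_2(t) &= \xi_2(t) + f_2(t),
\end{align*}
where $\mathcal{N}^\nu=0$. Since both $S$ and $T$ are nonsingular, solvability, uniqueness, and the set of consistent initial conditions of~\eqref{daeko} are in one-to-one correspondence with those of the decoupled pair.

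Next I would handle the two blocks separately. The first block is a standard linear \ODE with smooth right-hand side $f_1$, hence for every initial value $\xi_{1,0}$ there is a unique $\mathcal{C}^1$ solution given by variation of constants,
\begin{displaymath}
    \xi_1(t) = \ee^{\mathcal{J}(t-t_0)}\xi_{1,0} + \int_{t_0}^t \ee^{\mathcal{J}(t-s)}f_1(s)\ds.
\end{displaymath}
For the second block I would exploit nilpotency by repeatedly differentiating the identity $\xi_2 = \mathcal{N}\dot\xi_2 - f_2$ and back-substituting, yielding
\begin{displaymath}
    \xi_2(t) = \mathcal{N}^k \xi_2^{(k)}(t) - \sum_{i=0}^{k-1}\mathcal{N}^i f_2^{(i)}(t),\qquad k=1,\dots,\nu.
\end{displaymath}
Setting $k=\nu$ and using $\mathcal{N}^\nu=0$ gives the explicit formula $\xi_2(t) = -\sum_{i=0}^{\nu-1}\mathcal{N}^i f_2^{(i)}(t)$, which is well-defined and $\mathcal{C}^1$ because $f\in\mathcal{C}^\nu(\timeInt,\R^{N})$. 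A direct substitution then confirms that this $\xi_2$ indeed satisfies $\mathcal{N}\dot{\xi}_2 = \xi_2 + f_2$, since
\begin{displaymath}
    \mathcal{N}\dot{\xi}_2(t) = -\sum_{i=0}^{\nu-1}\mathcal{N}^{i+1}f_2^{(i+1)}(t) = -\sum_{j=1}^{\nu-1}\mathcal{N}^j f_2^{(j)}(t) = \xi_2(t) + f_2(t).
\end{displaymath}

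With these two explicit formulas in hand, the three claims fall into place. Statement (i) follows at once: $\xi_1$ can be constructed for any initial datum and $\xi_2$ is given by the closed expression above, so~\eqref{daeko} admits a solution. Statement (ii) is equivalent to the fact that the formula for $\xi_2$ forces $\xi_2(t_0) = -\sum_{i=0}^{\nu-1}\mathcal{N}^i f_2^{(i)}(t_0)$, which—after undoing the transformation $T$—is precisely the stated consistency condition. Statement (iii) then follows because the consistency condition imposes no restriction on $\xi_{1,0}$, so the set of consistent initial values is an affine subspace of dimension $\dim\mathcal{J}>0$ (hence nonempty), while uniqueness of $\xi_1$ (from \ODE~theory) and of $\xi_2$ (from the explicit formula) yields global uniqueness.

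No step is really delicate here; the only mild subtlety is verifying that the explicit formula for $\xi_2$ is both necessary and sufficient, which requires checking that iterating the substitution does not introduce spurious derivatives and that $f\in \mathcal{C}^\nu$ is the sharp regularity needed to make $\xi_2\in\mathcal{C}^1$. This is where the hypothesis on smoothness of $f$ is used, and it is the only place in the argument where the index $\nu$ enters quantitatively.
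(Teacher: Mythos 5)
Your proof is correct and is essentially the argument the paper relies on: the theorem is stated there without proof (with a citation to Kunkel--Mehrmann), and the standard proof is exactly your decoupling into the \ODE block and the nilpotent block, variation of constants for the former, and iterated back-substitution for the latter. The only point to tighten is the intermediate identity $\xi_2 = \mathcal{N}^k\xi_2^{(k)} - \sum_{i=0}^{k-1}\mathcal{N}^i f_2^{(i)}$, since a solution is a priori only $\mathcal{C}^1$; one should instead write $\xi_2 = \tfrac{\mathrm{d}^k}{\mathrm{d}t^k}\big(\mathcal{N}^k\xi_2\big) - \sum_{i=0}^{k-1}\mathcal{N}^i f_2^{(i)}$ and note that the equation itself bootstraps the required smoothness of $\mathcal{N}^k\xi_2$ --- precisely the subtlety you flagged at the end.
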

\begin{remark}
\label{rem:indexrel}
To clarify the difference between the (Kronecker) index and the strangeness index, observe that a regular \LTI \DAE system has (Kronecker) index $\nu>0$, then its strangeness index is $\mu=\nu-1$ and if $\nu=0$, then also $\mu=0$, see also \cite{Meh15}.
\end{remark}

\section{Control concepts for general DAE systems}
\label{sec:controlaspects}
In this section we discuss different aspects related to control theory of general \DAE systems. Most of our discussion focuses on linear descriptor systems of the form
\begin{subequations}
    \label{eqn:descriptor:linear}
    \begin{align}
        \label{eqn:descriptor:linear:state}
        E\dot{\state} &= A\state + B\inpVar,\\
        \label{eqn:descriptor:linear:output}
        \outVar &= C\state,
    \end{align}
\end{subequations}
with either
\begin{itemize}
    \item[--] matrices $E,A\in\R^{\stateDim,\stateDim}$, $B\in\R^{\stateDim,m}$, $C\in\R^{p,n}$ for the \LTI case, and
    \item[--] matrix functions $E,A\colon\timeInt\to\R^{\stateDim,\stateDim}$, $B\colon\timeInt\to\R^{\stateDim,m}$, $C\colon\timeInt\to\R^{p,n}$ for the \emph{linear time-varying} (\LTV) case.
\end{itemize}

\begin{remark}
    \label{rem:feedthroughRemoval}
In general, the descriptor system~\eqref{eqn:descriptor:linear} may also include a feedthrough term, \ie, the output equation~\eqref{eqn:descriptor:linear:output} is given as
\begin{displaymath}
    \outVar = C\state + D\inpVar
\end{displaymath}
with a suitable matrix or matrix function $D$. However, in the \DAE context, we can rewrite the descriptor system~\eqref{eqn:descriptor:linear} without the feedthrough term, as follows. Consider any decomposition $D = D_{\mathrm{c}}D_{\mathrm{b}}$ and the extended system matrices or matrix functions,
\begin{align*}
    \widehat{E} &\vcentcolon= \begin{bmatrix}
        E & 0\\
        0 & 0
    \end{bmatrix}, & \widehat{A} &\vcentcolon= \begin{bmatrix}
        A & 0\\
        0 & -I
    \end{bmatrix}, & \widehat{B} &\vcentcolon= \begin{bmatrix}
        B\\D_{\mathrm{b}}
    \end{bmatrix}, & \widehat{C} &\vcentcolon= \begin{bmatrix}
        C & D_{\mathrm{c}}
    \end{bmatrix}.
\end{align*}
Then the solution of the associated descriptor system contains as a part the solution of the descriptor system with feedthrough term.
\end{remark}

\subsection{Feedback regularization}
\label{sec:feedbackRegularization}

As we have seen in \Cref{sec:modelClass}, a \DAE system may not be regular, \ie, there may not be any initial values such that the initial value problem has a solution, or a solution for a consistent initial value may not be unique. To deal with this situation, we first discuss how to regularize a descriptor system via instantaneous, proportional (linear) state or output feedback, \ie, via feedback laws of the form
\begin{equation}
    \label{eqn:feedback}
    \inpVar = F_1\state+w\qquad\text{or}\qquad
    \inpVar = F_2\outVar+w,
\end{equation}
respectively, with suitable matrices or matrix functions $F_1$ and $F_2$. After applying such a feedback, the closed-loop system matrices, respectively matrix functions, are given as $\widetilde{E} \vcentcolon= E$ and
\begin{equation*}
    \widetilde{A}_1 \vcentcolon= A+BF_1\qquad\text{and}\qquad \widetilde{A}_2 \vcentcolon= A+BF_2C,
\end{equation*}
respectively. 

We start our analysis for the \LTI case and recall important conditions for controllability and observability. If the matrix $E$ in~\eqref{eqn:descriptor:linear} is nonsingular, then the well-known Hautus lemma, see \eg \cite{Dai89}, asserts that the \LTI descriptor system~\eqref{eqn:descriptor:linear} is controllable if and only if
\begin{align}
    \label{eqn:C1}
    \rank  \begin{bmatrix} \lambda E - A & B \end{bmatrix} = n\qquad \text{for all $\lambda\in\C$}.
\end{align}
If $E$ is singular, then the situation is more involved and we need the following conditions, taken for instance from \cite{Dai89,BunBMN99}.

\begin{definition}
    Consider the \LTI descriptor system~\eqref{eqn:descriptor:linear} and let~$S_\infty$ be a matrix with columns that span the kernel of $E$.
    \begin{enumerate}
        \item[(i)] The system~\eqref{eqn:descriptor:linear} is called \emph{controllable at $\infty$} or \emph{impulse controllable} if
             \begin{align}
                \label{eqn:C2}
                \rank \begin{bmatrix} E & A S_\infty & B \end{bmatrix} &= n.
            \end{align}
        \item[(ii)] The system~\eqref{eqn:descriptor:linear} is called \emph{strongly controllable}, if it is impulse controllable and \eqref{eqn:C1} is satisfied.
        \item[(iii)] The system~\eqref{eqn:descriptor:linear} is called \emph{strongly stabilizable}, if it is impulse controllable and \eqref{eqn:C1} holds for all $\lambda \in\C$ with $\real(\lambda)\geq 0$. 
    \end{enumerate}
\end{definition}

The corresponding dual conditions with respect to the output equation are given as
\begin{align}        
    \label{eqn:O1}
    \rank\begin{bmatrix}\lambda E^T-A^T & C^T\end{bmatrix} &= \stateDim, \\        \label{eqn:O2}
    \rank\begin{bmatrix} E^T & A^T T_{\infty} & C^T \end{bmatrix} &= \stateDim,
\end{align}
respectively, where $T_\infty$ is a matrix that spans the kernel of $E^T$.
\begin{definition}\label{def:concon}
    Consider the \LTI descriptor system~\eqref{eqn:descriptor:linear} and let~$T_\infty$ be a matrix with columns that span the kernel of $E^T$.
    \begin{enumerate}
        \item[(i)] The system~\eqref{eqn:descriptor:linear} is called \emph{observable at~$\infty$} or \emph{impulse observable} if condition~\eqref{eqn:O2} is satisfied.
        \item[(ii)] The system~\eqref{eqn:descriptor:linear} is called \emph{strongly observable} if it is impulse observable and if \eqref{eqn:O1} holds for all $\lambda\in\C$. 
        \item[(iii)] The system~\eqref{eqn:descriptor:linear} is called \emph{strongly detectable} if it is impulse observable and if \eqref{eqn:O1} holds for all $\lambda \in \C$ with $\real(\lambda)\geq 0$.
    \end{enumerate}
\end{definition}

A system that satisfies conditions~\eqref{eqn:C1} and \eqref{eqn:O1} is called \emph{minimal}.

Conditions \eqref{eqn:O1} and \eqref{eqn:O2} are preserved under non-singular equivalence transformations as well as under state and output feedback. More precisely, if the system  satisfies \eqref{eqn:O1} and \eqref{eqn:O2}, then for any non-singular $U \in\R^{\stateDim,\stateDim}$, $V \in \R^{\stateDim,\stateDim}$, and any $F_1\in\R^{m,\stateDim}$ and $F_2\in\R^{m,p}$,
the system with coefficients $(\widetilde{E}, \widetilde{A}, \widetilde{B}, \widetilde{C})$ satisfies the same condition for all of the following three choices:
\begin{align*}
    \widetilde{E} &= U E V, & \widetilde{A} &= U A V, & \widetilde{B} &= U B, \\
    \widetilde{E} &= E, & \widetilde{A} &= A + B F_1, & \widetilde{B} &= B,\\
    \widetilde{E} &= E, & \widetilde{A} &= A + B F_2 C, & \widetilde{B} &= B.
\end{align*}
Analogous invariance properties hold for \eqref{eqn:O1} and \eqref{eqn:O2}. Further details and properties of \LTI \DAE systems are discussed in \cite{BerR13}.

Note, however, that regularity or non-regularity of the pencil and the (Kronecker) index are in general not preserved under state or output feedback, 
respectively. On the contrary, feedback of the form~\eqref{eqn:feedback} may be used to regularize the system, as detailed in the following theorem taken from \cite{BunBMN99}.

\begin{theorem}
    \label{thm:feedbackRegularization}
    Consider the \LTI descriptor system~\eqref{eqn:descriptor:linear}.
    \begin{enumerate}
        \item[(i)] If~\eqref{eqn:descriptor:linear} is impulse controllable, \ie, condition~\eqref{eqn:C2} is satisfied, then there exists a suitable linear state feedback matrix $F_1$ such that $\lambda E-(A+B F_1)$ is regular and of (Kronecker) index $\nu\leq 1$. 
        \item[(ii)] If~\eqref{eqn:descriptor:linear} is impulse controllable and impulse observable, \ie, the conditions~\eqref{eqn:C2} and~\eqref{eqn:O2} hold, then there exists a linear output feedback matrix $F_2$ such that the pencil $\lambda E-(A+BF_2 C)$ is regular and of (Kronecker) index $\nu\leq 1$.
    \end{enumerate}
\end{theorem}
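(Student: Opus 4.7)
The plan is to exploit orthogonal condensed forms to expose $\ker E$, reformulate the impulse controllability condition~\eqref{eqn:C2} (and, for part~(ii), also the impulse observability condition~\eqref{eqn:O2}) as rank statements about a small ``algebraic'' sub-block, and then choose the feedback so as to render that sub-block invertible. A Schur-complement block reduction then shows that this invertibility is precisely what forces regularity together with Kronecker index at most one.

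For part~(i), I would first pick an orthogonal $U\in\R^{n,n}$ and a nonsingular $V\in\R^{n,n}$ so that $UEV=\diag(\Sigma,0)$ with $\Sigma\in\R^{r,r}$ invertible and $r=\rank E$. Partitioning $UAV=\bigl[\begin{smallmatrix}A_{11}&A_{12}\\ A_{21}&A_{22}\end{smallmatrix}\bigr]$ and $UB=\bigl[\begin{smallmatrix}B_1\\ B_2\end{smallmatrix}\bigr]$ conformably, the columns of $V$ indexed by $r+1,\ldots,n$ form a basis of $\ker E$, so \eqref{eqn:C2} translates into $\rank\begin{bmatrix}A_{22}&B_2\end{bmatrix}=n-r$. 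A standard rank-assignment lemma, namely that if $\begin{bmatrix}X&Y\end{bmatrix}$ has full row rank then $X+YF$ is invertible for some $F$, yields $F_{12}\in\R^{m,n-r}$ with $A_{22}+B_2F_{12}$ invertible, and I set $F_1\vcentcolon=\begin{bmatrix}0&F_{12}\end{bmatrix}V^{-1}$. Pre- and post-multiplying the transformed closed-loop pencil by block-triangular matrices built from $(A_{22}+B_2F_{12})^{-1}$ decouples it into a $\lambda\Sigma-M$ block (with $M$ the Schur complement $A_{11}-(A_{12}+B_1F_{12})(A_{22}+B_2F_{12})^{-1}A_{21}$) and a $-(A_{22}+B_2F_{12})$ block, which is, by \Cref{thm:WCF}, the Weierstra\ss\ form of a regular pencil of Kronecker index at most one.

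For part~(ii), I would extend the reduction by applying a further orthogonal transformation from the left that simultaneously triangularizes $C$, producing a joint staircase form for the quadruple $(E,A,B,C)$ in which both \eqref{eqn:C2} and \eqref{eqn:O2} appear as full-rank conditions on the same ``algebraic'' block. The role played by $F_1$ in part~(i) is now taken by $F_2$ acting through the already isolated images of $B$ and $C^T$; a two-sided version of the rank-assignment lemma (applied to expressions of the form $A+BF_2C$ with both outer factors of known rank) produces an $F_2$ making the critical sub-block invertible, and the Schur-complement argument of part~(i) applies verbatim afterwards.

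The main technical obstacles are two. First, the rank-assignment lemma itself requires a short justification; the cleanest route is to observe that $F\mapsto\det(X+YF)$ is a polynomial in the entries of $F$ that cannot vanish identically when $\begin{bmatrix}X&Y\end{bmatrix}$ is surjective, combined with a column-selection argument to identify which columns of $Y$ to absorb into $F$. Second, and more seriously, part~(ii) requires a carefully ordered staircase construction so that the $F_2$ produced at the final step does not destroy the decoupling already achieved at earlier steps; I plan to organize the construction so that each sub-feedback only affects a block already isolated from the previously fixed ones, thereby avoiding a cascading obstruction.
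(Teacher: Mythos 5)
Your proposal is correct, and it is essentially the standard argument from \cite{BunBMN99}, which the survey cites for \Cref{thm:feedbackRegularization} without reproducing a proof: reveal $\ker E$ via $UEV=\diag(\Sigma,0)$, observe that \eqref{eqn:C2} and \eqref{eqn:O2} become $\rank\begin{bmatrix}A_{22}&B_2\end{bmatrix}=n-r$ and $\rank\begin{bmatrix}A_{22}^T&C_2^T\end{bmatrix}^T=n-r$, make $A_{22}+B_2F_{12}$ (resp.\ $A_{22}+B_2F_2C_2$) invertible, and conclude by the Schur-complement decoupling and \Cref{thm:WCF}. One simplification for part~(ii): no additional staircase ordering or triangularization of $C$ is needed, since the same $U,V$ from part~(i) already expose $C_2$ in $CV=\begin{bmatrix}C_1&C_2\end{bmatrix}$, and the two-sided rank-assignment step is a single application of the max-rank formula $\max_{F}\rank(X+YFZ)=\min\bigl\{\rank\begin{bmatrix}X&Y\end{bmatrix},\rank\begin{bmatrix}X^T&Z^T\end{bmatrix}^T\bigr\}$, so the ``cascading obstruction'' you worry about does not arise.
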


\begin{remark}
    \label{rem:delayedFeedback}
    Although instantaneous feedback is a convenient theoretical approach, it may suffer from the fact that signals have to be measured first, and some calculations have to be carried out, thus resulting in an intrinsically necessary time delay. If this time delay cannot be ignored in the modeling phase, then for some $\tau>0$, the feedback takes the form
    \begin{displaymath}
        \inpVar(t) = F_1\state(t-\tau)+w(t)\qquad\text{or}\qquad
        \inpVar(t) = F_2\outVar(t-\tau)+w(t),
    \end{displaymath}
    thus rendering the closed-loop system a delay \DAE. However, the \DAE can be regularized with delayed feedback if and only if it can be regularized with instantaneous feedback, see \cite{TreU19,Ung20b} for further details. Nevertheless, we always assume that the feedback delay can be ignored in the modeling phase within this survey.
\end{remark}

For the feedback regularization in the \LTV and nonlinear case, we follow 
\cite{CamKM12}, and use the behavior approach as introduced in~\eqref{eqn:behaviorState}. In more detail, for the \LTV descriptor system~\eqref{eqn:descriptor:linear}, we form the (matrix) functions
\begin{equation}
    \label{eqn:descriptor:ltv:behavior}
    \xi \vcentcolon= \begin{bmatrix} \state \\ \inpVar \end{bmatrix},\quad 
    \mathcal{E} \vcentcolon= \begin{bmatrix} E & 0 \end{bmatrix},\quad
    \mathcal{A} \vcentcolon= \begin{bmatrix} A & B \end{bmatrix}.
\end{equation}
Ignoring the fact that~$\xi$ is composed of parts that may have quite different orders of differentiability, we form the derivative array~\eqref{eq:derivativeArray} and follow the approach presented in \Cref{sec:timevar}. In more detail, we construct matrices $\widehat{\mathcal{E}}_1$, $\widehat{\mathcal{A}}_1$, and $\widehat{\mathcal{A}}_2$, such that the system
\begin{equation}
    \label{eqn:descriptor:ltv:sfree:v1}
    \begin{bmatrix}
        \widehat{\mathcal{E}}_1(t)\\
        0\\
        0
    \end{bmatrix}\dot{\xi}(t) = \begin{bmatrix}
        \widehat{\mathcal{A}}_1(t)\\
        \widehat{\mathcal{A}}_2(t)\\
        0
    \end{bmatrix}\xi(t)
\end{equation}
is solution equivalent to~\eqref{eqn:descriptor:linear} and strangeness-free. Since the matrix functions are obtained solely by transformations of the derivative array from the left, the partitioning of the matrices as introduced in~\eqref{eqn:descriptor:ltv:behavior} is retained in~\eqref{eqn:descriptor:ltv:sfree:v1}, see also \Cref{rem:sfreeReformulationBlockStructure}. In particular, the state $\state$ and the input function $\inpVar$ are not mixed, such that we can rewrite~\eqref{eqn:descriptor:ltv:sfree:v1} as
\begin{equation}
    \label{eqn:descriptor:ltv:sfree:v2}
    \begin{bmatrix}
        \hat{E}_1(t)\\
        0\\
        0
    \end{bmatrix}\dot{\state}(t) = \begin{bmatrix}
        \hat{A}_1(t)\\
        \hat{A}_2(t)\\
        0
    \end{bmatrix}\state(t) + \begin{bmatrix}
        \hat{B}_1(t)\\
        \hat{B}_2(t)\\
        0
    \end{bmatrix}\inpVar(t),
\end{equation}
with $\widehat{\mathcal{E}}_1 = \begin{bmatrix}
    \widehat{E}_1 & 0
\end{bmatrix}$, $\widehat{\mathcal{A}}_1 = \begin{bmatrix}
    \widehat{A}_1 & \widehat{B}_1
\end{bmatrix}$, and $\widehat{\mathcal{A}}_2 = \begin{bmatrix}
    \widehat{A}_2 & \widehat{B}_2
\end{bmatrix}$.

\begin{remark}
Note that we have constructed~\eqref{eqn:descriptor:ltv:sfree:v1} such that the system
is strangeness-free (with respect to the combined state variable~$\xi$). Since~\eqref{eqn:descriptor:ltv:sfree:v2} is simply obtained by rewriting~\eqref{eqn:descriptor:ltv:sfree:v1}, it is also strangeness-free with respect to~$\xi$. However, it may not be strangeness-free with respect to the original state variable $\state$ (in the sense that we assume $u$ to be given). To distinguish this subtlety in the following, we say that a descriptor system is strangeness-free \emph{as a free system}, if it is strangeness-free with respect to $\state$ for given input~$\inpVar\equiv0$.
\end{remark}

To theoretically analyze the regularizability via feedback control, we use the following condensed form, see \cite{KunMR01}. 

\begin{theorem}
    \label{thm:controlcanform}
    Consider the \LTV descriptor system~\eqref{eqn:descriptor:linear} and assume that the corresponding behavior system defined in~\eqref{eqn:descriptor:ltv:behavior} has a well-defined strangeness index with strangeness-free form~\eqref{eqn:descriptor:ltv:sfree:v1}. Then, under some constant rank assumptions, there exist pointwise nonsingular matrix functions  $S_{\mathrm{z}}\in\mathcal{C}(\timeInt,\R^{\stateDim,\stateDim})$, $T_{\mathrm{z}}\in\mathcal{C}(\timeInt,\R^{\stateDim,\stateDim})$, $S_{\mathrm{y}}\in\mathcal{C}(\timeInt,\R^{\outputDim,\outputDim})$, $T_{\mathrm{u}}\in\mathcal{C}(\timeInt,\R^{\inputDim,\inputDim})$, such that setting 
    \begin{align*}
        \state &= T_{\mathrm{\state}} \begin{bmatrix} \state_1^T & \state_2^T & \state_3^T & \state_4^T\end{bmatrix}^T, &
        \inpVar &= T_{\mathrm{\inpVar}} \begin{bmatrix} \inpVar_1^T & \inpVar_2^T \end{bmatrix}^T, &
        \outVar &= S_{\mathrm{\outVar}} \begin{bmatrix} \outVar_1^T & \outVar_2^T\end{bmatrix}^T,
    \end{align*}
    and multiplying~\eqref{eqn:descriptor:ltv:sfree:v2} by appropriate matrix functions from the left, yields
    a transformed control system of the form
    \begin{subequations}
        \label{concf3}
        \begin{align}
            \label{concf3A} \dot{\state}_1 &= A_{13}(t)\state_3 + A_{14}(t)\state_4 + B_{12}(t)\inpVar_2, &&d \\
            \label{concf3B} 0 &= \state_2 + B_{22}(t)\inpVar_2, && a-\phi\\
            \label{concf3C} 0 &= A_{31}(t)\state_1 + \inpVar_1, && \phi\\
            \label{concf3D} 0 &= 0, && v\\
            \label{concf3E} \outVar_1 &= \state_3, && \omega\\
            \label{concf3F} \outVar_2 &= C_{21}(t)\state_1 + C_{22}(t)\state_2, && p-\omega
        \end{align}
    \end{subequations}
    where the number at the end of each block equation denotes the number of equations within this block.
\end{theorem}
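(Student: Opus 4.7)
The plan is to apply a sequence of smooth pointwise nonsingular equivalence transformations to~\eqref{eqn:descriptor:ltv:sfree:v2}, each obtained from an application of the smooth rank-revealing decomposition of \Cref{thm:smoothRankRevealingDecomposition}; the constant rank assumptions in the statement are exactly those required to make that decomposition applicable at each such step. First, normalize the differential block: since $\widehat E_1$ has pointwise full row rank $d$ by strangeness-freeness, applying \Cref{thm:smoothRankRevealingDecomposition} to $\widehat E_1$ and absorbing the factors into a left multiplication of the differential equations and a state change of basis yields a differential block of the normalized form $\dot{\tilde\state}_a=\widetilde A_1\tilde\state+\widetilde B_1\inpVar$, while the algebraic block transforms consistently to $0=\widetilde A_2\tilde\state+\widetilde B_2\inpVar$ and the trivial rows $0=0$ are carried over unchanged.

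Next, separate the algebraic equations into the feedback block~\eqref{concf3C} and the state-defining block~\eqref{concf3B}. Let $\phi$ denote the (assumed constant) pointwise rank of $\widetilde B_2$. A rank-revealing decomposition of $\widetilde B_2$ gives smooth row and column transformations; the column transformation provides the input splitting $\inpVar=T_{\mathrm{u}}\left[\inpVar_1^T,\inpVar_2^T\right]^T$ with $\inpVar_1\in\R^\phi$, and after normalizing the $\inpVar_1$-pivot block to the identity the algebraic equations decouple into
\begin{align*}
    0 &= \widetilde A_2^{(1)}\tilde\state + \inpVar_1,\\
    0 &= \widetilde A_2^{(2)}\tilde\state + \widetilde B_{22}\inpVar_2.
\end{align*}
Strangeness-freeness together with the constant-rank assumption on $\left[\widehat A_2\;\widehat B_2\right]$ forces $\widetilde A_2^{(2)}$ to have pointwise full row rank $a-\phi$; a second application of \Cref{thm:smoothRankRevealingDecomposition} produces a state change of basis that singles out a component $\state_2$ of size $a-\phi$ and normalizes the second group to $0=\state_2+B_{22}(t)\inpVar_2$, which is~\eqref{concf3B}. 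Substituting this for $\state_2$ in the first group and absorbing the resulting shift into $\inpVar_1$ yields $0=A_{31}(t)\state_1+\inpVar_1$, which is~\eqref{concf3C}, where $\state_1$ denotes the remaining component of the state that is genuinely differentiated.

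Finally, handle the output equation. After the state has been split as $\state=T_{\mathrm{z}}\left[\state_1^T,\state_2^T,\state_r^T\right]^T$, write $\outVar=\widetilde C_1\state_1+\widetilde C_2\state_2+\widetilde C_r\state_r$ and apply \Cref{thm:smoothRankRevealingDecomposition} to $\widetilde C_r$ under a constant-rank assumption, giving the output transformation $S_{\mathrm{y}}$ and a further splitting $\state_r=T^{(4)}\left[\state_3^T,\state_4^T\right]^T$ of sizes $\omega$ and $\stateDim-d-(a-\phi)-\omega$. After normalizing the surviving output-coefficient block on $\state_3$ to the identity, this gives $\outVar_1=\state_3$ and $\outVar_2=C_{21}(t)\state_1+C_{22}(t)\state_2$, which are~\eqref{concf3E} and~\eqref{concf3F}. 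Pulling the differential equation through all of the above transformations and using~\eqref{concf3B} and~\eqref{concf3C} to eliminate $\state_2$ and $\inpVar_1$ on the right-hand side yields~\eqref{concf3A}; the trivial rows remain~\eqref{concf3D}.

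The main obstacle is bookkeeping: each new row or column operation risks destroying identity blocks produced in earlier steps. The remedy, worked out in detail in~\cite{KunMR01}, is to perform the four reductions in the order above and to restrict each later row or column operation to act only on the complementary blocks that have not yet been normalized. Together with the pointwise constant rank hypotheses, this keeps every intermediate transformation smooth and pointwise nonsingular on all of $\timeInt$.
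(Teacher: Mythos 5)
The paper itself does not prove \Cref{thm:controlcanform}; it only cites \cite{KunMR01}, so there is no in-paper argument to compare against, and your proposal has to stand on its own. It does not: the identification of the block of size $\phi$ is wrong. You define $\phi$ as the pointwise rank of the input coefficient $\widehat B_2$ of the algebraic equations and split the input by a rank-revealing decomposition of that matrix. This is the wrong invariant. If you fully compress $\widehat B_2$ as in \Cref{thm:smoothRankRevealingDecomposition}, the remaining $a-\phi$ algebraic rows carry no input at all, so your displayed block $0=\widetilde A_2^{(2)}\tilde\state+\widetilde B_{22}\inpVar_2$ is inconsistent with the decomposition you invoke and would force $B_{22}=0$ in \eqref{concf3B}, which is not the general form. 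More seriously, your block $0=\widetilde A_2^{(1)}\tilde\state+\inpVar_1$ still contains the undetermined components $\state_3,\state_4$; substituting \eqref{concf3B} only resolves $\state_2$, and ``absorbing the resulting shift into $\inpVar_1$'' is a state feedback, not an admissible input transformation $\inpVar=T_{\mathrm{u}}[\inpVar_1^T\ \inpVar_2^T]^T$ (cf.~\Cref{th:invfb}, where feedback is treated as a separate operation precisely because it changes the system). The correct quantity is $\phi=a-\rank(\widehat A_2 Z)$ with $Z$ spanning $\kernel\widehat E_1$: one first row-compresses $\widehat A_2$ on the non-differentiated state directions, which isolates $\phi$ equations involving only $\state_1$ and $\inpVar$; strangeness-freeness of the behavior system then forces the input coefficient of exactly these $\phi$ rows to have full row rank, and only then is $\inpVar_1$ defined by column compression of that block. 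The scalar example $\dot\state_1=0$, $0=\state_2+\inpVar$ separates the two definitions: there $\phi=0$ and $B_{22}=1$, whereas your recipe gives $\phi=1$ and an equation $0=\state_2+\inpVar_1$ with a non-differentiated state variable, which is not of the form \eqref{concf3C}. This error also propagates into \Cref{cor:consis} and \Cref{cor:regul}, which rely on $\phi$ counting the equations that constrain the input.

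Even with the correct $\phi$, two steps are missing. The right-hand side of \eqref{concf3A} contains no $\state_1$, $\state_2$, or $\inpVar_1$; eliminating $\state_2$ and $\inpVar_1$ via \eqref{concf3B}--\eqref{concf3C} are legitimate row operations, but removing the residual $A_{11}(t)\state_1$ term requires the change of basis $\state_1=\Phi(t)w_1$ with $\Phi$ a fundamental solution of $\dot\Phi=A_{11}\Phi$ (the same device as in \Cref{pHDAErev} and behind \Cref{thm:LTV:daered}); this is a global, non-pointwise construction your outline never mentions. Similarly, $\outVar_1=\state_3$ without $\state_1,\state_2$ terms requires a shear $\state_3\mapsto\state_3+\hat C_{11}\state_1+\hat C_{12}\state_2$, and every time-varying change of basis feeds a $-\widehat E_1\dot T$ correction back into the $A$-blocks. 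These are not mere bookkeeping to be deferred to \cite{KunMR01}; they are where the normalizations claimed in \eqref{concf3} are actually produced.
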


\begin{corollary}
    \label{cor:consis}
    Let the assumptions be as in \Cref{thm:controlcanform}. Furthermore, let the quantities~$\phi$ and~$\omega$ defined in \eqref{concf3} be constant. Then the following properties hold.
    \begin{itemize}
        \item[(i)] The \LTV system~\eqref{eqn:descriptor:linear} is consistent. The equations \eqref{concf3D} describe redundancies in the system that can be omitted.
        \item [(ii)] If $\phi=0$, then for a given input function~$\inpVar$, an initial value is consistent if and only if it implies \eqref{concf3B}. Solutions of the corresponding initial value problem will in general not be unique.
        \item[(iii)] The system is regular and strangeness-free (as a free system) if and only if $v=\phi=0$ and $d+a=n$.
    \end{itemize}
\end{corollary}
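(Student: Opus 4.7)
The strategy is to read all three claims directly off the condensed form \eqref{concf3} supplied by Theorem~\ref{thm:controlcanform}. Since the pointwise nonsingular transformations $S_{\mathrm{z}}, T_{\mathrm{z}}, S_{\mathrm{y}}, T_{\mathrm{u}}$ and the left multiplications used to derive \eqref{concf3} preserve solvability, consistency of initial values, and uniqueness, every property in the conclusion translates verbatim to a property of the block system \eqref{concf3}, and each assertion then reduces to an inspection of its block structure.

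For part (i) the plan is to exhibit an explicit admissible trajectory. I would pick any smooth $\inpVar_2$, any smooth $\state_3, \state_4$, and any value $\state_{1}(t_0) = \state_{1,0}$; integration of the ODE \eqref{concf3A} yields $\state_1$, and then \eqref{concf3B} and \eqref{concf3C} define $\state_2 \vcentcolon= -B_{22}\inpVar_2$ and $\inpVar_1 \vcentcolon= -A_{31}\state_1$, respectively. The block \eqref{concf3D} reads $0=0$ and is vacuously satisfied, while \eqref{concf3E} and \eqref{concf3F} simply define $\outVar$. This simultaneously exhibits consistency and identifies the $v$ equations of \eqref{concf3D} as redundancies that may be omitted without affecting the solution set.

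For part (ii), with $\phi=0$ the block \eqref{concf3C} is absent, so for any prescribed smooth $\inpVar$ the only algebraic constraint on $\state$ is \eqref{concf3B}. Hence an initial value $\state_0$ is consistent if and only if its $\state_2$-component satisfies $\state_{2,0} = -B_{22}(t_0)\inpVar_2(t_0)$, which is exactly the statement that $\state_0$ implies \eqref{concf3B} at $t_0$. Non-uniqueness then follows from the observation that $\state_3$ and $\state_4$ appear in \eqref{concf3A} only as inhomogeneities: whenever $\dim\state_3 + \dim\state_4 > 0$ they may be chosen as arbitrary smooth functions, producing distinct solutions sharing the same initial data.

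For part (iii) the ``$\Leftarrow$'' direction is transparent: if $v=\phi=0$ and $d+a=n$, then $\state_3$ and $\state_4$ must be empty, and setting $\inpVar\equiv 0$ reduces \eqref{concf3} to the square system $\dot{\state}_1 = 0$, $\state_2 = 0$, which is evidently regular and strangeness-free as a free system. For the converse I would argue contrapositively: $d+a<n$ leaves $\state_3$ or $\state_4$ in the system as a free function, destroying uniqueness as in part (ii); $v>0$ leaves the original, untransformed system with genuine redundancies incompatible with being a regular free \DAE; and $\phi>0$ would mean that with $\inpVar\equiv 0$ the constraint \eqref{concf3C} reads $A_{31}\state_1=0$, whose differentiation combined with \eqref{concf3A} produces the hidden algebraic relation $\dot{A}_{31}\state_1 + A_{31}(A_{13}\state_3 + A_{14}\state_4) = 0$, forcing a nonzero strangeness index. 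The main obstacle is showing that this last differentiated relation is genuinely new rather than a consequence of the undifferentiated equations; this requires invoking the normalization built into Theorem~\ref{thm:controlcanform}, in particular that $A_{31}$ has full row rank $\phi$ and the subblock sizes are fixed by the constant-rank assumptions, so that the newly produced constraint cannot be absorbed into the existing ones and genuinely raises the strangeness index above zero.
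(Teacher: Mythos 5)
Your proposal is correct and follows exactly the route the paper intends: the corollary is stated without an explicit proof precisely because all three assertions are meant to be read off the block structure of the condensed form \eqref{concf3}, using that the pointwise nonsingular transformations preserve solvability, consistency, and uniqueness. Your explicit construction in (i), the identification of \eqref{concf3B} as the only algebraic constraint when $\phi=0$ in (ii), and the case analysis in (iii) (with the observation that $\phi>0$ makes the algebraic block \eqref{concf3C} constrain the differentiated variable $\state_1$, which is exactly what a nonzero strangeness index means) fill in the details the paper leaves implicit.
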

Analogous to the constant coefficient case we can use proportial feedback to modify some of the system properties. However, the following result, see \cite[Thm.~3.80]{KunM06}, states that some properties stay invariant. 
\begin{theorem}
    \label{th:invfb}
    Consider the \LTV descriptor system~\eqref{eqn:descriptor:linear} and suppose that the assumptions of \Cref{thm:controlcanform} are satisfied. Then, the characteristic values $d$, $a$, and $v$ are invariant under proportional state feedback and proportional output feedback.
\end{theorem}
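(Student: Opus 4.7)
The plan is to reduce the statement to the well-known invariance of the characteristic values of a linear \DAE under pointwise nonsingular changes of basis in the solution space, applied to the \emph{behavior} formulation of the control system. Recall that in \Cref{thm:controlcanform} the integers $d$, $a$, $v$ are read off from the strangeness-free reduction \eqref{eqn:descriptor:ltv:sfree:v1} of the behavior \DAE $\mathcal{E}\dot{\xi}=\mathcal{A}\xi$ with $\xi=\begin{bmatrix}\state^T & \inpVar^T\end{bmatrix}^T$, $\mathcal{E}=\begin{bmatrix}E & 0\end{bmatrix}$, and $\mathcal{A}=\begin{bmatrix}A & B\end{bmatrix}$. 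Hence it suffices to show that this behavior \DAE is globally equivalent, via a smooth pointwise nonsingular change of basis in $\xi$, to the behavior \DAE of the corresponding closed-loop system.

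For state feedback $\inpVar = F_1 \state + w$ (respectively output feedback $\inpVar = F_2 C \state + w$), I would introduce the new behavior variable $\tilde{\xi}=\begin{bmatrix}\state^T & w^T\end{bmatrix}^T$ and obtain the closed-loop coefficients $\tilde{\mathcal{E}}=\begin{bmatrix}E & 0\end{bmatrix}$ and $\tilde{\mathcal{A}}=\begin{bmatrix}A+BK & B\end{bmatrix}$ with $K=F_1$ (respectively $K=F_2C$). A direct computation then yields the factorization
\begin{equation*}
    \tilde{\mathcal{E}} = \mathcal{E} V, \qquad \tilde{\mathcal{A}} = \mathcal{A} V - \mathcal{E}\dot{V}, \qquad V(t) = \begin{bmatrix} I & 0 \\ K(t) & I \end{bmatrix},
\end{equation*}
which is the standard form of the coefficient transformation under the substitution $\xi = V \tilde{\xi}$ for \LTV \DAEs. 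Since $V$ is pointwise nonsingular and as smooth as $K$, this is exactly an equivalence transformation in the sense of the strangeness-index framework of \cite{KunM06}.

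The key algebraic observation is the identity $\mathcal{E}\dot{V}=\begin{bmatrix}E & 0\end{bmatrix}\begin{bmatrix}0 & 0 \\ \dot{K} & 0\end{bmatrix}=0$, because the upper block row of $V$ is constant and $\mathcal{E}$ has only its first block column nonzero. Hence the closed-loop behavior \DAE is obtained from the open-loop one by a pure right multiplication with the pointwise nonsingular matrix function $V$, that is, a clean change of basis in the combined variable. Appealing to the invariance of the characteristic values $\mu$, $a$, $d$, $v$ under such transformations — a standard consequence of the construction of the derivative arrays \eqref{eqn:nonlinDAE:Jacobians} — the values $d$, $a$, $v$ of the open-loop behavior \DAE agree with those of the closed-loop one, yielding the claim.

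The main subtlety will be to verify that the constant-rank assumptions underlying \Cref{thm:controlcanform} transfer from the open-loop to the closed-loop behavior \DAE. This should be automatic because the ranks of all Jacobians entering the derivative array are preserved under right multiplication by (the prolongation of) the block-triangular $V$. As a sanity check, note that if one allowed proportional \emph{derivative} feedback $\inpVar=F_0 \dot{\state}+w$, the analogous $V$ would involve $\dot{\state}$ and the block structure of $\mathcal{E}$ would no longer annihilate $\dot V$, so the argument would break down — consistent with the well-known fact that derivative feedback genuinely changes the index, and therefore the characteristic values.
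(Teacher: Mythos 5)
Your argument is correct and is essentially the proof that the paper delegates to \cite[Thm.~3.80]{KunM06}: proportional state or output feedback is exactly the pointwise nonsingular change of basis $\xi = V\tilde{\xi}$ with $V=\begin{smallbmatrix} I & 0\\ K & I\end{smallbmatrix}$ in the behavior variable, i.e., a global equivalence transformation of the behavior \DAE, under which the characteristic values $\mu$, $r$, $d$, $a$, $v$ of the derivative array are invariant. Your observation that $\mathcal{E}\dot{V}=0$ correctly identifies the transformed pair with the closed-loop behavior pair $\begin{bmatrix}E & 0\end{bmatrix}$, $\begin{bmatrix}A+BK & B\end{bmatrix}$, so nothing further is needed.
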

The strangeness index (as a free system) as well as the regularity of the system can, however, be modified by proportional feedback, cf.~\cite{KunMR01}. 
\begin{corollary}
    \label{cor:regul}
    Let the assumptions of \Cref{cor:consis} hold. 
    \begin{enumerate}
        \item[(i)] There exists a state feedback $\inpVar = F\state + w$ such that the closed-loop system
    \begin{equation*}
        E\dot{\state} = (A+BF)\state + Bw
    \end{equation*}
    is regular (as a free system) if and only if $v=0$ and $d+a=\stateDim$.
        \item[(ii)] There exists an output feedback $\inpVar = F\outVar+w$ such that the closed-loop system
        \begin{equation*}
            E\dot{\state} = (A+BFC)\state + Bw
        \end{equation*}
        is regular (as a free system) if and only if $v=0$, $d+a=n$, and $\phi=\omega$.
    \end{enumerate}
\end{corollary}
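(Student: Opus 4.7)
The plan is to reduce both parts to the condensed form of \Cref{thm:controlcanform}, combining the invariance result in \Cref{th:invfb} (the numbers $d,a,v$ are unchanged by proportional state or output feedback) with the regularity characterization in \Cref{cor:consis}(iii) (regular and strangeness-free as a free system $\Leftrightarrow$ $v=\phi=0$ and $d+a=n$). Since feedback $u=Fz+w$ or $u=Fy+w$ simply produces a new control system $(E,A+BF,B,C)$ or $(E,A+BFC,B,C)$, we can apply \Cref{cor:consis}(iii) to the closed-loop system and ask under what conditions on the open-loop invariants such a system can be realized.

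For the necessity direction in both (i) and (ii), suppose a suitable feedback exists. Applying \Cref{cor:consis}(iii) to the closed-loop system we must have $v^{\mathrm{cl}}=0$, $\phi^{\mathrm{cl}}=0$, and $d^{\mathrm{cl}}+a^{\mathrm{cl}}=n$. By \Cref{th:invfb}, $v^{\mathrm{cl}}=v$ and $d^{\mathrm{cl}}+a^{\mathrm{cl}}=d+a$, giving $v=0$ and $d+a=n$, which proves (i) in one direction. For (ii), one checks additionally from the condensed form~\eqref{concf3} that $\dim z_3=\omega$ and $\dim z_4=\phi-\omega$ (using $d+a=n$). Since an output feedback only feeds back combinations of $y_1=z_3$ and $y_2=C_{21}z_1+C_{22}z_2$ and therefore cannot introduce any equation involving $z_4$, the closed-loop system will remain ill-posed unless $\dim z_4=0$, i.e.\ $\phi=\omega$; this is the additional necessary condition.

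For sufficiency, I would work directly in condensed coordinates. For (i), assuming $v=0$ and $d+a=n$, equations~\eqref{concf3C} are $\phi$ constraints and $\dim z_3+\dim z_4=\phi$. Choose the state feedback so that in condensed form $u_1=-A_{31}z_1+\Lambda\bigl[\begin{smallmatrix}z_3\\z_4\end{smallmatrix}\bigr]+w_1$ with a nonsingular $\phi\times\phi$ matrix $\Lambda$, and $u_2=w_2$. Substituting into~\eqref{concf3} makes \eqref{concf3C} an invertible algebraic equation determining $(z_3,z_4)$ from $(z_1,w_1)$, while \eqref{concf3A}--\eqref{concf3B} remain a $d$-dimensional ODE in $z_1$ together with the algebraic block determining $z_2$. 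Transforming back via $T_z,T_u$ yields the desired $F$, and \Cref{cor:consis}(iii) applied to the closed-loop gives regularity as a free system. For (ii), with the additional assumption $\phi=\omega$, $\dim z_4=0$ and $z_3$ has the same dimension as $y_1$; choose the output feedback so that in condensed form $u_1=\Lambda y_1+w_1$ with $\Lambda$ a nonsingular $\phi\times\phi$ matrix, and $u_2=w_2$. Then \eqref{concf3C} becomes $0=A_{31}z_1+\Lambda z_3+w_1$, which uniquely solves for $z_3$, and the same argument as before establishes closed-loop regularity.

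The main obstacle is the sufficiency direction, where one must make sure that the feedback designed in the condensed coordinates does not accidentally create new constraints or redundancies—in other words, that the closed-loop characteristic values really are $v^{\mathrm{cl}}=\phi^{\mathrm{cl}}=0$, $d^{\mathrm{cl}}+a^{\mathrm{cl}}=n$ and not just $v=0$, $d+a=n$ inherited from invariance. This amounts to verifying the invertibility of the block matrix arising from \eqref{concf3C} after substitution and checking that the other blocks of the condensed form are unaffected; both steps are straightforward thanks to the block-column structure noted in \Cref{rem:sfreeReformulationBlockStructure}, but they are the place where the dimension counts $d+a=n$ and (for output feedback) $\phi=\omega$ are genuinely used.
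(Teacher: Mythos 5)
Your overall strategy is the intended one: the survey itself gives no proof of \Cref{cor:regul} (it defers to \cite{KunMR01}), and the argument there is precisely the combination you describe --- read everything off the condensed form \eqref{concf3}, use \Cref{th:invfb} for the invariance of $d,a,v$ under proportional feedback, and use \Cref{cor:consis}\,(iii) for the closed-loop characterization. Your sufficiency constructions (state feedback $u_1=-A_{31}z_1+\Lambda\begin{bmatrix}z_3^T & z_4^T\end{bmatrix}^T+w_1$, output feedback $u_1=\Lambda y_1+w_1$ with square nonsingular $\Lambda$) are the standard ones, the dimension counts $\dim z_3=\omega$ and $\dim z_4=\phi-\omega$ under $d+a=n$ are correct, and transforming back through the pointwise nonsingular $T_{\mathrm z},T_{\mathrm u},S_{\mathrm y}$ is unproblematic.

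The one place where your argument is not airtight is the necessity of $\phi=\omega$ in part (ii). You argue that since no output involves $z_4$, the closed loop ``will remain ill-posed'' whenever $\dim z_4>0$. That conclusion is only valid if ``regular'' is read as ``regular \emph{and strangeness-free}'', which is how \Cref{cor:consis}\,(iii) is phrased and how the cited source states the result. If one asks only for unique solvability of the free closed loop, the claim fails: take $d=1$, $a=\phi=1$, $\omega=0$, so the condensed form reads $\dot z_1=A_{14}z_4+B_{12}u_2$, $0=A_{31}z_1+u_1$, $y=C_{21}z_1$; any output feedback making $A_{31}+F_1C_{21}$ pointwise nonzero forces $z_1=0$, hence $\dot z_1=0$, hence $A_{14}z_4=0$, and if $A_{14}$ is pointwise nonzero the free closed-loop system is uniquely solvable --- but has strangeness index one. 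So the necessity argument should be phrased as: none of the algebraic equations of the closed-loop system involves $z_4$, so $z_4$ could only be determined by differentiating hidden constraints, which contradicts strangeness-freeness; hence $\dim z_4=\phi-\omega$ must vanish. With that single clarification (which also pins down the sense of ``regular'' in the statement) your proof is complete.
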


A similar local result is also available for nonlinear descriptor systems of the form \eqref{eqn:nonlinDAE:sfree}, see \cite{CamKM12} for details.

\subsection{Stability}
\label{sec:stability}
One of the key questions in control is whether a system can be stabilized via feedback control. In this section we therefore recall the stability theory for \emph{ordinary differential equations} (\ODEs) and discuss how these concepts are generalized to \DAE systems. 
The classical stability concepts for \ODEs are as follows, see, \eg, \cite{HinP05}. Consider an \ODE of the form
\begin{equation}
\label{ODE}
\dot{\state} = f(t,\state), \qquad t\in \timeInt_\infty= [t_0,\infty]
\end{equation}
and denote the solution satisfying the initial condition $\state(t_0)=\state_0$ by $\state(\cdot;t_0,\state_0)$.
\begin{definition}\label{def:stabtraj}
A solution~$\state(\cdot;t_0,z_0)$ of~\eqref{ODE} is called
\begin{itemize}
\item [(i)]
\emph{stable} if for every $\varepsilon >0$ there exists $\delta>0$ such that for all $\hat{\state}_0\in\R^{\stateDim}$ with $\|\hat{\state}_0-\state_0\|<\delta$
\begin{itemize}
\item the initial value problem \eqref{ODE} with initial condition $\state(t_0)=\hat{\state}_0$ is solvable on~$\timeInt_\infty$ and
\item the solution $\state(t;t_0,\hat{\state}_0)$ satisfies
$\|\state(t;t_0,\hat{\state}_0)-\state(t;t_0,\state_0) \|<\varepsilon$ on~$\timeInt_\infty$;
\end{itemize}
\item [(ii)] \emph{asymptotically stable} if
it is stable and there exists $\varrho>0$ such that for all $\hat{\state}_0\in\R^{\stateDim}$ with $\|\hat{\state}_0-\state_0\|<\rho$
\begin{itemize}
\item the initial value problem~\eqref{ODE} with initial condition
$\state(t_0)=\hat{\state}_0$ is solvable on~$\timeInt_\infty$  and
\item the solution $\state(t;t_0,\hat{\state}_0)$ satisfies
$\lim_{t\to \infty}\| \state(t;t_0,\hat{\state}_0) -\state(t;t_0,\state_0) \|=0$;
\end{itemize}
\item [(iii)] \emph{exponentially stable} if it is stable and
\emph{exponentially attractive}, \ie, if there exist $\delta>0$, $L>0$, and $\gamma>0$ such that for all $\hat{\state}_0\in\R^{\stateDim}$ with $\|\hat{\state}_0-\state_0\|<\delta$
\begin{itemize}
\item the initial value problem \eqref{ODE} with initial condition
$\state(t_0)=\hat{\state}_0$ is solvable on~$\timeInt_\infty$ and
\item the solution satisfies the estimate
\begin{equation*}
\|\state(t;t_0,\hat{\state}_0) - \state(t;t_0,\state_0) \|<L\mathrm{e}^{-\gamma(t-t_0)} \text{~on~}\timeInt_\infty.
\end{equation*}
\end{itemize}
If $\delta$ does not depend on $t_0$, then we say the solution is \emph{uniformly (exponentially) stable}.
\end{itemize}
\end{definition}
By shifting the arguments  we may assume that the reference solution is the trivial solution $\state(t;t_0,\state_0)=0$. 

\begin{remark}
    To analyze the stability of a given \ODE systems (finite or infinite-dimensional) is  analytically and computationally very challenging, see \eg~\cite{Adr95,DieRV97,DieV02b,HinP05,LaS76}.
\end{remark}

For \DAE systems 
\begin{equation*}
    F(t,\state,\dot{\state})=0,\qquad t \in \timeInt_\infty
\end{equation*}
the stability concepts in \Cref{def:stabtraj} essentially carry over. However, when perturbing a consistent initial value, it may happen that the perturbed initial value is not consistent anymore. Then the solution (if one allows discontinuities in the part of the state vector that is not differentiated) has a discontinuous jump that transfers the solution to the constraint manifold. For a strangeness-free \DAE this would not be a problem because such a jump does not destroy the stability properties. If, however, the strangeness index is bigger than zero 
then, due to the required differentiations, the solution may only exist in the distributional sense, see for instance \cite{KunM06,RabR96a,Tre13}.
\begin{example} 
\label{ex:DAE:index2}
    Consider the homogeneous linear time-invariant \DAE from \cite{DuLM13},
\begin{displaymath}
    \dot{\state}_1= \state_2,\qquad 0=-\state_1-\varepsilon \state_2.
\end{displaymath}
If~$\varepsilon >0$ then the \DAE is strangeness-free and has the solution 
\begin{displaymath}
    \state_1(t) = \mathrm{e}^{-\varepsilon^{-1} t}\state_1(0),\qquad \state_2(t)= -\varepsilon^{-1}\mathrm{e}^{-\varepsilon^{-1} t}\state_1(0).
\end{displaymath}
With a consistent initial value $\state_2(0)= -\varepsilon^{-1} \state_1(0)$, the solution is asymp\-to\-ti\-cally stable but this limit would not exist for $\varepsilon\to 0$ except if $\varepsilon^{-1} \state_1(t)$ is bounded for $t\to 0$. If $\varepsilon=0$ then the \DAE has strangeness index one and for the solution  $\state_1=0$, $\state_2=\dot{\state}_1=0$ the initial value $\state_1(0)$ is restricted as well. For $\state_1(0)=1$ then $\state_1$ exists and is the discontinuous function that jumps from $1$ to $0$ at $t=0$ and $\state_2$ would only be representable by a delta distribution. Finally, if $\varepsilon <0$ then the solution is unstable.
\end{example}
The stability analysis and computational methods for \DAE systems, therefore, assumes uniquely solvable strangeness-free systems. If the system is not strangeness-free, then one first performs a strangeness-free reformulation as discussed in \Cref{sec:DAEtheory}.

For a strangeness-free system then a solution of the system is called \emph{ stable, asymptotically stable, (uniformly) exponentially stable}, respectively, if it satisfies the corresponding condition in \Cref{def:stabtraj}.
Then many analytical results and computational methods can be extended to the case of strangeness-free \DAE systems, see \cite{KunM07,LinM09,LinM11a,LinMV11,LinM14}.

For \LTI \ODE systems 
\begin{equation}\label{linode}
    \dot{\state} =A\state,
\end{equation}
with $A\in \R^{\stateDim,\stateDim}$, it is well-known, see \eg \cite{Adr95}, that the system is asymptotically (and also uniformly exponentially) stable if all the eigenvalues are in the open left half of the complex plane and stable if all the eigenvalues are in the closed left half plane and the eigenvalues on the imaginary axis are semisimple, \ie the associated Jordan blocks have size at most one. 

The stability analysis can also be carried out via the computation of a Lyapunov function given by $ V(\state)=\tfrac{1}{2} \state^T X\state $, where for stability 
$X=X^T>0$ is a  solution of the \emph{Lyapunov inequality}
\begin{equation}\label{lya-ineq} 
    A^T X+ X A \leq 0,
\end{equation} 
and for asymptotic stability it  is a positive definite solution of the strict inequality, $ A^T X+ X A < 0$, see, \eg, \cite{Kai80}. 

The spectral characterization of stability for \ODE systems can be generalized to \LTI \DAE systems
\begin{equation}\label{stabdaeko}
    E\dot{\state}=A\state,
\end{equation}
with $E,A\in\R^{\stateDim,\stateDim}$, see  \eg  \cite{DuLM13}.
\begin{theorem}\label{th:daestab}
Consider the \DAE~\eqref{stabdaeko} with a regular pencil $\lambda E-A$ of (Kronecker) index at most one. The trivial solution $\state=0$ then has the following stability properties:
\begin{itemize}
\item [(i)]
If all finite eigenvalues have non-positive real part
and the eigenvalues  on the imaginary axis  are semisimple, then the trivial solution~$\state=0$ is stable.
\item [(ii)]
If  all finite eigenvalues have negative real part, then the trivial solution~$\state=0$ is 
uniformly and thus exponentially and asymptotically stable.
\end{itemize}
\end{theorem}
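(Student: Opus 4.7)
The plan is to reduce the stability analysis of \eqref{stabdaeko} to the standard \ODE case via the Weierstra\ss\ canonical form of \Cref{thm:WCF}, which is the natural tool given regularity and the index hypothesis. Since $\lambda E-A$ is regular with Kronecker index $\nu\leq 1$, \Cref{thm:WCF} provides nonsingular $S,T\in\R^{\stateDim,\stateDim}$ such that $SET=\diag(I,\mathcal{N})$ and $SAT=\diag(\mathcal{J},I)$ with $\mathcal{N}$ nilpotent of nilpotency index $\leq 1$, which forces $\mathcal{N}=0$ (or the nilpotent block to be absent when $E$ is nonsingular). Writing $T^{-1}\state=\begin{bmatrix}\state_1^T & \state_2^T\end{bmatrix}^T$ and premultiplying \eqref{stabdaeko} by $S$ decouples it into an \ODE $\dot{\state}_1=\mathcal{J}\state_1$ and a purely algebraic equation $0=\state_2$. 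By construction, the eigenvalues of $\mathcal{J}$ coincide with the finite eigenvalues of $\lambda E-A$.

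Next, I would observe that by \Cref{th:daeko} the constraint manifold of consistent initial values corresponds, in the new variables, to $\{\state_2=0\}$, so every solution of \eqref{stabdaeko} is of the form $\state(t) = T\begin{bmatrix}\mathrm{e}^{\mathcal{J}t}\state_1(0) \\ 0\end{bmatrix}$. For part~(i), the spectral conditions translate into: all eigenvalues of $\mathcal{J}$ lie in the closed left half-plane, and those on the imaginary axis are semisimple. A standard real Jordan form argument then shows that $\|\mathrm{e}^{\mathcal{J}t}\|$ stays bounded on $\timeInt_\infty$, which gives stability of the trivial solution $\state_1=0$ of the reduced \ODE in the sense of \Cref{def:stabtraj}(i). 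For part~(ii), the strict spectral condition yields the standard estimate $\|\mathrm{e}^{\mathcal{J}t}\|\leq L\mathrm{e}^{-\gamma t}$ for some $L,\gamma>0$, which is precisely uniform exponential (hence asymptotic) stability of $\state_1=0$.

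The last step is to transfer these estimates back to the original coordinates. Because $T$ is a constant nonsingular matrix, the norms $\|\state\|$ and $\|\state_1\|$ are equivalent on the constraint manifold with equivalence constants depending only on $\|T\|$ and $\|T^{-1}\|$, so each of the three stability notions in \Cref{def:stabtraj} passes from the reduced \ODE to the original \DAE.

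The main subtlety I expect is the handling of perturbations of consistent initial values that would leave the constraint manifold. This is exactly where the index-one hypothesis is essential: with $\mathcal{N}=0$, the algebraic block is simply $0=\state_2$, so perturbations are required to satisfy $\hat{\state}_{2,0}=0$ and no differentiations of the data are involved, in contrast to \Cref{ex:DAE:index2}. Accordingly, one only needs to control perturbations within the constraint manifold, and the argument above applies verbatim. For higher index the nilpotent block would amplify perturbations through differentiation and no such clean spectral characterization would hold, which clarifies why the hypothesis $\nu\leq 1$ cannot be dropped.
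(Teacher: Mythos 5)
Your proof is correct and follows exactly the route the paper relies on: reduction via the Weierstra\ss\ canonical form of \Cref{thm:WCF} (with $\mathcal{N}=0$ because $\nu\leq 1$) to the decoupled \ODE $\dot{\state}_1=\mathcal{J}\state_1$ plus the constraint $\state_2=0$, followed by the standard spectral estimates for $\mathrm{e}^{\mathcal{J}t}$ and transfer back through the constant nonsingular transformation; the paper itself only cites the literature for this theorem rather than writing out the argument. Your closing remark on perturbations leaving the constraint manifold also matches the paper's own discussion preceding the theorem, where the index-one hypothesis is identified as the reason the jump back to the constraint set is harmless.
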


For \LTV ordinary differential-equations
\begin{equation}
	\label{odevarhom}
	\dot{\state} = A(t)\state,
\end{equation}
the different stability properties are characterized by means of the fundamental solution
$\Phi(\cdot,t_0)\in \mathcal{C}^1(\timeInt,\R^{\stateDim,\stateDim})$ that satisfies
\begin{equation}
	\label{fundsol}
    \tfrac{\partial}{\partial t} \Phi(t,t_0) = A(t) \Phi(t,t_0),\qquad
    \Phi(t_0,t_0)=I_n
\end{equation}
such that $\state(t;t_0,\state_0)=\Phi(t,t_0)\state_0$, see \eg \cite{Adr95}. 

\begin{theorem}
    \label{th:prolinode}
    Consider the \LTV \ODE~\eqref{odevarhom} with fundamental solution~$\Phi$ as in~\eqref{fundsol}. The trivial solution of the \LTV \ODE~\eqref{odevarhom}
    \begin{itemize}
        \item [(i)] is stable if and only if there exists a constant $L>0$ with $\| \Phi(t,t_0) \|\leq L$ on~$\timeInt$;
        \item [(ii)] is asymptotically stable if and only if $\| \Phi(t,t_0) \|\to 0$ for $t\to \infty$;
        \item [(iii)] is exponentially stable if there exist $L>0$ and $\gamma>0$ such that 
        	\begin{displaymath}
        		\| \Phi(t,t_0) \|\leq L e^{-\gamma(t-t_0)} \text{ on~$\timeInt$}.
        	\end{displaymath}
    \end{itemize}
\end{theorem}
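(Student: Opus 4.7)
The core observation is that the \LTV \ODE~\eqref{odevarhom} is linear, so by uniqueness of solutions, every trajectory satisfies $\state(t;t_0,\hat{\state}_0)=\Phi(t,t_0)\hat{\state}_0$. Since $\state\equiv 0$ is itself a solution of \eqref{odevarhom} (with $\hat{\state}_0=0$), the perturbed trajectory starting at $\hat{\state}_0$ coincides with $\Phi(t,t_0)\hat{\state}_0$, and \Cref{def:stabtraj} specialized to the trivial solution becomes a statement about norm bounds on $\Phi(t,t_0)$ acting on small vectors. The plan is to translate each of the three items into the corresponding matrix norm statement, exploiting the homogeneity $\Phi(t,t_0)(\alpha v)=\alpha\,\Phi(t,t_0)v$.

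For item (i), the direction $(\Leftarrow)$ is immediate: given $\varepsilon>0$, set $\delta\vcentcolon=\varepsilon/L$, so that $\|\hat{\state}_0\|<\delta$ yields $\|\Phi(t,t_0)\hat{\state}_0\|\leq L\|\hat{\state}_0\|<\varepsilon$ on $\timeInt_\infty$. For $(\Rightarrow)$, fix any $\varepsilon>0$ (say $\varepsilon=1$) and pick the $\delta>0$ guaranteed by stability. For any unit vector $v\in\R^{\stateDim}$, the scaled initial value $\hat{\state}_0=(\delta/2)v$ satisfies $\|\hat{\state}_0\|<\delta$, hence $\|\Phi(t,t_0)(\delta/2)v\|<1$, and by homogeneity $\|\Phi(t,t_0)v\|<2/\delta$. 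Taking the supremum over unit vectors gives $\|\Phi(t,t_0)\|\leq 2/\delta=:L$.

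For item (ii), the direction $(\Leftarrow)$ follows from (i) since $\|\Phi(\cdot,t_0)\|$ is continuous and tends to zero, hence bounded on $\timeInt_\infty$, giving stability, while $\|\Phi(t,t_0)\hat{\state}_0\|\leq\|\Phi(t,t_0)\|\,\|\hat{\state}_0\|\to 0$ gives attraction for \emph{every} $\hat{\state}_0$. For $(\Rightarrow)$, asymptotic stability provides $\rho>0$ with $\Phi(t,t_0)\hat{\state}_0\to 0$ for all $\hat{\state}_0$ with $\|\hat{\state}_0\|<\rho$; homogeneity upgrades this to convergence for every $\hat{\state}_0\in\R^{\stateDim}$. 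Applied to the standard basis vectors $e_1,\ldots,e_\stateDim$, the columns of $\Phi(t,t_0)$ vanish in the limit, and in finite dimensions the bound $\|\Phi(t,t_0)\|\leq\sum_{i=1}^{\stateDim}\|\Phi(t,t_0)e_i\|$ forces $\|\Phi(t,t_0)\|\to 0$. This passage from columnwise to operator-norm convergence via finite-dimensionality is the only mildly delicate point in the argument.

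Item (iii) is the easiest, and since only the sufficient direction is claimed, one implication suffices: assuming $\|\Phi(t,t_0)\|\leq L e^{-\gamma(t-t_0)}$, boundedness of the right-hand side on $\timeInt_\infty$ together with item (i) gives stability, while $\|\Phi(t,t_0)\hat{\state}_0\|\leq L\|\hat{\state}_0\|e^{-\gamma(t-t_0)}$ yields exponential attraction with the same constants~$L$ and~$\gamma$ and any choice of $\delta>0$ (for instance $\delta=1$), so the solution is exponentially stable in the sense of \Cref{def:stabtraj}~(iii). The main subtlety throughout is book-keeping of the quantifiers in \Cref{def:stabtraj} and keeping track of the fact that, for the trivial reference solution, ``perturbing the initial value'' is literally the same as ``choosing a new initial value'', so no further argument about existence of perturbed solutions on $\timeInt_\infty$ is needed beyond the global existence of $\Phi$.
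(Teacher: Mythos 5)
The paper does not prove \Cref{th:prolinode}; it states the result as classical and refers to the literature (the citation of \cite{Adr95} just before the statement), so there is no in-paper argument to compare against. Your proof is correct and self-contained: the reduction of \Cref{def:stabtraj} for the trivial reference solution to norm statements about $\Phi(t,t_0)$ via linearity and homogeneity is exactly the standard route, the quantifier book-keeping in (i) and (iii) is handled properly (including the observation that only sufficiency is claimed in (iii), matching the ``if'' in the statement), and the one genuinely delicate step --- upgrading columnwise convergence $\Phi(t,t_0)e_i\to 0$ to operator-norm convergence in (ii) --- is justified correctly by the finite-dimensional estimate $\|\Phi(t,t_0)\|\le\sum_{i=1}^{\stateDim}\|\Phi(t,t_0)e_i\|$. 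No gaps.
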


To obtain the results that extend this characterization to \LTV \DAE systems
\begin{equation}
    \label{daevarhom}
    E(t)\dot{\state} = A(t)\state,
\end{equation}
assume again that the initial value problem associated with~\eqref{daevarhom} has a unique solution for every consistent initial value and is strangeness-free. If the system is not strangeness-free then one first performs the transformation to strangeness-free form as in \Cref{sec:DAEtheory}.

For a regular strangeness-free system~\eqref{daevarhom} there exist pointwise orthogonal  matrix functions $S\in \mathcal{C}(\timeInt,\R^{\stateDim,\stateDim})$, $T\in\mathcal{C}^1(\timeInt,\R^{\stateDim,\stateDim})$ such that
\begin{equation}
    \label{pwunitary}
    SET= \begin{bmatrix}E_{11} &0\\0&0\end{bmatrix},\qquad
    SAT-SE\dot{T} = \begin{bmatrix} A_{11}&A_{12}\\ A_{21} &A_{22}\end{bmatrix},
\end{equation}
with $E_{11},A_{22}$ pointwise nonsingular, and $\state=V\begin{smallbmatrix}\state_1\\
\state_2\end{smallbmatrix}$.
Under the condition of a bounded matrix function $A_{22}^{-1}A_{21}$, 
one obtains the algebraic equation $\state_2= -A_{22}^{-1} A_{21} \state_1$ and the so-called inherent \ODE associated with \eqref{daevarhom} given by
\begin{equation}
    \label{inhodehom}
    \dot{\state}_1= E_{11}^{-1}(A_{11}-A_{12} A_{22}^{-1} A_{21})\state_1.
\end{equation}
It is then clear that for the different stability concepts
to extend to \DAE systems it is necessary that \eqref{inhodehom} satisfies the corresponding stability conditions.

\begin{remark}
    \label{rem:spectralintervals}
For \ODE systems there is also a well-known extension of the spectral stability analysis via the computation of Lyapunov, Bohl and Sacker-Sell spectral intervals. These results have been extended to \DAE systems in \cite{Ber12,LinM09,LinM12,LinM14,LinMV11}. We will not discuss this topic here further, but just mention that it is computationally highly expensive. 
\end{remark}

For general autonomous nonlinear \ODE systems $\dot{\state}=f(\state)$ the fundamental approach to analyze the stability properties is to compute a Lyapunov function~$V(\state)$ such that $\dot{V}(\state)$ is negative definite in neighborhood of the solution $\state$. If such a Lyapunov function exists, then the equilibrium solution $
\state=0$ is asymptotically stable, see \eg \cite{LaSL61,Adr95}. This approach can be used as well for general strangeness-free \DAE systems by reducing the system to the inherent \ODE. 

\subsection{Stabilization}
\label{sec:stabilization}
Since in physical systems the stability of a  solution is typically a crucial property, it is important to know how a stable system behaves under
disturbances or uncertainties in the coefficients and how an unstable system can be stabilized with the help of available feedback control.
Let us consider this question first for \LTI control problems of the form \eqref{eqn:descriptor:linear} and ask whether it is possible to achieve stability or asymptotic stability via proportional state or output feedback. 

We have seen in \Cref{thm:feedbackRegularization} that for strongly stabilizable systems there exist an $F\in\R^{m,\stateDim}$ such that the pair $(E,A+BF)$ is regular and of index at most one, and for strongly stabilizable
and strongly detectable systems there exist an $F\in\R^{m,p}$ such that the pair $(E,A+BFC)$ is regular and of (Kronecker) index at most one. In the construction of stabilizing feedbacks we can therefore assume that such a (preliminary) state or output feedback has been performed, and therefore that the pair $(E,A)$ is regular and of (Kronecker) index at most one.

The calculation of stabilizing feedback control laws can then be performed via an optimal control approach (see also the forthcoming \Cref{sec:ocpdae}),
by minimizing the cost functional
\begin{equation}
    \label{stabcostfunctional}
    \mathcal{J}(\state, \inpVar) = \tfrac{1}{2} \int^{\infty}_{t_0} \begin{bmatrix} \state \\ \inpVar \end{bmatrix}^T \begin{bmatrix} W_{\mathrm{z}} & S \\ S^T & W_{\mathrm{u}} \end{bmatrix}
\begin{bmatrix} \state \\ \inpVar \end{bmatrix}\dt
\end{equation}
subject to the constraint \eqref{eqn:descriptor:linear}. We
could have also used the output function~$\outVar$ instead of the state function~$\state$ by
inserting $\outVar = C\state$ and modifying the weights accordingly. The following results, which are based on the Pontryagium maximum principle, are taken from \cite{Meh91}.

\begin{theorem}
    \label{th:stabneccon}
    Consider the optimal control problem to minimize \eqref{stabcostfunctional} subject to the constraint \eqref{eqn:descriptor:linear} with a pair $(E,A)$ that is regular and of (Kronecker) index at most one. Suppose that a continuous solution $\inpVar^\star$ to the optimal control problem exists and let $\state^\star$ be the solution of \eqref{eqn:descriptor:linear} with this input function. Then, there exists a Lagrange multiplier function $\lambda \in \mathcal{C}^1(\timeInt,\R^{\stateDim})$ 
    such that $\state^\star$, $\inpVar^\star$, and $\lambda$ satisfy the boundary value problem 
    \begin{equation}
        \label{neccond}
        \begin{bmatrix}
            0 & E & 0\\
            -E^T & 0 & 0\\
            0 & 0 & 0
        \end{bmatrix}\begin{bmatrix}
            \dot{\lambda}\\
            \dot{\state}\\
            \dot{\inpVar}
        \end{bmatrix} = \begin{bmatrix}
            0 & A & B\\
            A^T & W_{\mathrm{z}} & S\\
            B^T & S^T & W_{\mathrm{u}}
        \end{bmatrix}\begin{bmatrix}
            \lambda\\
            \state\\
            \inpVar
        \end{bmatrix},
    \end{equation}
    with boundary conditions
    \begin{equation} 
        \label{stabtwoptbc}
        E^\dagger E \state(t_0)=\state_0,\qquad \lim_{t\to\infty} E^T\lambda(t)=0,
    \end{equation}
    where $E^\dagger$ denotes the Moore-Penrose inverse of $E$.
\end{theorem}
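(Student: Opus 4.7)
The plan is to apply the Lagrange multiplier calculus, in the spirit of Pontryagin's maximum principle, to the constrained problem of minimizing the cost \eqref{stabcostfunctional} subject to the dynamic constraint \eqref{eqn:descriptor:linear}. Since the pencil $\lambda E-A$ is regular and of (Kronecker) index at most one, \Cref{th:daeko} guarantees that for every sufficiently smooth input $\inpVar$ and every consistent initial value the constraint admits a unique classical solution $\state$, so that the functional \eqref{stabcostfunctional} is well-defined along admissible pairs $(\state,\inpVar)$ and stationarity conditions can be derived by the standard variational calculus.

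\textbf{Key steps.} First, I introduce a multiplier $\lambda\in\mathcal{C}^1([t_0,\infty),\R^{\stateDim})$ and form the augmented functional
\begin{equation*}
    \widetilde{\mathcal{J}}(\state,\inpVar,\lambda) = \int_{t_0}^\infty \left(\tfrac{1}{2}\begin{bmatrix}\state \\ \inpVar\end{bmatrix}^{\!T}\begin{bmatrix}W_{\mathrm{z}} & S \\ S^T & W_{\mathrm{u}}\end{bmatrix}\begin{bmatrix}\state\\ \inpVar\end{bmatrix} + \lambda^T(A\state + B\inpVar - E\dot{\state})\right)\dt.
\end{equation*}
Variation with respect to $\lambda$ recovers the state equation, which is the first block row of \eqref{neccond}. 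Variation with respect to $\inpVar$ yields the pointwise algebraic optimality condition $W_{\mathrm{u}}\inpVar + S^T\state + B^T\lambda = 0$, the third block row. For the variation in $\state$, I integrate the term $\int \lambda^T E\,\delta\dot{\state}\dt$ by parts (using that $E$ is constant) to obtain
\begin{equation*}
    \delta_{\state}\widetilde{\mathcal{J}} = \int_{t_0}^\infty (W_{\mathrm{z}}\state + S\inpVar + A^T\lambda + E^T\dot{\lambda})^T\delta\state\dt - \bigl[\lambda^T E\,\delta\state\bigr]_{t_0}^\infty.
\end{equation*}
Requiring stationarity for all admissible $\delta\state$ then yields the adjoint equation $-E^T\dot{\lambda} = A^T\lambda + W_{\mathrm{z}}\state + S\inpVar$, i.e., the second block row of \eqref{neccond}.

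\textbf{Boundary conditions.} The boundary term above is precisely what produces \eqref{stabtwoptbc}. At $t=t_0$, only the differential part $E^\dagger E\state(t_0)$ of the initial state can be prescribed for an index-at-most-one DAE (the algebraic components being pinned down by the constraint); thus the data $\state_0$ is imposed through $E^\dagger E\state(t_0)=\state_0$, and admissible variations obey $E\,\delta\state(t_0)=0$, annihilating the lower endpoint of the boundary term. At $t\to\infty$, $\delta\state$ is free subject only to the decay needed for finiteness of the improper integral, so stationarity forces $\lim_{t\to\infty}E^T\lambda(t)=0$.

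\textbf{Main obstacle.} The principal difficulty is handling the singular $E$ rigorously within the variational framework: perturbations $(\delta\state,\delta\inpVar)$ must jointly respect the algebraic part of the system, and one has to verify that enough admissible variations exist to extract each Euler-Lagrange equation independently. The standard way around this is to invoke the index-one Weierstraß decomposition from \Cref{thm:WCF}, which decouples $\state$ into a differential part whose initial value and perturbation are genuinely free and an algebraic part completely determined by $\inpVar$ and the differential part; free variations in $\inpVar$ and in the differential component then lift to full admissible variations of $(\state,\inpVar)$. A secondary subtlety, inherent to the infinite horizon, is justifying the vanishing of the boundary term as $t\to\infty$, which rests on exponential decay of the optimal trajectory and its adjoint — precisely what the stabilizability assumption (needed to make the infimum finite in the first place) is there to supply.
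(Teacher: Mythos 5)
Your derivation is correct and follows essentially the route the paper takes: the survey states this result without proof, attributing it to the Pontryagin maximum principle and citing \cite{Meh91}, and for this linear-quadratic problem the maximum principle reduces exactly to the stationarity conditions of your augmented functional, while your resolution of the singular-$E$ difficulty via the Weierstra\ss{} form is precisely the reduction the paper itself carries out immediately after the theorem in \eqref{trfoptcon}--\eqref{redtrfoptcon}. The only caveats are the ones you already flag yourself: the transversality condition at infinity needs the decay supplied by strong stabilizability, and the formal multiplier argument is legitimate here only because the index-at-most-one assumption makes the constraint strangeness-free (compare the paper's warning around \eqref{formoptsys} that the formal optimality system can produce an incorrect multiplier in general).
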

\begin{theorem}
    \label{th:stabsuffcond}
    Suppose that $\state^\star$, $\inpVar^\star$, $\lambda$ satisfy the boundary value problem \eqref{neccond}, \eqref{stabtwoptbc} and suppose, furthermore, that the matrix
    \begin{displaymath}
        \begin{bmatrix} W_{\mathrm{z}} & S\\S^T & W_{\mathrm{u}}\end{bmatrix}
    \end{displaymath}
    is positive semi-definite. Then
    \begin{displaymath}
        \mathcal{J}(\state,\inpVar) \geq \mathcal{J}(\state^\star,\inpVar^\star)
    \end{displaymath}
    for all $\state$ and $\inpVar$ satisfying \eqref{eqn:descriptor:linear}.
\end{theorem}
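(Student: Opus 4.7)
The plan is to exploit convexity of the quadratic cost in the combined variable $\xi = [\state^T, \inpVar^T]^T$, together with the necessary conditions~\eqref{neccond}--\eqref{stabtwoptbc} that $(\state^\star,\inpVar^\star,\lambda)$ satisfies. Let $(\state,\inpVar)$ be any pair satisfying the constraint~\eqref{eqn:descriptor:linear} for which $\mathcal{J}(\state,\inpVar)$ is well defined, and introduce the differences $\Delta\state \vcentcolon= \state - \state^\star$ and $\Delta\inpVar \vcentcolon= \inpVar - \inpVar^\star$. By linearity of~\eqref{eqn:descriptor:linear}, these satisfy the homogeneous DAE $E\,\Delta\dot{\state} = A\,\Delta\state + B\,\Delta\inpVar$, and by the common initial condition in~\eqref{stabtwoptbc} we have $E^\dagger E\,\Delta\state(t_0) = 0$, which, after multiplication by $E$, gives $E\,\Delta\state(t_0) = 0$.

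Setting $W \vcentcolon= \bigl[\begin{smallmatrix}W_{\mathrm{z}} & S\\ S^T & W_{\mathrm{u}}\end{smallmatrix}\bigr]$ and using the elementary identity $a^T W a - b^T W b = (a-b)^T W (a-b) + 2\,b^T W (a-b)$ valid for symmetric $W$, the cost difference decomposes as
\begin{equation*}
    \mathcal{J}(\state,\inpVar) - \mathcal{J}(\state^\star,\inpVar^\star) = \tfrac{1}{2}\int_{t_0}^{\infty} \begin{bmatrix}\Delta\state\\ \Delta\inpVar\end{bmatrix}^T W \begin{bmatrix}\Delta\state\\ \Delta\inpVar\end{bmatrix} \dt + \int_{t_0}^{\infty} \begin{bmatrix}\state^\star\\ \inpVar^\star\end{bmatrix}^T W \begin{bmatrix}\Delta\state\\ \Delta\inpVar\end{bmatrix} \dt.
\end{equation*}
The first integrand is pointwise nonnegative by the assumed positive semi-definiteness of $W$, so it suffices to show that the second (cross) integral vanishes.

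For the cross term, I would use the second and third block-rows of the necessary conditions~\eqref{neccond}, which yield $W_{\mathrm{z}}\state^\star + S\inpVar^\star = -E^T\dot{\lambda} - A^T\lambda$ and $S^T\state^\star + W_{\mathrm{u}}\inpVar^\star = -B^T\lambda$. Substituting these expressions and using $E\,\Delta\dot{\state} = A\,\Delta\state + B\,\Delta\inpVar$, a direct calculation gives
\begin{equation*}
    \begin{bmatrix}\state^\star\\ \inpVar^\star\end{bmatrix}^T W \begin{bmatrix}\Delta\state\\ \Delta\inpVar\end{bmatrix} = -\dot{\lambda}^T E\,\Delta\state - \lambda^T (A\,\Delta\state + B\,\Delta\inpVar) = -\dot{\lambda}^T E\,\Delta\state - \lambda^T E\,\Delta\dot{\state} = -\tfrac{\mathrm{d}}{\mathrm{d}t}\bigl(\lambda^T E\,\Delta\state\bigr).
\end{equation*}
Integration then produces boundary terms only. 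At $t=t_0$ the term vanishes because $E\,\Delta\state(t_0)=0$, and at $t=\infty$ it vanishes because $(\lambda^T E)\Delta\state = (E^T\lambda)^T\Delta\state$ and $\lim_{t\to\infty}E^T\lambda(t) = 0$ by~\eqref{stabtwoptbc}.

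The main subtlety, and therefore the step that deserves the most care, is the limit at infinity: concluding $\lim_{t\to\infty}(E^T\lambda(t))^T\Delta\state(t) = 0$ from $\lim_{t\to\infty} E^T\lambda(t) = 0$ requires that $E\,\Delta\state$ (equivalently, the differential component of $\Delta\state$) remains bounded. This boundedness should be read into the admissibility of the pair $(\state,\inpVar)$, since finiteness of $\mathcal{J}(\state,\inpVar)$ on $[t_0,\infty)$ together with the index-one, stabilizable structure underlying~\eqref{neccond} forces the relevant trajectory components to decay; alternatively, one restricts the class of competitors to those for which the product $(E^T\lambda)^T\Delta\state$ vanishes in the limit. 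Granting this, the cross term is zero, only the semi-definite quadratic term remains, and the inequality $\mathcal{J}(\state,\inpVar) \geq \mathcal{J}(\state^\star,\inpVar^\star)$ follows.
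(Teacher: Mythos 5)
The paper itself gives no proof of this theorem---it is stated as a result taken from \cite{Meh91}---so there is nothing in the text to compare against line by line. Your argument is the standard completion-of-squares/convexity proof for linear-quadratic sufficiency, and it is correct: the algebraic decomposition of the cost difference, the use of the second and third block rows of \eqref{neccond} to rewrite the cross term as $-\tfrac{\mathrm{d}}{\mathrm{d}t}(\lambda^T E\,\Delta\state)$, and the vanishing of the boundary term at $t_0$ via $E\,\Delta\state(t_0)=0$ all check out. The one genuine subtlety is exactly the one you flag: the terminal condition $\lim_{t\to\infty}E^T\lambda(t)=0$ kills the boundary term at infinity only for competitors $(\state,\inpVar)$ whose differential component stays bounded (or for which the product $(E^T\lambda)^T\Delta\state$ tends to zero), and this restriction on the admissible class should be stated explicitly rather than left implicit; note that in the finite-horizon analogue (\Cref{thm_sufcon}) this issue is absorbed by the terminal condition on $E^T\lambda(t_{\mathrm{f}})$, which is why that version needs no such caveat.
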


The solution of the optimality boundary value problem~\eqref{neccond} with boundary conditions~\eqref{stabtwoptbc} will yield the optimal control $\inpVar$ and the corresponding optimal state $\state$. However, in many real-world applications one would like the optimal control to be a state feedback. 
A sufficient condition for this to hold is that the matrix pencil associated with \eqref{neccond} is regular of (Kronecker) index at most one and has no purely imaginary eigenvalue. If the matrix $W_{\mathrm{u}}$ is positive definite and  $(E,A,B)$ is strongly stabilizable, then this can be guaranteed, see \cite{Meh91}, and we 
can proceed as follows.
Recall that we have assumed that the pair $(E,A)$ is regular and of (Kronecker) index at most one. Then the coefficients $E,A,B,W_{\mathrm{\state}},S,W_{\mathrm{\inpVar}}$
can be transformed such that $(E,A)$ is in  Weierstra\ss\ canonical form~\eqref{eqn:WCF}, \ie
\begin{align*}
    PEQ &= \begin{bmatrix} I&0\\0& 0 \end{bmatrix}, &
    PAQ &= \begin{bmatrix} J&0\\0&I\end{bmatrix}, &
    PB &= \begin{bmatrix} B_1\\ B_2\end{bmatrix},
\end{align*}
with transformed cost function
\begin{equation*}
    Q^T W_{\mathrm{\state}} Q= \begin{bmatrix} W_{11} & W_{12} \\ W_{21} & W_{22} \end{bmatrix},\quad  Q^T S = \begin{bmatrix} S_1 \\ S_2 \end{bmatrix}.
\end{equation*}
Setting
\begin{align*}
    Q^{-1}\state &=\vcentcolon \begin{bmatrix} \state_1\\ \state_2\end{bmatrix}, &
    Q^{-1}\lambda &=\vcentcolon  \begin{bmatrix} \lambda_1\\ \lambda_2\end{bmatrix}, &
    Q^{-1}\state_0 &=\vcentcolon \begin{bmatrix} {\state}_{1,0}\\{\state}_{2,0}\end{bmatrix} 
\end{align*}
and reordering equations and unknowns we obtain the transformed boundary value problem
\begin{equation}\label{trfoptcon}
\begin{bmatrix} \phantom{-}0  & I & 0 & 0 & 0\\
           -I & 0 & 0 & 0 & 0\\
           \phantom{-}0 & 0 & 0 & 0 & 0\\
           \phantom{-}0  & 0 & 0 & 0 & 0\\
           \phantom{-}0 & 0 & 0 & 0 & 0\end{bmatrix} \begin{bmatrix} \dot{\lambda}_1\\
                                         \dot{\state}_1\\
                                         \dot{\lambda}_2\\
                                         \dot{\state}_2\\
                                         \dot{\inpVar}\end{bmatrix} =
   \begin{bmatrix} 0 &J & 0 & 0 & B_{1}\\
            J^T & W_{11} & 0 & W_{12} & S_{1}\\
            0 & 0 & 0 & I & B_{2}\\
            0 &W_{12}^T & I & W_{22} & S_{2}\\
            B_1^T & S_1^T & B_2^T & S_2^T  & W_{\mathrm{\inpVar}}\end{bmatrix}
   \begin{bmatrix} \lambda_1\\
            \state_1\\
            \lambda_2\\
            \state_2\\
            \inpVar\end{bmatrix},
\end{equation}
with boundary conditions
\begin{equation}
    \label{redtwoptbca}
    \state_1({t_0})={\state}_{1,0},\qquad 
\lim_{t\to \infty}\lambda_1({t})=0.
\end{equation}
Solving the third and fourth equation in \eqref{trfoptcon} gives
\begin{displaymath}
    \state_2= -B_2 \inpVar \qquad\text{and}\qquad  
    \lambda_2=-W_{12}^T \state_1 -W_{22} \state_2 -S_2 \inpVar.
\end{displaymath}
 Inserting these in the other equations gives the reduced optimality system
\begin{equation}
    \label{redtrfoptcon}
    \begin{bmatrix} \phantom{-}0  & I & 0 \\
           -I  & 0 &0 \\ \phantom{-}0&0& 0 \end{bmatrix} \begin{bmatrix} \dot{\lambda}_1\\
                                         \dot{\state}_1\\
                                         \dot{\inpVar}\end{bmatrix} =
   \begin{bmatrix} 0 &J  & B_{1}\\
            J^T & W_{11} & \tilde{S}_{1}\\
            B_1^T & \tilde{S}_1^T  & \tilde{W}_{\mathrm{\inpVar}} \end{bmatrix}
   \begin{bmatrix} \lambda_1\\
            \state_1\\
            \inpVar\end{bmatrix},
\end{equation}
with $\tilde S_1=S_{1}-W_{12} B_2$, $\tilde{W}_{\mathrm{\inpVar}} = W_{\mathrm{\inpVar}}-S_2^TB_2-B_2^TS_2 +B_2^TW_{22} B_2$,
and boundary conditions \eqref{redtwoptbca}. This is the classical optimality condition associated with the \ODE constraint $\dot{\state}_1=J\state_1 +B_1 \inpVar$ and the cost matrix
\begin{displaymath}
\tilde{\mathcal{W}}=\begin{bmatrix} W_{11} & \tilde{S}_1 \\ \tilde{S}_{1}^T & \tilde{W}_{\inpVar} \end{bmatrix},
\end{displaymath}
for which the standard theory for optimal control with \ODEs constraints can be applied, see, \eg, \cite{Meh91}.   

With an ansatz $\lambda_1 =X\state_1$ with $X=X^T$ the optimal control takes the form of a state feedback
\begin{displaymath}
    \inpVar = F\state_1=-\tilde{W}_{\mathrm{\inpVar}}^{-1} (B_1^TX+\tilde{S}_1^T)\state_1.
\end{displaymath}
Inserting $\lambda_1=X \state_1$ in~\eqref{redtrfoptcon} we obtain the system
\begin{align*}
    \dot{\state}_1 &=(J-B_1 \tilde{W}_{\mathrm{\inpVar}}^{-1}(\tilde{S}_1+B_1^T X))\state_1,\\
-X \dot{\state}_1&= ((J-B_1 \tilde{W}_{\mathrm{\inpVar}}^{-1}\tilde{S}_1^T)^TX +W_{11}- \tilde {S}_1 \tilde{W}_{\mathrm{\inpVar}}^{-1} \tilde{S}_1^T)\state_1,\\
 0 &= \lim_{t\to \infty}X\state_1(t)
\end{align*}
A sufficient condition for this system to have a solution is, see \cite{Meh91}, that we can find a positive semi-definite solution $X=X^T$ to the algebraic Riccati equation
\begin{displaymath}
    0 =W_{11}+XJ+J^TX -(B_1^TX+\tilde{S}_1^T)^T \tilde {W}_{\mathrm{\inpVar}}^{-1}(B_1^TX + \tilde{S}_1^T). 
\end{displaymath}
If $\tilde{W}_{\mathrm{\inpVar}}$ is not invertible then there are further restrictions on the boundary conditions that may defer the solvability of the boundary value problem, see \cite{Meh91} for details and the forthcoming \Cref{sec:OCPH}.

\subsection{Passivity}\label{sec:passive}
Another important property of control systems  is the concept of \emph{passivity}. Let us first introduce a passivity  definition for  general \DAE control systems of the form \eqref{eqn:descriptorSystem} with a state space $\mathcal{Z}$, input space $\mathcal{U}$, and output space $\mathcal{Y}$. See \cite{ByrIW91a} for the definition for \ODE systems. 

To introduce this definition we consider a positive definite and quadratic \emph{storage function} 
$\hamiltonian\colon \mathcal{Z}\to \R$
as well as a \emph{supply function}
$\mathcal{S}\colon \mathcal{Y}\times \mathcal{U}\to \R$
satisfying 
\begin{align*}
    \mathcal{S}(0,\inpVar) &=0\ \text{ for all } \inpVar\in \mathcal{U},\\
    \mathcal{S}(\outVar,0) &=0\ \text{ for all } \outVar\in \mathcal{Y}.
\end{align*}
\begin{definition}\label{def:passive}
An autonomous \DAE of the from~\eqref{eqn:descriptorSystem} is called \emph{(strictly) passive} with respect to the storage function $\hamiltonian(\state)$ and the supply function $\mathcal{S}(\outVar,\inpVar)$ if there exists a positive semi-definite (positive definite) function 
$\Phi\colon \mathcal{X} \to \R$, such that for any $\inpVar\in \mathcal{U}$ and for any $t_0< t_1$ the equation 
\begin{equation}
    \label{eqn:baleq}
    \hamiltonian(\state(t_1)) - \hamiltonian(\state(t_0))=
    \int_{t_0}^{t_1} \mathcal{S}(\outVar(s),\inpVar(s)) - \Phi(\state(s))\,\mathrm{d}s
\end{equation}
holds for all $(\outVar,\inpVar)\in \mathcal{Y}\times\mathcal{U}$.
\end{definition}
Equation \eqref{eqn:baleq} is called \emph{storage energy balance equation} and  directly implies that the \emph{dissipation inequality}
\begin{equation*}
    \hamiltonian(\state(t_1)) - \hamiltonian(\state(t_0))\leq
    \int_{t_0}^{t_1} \mathcal{S}(\outVar(s),\inpVar(s))\,\mathrm{d}s
\end{equation*}
holds for all $(\outVar,\inpVar)\in \mathcal{Y}\times\mathcal{U}$.

For \LTI \ODE systems of the form 
\begin{align*}
    \dot{\state} &= A\state +B\inpVar, \\
    \outVar & = C\state+ D\inpVar,
\end{align*}
passivity can be characterized, see \cite{Wil71}, via the existence of a positive definite solution $X=X^T$ of a linear matrix inequality, the \emph{Kalman--Yakubovich--Popov inequality}
\begin{equation} \label{KYP-LMI}
W(X) \vcentcolon= \begin{bmatrix}
-X A - A^TX & C^T - X B \\
C- B^TX & D+D^T
\end{bmatrix}\geq 0.
\end{equation}
For strict passivity this inequality has to be strict.
Note that \eqref{KYP-LMI} generalizes  the Lyapunov inequality~\eqref{lya-ineq}, which is just the leading block, and hence (strict) passivity directly implies (asymptotic) stability. 

The relationship between passivity and the linear matrix inequality~\eqref{KYP-LMI} has been extended to  \LTI \DAE
systems in \cite{ReiRV15,ReiV15}.

Since passive systems are closely related to port-Hamiltonian systems, we will come back to this topic in \Cref{sec:generalDAE}.

\subsection{Optimal control}
\label{sec:ocpdae}

An important task in control theory is the solution of optimal control problems that minimize a cost functional subject to an \ODE or \DAE system. The optimal control theory for general nonlinear \DAE systems was presented in \cite{KunM08}. In this section we recall these general results. 

Consider the optimal control problem to minimize a cost functional
\begin{equation*}
    \mathcal{J}(\state,\inpVar) = \mathcal{M}(t_{\mathrm{f}})+\int_{t_0}^{t_{\mathrm{f}}}
\mathcal{K}(t,\state(t),\inpVar(t)) \dt
\end{equation*}
subject to a constraint given by an initial value problem
associated with a nonlinear \DAE system 
\begin{equation*}
    F(t,\state,\inpVar,\dot{\state})=0, \quad \state(t_0) = \state_0.
\end{equation*}
We can rewrite this problem in the behavior representation, see the discussion in \Cref{sec:DAEtheory}, with $\xi=\begin{bmatrix} \state^T & \inpVar^T \end{bmatrix}^T$, and then study the optimization problem
\begin{equation}
    \label{DaeOCbeh}
    \mathcal{J}(\xi) = \mathcal{M}(\xi(t_{\mathrm{f}})) + \int_{t_0}^{t_{\mathrm{f}}}
\mathcal{K}(t,\xi(t)) \dt={\min !}
\end{equation}
subject to the constraint
\begin{equation}
    \label{DaeConstrbeh}
    F(t,\xi,\dot{\xi})=0,\qquad \begin{bmatrix} I_n & 0 \end{bmatrix} \xi(t_0) = \state_0. 
\end{equation}
If \Cref{hyp:nonLin:nonRegular} holds, then for this system we (locally via the implicit function theorem) have a strangeness-free reformulation, cf.~\cite{KunM06}, as
\begin{equation}
    \label{newnlDaeConstr}
    \begin{aligned}
        \dot{\state}_1 &=\mathcal{L}(t,\state_1,\inpVar), \qquad \state_1(t_0)=\state_{1,0},\\
        \state_2 &= \mathcal{R}(t,\state_1,\inpVar),
    \end{aligned}
\end{equation}
and the associated cost function reads
\begin{equation}
    \label{newnlDaeOC}
    \mathcal{J}(\state_1,\state_2,\inpVar)= \mathcal{M}(\state_1(t_{\mathrm{f}}),\state_2(t_{\mathrm{f}}))+
\int_{t_0}^{t_{\mathrm{f}}}
\mathcal{K}(t,\state_1,\state_2,\inpVar) \dt.
\end{equation}
For this formulation, the  necessary optimality conditions in the space 
\begin{displaymath}
    \mathbb{W} \vcentcolon=
    \mathcal{C}^1(\timeInt,\R^{d})\times
    \mathcal{C}(\timeInt,\R^{a})\times
    \mathcal{C}(\timeInt,\R^{m}),
\end{displaymath}
are presented in the following theorem. We refer to \cite{KunM08} for the original presentation and the proof.
\begin{theorem}
\label{thnl}
Let $\xi$ be a local solution of \eqref{DaeOCbeh} subject to
\eqref{DaeConstrbeh} in the sense that $(\state_1,\state_2,\inpVar)\in\mathbb{W}$ is a local solution of~\eqref{newnlDaeOC} subject to \eqref{newnlDaeConstr}. Then there exist
unique Lagrange multipliers $(\lambda_1,\lambda_2,\gamma)\in\mathbb{W}$
such that $(\state_1,\state_2,\inpVar,\lambda_1,\lambda_2,\gamma)$ solves the
boundary value problem  
\begin{align*}
\label{nlnecoptconblock}
    \dot{\state}_1 &= \mathcal{L}(t,\state_1,\inpVar),\ \state_1(t_0)=\state_{1,0},\\
    \state_2 &= \mathcal{R}(t,\state_1,\inpVar),\\
    \dot{\lambda}_1 &= \tfrac{\partial}{\partial {\state_1}}\mathcal{K}(t,\state_1,\state_2,\inpVar)^T-
\tfrac{\partial}{\partial {\state_1}}\mathcal{L}(t,\state_1,\state_2,\inpVar)^T\lambda_1 - \tfrac{\partial}{\partial {\state_1}}\mathcal{R}_{\state_1}(t,\state_1,\inpVar)^T
\lambda_1,\\
&\qquad \qquad \qquad \qquad \qquad \qquad
\lambda_1(t_{\mathrm{f}})=-\tfrac{\partial}{\partial {\state_1}}{\mathcal M} (\state_1(t_{\mathrm{f}}),\state_2(t_{\mathrm{f}}))^T\\
    0 &= \tfrac{\partial}{\partial {\state_2}}\mathcal{K}(t,\state_1,\state_2,\inpVar)^T+\lambda_2,\\
    0 &= \tfrac{\partial}{\partial {\inpVar}}\mathcal{K}(t,\state_1,\state_2,\inpVar)^T-\tfrac{\partial}{\partial {\inpVar}}\mathcal{L}(t,\state_1,\inpVar)^T\lambda_1-
\tfrac{\partial}{\partial {\inpVar}}\mathcal{R}(t,\state_1,\inpVar)^T\lambda_2,\\
    \gamma &=\lambda_1(t_0).
\end{align*}
\end{theorem}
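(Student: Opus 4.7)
The plan is to work with the strangeness-free reformulation \eqref{newnlDaeConstr}--\eqref{newnlDaeOC}, since this decouples the differential component $\state_1$ from the algebraic component $\state_2$ and makes the constraint qualification transparent. Concretely, I would set up the Lagrangian
\begin{displaymath}
\mathcal{L}_{\mathrm{ag}} = \mathcal{M}(\state_1(t_{\mathrm{f}}),\state_2(t_{\mathrm{f}})) + \int_{t_0}^{t_{\mathrm{f}}} \Big[ \mathcal{K}(t,\state_1,\state_2,\inpVar) + \lambda_1^T(\mathcal{L}(t,\state_1,\inpVar)-\dot{\state}_1) + \lambda_2^T(\mathcal{R}(t,\state_1,\inpVar)-\state_2)\Big]\dt + \gamma^T(\state_1(t_0)-\state_{1,0}),
\end{displaymath}
with $\lambda_1\in\mathcal{C}^1(\timeInt,\R^{d})$, $\lambda_2\in\mathcal{C}(\timeInt,\R^{a})$, and $\gamma\in\R^{d}$, and then read off stationarity conditions by taking variations in each argument.

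The main technical step is the existence and uniqueness of the multipliers. Because the reduced system is strangeness-free and in the semi-explicit form \eqref{newnlDaeConstr}, the linearization around the optimum inherits the same semi-explicit structure, so its constraint operator is surjective onto the appropriate function space, \ie a Robinson-type constraint qualification holds. An abstract Karush--Kuhn--Tucker theorem on Banach spaces (applied in $\mathbb{W}$) then yields a unique triple $(\lambda_1,\lambda_2,\gamma)$. This is essentially the content sketched in \cite{KunM08}, and it is the step where the strangeness-free preprocessing pays off: for the original \DAE~\eqref{DaeConstrbeh}, the derivative of the constraint map need not be surjective, so multipliers may fail to exist or be unique.

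Next, I would compute the variations one by one. Variation in $\state_2$ is purely pointwise, yielding $\tfrac{\partial \mathcal{K}}{\partial \state_2}^T + \lambda_2 = 0$ and the terminal piece $\tfrac{\partial \mathcal{M}}{\partial \state_2}(\state_1(t_{\mathrm{f}}),\state_2(t_{\mathrm{f}}))^T = 0$ whenever $\state_2(t_{\mathrm{f}})$ is free (this is absorbed into $\lambda_2$ in the statement). Variation in $\inpVar$, again pointwise, gives the stationarity equation
\begin{displaymath}
0 = \tfrac{\partial \mathcal{K}}{\partial \inpVar}^T - \tfrac{\partial \mathcal{L}}{\partial \inpVar}^T\lambda_1 - \tfrac{\partial \mathcal{R}}{\partial \inpVar}^T\lambda_2.
\end{displaymath}
Variation in $\state_1$ requires integration by parts on the term $-\lambda_1^T\dot{\state}_1$: this produces the adjoint \ODE for $\lambda_1$, the terminal condition $\lambda_1(t_{\mathrm{f}}) = -\tfrac{\partial \mathcal{M}}{\partial \state_1}^T$, and a boundary contribution at $t_0$ that identifies $\gamma = \lambda_1(t_0)$.

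The hard part is really the constraint qualification and the verification that variations may be taken independently in the three components of $\mathbb{W}$; once that is in place, collecting the computed stationarity equations, together with the primal equations \eqref{newnlDaeConstr} recovered from variation in the multipliers, gives the full boundary value system in the theorem. A minor additional care is required in the substitution that rewrites $\tfrac{\partial \mathcal{R}}{\partial \state_1}$ correctly in the $\lambda_1$-equation: in the theorem statement this appears in the form $\tfrac{\partial}{\partial \state_1}\mathcal{R}_{\state_1}^T\lambda_1$, which must be interpreted using the relation $\lambda_2 = -\tfrac{\partial \mathcal{K}}{\partial \state_2}^T$ together with a chain-rule simplification of the $\state_2$-dependence of $\mathcal{K}$; this is bookkeeping rather than conceptually difficult.
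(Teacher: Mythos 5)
The paper itself gives no proof of this theorem: immediately before the statement it says that the result is taken from \cite{KunM08} ``for the original presentation and the proof'', so there is no in-paper argument to compare against. Your outline is nonetheless essentially the route of that reference: pass to the locally equivalent strangeness-free semi-explicit system \eqref{newnlDaeConstr}, establish a constraint qualification there, invoke a Lagrange multiplier theorem in the Banach space $\mathbb{W}$ to get existence and uniqueness of $(\lambda_1,\lambda_2,\gamma)$, and obtain the adjoint and stationarity equations by taking variations (with one integration by parts in the $\state_1$-direction). That is the right skeleton, and you correctly identify that the strangeness-free preprocessing is what makes the multiplier rule applicable, in contrast to the original formulation \eqref{DaeConstrbeh}.

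Two points in your sketch need more than the hand-wave they receive. First, the surjectivity of the linearized constraint operator is where the actual work sits: one must show that for every admissible right-hand side the linearized system --- an \ODE in $\delta\state_1$ with initial condition plus a pointwise algebraic relation determining $\delta\state_2$ --- is solvable with exactly the regularity encoded in $\mathbb{W}$; this is a statement to be proved from the semi-explicit structure, not merely asserted, and it is also what delivers \emph{uniqueness} of the multipliers. Second, your treatment of the terminal dependence of $\mathcal{M}$ on $\state_2(t_{\mathrm{f}})$ is not tenable as written: since $\lambda_2$ is only continuous, the boundary contribution $\tfrac{\partial}{\partial \state_2}\mathcal{M}\,\delta\state_2(t_{\mathrm{f}})$ cannot be ``absorbed into $\lambda_2$'' pointwise; it has to be eliminated by using that $\state_2(t_{\mathrm{f}})$ is not an independent degree of freedom but is fixed through $\state_2(t_{\mathrm{f}})=\mathcal{R}(t_{\mathrm{f}},\state_1(t_{\mathrm{f}}),\inpVar(t_{\mathrm{f}}))$, which is what yields a terminal condition on $\lambda_1$ alone. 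Finally, note that the term $\tfrac{\partial}{\partial \state_1}\mathcal{R}_{\state_1}^T\lambda_1$ in the displayed adjoint equation is dimensionally forced to be $\tfrac{\partial}{\partial \state_1}\mathcal{R}^T\lambda_2$ (a typographical slip in the survey); your chain-rule reading is consistent with this after substituting $\lambda_2=-\tfrac{\partial}{\partial \state_2}\mathcal{K}^T$, so no separate reinterpretation is required.
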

\Cref{thnl} is a local result based on the implicit function theorem that has to be modified to turn it into a computationally feasible procedure, cf.~\cite{KunM08}, and which can be substantially strengthened for the minimization of quadratic cost functionals 
\begin{equation}
    \label{qcostfunct}
    \mathcal{J}(\state,\inpVar) = \tfrac{1}{2} \state(t_{\mathrm{f}})^T M \state(t_{\mathrm{f}})+ \tfrac{1}{2} \int_{t_0}^{t_{\mathrm{f}}}(\state^T W_{\mathrm{\state}} \state + 2 \state^T S \inpVar +\inpVar^T W_{\mathrm{\inpVar}} \inpVar)\dt,
\end{equation}
with matrix functions $ W_{\mathrm{\state}}=W_{\mathrm{\state}}^T\in  \mathcal{C}(\timeInt,\R^{\stateDim,\stateDim})$,
$W_{\mathrm{\inpVar}}=W_{\mathrm{\inpVar}}^T\in \mathcal{C}(\timeInt,\R^{m,m})$,
$S\in\mathcal{C}(\timeInt,\R^{\stateDim,m})$, and   $M=M^T\in\R^{\stateDim,\stateDim}$, subject to \LTV \DAE constraints  
\begin{equation}
    \label{lindae}
    E \dot{\state} = A \state+ B \inpVar + f,\qquad \state(t_0)=\state_0,
\end{equation}
with $E\in \mathcal{C}(\timeInt,\R^{\stateDim,\stateDim})$,
$A \in \mathcal{C}(\timeInt,\R^{\stateDim,\stateDim})$,
$ B\in \mathcal{C}(\timeInt,\R^{\stateDim,m})$, $f\in \mathcal{C}(\timeInt,\R^{\stateDim})$,
$\state_0\in\R^{\stateDim}$, 
$\inpVar\in\mathbb{U}\vcentcolon=\mathcal{C}(\timeInt,\R^{m})$, and $f\in  \mathcal{C}(\timeInt,\R^{\stateDim})$. 
Using the property that for the Moore-Penrose inverse $E^\dagger$ of $E$, we have 
\begin{displaymath}
    E \dot{\state} = E E^\dagger E \dot{\state} = E \ddt (E^\dagger E \state)-E\ddt (E^\dagger E) \state,
\end{displaymath}
we interpret \eqref{lindae} as
\begin{equation*}
    E \ddt (E^\dagger E \state)=(A+E\ddt (E^\dagger E)) \state +B \inpVar+f,\qquad
    (E^\dagger E \state)(t_0)=\state_0.
\end{equation*}
This allows the particular solution space, see \cite{KunM96a},
\begin{equation*}
    \mathbb{Z}\vcentcolon= \mathcal{C}^1_{E^\dagger E }(\timeInt,\R^{\stateDim}) \vcentcolon=\left \{ \state\in  \mathcal{C}(\timeInt,\R^{\stateDim}) \mid
E^\dagger E \state \in \mathcal{C}^1(\timeInt,\R^{\stateDim}) \right \},
\end{equation*}
which takes care that the differentiability of the state variable is only required in this restricted space. Note that with this definition, we slightly extend our solution concept from \Cref{def:solutionConcept}.

Using this solution space, in \cite{KunM08} the following necessary optimality condition was derived.
\begin{theorem}
    \label{th:optsys}
    Consider the optimal control problem \eqref{qcostfunct} subject to \eqref{lindae} with a consistent initial condition. Suppose that \eqref{lindae} is strangeness-free as a behavior system and that the range of $M$ is contained in the $\cokernel$ of $E(t_{\mathrm{f}})$.
    If $(\state,\inpVar)\in \mathbb{Z}\times\mathbb{U}$ is a solution to this optimal control problem, then there exists a Lagrange multiplier function $\lambda \in \mathbb{Z}$, such that $(z,\lambda,u)$ satisfy the boundary value problem 
    \begin{subequations}
        \label{optbvp}
        \begin{align}
            E \ddt (E^\dagger E \state) &=(A+E\ddt (E^\dagger E)) \state +B\inpVar+f,\ 
            \label{optbvp:a} \\
            E^T \ddt (E E^\dagger \lambda) &= W_{\mathrm{\state}} \state +S\inpVar -(A+E E^\dagger\dot{E})^T \lambda, \  \label{optbvp:b}\\
            0 &= S^T\state + W_{\mathrm{\inpVar}} \inpVar -B^T \lambda, \label{optbvp:c}\\
            (E^\dagger E \state)(t_0) &=\state_0, \qquad
            (E E^\dagger  \lambda)(t_{\mathrm{f}}) =- E^\dagger(t_{\mathrm{f}})^TM\state(t_{\mathrm{f}}).  \label{optbvp:BC}
        \end{align}
\end{subequations}
\end{theorem}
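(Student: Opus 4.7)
The strategy is a Lagrangian/variational argument carried out directly in the restricted solution space $\mathbb{Z}=\mathcal{C}^1_{E^\dagger E}(\timeInt,\R^n)$, in which only the projected state $E^\dagger E\state$ is required to be continuously differentiable. The key technical identity is
\begin{equation*}
E\dot{\state} = E\tddt(E^\dagger E\state) - E\tddt(E^\dagger E)\state,
\end{equation*}
which follows from $EE^\dagger E = E$ and the product rule and moves the time derivative onto the component that is actually smooth in $\mathbb{Z}$. The dual identity $E^T\dot{\lambda} = E^T\tddt(EE^\dagger\lambda) - E^T\tddt(EE^\dagger)\lambda$ plays the analogous role for the multiplier and, combined with the fact that $EE^\dagger$ is the orthogonal projector onto $\range E$, explains why the adjoint equation~\eqref{optbvp:b} and the terminal condition~\eqref{optbvp:BC} are both phrased in terms of $EE^\dagger\lambda$.

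First I would introduce the augmented functional
\begin{equation*}
\mathfrak{L}(\state,\inpVar,\lambda) = \mathcal{J}(\state,\inpVar) - \int_{t_0}^{t_f}\lambda^T\bigl(E\dot{\state} - A\state - B\inpVar - f\bigr)\dt,
\end{equation*}
substitute the identity above, and integrate by parts, using $E(E^\dagger E\state) = E\state$ to collapse the boundary term to $[\lambda^T E\state]_{t_0}^{t_f}$. Then I would compute the three first variations. Variation in $\lambda$ recovers the primal DAE~\eqref{optbvp:a}. Variation in $\inpVar$ gives the pointwise algebraic stationarity condition $W_{\mathrm{\inpVar}}\inpVar + S^T\state - B^T\lambda = 0$, i.e.,~\eqref{optbvp:c}. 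Variation in $\state$ produces the adjoint DAE~\eqref{optbvp:b} from the interior integrand, together with two boundary contributions.

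At $t_0$, the initial condition fixes only $E^\dagger E\state(t_0)=\state_0$, so admissible variations satisfy $E^\dagger E\,\delta\state(t_0)=0$; since $E^\dagger E$ is the orthogonal projector onto $(\kernel E)^\perp$, this forces $E\,\delta\state(t_0)=0$ and kills the boundary term. At $t_f$, combining the terminal-cost contribution $\state(t_f)^T M\,\delta\state(t_f)$ with $-\lambda(t_f)^T E(t_f)\,\delta\state(t_f)$ and projecting the transversality relation onto $\range E(t_f)$ by means of $EE^\dagger(t_f)$ yields precisely~\eqref{optbvp:BC}.

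The principal obstacle will be the endpoint bookkeeping at $t_f$: one has to verify that the hypothesis $\range M \subseteq \cokernel E(t_f)$ is exactly what is needed so that (i) the prescribed right-hand side $-E^\dagger(t_f)^T M\state(t_f)$ of~\eqref{optbvp:BC} is compatible with the projector $EE^\dagger(t_f)$ on the left, and (ii) the transversality condition matches the set of admissible terminal variations carved out by the algebraic part of the strangeness-free behavior form of the constraint. A secondary point is the actual existence and uniqueness of a Lagrange multiplier $\lambda\in\mathbb{Z}$; here I would fall back on the strangeness-free behavior reformulation supplied by \Cref{thm:controlcanform} and the block form~\eqref{concf3}, apply the already-established \Cref{thnl} to the resulting regular linear-quadratic problem in the differential block $\xi_1$, and lift the unique reduced multiplier back to the original coordinates via the Moore-Penrose relations used in the very first step.
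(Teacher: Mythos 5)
The paper does not actually prove \Cref{th:optsys}; it is quoted from \cite{KunM08}, so there is no in-paper argument to compare against line by line. Judged on its own terms, your variational skeleton has the right shape: you correctly identify that the whole point of the theorem is to reinterpret the constraint as $E\tddt(E^\dagger E\state)=(A+E\tddt(E^\dagger E))\state+B\inpVar+f$ before forming the Lagrangian, which is exactly what separates the correct system \eqref{optbvp} from the formal system \eqref{formoptsys} that the paper warns against, and your endpoint analysis (admissible variations at $t_0$ lie in $\kernel E$, the condition $\range M\subseteq\cokernel E(t_{\mathrm{f}})$ makes the transversality condition compatible with the projector $EE^\dagger(t_{\mathrm{f}})$) is on target.

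The genuine gap is that you treat the existence of the multiplier $\lambda\in\mathbb{Z}$ as ``a secondary point,'' when it is the entire content of the theorem. A first-variation computation on the augmented functional $\mathfrak{L}$ only tells you what equations $\lambda$ must satisfy \emph{if} a multiplier exists; it does not produce one. To get existence you need a constraint qualification --- surjectivity of the linearized constraint map from $\mathbb{Z}\times\mathbb{U}$ onto a suitable image space, which is precisely what the hypothesis ``strangeness-free as a behavior system'' delivers --- and you never invoke it at the point where it is needed. Your proposed patch (reduce via \Cref{thm:controlcanform} to the block form \eqref{concf3}, apply \Cref{thnl} to the differential block, lift back) is the right repair in spirit, but it is not a footnote: the behavior reduction mixes $\state$ and $\inpVar$, the reduced problem yields multipliers of different smoothness classes ($\mathcal{C}^1$ for the differential block, merely $\mathcal{C}^0$ for the algebraic block), and you must verify that the lifted $\lambda$ lands in $\mathcal{C}^1_{EE^\dagger}(\timeInt,\R^{\stateDim})$ --- i.e.\ that only the component $EE^\dagger\lambda$ needs to be differentiable --- and that the terminal condition on the reduced multiplier transforms into $(EE^\dagger\lambda)(t_{\mathrm{f}})=-E^\dagger(t_{\mathrm{f}})^TM\state(t_{\mathrm{f}})$ under the same change of variables. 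Until that lifting is carried out, the proof establishes only the form of the optimality system, not the asserted existence of $\lambda$.
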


\begin{remark}\label{rem:other optimization}
If we have an output equation, then we can also consider the cost functional \eqref{qcostfunct} with the state replaced by the output. This problem can be treated analogously by inserting the output equation in the cost functional and renaming the coefficients. 
The same approach with a modified cost functional can also be used for the case that we want to optimally drive the solution to a reference function $\tilde{\state}$. 
\end{remark}

It \cite{KunM11}, see also \cite{Bac06,KurM04}, it was demonstrated that it is in general not possible to drop the assumptions of \Cref{th:optsys}
and instead consider the formal optimality system
\begin{equation}
    \label{formoptsys}
    \begin{aligned}
        E \dot{\state} &= A\state+B\inpVar+f, & \state(t_0)&=\state_0,\\
        \ddt (E^T \lambda) &= W_{\mathrm{\state}} \state+S\inpVar-A^T\lambda, & (E^T\lambda)(t_{\mathrm{f}}) &=-M \state(t_{\mathrm{f}}),\\
        0 &= S^T \state+ W_{\mathrm{\inpVar}} \inpVar- B^T \lambda.
    \end{aligned}
\end{equation}
However, if this formal optimality system has a unique solution, then the state~$\state$ and the control~$\inpVar$ are correct but the optimal Lagrange multiplier may be different. In this case, under some further assumptions, also a sufficient condition has been shown in \cite{KunM11} generalizing results from \cite{Bac06}. 

\begin{theorem}
    \label{thm_sufcon}
    Consider the optimal control problem \eqref{qcostfunct} subject to \eqref{lindae} with a consistent initial condition and suppose that in the cost functional  \eqref{qcostfunct} we have that the matrix functions $\begin{smallbmatrix} W_{\state} & S\\ S^T & W_{\mathrm{\inpVar}}\end{smallbmatrix}$ and $M$ are (pointwise) positive semi-definite. If $(\state^\star,\inpVar^\star,\lambda)$ satisfies the formal optimality system \eqref{formoptsys}, then for any $(\state,\inpVar)$ satisfying \eqref{lindae} we have
    \begin{displaymath}
        \mathcal{J}(\state,\inpVar)\geq \mathcal{J}(\state^\star,\inpVar^\star).
    \end{displaymath}
\end{theorem}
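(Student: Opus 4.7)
The plan is to write the cost difference $\mathcal{J}(\state,\inpVar)-\mathcal{J}(\state^\star,\inpVar^\star)$ in terms of the deviations $\Delta\state \vcentcolon= \state-\state^\star$ and $\Delta\inpVar \vcentcolon= \inpVar-\inpVar^\star$ and to show that it splits into a manifestly non-negative quadratic piece plus a linear cross term that cancels by virtue of the formal optimality system~\eqref{formoptsys}. Since both pairs satisfy~\eqref{lindae} with the same data, the deviation obeys the homogeneous \DAE $E\dot{(\Delta\state)} = A\,\Delta\state+B\,\Delta\inpVar$ with $E(t_0)\,\Delta\state(t_0)=0$ (equivalently, $(E^\dagger E\,\Delta\state)(t_0)=0$).

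First, I would expand the quadratic cost functional. Using symmetry of $W_{\mathrm{\state}}$, $W_{\mathrm{\inpVar}}$ and $M$, a direct expansion gives
\begin{align*}
\mathcal{J}(\state,\inpVar)-\mathcal{J}(\state^\star,\inpVar^\star)
&= \tfrac12\,\Delta\state(t_{\mathrm f})^T M\,\Delta\state(t_{\mathrm f}) + \tfrac12\int_{t_0}^{t_{\mathrm f}}\begin{bmatrix}\Delta\state\\ \Delta\inpVar\end{bmatrix}^{\!T}\begin{bmatrix}W_{\mathrm{\state}}&S\\ S^T&W_{\mathrm{\inpVar}}\end{bmatrix}\begin{bmatrix}\Delta\state\\ \Delta\inpVar\end{bmatrix}\dt \\
&\quad + \state^\star(t_{\mathrm f})^T M\,\Delta\state(t_{\mathrm f}) + \int_{t_0}^{t_{\mathrm f}}\bigl[\Delta\state^T(W_{\mathrm{\state}}\state^\star+S\inpVar^\star) + \Delta\inpVar^T(S^T\state^\star+W_{\mathrm{\inpVar}}\inpVar^\star)\bigr]\dt.
\end{align*}
The first line is non-negative by the semi-definiteness hypotheses on $M$ and on the block weight, so it remains to show that the cross term on the second line vanishes.

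To that end, substitute the adjoint and control relations from~\eqref{formoptsys}, namely $S^T\state^\star+W_{\mathrm{\inpVar}}\inpVar^\star=B^T\lambda$ and $W_{\mathrm{\state}}\state^\star+S\inpVar^\star=\tfrac{\mathrm d}{\mathrm dt}(E^T\lambda)+A^T\lambda$, and carry out an integration by parts using the transversality condition $(E^T\lambda)(t_{\mathrm f})=-M\state^\star(t_{\mathrm f})$ and the initial condition at $t_0$. This yields
\[\int_{t_0}^{t_{\mathrm f}}\!\Delta\state^T\tfrac{\mathrm d}{\mathrm dt}(E^T\lambda)\dt = -\Delta\state(t_{\mathrm f})^T M\,\state^\star(t_{\mathrm f}) - \int_{t_0}^{t_{\mathrm f}}\!\bigl(E\dot{(\Delta\state)}\bigr)^{\!T}\lambda\dt.\]
Inserting $E\dot{(\Delta\state)}=A\,\Delta\state+B\,\Delta\inpVar$ makes the $A^T\lambda$- and $B^T\lambda$-contributions cancel exactly against their counterparts in the cross term, while the boundary piece $-\Delta\state(t_{\mathrm f})^T M\,\state^\star(t_{\mathrm f})$ cancels $\state^\star(t_{\mathrm f})^T M\,\Delta\state(t_{\mathrm f})$ by symmetry of $M$. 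Hence only the non-negative quadratic remainder survives, proving the inequality.

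The main obstacle is making the integration by parts rigorous: $\Delta\state$ lies in $\mathcal{C}^1_{E^\dagger E}(\timeInt,\R^\stateDim)$ and is not classically differentiable, so the naive identity must be replaced, in the spirit of the passage from~\eqref{formoptsys} to~\eqref{optbvp}, by one built from $\tfrac{\mathrm d}{\mathrm dt}(E^\dagger E\,\Delta\state)$ and the product-rule identity $E\dot{(\Delta\state)} = E\tfrac{\mathrm d}{\mathrm dt}(E^\dagger E\,\Delta\state)-E\tfrac{\mathrm d}{\mathrm dt}(E^\dagger E)\,\Delta\state$, together with the dual identity for $E^T\lambda$. Provided \eqref{formoptsys} is interpreted in this Moore--Penrose-weighted sense, the cancellations above are exact, and the argument follows the line of reasoning used in~\cite{KunM11}; otherwise a short preliminary reduction to the strangeness-free form from \Cref{sec:timevar} is required before the computation can be carried out cleanly.
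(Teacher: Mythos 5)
Your argument is correct: the paper itself gives no proof of \Cref{thm_sufcon} (it defers to \cite{KunM11}, generalizing \cite{Bac06}), and your expansion of $\mathcal{J}$ about $(\state^\star,\inpVar^\star)$ into a non-negative quadratic form plus a first-order term that vanishes after integration by parts against the adjoint equation is exactly the standard convexity argument used there. Your closing caveat about carrying out the integration by parts in the $\mathcal{C}^1_{E^\dagger E}$ solution space (via $E\dot{\state}=E\ddt(E^\dagger E\state)-E\ddt(E^\dagger E)\state$) correctly identifies the only technical point that distinguishes the \DAE case from the \ODE case.
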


Numerically, the solution of the boundary value problem (which always is \DAE ) is a challenge, in particular for large-scale problems. For the case that $E=I$ and if $W_{\mathrm{\inpVar}}$ is positive definite, which means that the optimality system is strangeness-free, then a classical approach that is successfully employed in many applications is to resolve \eqref{optbvp:c} for $\inpVar$, and then to decouple the state equation \eqref{optbvp:a} and the adjoint equation  \eqref{optbvp:b} via the solution of a Riccati differential equation. 

If some further conditions hold, then the Riccati approach can also be carried out for \DAE systems, see \cite{KunM11}.
If the constraint system with $\inpVar=0$ is regular, strangeness-free, and $E$ has constant rank, then, using \Cref{thm:smoothFullRankDecomposition}, there
exist pointwise orthogonal $ P\in\mathcal{C}(\timeInt,\R^{\stateDim,\stateDim})$ and $Q\in  \mathcal{C}^1(\timeInt,\R^{\stateDim,\stateDim})$
such that
\begin{equation}\label{nf}
    \begin{gathered}
    \tilde{E} = PEQ = \begin{bmatrix} E_{11} & 0 \\ 0 & 0 \end{bmatrix},\quad
    \tilde{A} = PAQ -PE\dot{Q} = \begin{bmatrix} A_{11} & A_{12} \\ A_{21} & A_{22} \end{bmatrix},\\
    \tilde{B} = PB =\begin{bmatrix} B_{1}  \\ B_{2} \end{bmatrix},\ 
    \tilde{W}_{\mathrm{\state}} = Q^T W_{\mathrm{\state}} Q = \begin{bmatrix} W_{11} & W_{12} \\ W_{21} & W_{22} \end{bmatrix},\
    \tilde{f} = Pf = \begin{bmatrix} f_{1} \\ f_{2} \end{bmatrix},\\
    \tilde{S} = Q^T S = \begin{bmatrix} S_{1}  \\ S_{2} \end{bmatrix},\ 
    \state = Q\tilde{\state} = \begin{bmatrix} \state_1 \\ \state_2 \end{bmatrix}, \
    \state_0 = Q\tilde{\state}_0 = \begin{bmatrix} {\state}_{0,1} \\ {\state}_{0,2} \end{bmatrix},
    \end{gathered}
\end{equation}
with $E_{11}\in \mathcal{C}(\timeInt,\R^{d,d})$ and
$A_{22}\in \mathcal{C}(\timeInt,\R^{a,a})$ pointwise nonsingular.
Forming the formal optimality system associated with this transformed system and rearranging the equations, the following theorem is proved in \cite{KunM11}.
\begin{theorem}
    \label{thindex}
    The \DAE in \eqref{optbvp} is regular and strangeness-free if and only if
    \begin{equation*}
        \hat{W}_\inpVar = \begin{bmatrix} 0 & A_{22} & B_2  \\
        A_{22}^T & W_{22}& S_2\\
        B_2^T & S_2^T & W_{\mathrm{\inpVar}} \end{bmatrix}
    \end{equation*}
    is pointwise nonsingular, where we used the notation of~\eqref{nf}.
\end{theorem}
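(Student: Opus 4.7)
The plan is to apply the coordinate change~\eqref{nf} to the full optimality \DAE~\eqref{optbvp}, read off its algebraic part, and show that the algebraic part is uniquely solvable for the algebraic unknowns if and only if $\hat W_\inpVar$ is pointwise nonsingular; this is precisely the condition that~\eqref{optbvp} is regular and strangeness-free.

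First, I would introduce the companion transformation for the adjoint variable, $\lambda = P^T\tilde\lambda$ with $\tilde\lambda = \begin{bmatrix}\lambda_1^T & \lambda_2^T\end{bmatrix}^T$ partitioned conformally with $\tilde E$. Because $P$ and $Q$ are pointwise orthogonal and $E$ has constant rank, $E^\dagger E$ and $EE^\dagger$ act in the transformed coordinates as the projectors $\diag(I_d,0)$ and $\diag(I_d,0)$, respectively, so the modified time derivative terms $E\ddt(E^\dagger E\state)$ and $E^T\ddt(EE^\dagger\lambda)$ become $E_{11}\dot{\state}_1$ and $E_{11}^T\dot{\lambda}_1$ up to lower-order terms that are absorbed into $\widetilde A$ by the same shift $PAQ - PE\dot Q$ used in~\eqref{nf}. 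This is the main bookkeeping step and, in my view, the only mildly delicate point: one must verify that the $\dot Q$-contributions in the adjoint equation combine into a term of the form $\widetilde A^T\tilde\lambda$ with the \emph{same} blocks $A_{ij}$ that appear in the state equation, which is exactly what the product-rule identity used in the derivation of~\eqref{optbvp:b} guarantees.

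After this substitution, the transformed version of~\eqref{optbvp} decouples into a differential part consisting of equations for $\dot{\state}_1$ and $\dot{\lambda}_1$, and an algebraic part comprising
\begin{align*}
0 &= A_{21}\state_1 + A_{22}\state_2 + B_2\inpVar + f_2,\\
0 &= W_{21}\state_1 + W_{22}\state_2 + S_2\inpVar + A_{12}^T\lambda_1 + A_{22}^T\lambda_2,\\
0 &= S_1^T\state_1 + S_2^T\state_2 + W_{\mathrm{\inpVar}}\inpVar - B_1^T\lambda_1 - B_2^T\lambda_2.
\end{align*}
Collecting the algebraic unknowns in the order $(\lambda_2,\state_2,\inpVar)$ and moving all remaining terms to the right-hand side, this algebraic subsystem is precisely
\begin{equation*}
\hat W_\inpVar \begin{bmatrix}\lambda_2\\ \state_2\\ \inpVar\end{bmatrix} = \text{(terms in $\state_1,\lambda_1,f_2,\dots$)}.
\end{equation*}

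Finally, by the strangeness-free characterization recalled in \Cref{sec:timevar}, the \DAE~\eqref{optbvp} is regular and strangeness-free if and only if, in any (in particular the present) equivalent block triangularization, the algebraic equations uniquely determine all algebraic variables from the differential ones. That in turn is equivalent to the pointwise invertibility of the coefficient matrix $\hat W_\inpVar$ acting on $(\lambda_2,\state_2,\inpVar)$. Conversely, if $\hat W_\inpVar$ is nonsingular, one solves explicitly for $(\lambda_2,\state_2,\inpVar)$ and substitutes into the differential block, obtaining a closed \ODE for $(\state_1,\lambda_1)$, which confirms both regularity and the strangeness-free property. The equivalence of the two statements follows.
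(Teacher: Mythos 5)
Your proposal is correct and follows essentially the same route as the paper: transform with the decomposition~\eqref{nf}, observe that the algebraic rows of the formal optimality system form a linear system with coefficient matrix $\hat W_\inpVar$ in the unknowns $(-\lambda_2,\state_2,\inpVar)$, and invoke the standard semi-explicit characterization of regular strangeness-free \DAEs. The only blemish is a sign slip in your second algebraic equation (the adjoint equation yields $-A_{12}^T\lambda_1 - A_{22}^T\lambda_2$, which is why the paper solves for $-\lambda_2$ rather than $\lambda_2$), but since this only negates a column of the coefficient matrix it does not affect the nonsingularity equivalence.
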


If $\hat{W}_\inpVar$ is pointwise nonsingular, then
\begin{equation*}
    \begin{bmatrix} -\lambda_2\\ z_2 \\  u  \end{bmatrix} = 
 -\hat{W}_\inpVar^{-1} \left ( \begin{bmatrix} 0 & A_{21}\\
A_{12}^T & W_{21} \\
B_1^T & S_1^T\end{bmatrix}
\begin{bmatrix} -\lambda_1 \\ z_1 \end{bmatrix} +
 \begin{bmatrix} f_2 \\ 0 \\ 0 \end{bmatrix}\right ).
\end{equation*}
The remaining equations can be written as
\begin{multline*}
    \begin{bmatrix} E_{11} \dot{\state}_1 \\ \ddt((-E_{11}^T) (-\lambda_1))
 \end{bmatrix} = \begin{bmatrix} 0 & A_{11} \\ A_{11}^T & W_{11}\end{bmatrix}
 \begin{bmatrix} -\lambda_1 \\ \state_1  \end{bmatrix}
    +\begin{bmatrix} 0 & A_{12} & B_1\\
A_{21}^T & W_{21}^T & S_1\end{bmatrix}
\begin{bmatrix} -\lambda_2\\ \state_2 \\  \inpVar  \end{bmatrix} +
\begin{bmatrix} f_1 \\ 0 \end{bmatrix}.
\end{multline*}
Defining
\begin{align*}
    F_1 &\vcentcolon= E_{11}^{-1} \left ( A_{11}-\begin{bmatrix} 0 & A_{12} & B_1\end{bmatrix} \hat{W}_\inpVar^{-1} \begin{bmatrix} A_{21}^T & W_{21}^T & S_1 \end{bmatrix}^T\right ),\\
    G_1 &\vcentcolon= E_{11}^{-1} \begin{bmatrix} 0 & A_{12} & B_1\end{bmatrix} \hat{W}_\inpVar^{-1}
    \begin{bmatrix} 0 & A_{12} & B_1 \end{bmatrix}^T E_{11}^{-T},\\
    H_1 &\vcentcolon= W_{11}- \begin{bmatrix} A_{21}^T & W_{21}^T & S_1 \end{bmatrix} \hat{W}_\inpVar^{-1} \begin{bmatrix} A_{21}^T & W_{21}^T & S_1 \end{bmatrix}^T,\\
    g_1 &\vcentcolon= E_{11}^{-1}\left( f_1- \begin{bmatrix}
0 & A_{12} & B_1 \end{bmatrix} \hat{W}_\inpVar^{-1} \begin{bmatrix} f_2^T & 0 & 0 \end{bmatrix}^T\right),\\
    h_1 &\vcentcolon= -\begin{bmatrix} A_{21}^T & W_{21}^T & S_1 \end{bmatrix} \hat{W}_\inpVar^{-1} \begin{bmatrix} f_2^T & 0 & 0 \end{bmatrix}^T,
\end{align*}
we obtain the boundary value problem 
\begin{subequations}
    \label{hambvp}
    \begin{align}
    \dot{\state}_1 &= F_1 \state_1 + G_1 (E_{11}^T \lambda_1) + g_1, &
    \state_1(t_0) &= \state_{0,1}, \\
    \ddt (E_{11}^T \lambda_1) &= H_1 \state_1 - F_1^T (E_{11}^T \lambda_1) + h_1, &
    (E_{11}^T \lambda_1)(t_{\mathrm{f}}) &= -M_{11}\state_1(t_{\mathrm{f}}).
    \end{align}
\end{subequations}
Making the ansatz $E_{11}^T \lambda_1 = X_{11}\state_1+v_1$,
one can solve the two initial value problems
\begin{equation*}
    \dot{X}_{11} + X_{11}F_1 +F_1^T X_{11}+ X_{11}G_1X_{11}-H_1=0,
    \quad X_{11}(t_{\mathrm{f}})=-M_{11},
\end{equation*}
and
\begin{equation*}
    \dot{v}_1 + X_{11} G_1v_1 +F_1^Tv_1+X_{11}g_1-h_1=0, \quad v_1(t_{\mathrm{f}})=0,
\end{equation*}
to obtain $X_{11}$ and $v_1$ and to decouple the solution of \eqref{hambvp}. In \cite{KunM11}, a Riccati approach is also obtained directly for the original optimality system \eqref{optbvp} by the modified ansatz
\begin{equation}\label{genricans}
    \begin{aligned}
        \lambda &=X Ez+v= X E E^\dagger E\state +v,\\
        \ddt (EE^\dagger\lambda) &=\ddt (EE^\dagger X) E\state+ (EE^\dagger X)\dot{E} E^\dagger E \state +(EE^\dagger X)E \ddt (E^\dagger E x)+\ddt (E^\dagger E v),
    \end{aligned}
\end{equation}
where
\begin{equation*}
    X\in\mathcal{C}^1_{EE^\dagger}(\timeInt, \R^{\stateDim,\stateDim}),\quad
    v\in \mathcal{C}^1_{EE^\dagger} (\timeInt, \R^{\stateDim})
\end{equation*}
to fit to the solution spaces for~$\state$ and~$\lambda$. If $W_{\mathrm{\inpVar}}$ is invertible, then introducing the notation
\begin{equation*}
    F \vcentcolon= A- B  W_{\mathrm{\inpVar}}^{-1} S^T, \quad
    G \vcentcolon= B W_{\mathrm{\inpVar}}^{-1} B^T,\quad 
    H \vcentcolon= W- S W_{\mathrm{\inpVar}}^{-1} S^T,
\end{equation*}
yields two initial value problems for the Riccati \DAE
\begin{equation}
    \label{genRiceq}
    \begin{aligned}
    \ddt (E^T X E) + E^T XF+ F^T XE + E^T X G X E -H &=0, \\
    (E^T X E)(t_{\mathrm{f}}) &= -M,
    \end{aligned}
\end{equation}
and 
\begin{equation*}
    \begin{aligned}
	\ddt ( E^T v) +E^T X G v+ F^T v +E^TX f&=0,\\
	(E^Tv)(t_{\mathrm{f}})&=0.
	\end{aligned}
\end{equation*}
For this to be solvable, we must have  $M=E(t_{\mathrm{f}})^T \tilde M E( t_{\mathrm{f}})$
with suitable~$\tilde M$ and $H=E^T \tilde H E$ with suitable~$\tilde H$. 

The major advantage of the ansatz via Riccati equations is that the resulting control can be directly expressed as a feedback control, \eg using~\eqref{genricans}, we get 
\begin{equation*}
    \inpVar= W_{\mathrm{\inpVar}}^{-1}(B^T \lambda- S^T\state)= W_{\mathrm{\inpVar}}^{-1}(B^T X E E^\dagger E-S^T) \state +v.
\end{equation*}
A similar result is also obtained if the cost functional is formulated in terms of the output and using an output feedback. 

Note that in the \LTI \DAE case further results have been obtained, that allow the use of efficient numerical techniques for the computation of the solutions to \eqref{genRiceq} via eigenvalue methods, see \cite{Meh91}.

In this section we have recalled several properties for general descriptor systems. In the following section we study the  special class of port-Hamiltonian descriptor systems and show that the structure of the systems ensures many improved  properties.
%
\section{Port-Hamiltonian descriptor systems}
\label{sec:phdae}
To fulfill as many points on our wish-list as possible, we will not use general descriptor systems, but energy-based modeling within the class of (dissipative) port-Hamiltonian (\pH) systems and their generalization to descriptor systems. 

\subsection{Nonlinear (dissipative) port-Hamiltonian descriptor systems}
We start our exposition  by introducing the general model class of (dissipative) \pH descriptor systems, or \pH differential-algebraic equation (\pHDAE) systems, introduced in \cite{MehM19}.

\begin{definition}[\pH descriptor system, \pHDAE]
	\label{def:pHDAE}
	Consider a time interval $\timeInt$, a state space $\stateSpace\subseteq\R^\stateDim$, and an extended space $\extSpace \vcentcolon= \timeInt\times\stateSpace$.  Then a \emph{(dissipative) port-Hamiltonian descriptor system} (\pHDAE) is a descriptor system of the form
	\begin{subequations}
		\label{eqn:pHDAE}
		\begin{align}
			E(t,\state)\dot{\state} + r(t,\state) &= (J(t,\state)-R(t,\state))\eta(t, \state) + (G(t,\state) - P(t,\state))\inpVar,\\
			\outVar &= (G(t,\state) + P(t,\state))^T \eta(t, \state) + (S(t,\state) - N(t,\state))\inpVar,
		\end{align}
	\end{subequations}
	with state $\state\colon \timeInt \to\stateSpace$,  input $u\colon \timeInt \to \R^m$,  output $y\colon \timeInt \to\R^m$, where
	\begin{align*}
		r,\eta&\in\calC(\extSpace,\R^{\ell}),
		& E&\in\calC(\extSpace,\R^{\ell, \stateDim}),
		& J,R&\in \calC(\extSpace,\R^{\ell,\ell})\\
		G,P&\in\calC(\extSpace,\R^{\ell, m}),
		& S,N&\in\calC(\extSpace,\R^{m, m}),
	\end{align*}
	 and an associated function $\hamiltonian\in\calC^1(\extSpace,\R)$, called the \emph{Hamiltonian} of~\eqref{eqn:pHDAE}.  Furthermore, the following properties must hold:
	 \begin{enumerate}
	 	\item[(i)] The matrix functions
	 		\begin{subequations}
	 		\label{eqn:pHDAE:prop1}
	 		\begin{align}
	 			\structureMatrix &\vcentcolon= \begin{bmatrix}
	 				J & G\\
	 				-G^T & N
	 			\end{bmatrix}\in\calC(\extSpace,\R^{(\ell+m),(\ell+m)}),\\
	 			\dissipationMatrix &\vcentcolon= \begin{bmatrix}
	 				R & P\\
	 				P^T & S
	 			\end{bmatrix}\in\calC(\extSpace,\R^{(\ell+m),(\ell+m)}),
	 		\end{align}
	 		\end{subequations}
	 		called the \emph{structure matrix} and \emph{dissipation matrix}, respectively,
	 		satisfy $\structureMatrix = -\structureMatrix^T$ and $\dissipationMatrix = \dissipationMatrix^T \geq 0$ in $\mathcal S$.
	 	\item[(ii)] The Hamiltonian satisfies
	 		\begin{equation}
	 			\label{eqn:energyCondition}
\tfrac{\partial}{\partial {\state}}\hamiltonian(t,\state) = E^T(t,\state)\eta(t,\state) \quad\text{and}\quad \tfrac{\partial}{\partial t} \hamiltonian(t,\state) = \eta^T(t,\state) r(t,\state)
	 		\end{equation}
	 		in $\mathcal S$ along any solution of~\eqref{eqn:pHDAE}.
	 \end{enumerate}
	 If the \pH descriptor system has no inputs and outputs, i.e., if $G,P\equiv 0$ and the output equation is omitted, then we refer to \eqref{eqn:pHDAE} as \emph{(dissipative) Hamiltonian differential-algebraic equation (\dHDAE)}.
\end{definition}		

Note that in the literature and also in this survey the adjective \emph{dissipative} is typically omitted, and we follow this tradition, even if the system has a  dissipative part $R$.

\begin{remark}\label{rem:autono}
In many applications the coefficients of \pHDAE systems are not explicitly depending on time. This is not really a restriction, since we can always make a system of the form \eqref{eqn:pHDAE}
autonomous by introducing the combined state
$\widehat{z} \vcentcolon= [\state^T, t]^T$ and  reformulating the \pHDAE~\eqref{eqn:pHDAE} as
\begin{subequations}
\label{eqn:pHDAE:autonomous}
\begin{align*}
			\begin{bmatrix}
				E(\widehat{\state}) & r(\widehat{\state})\\
				0 & 1
			\end{bmatrix}\dot{\widehat{\state}} &= \begin{bmatrix}
				J(\widehat{\state})-R(\widehat{\state}) & 0\\
				0 & 0
			\end{bmatrix}\begin{bmatrix}
				\eta(\widehat{\state})\\
				0\\
			\end{bmatrix} + \begin{bmatrix}
				G(\widehat{\state}) - P(\widehat{\state}) & 0\\
				0 & 1
			\end{bmatrix}\begin{bmatrix}
				\inpVar\\
				1
			\end{bmatrix},\\
			\begin{bmatrix}
				\outVar\\
				0
			\end{bmatrix} &= \begin{bmatrix}
				G(\widehat{\state}) + P(\widehat{\state}) & 0\\
				0 & 1
			\end{bmatrix}^T \begin{bmatrix}
				\eta(\widehat{\state})\\
				0
			\end{bmatrix} + \begin{bmatrix}
				S(\widehat{\state}) - N(\widehat{\state}) & 0\\
				0 & 0
		\end{bmatrix}\begin{bmatrix}
				\inpVar\\
				1
			\end{bmatrix},
\end{align*}
\end{subequations}
which is again a \pHDAE, since
\begin{displaymath}
	\tfrac{\partial}{\partial \widehat{\state}} \hamiltonian(\widehat{\state}) = \begin{bmatrix}
		\tfrac{\partial}{\partial {\state}} \hamiltonian(t,\state)\\
		\tfrac{\partial}{\partial t} \hamiltonian(t,\state)
	\end{bmatrix} = \begin{bmatrix}
		E(\widehat{\state}) & r(\widehat{\state})\\
		0 & 1
	\end{bmatrix}^T \begin{bmatrix}
		\eta(\widehat{\state})\\
		0
	\end{bmatrix}.
\end{displaymath}
\end{remark}

\begin{remark}\label{rem:alongsol}
	For many properties of \pHDAEs that we discuss later, it is sufficient to require the properties~\eqref{eqn:pHDAE:prop1} and~\eqref{eqn:energyCondition} in \Cref{def:pHDAE} to hold only along any solution of~\eqref{eqn:pHDAE}, thus further extending the model class. Nevertheless, to simplify the presentation, we work with the definition as presented here.
\end{remark}

\begin{remark}\label{rem:H=I}
	If $E = I_{\stateDim}$ is the identity matrix, $r\equiv 0$, and the coefficients do not explicitly depend on time $t$, then \Cref{def:pHDAE} reduces to the well-known classical representation for \ODE \pH systems, called \pHODEs in the following, as for instance presented in \cite{SchJ14}.
\end{remark}

\begin{remark}\label{rem:dirac}
	In the literature, \pH systems are often described via a Dirac structure; see \cite{SchJ14} and this approach has also been extended to descriptor systems, see  \cite{MehM19,Sch13,SchM18}, and the forthcoming \Cref{sec:dirac}. In this survey, however, we mostly focus on the dynamical systems point of view, which is prevalent in the simulation and control context.
\end{remark}

In many applications,  additional properties of the Hamiltonian such as convexity or non-negativity, may further strengthen the properties of \pHDAEs, see  \Cref{sec:properties}.
We thus make the following definition.

\begin{definition} \label{def:nonham}
	The Hamiltonian for a \pHDAE of the form~\eqref{eqn:pHDAE} is called \emph{non-negative}, if
	\begin{equation*}
		\hamiltonian(t,\state(t)) \geq 0\qquad\text{for all $(t,\state)\in\extSpace$ with $\state$ being a solution of~\eqref{eqn:pHDAE}}.
	\end{equation*}
\end{definition}

Although satisfied in many applications, it may seem artificial from a mathematical point of view to require the Hamiltonian to be non-negative. This is however not a restriction, since any Hamiltonian that is bounded from below can be recast as a non-negative Hamiltonian by adding its infimum along any behavior solution.
In the following we therefore always assume that the Hamiltonian is non-negative.

\begin{remark}\label{rem:colo}
The particular structure of the ports with equal dimensions and the described structure ensure that inputs and outputs are \emph{co-located} or \emph{power-conjugated}.
This enables for easy power-conserving interconnection of \pHDAE systems, see the forthcoming \Cref{sec:interconnect}. However, in many applications, one has specific quantities that one can observe and others that one can use for control, and these are not necessarily co-located or power-conjugated. To allow classical control techniques as well as interconnectability, one can extend the inputs and outputs to obtain a  power-conjugated formulation.
Even if these variables are not explicitly used, they typically have a physical meaning in the context of supplied energy. We will demonstrate this with examples later on, see \eg \Cref{sec:power,sec:gasnetwork}. 
\end{remark}

There are different generalizations of  \Cref{def:pHDAE} to infinite-dimensional systems, \eg one can formulate operator \pHDAE systems via semigroup theory, introduce formal Dirac structures, or follow a gradient flow approach. In the final section of this survey, we present an incomplete list of references discussing different aspects of infinite-dimensional \pHDAE systems.
In this survey, we focus mainly on the finite-dimensional case. We assume that a space-discretization via Galerkin projection is performed for an infinite-dimensional case. For infinite-dimensional examples, we mimic the finite-dimensional properties, which are then preserved under Galerkin projection (see the forthcoming \Cref{sec:invariance}). We refer to the examples in \Cref{sec:examples} for further details. 

\subsection{Linear pHDAE systems}
\label{sec:linearDAE} 
Important special subclasses of the general class of \pHDAE systems, are \LTV and \LTI \pHDAE systems. Such a general class with a quadratic Hamiltonian was introduced in \cite{BeaMXZ18}. 
\begin{definition}[Linear \pHDAE, quadr.~Hamiltonian]
	\label{def:pHDAE:linear}
	A linear time-varying descriptor system of the form
	\begin{subequations}
		\label{eqn:pHDAE:linear}
		\begin{align}
			E(t)\dot{\state} + E(t)K(t)\state &=(J(t)-R(t))Q(t)\state + (G(t)-P(t))\inpVar,\\
			\outVar &= (G(t)+P(t))^T Q(t)\state + (S(t)-N(t))\inpVar,
		\end{align}
	\end{subequations}
	with
	\begin{gather*}
		E,Q \in \calC^1(\timeInt,\R^{\ell,\stateDim}), \quad
		J,R,K \in \calC(\timeInt,\R^{\ell,\ell}), \quad
		 G,P\in \calC(\timeInt,\R^{\ell, m}),\\
		S = S^T\!, N=-N^T \in \calC(\timeInt,\R^{m, m})
	\end{gather*}
	is called a \emph{linear \pHDAE with quadratic Hamiltonian}
	\begin{equation}
	 		\label{eqn:Hamiltonian:linear}
	 		\hamiltonian \colon \timeInt\times \R^{\stateDim}\to \R,\qquad (t,\state) \mapsto \tfrac{1}{2}\state^T Q^T(t)E(t)\state,
	 \end{equation}
	 if the following properties are satisfied.
	\begin{enumerate}
	 	\item[(i)] The differential-operator
	 		\begin{equation}
	 			\calL \vcentcolon= Q^T E\ddt - (Q^T JQ - Q^T EK)\colon 
\calC^1(\timeInt,\R^{\stateDim})\to \calC(\timeInt,\R^{\stateDim})\label{skewL}
	 		\end{equation}
	 		is \emph{skew-adjoint}, \ie we have $Q^T E\in \calC^1(\timeInt,\R^{\stateDim,\stateDim})$ and for all $t\in\timeInt$,
	 		\begin{align*}
	 			Q^T(t)E(t) &= E^T(t)Q(t),\qquad\text{and}\\
	 			\ddt \left(Q^T(t)E(t)\right) &= Q^T(t)\left[E(t)K(t) - J(t)Q(t)\right] + \left[E(t)K(t)-J(t)Q(t)\right]^T Q(t).
	 		\end{align*}
	 	\item[(ii)] The matrix function
	 		\begin{equation}\label{Wdef}
	 			\dissipationMatrix \vcentcolon= \begin{bmatrix} Q & 0\\
	 			0 & I_m\end{bmatrix}^T\begin{bmatrix}
	 				R & P\\
	 				P^T & S
	 			\end{bmatrix}\begin{bmatrix} Q & 0\\
	 			0 & I_m\end{bmatrix}\in \calC(\timeInt,\R^{(\stateDim+m),(\stateDim+m)})
	 		\end{equation}
	 		is positive semi-definite, \ie $W(t) = W^T(t) \geq 0$ for all $t\in\timeInt$.
	 \end{enumerate}
\end{definition}	
If the Hamiltonian is quadratic, and the coefficients are not depending explicitly on the state $z$, then \Cref{def:pHDAE,def:pHDAE:linear} are closely related. 
Starting from \Cref{def:pHDAE:linear} we may set
\begin{displaymath}
	r(t,\state) \vcentcolon= E(t)K(t)\state\qquad\text{and}\qquad \eta(t,\state) \vcentcolon= Q(t)\state.
\end{displaymath}
Using the skew-adjointness of $\calL$ in \Cref{def:pHDAE:linear}\,(i)
we then obtain
\begin{align*}
	\tfrac{\partial}{\partial {\state}} \hamiltonian(t,\state) = E^T(t)Q(t)\state = E^T \eta(t,\state)
\end{align*}
and
\begin{align*}
	\tfrac{\partial}{\partial t} \hamiltonian(t,\state) &= \state^T Q^T(t) E(t)K(t)\state - \tfrac{1}{2}\state^T Q^T(t)(J(t)+J^T(t))Q(t)\state\\
	&= \eta^T(t,\state)r(t,\state) - \tfrac{1}{2}\state^T Q^T(t)(J(t)+J^T(t))Q(t)\state.
\end{align*}
Thus, if additionally $J$ is skew-symmetric, \ie $J(t) = -J^T(t)$, then the Hamiltonian satisfies the requirements~\eqref{eqn:energyCondition} from \Cref{def:pHDAE}. Similarly, we notice that the positive semi-definiteness of the dissipation matrix function in \Cref{def:pHDAE:linear} is slightly more general than its counterpart in \Cref{def:pHDAE}.  On the other hand, the requirement for the matrix functions $E$ and $Q$ to be continuously differentiable, is a sufficient condition to obtain a continuously differentiable Hamiltonian as required in \Cref{def:pHDAE}.

Another important special class is that of (\LTI) \pHDAE systems with quadratic Hamiltonian,  which is commonly used in closed-loop and  data-based control applications as well as linear stability analysis. 
\begin{definition}[\LTI \pHDAE, quadr. Hamiltonian]
	\label{def:pHDAE:LTI}
	A descriptor system of the form
	\begin{subequations}
		\label{eqn:pHDAE:LTI}
		\begin{align}
			E\dot{\state} &= (J-R)Q\state + (G-P)\inpVar,\\
			\outVar &= (G+P)^T Q\state + (S-N)\inpVar,
		\end{align}
	\end{subequations}
	with matrices $E,Q\in\R^{\ell, \stateDim},J,R\in\R^{\ell, \ell}$, $G,P\in\R^{\ell, m}$, and $S,N\in\R^{m, m}$ is called  \emph{linear time invariant 
	\pHDAE with (quadratic) Hamiltonian}
	\begin{equation}
		\label{eqn:Hamiltonian:LTI}
		\hamiltonian\colon\R^{\stateDim}\to\R,\qquad \state \mapsto \tfrac{1}{2} \state^T Q^T E\state,
	\end{equation}
	if the matrices
	\begin{align*}
		\structureMatrix &\vcentcolon= \begin{bmatrix}
			J & G\\
			-G^T & N
		\end{bmatrix}\in\R^{(\ell+m),(\ell+m)}\\ 
		\dissipationMatrix &\vcentcolon= \begin{bmatrix}
			Q & 0\\
			0 & I_{\inputDim}
		\end{bmatrix}^T \begin{bmatrix}
			R & P\\
			P^T & S
		\end{bmatrix}\begin{bmatrix}
			Q & 0\\
			0 & I_{\inputDim}
		\end{bmatrix}\in\R^{(\stateDim+m),(\stateDim+m)}
	\end{align*}
	satisfy $\structureMatrix = -\structureMatrix^T$ and $\dissipationMatrix = \dissipationMatrix^T \geq 0$.
\end{definition}

Having introduced the general modeling concept of (dissipative) \pH descriptor systems, we now discuss two modeling simplifications, namely removing the $Q$ factor in linear \pHDAE systems and removing the feedthrough term $(S-N)u$.

\subsection{Removing the \texorpdfstring{$Q$}{Q} factor in linear pHDAE systems}
\label{sec:noQ}
In many applications (time-varying or time-invariant) one has $\ell=\stateDim$ and that $Q = I_{\stateDim}$ is the identity matrix in \Cref{def:pHDAE:linear,def:pHDAE:LTI}. In this case $E$ is the Hessian of the Hamiltonian. This representation has many advantages: All the coefficients appear linearly in \eqref{eqn:pHDAE:LTI}, which greatly simplifies the analysis and also the perturbation theory. Also in many cases this leads to a convexification of the representation, cf.~\cite{Egg19,FriL71}.
In the following we will show how the factor $Q$ can be removed,
see \cite{BeaMXZ18,MehMW21}.

If $Q$ has pointwise full column rank in \eqref{eqn:pHDAE:linear}, then the state equation can be  multiplied with $Q^T$ from the left, yielding a system with the same solution set given by
\begin{align*}
	Q^T E\dot{\state} + Q^T EK\state &= Q^T (J-R)Q\state + Q^T (G-P)\inpVar,\\
	\outVar &= (G+P)^T Q\state + (S-N)\inpVar.
\end{align*}
Then setting
$\widetilde{E} \vcentcolon= Q^T E$, $\widetilde{J} \vcentcolon=Q^T JQ$, $\widetilde{R} \vcentcolon= Q^T R Q$, $\widetilde{G} \vcentcolon= Q^T G$, and $\widetilde{P} \vcentcolon= Q^T P$, the transformed system
\begin{align*}
	\widetilde{E}\dot{\state} + \widetilde{E}K\state &=(\widetilde{J}-\widetilde{R})\state + (\widetilde{G}-\widetilde{P})\inpVar,\\
	\outVar &= (\widetilde{G}+\widetilde{P})^T\state + (S-N)\inpVar
\end{align*}
is again a \pHDAE, but now has $\widetilde{Q}=I_{\stateDim}$ and hence $\widetilde{E}=\widetilde{E}^T$.

If $Q$ is not of full rank then the situation is more complex.
If $Q$ has constant rank in $\timeInt$, then, using a smooth full rank decomposition (\Cref{thm:smoothRankRevealingDecomposition}), there exist pointwise orthogonal matrix functions $U\colon\timeInt\to \R^{\ell,\ell}$ and
$V\colon\timeInt\to\R^{\stateDim,\stateDim}$ of the same smoothness as $Q$ such that
\begin{gather*}
    U^T QV = \begin{bmatrix} Q_{11} & 0\\ 0 & 0\end{bmatrix},\qquad
    U^T EV = \begin{bmatrix} E_{11} & E_{12} \\ E_{21} & E_{22} \end{bmatrix},\\
    U^T(J-R)U = \begin{bmatrix}J_{11}-R_{11} & J_{12}-R_{12} \\ J_{21}-R_{21} & J_{22}-R_{22} \end{bmatrix},
\end{gather*}
where the $(1,1)$ block in all three block matrices is square of size $r=\rank(Q)$ and $Q_{11}$ is pointwise invertible.
Since $Q^T E=E^T Q$, we get $Q_{11}^T E_{11}=E_{11}^T  Q_{11}$ and $E_{12}=0$, and the transformed system, with $\begin{bmatrix} \state_1^T & \state_2^T\end{bmatrix}^T=V^T\state$, is given by 
\begin{multline*}
	\resizebox{.97\linewidth}{!}{$\begin{aligned}
    \begin{bmatrix} E_{11} & 0 \\ E_{21} & E_{22}\end{bmatrix} \begin{bmatrix} \dot{\state}_1 \\ \dot{\state}_2 \end{bmatrix} = \left( \begin{bmatrix} (J_{11}-R_{11}) Q_{11} & 0 \\  (J_{21}-R_{21}) Q_{11} & 0\end{bmatrix}-\begin{bmatrix} E_{11} & 0 \\ E_{21} & E_{22}\end{bmatrix} \begin{bmatrix} K_{11} & K_{12} \\ K_{21} & K_{22} \end{bmatrix} \right)
\begin{bmatrix} \state_1 \\ \state_2\end{bmatrix} + \begin{bmatrix}G_1-P_1\\G_2-P_2\end{bmatrix} \inpVar,
\end{aligned}$}
\end{multline*}
where $\state_1$ is of size $r$ and $\state_2$ of size $n-r$.
By the \pHDAE structure it then follows that $E_{11} K_{12}=0$
and the resulting subsystem
\begin{align*} 
    E_{11} \dot{\state}_1 &= \left((J_{11}-R_{11}) Q_{11}  - E_{11} K_{11}\right) \state_1 +(G_1-P_1) \inpVar, \\
    \outVar &= (G_1+P_1)^T Q_{11} \state_1 + (S-N) \inpVar
\end{align*}
is a \pHDAE with $Q_{11}$ square and nonsingular, which determines $\state_1$ independent of $\state_2$. In particular, we can multiply by $Q_{11}^T$ as discussed before.

However, for given $\state_1$ and $\inpVar$, the remaining \DAE system for $\state_2$
\begin{multline*}
  E_{22} \dot{\state}_2 = \begin{bmatrix} E_{21} & E_{22} \end{bmatrix}  \begin{bmatrix} K_{12} \\  K_{22} \end{bmatrix}  \state_2+(J_{21}-R_{21})Q_{11} \state_1 \\ 
  - \begin{bmatrix} E_{21} & E_{22} \end{bmatrix} \begin{bmatrix} K_{11}  \\ K_{21}  \end{bmatrix}  \state_1 
 -E_{21} \dot{\state}_1+ (G_2-P_2) \inpVar,
\end{multline*}
has no apparent  structure. This is not  a problem, since the variable $z_2$ does not contribute to the Hamiltonian. Actually, further equations, as well as state, input and output variables can always be added to a \pHDAE system if they do not contribute to the Hamiltonian.

\begin{remark}
\label{rem:Qnotfull}
In the case that $Q$ is not of full rank, even in the case 
of \pHODE systems the solution can grow unboundedly. 
In \cite{MehMW18}, the  Hamiltonian \ODE system 
\begin{displaymath}
    \begin{bmatrix} \dot{\state}_1 \\ \dot{\state}_2 \end{bmatrix} = JQ\begin{bmatrix}\state_1\\\state_2\end{bmatrix} = \begin{bmatrix} 0&-1\\ 1&0\end{bmatrix}\begin{bmatrix} 1&0\\ 0&0\end{bmatrix} \begin{bmatrix}\state_1\\\state_2\end{bmatrix},\quad
    \begin{bmatrix}\state_1(0)\\\state_2(0)\end{bmatrix} = \begin{bmatrix}\state_{1,0}\\\state_{2,0}\end{bmatrix}
\end{displaymath}
with Hamiltonian $\hamiltonian=\tfrac {1}{2} \state_1^2$ is presented. It has the solution $\state_1=\state_{1,0}$, $\state_2=\state_{2,0} +t \state_{1,0}$ and thus has linear growth and thus is not stable.

Here the first equation $\dot \state_1=0$ is a \pHODE with Hamiltonian $\hamiltonian=\tfrac {1}{2} \state_1^2$, while the second equation $\dot \state_2 =z_1$ has no specific structure and $\state_2$ does not contribute to the Hamiltonian.
\end{remark}

\begin{remark}
\label{rem:eggerremoveQ}
A similar approach of generating a representation without a~$Q$ factor for nonlinear and even infinite-dimensional evolution equations in a weak formulation has been presented in \cite{Egg19}, where  
for applications in linear generalized gradient systems, it is discussed that even in the case that $Q=I$, it may be more convenient to use a representation that reverses the roles of $E$  and $J-R$.  
Note that if in $E\dot{\state} =(J-R)\state$ both $E$  and $J-R$ are (pointwise) invertible, then by multiplying with $E^{-1}$,
and setting $\tilde \state =(J-R) \state$, $(J-R)^{-1}=\widetilde{J} -\widetilde{R}$, the equivalent new system 
\begin{displaymath}
    (\widetilde{J} - \widetilde{R}) \dot{\tilde \state} = \widetilde{E}\tilde \state
\end{displaymath}
has $\widetilde R\geq 0$ and $\widetilde E>0$.
\end{remark}

In view of the observations concerning the term $Q$, one should avoid introducing a term $Q$ in the representation already on the modeling level and rather work with an $E$ in front of the derivative.

\subsection{Removing the feedthrough term in linear \pHDAE systems}
\label{sec:nofeed}

In many \pHDAE models, there is no feedthrough term $(S-N)u$ and in this case, by the semi-definiteness of the dissipation matrix $\dissipationMatrix$, also $P=0$. 
If this is not the case, then (under some constant rank assumptions) one can always remove the feedthrough term by extending the state space. However, the simple construction presented in \Cref{rem:feedthroughRemoval} may destroy the \pH structure. In this subsection, we, therefore, discuss how such an extension is possible while preserving the \pHDAE structure.

Consider the linear time-varying or time-invariant \pHDAE in \eqref{eqn:pHDAE:LTI}, or \eqref{eqn:pHDAE:linear}, respectively, and assume that $D\vcentcolon=S-N$ has constant rank. 
Under this assumption by \Cref{thm:smoothRankRevealingDecomposition} there exists a pointwise orthogonal matrix function $U_D$,  
such that
\begin{displaymath}
    D = U_D \begin{bmatrix}D_1 & 0 \\ 0 & 0\end{bmatrix} U_D^T,
\end{displaymath}
with $D_1$ pointwise nonsingular. By construction, the symmetric part of $D_1$ is pointwise positive semi-definite. 
Setting, with analogous partitioning, 
\begin{align*}
    (G-P)U_D &= \begin{bmatrix}G_1-P_1 &  G_2-P_2\end{bmatrix}, &
    U_D^T \inpVar &= \begin{bmatrix} \inpVar_1 \\ \inpVar_2 \end{bmatrix}, &
    U_D^T \outVar &= \begin{bmatrix} \outVar_1 \\ \outVar_2 \end{bmatrix},
\end{align*}
the system can be written as
\begin{subequations}
	\label{reduced}
	\begin{align}
        E \dot{\state} &= (J-R)\state +(G_1-P_1) \inpVar_1 + (G_2-P_2) \inpVar_2,\\
        \outVar_1 &= (G_1 + P_1)^T \state + D_1 \inpVar_1, \\
        \outVar_2 &= (G_2 + P_2)^T \state.
    \end{align}
\end{subequations}
Using the positive semi-definiteness of the matrix (function)~$\dissipationMatrix$ in \Cref{def:pHDAE:linear,def:pHDAE:LTI}, we immediately obtain $P_2=0$.  Let us introduce the new variable $\state_2\vcentcolon= D_1 \inpVar_1 + P_1^T \state$ to obtain the extended system
\begin{align*}
    \begin{bmatrix} E & 0 \\ 0 & 0 \end{bmatrix}\begin{bmatrix}\dot{\state}\\ \dot{\state}_2\end{bmatrix} &= \begin{bmatrix} J-R & 0 \\ D_1^{-1}P_1^T & -D_1^{-1} \end{bmatrix} \begin{bmatrix} \state \\ \state_2 \end{bmatrix} + \begin{bmatrix}G_1 -P_1\\ I\end{bmatrix} \inpVar_1 + \begin{bmatrix} G_2 \\ 0 \end{bmatrix} \inpVar_2,\nonumber \\
    \outVar_1 &= \begin{bmatrix} G_1^T & I \end{bmatrix}\begin{bmatrix} z \\ z_2 \end{bmatrix},\nonumber\\ 
    \outVar_2 &= G_2^T \state . 
\end{align*}
Note that by this extension the Hamiltonian and the output have not changed, they are just formulated in different variables. Then, by multiplying  the state equation with the nonsingular matrix (function) $\begin{smallbmatrix}
        I & P_1 \\ 
        0 & I
    \end{smallbmatrix}$
from the left, we obtain the extended descriptor system
\begin{equation}
	\label{extended:pH}
	\begin{aligned}
	\mathcal{E}\dot{\xi} &= (\mathcal{J}-\mathcal{R})\xi + \mathcal{G}\inpVar,\\
	\outVar &= \mathcal{G}^T\xi
	\end{aligned}
\end{equation}
with extended state $\xi = [\state^T,\state_2^T]^T$ and matrices
\begin{align*}
	\mathcal{E} &\vcentcolon= \begin{bmatrix}
		E & 0\\
		0 & 0
	\end{bmatrix}, & 
	\mathcal{J} &\vcentcolon= \begin{bmatrix}
		J + \tfrac{1}{2}\left(P_1D_1^{-1}P_1^T - \left(P_1D_1^{-1}P_1^T\right)^T\right) & -P_1D_1^{-1}\\
		D_1^{-1}P_1^T & -\tfrac{1}{2}\big(D_1^{-1}-D_1^{-T}\big)
	\end{bmatrix},\\
	\mathcal{G} &\vcentcolon= \begin{bmatrix}
		G_1 & G_2\\
		I & 0
	\end{bmatrix}U_D^T, & 
	\mathcal{R} &\vcentcolon= \begin{bmatrix}
		R - \tfrac{1}{2}\left(P_1D_1^{-1}P_1^T + \left(P_1D_1^{-1}P_1^T\right)^T\right) & 0\\
		0 & \tfrac{1}{2}\big(D_1^{-1}+D_1^{-T}\big)
	\end{bmatrix}.
\end{align*}

\begin{theorem} 
    Consider a linear time-varying or constant coefficient \pHDAE of the form~\eqref{reduced} and the extended system \eqref{extended:pH}. Then both systems have the same input-output relation and Hamiltonian and the extended system without feedthrough is again a \pHDAE.
\end{theorem}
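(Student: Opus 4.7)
The plan is to verify the three assertions — equivalence of the input-output behaviour, equality of Hamiltonians, and the \pHDAE structure of~\eqref{extended:pH} — by following the construction presented in the paragraphs preceding the statement.

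For the input-output equivalence, I would use that $D_1$ is pointwise nonsingular to exhibit an explicit bijection between solutions. Starting from a solution $(\state,\inpVar,\outVar)$ of~\eqref{reduced} with $\inpVar = U_D[\inpVar_1^T,\inpVar_2^T]^T$, I set $\state_2 \vcentcolon= D_1 \inpVar_1 + P_1^T \state$. The second block row of~\eqref{extended:pH} then becomes the algebraic identity $D_1^{-1}\state_2 = D_1^{-1}P_1^T\state + \inpVar_1$; the first block row, obtained from~\eqref{reduced} after left-multiplication by $\begin{smallbmatrix} I & P_1 \\ 0 & I\end{smallbmatrix}$, reproduces the original state equation once the definition of $\state_2$ is substituted; and the output agrees via $\outVar_1 = G_1^T \state + \state_2 = (G_1+P_1)^T\state + D_1 \inpVar_1$ while $\outVar_2 = G_2^T \state$ is unchanged. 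The converse direction recovers $\inpVar_1$ from the algebraic constraint and retraces the steps. Equality of Hamiltonians is then immediate from the block structure $\mathcal{E}=\diag(E,0)$, giving $\tfrac{1}{2}\xi^T \mathcal{E}\xi = \tfrac{1}{2}\state^T E \state$; the extra component $\state_2$ carries no energy.

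For the \pHDAE structure of~\eqref{extended:pH} (with $\mathcal{Q} = I$ and no $K$-term), I perform a block-by-block verification of skew-symmetry of $\mathcal{J}$ and positive semi-definiteness of $\mathcal{R}$. Skew-symmetry of $\mathcal{J}$ reduces to a direct block calculation, where the diagonal blocks are already presented as explicit skew parts. For $\mathcal{R}$, which is block-diagonal, it suffices to check the two diagonal blocks. The (2,2) block equals $D_1^{-1} S_1 D_1^{-T}$ with $S_1\vcentcolon=\tfrac{1}{2}(D_1+D_1^T)$; positive semi-definiteness of $S_1$ is inherited from $S\geq 0$ through the congruence by $U_D$, and the conjugation by $D_1^{-1}$ preserves this property.

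The main obstacle lies in the (1,1) block of $\mathcal{R}$, namely $R - P_1 D_1^{-1} S_1 D_1^{-T} P_1^T$. Here I would exploit that $W\geq 0$ together with $P_2=0$ (derived before the statement) yields the PSD submatrix $\begin{smallbmatrix} R & P_1 \\ P_1^T & S_1 \end{smallbmatrix}\geq 0$, whence the generalised Schur complement gives $R - P_1 S_1^\dagger P_1^T \geq 0$ on the range of $S_1$. The required bound then reduces to the operator comparison $D_1^{-1} S_1 D_1^{-T} \leq S_1^\dagger$, which in the case $S_1 > 0$ follows from the factorisation $D_1 = S_1^{1/2}(I - \widetilde N) S_1^{1/2}$ with $\widetilde N\vcentcolon= S_1^{-1/2} N_1 S_1^{-1/2}$ skew: this yields $D_1^{-1} S_1 D_1^{-T} = S_1^{-1/2}(I - \widetilde N^2)^{-1} S_1^{-1/2}$, and $(I - \widetilde N^2)^{-1} \leq I$ since $-\widetilde N^2 = \widetilde N^T \widetilde N \geq 0$. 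The degenerate case where $S_1$ is only positive semi-definite is handled by restricting all operations to $\range S_1$, into which $P_1^T$ maps by the PSD condition on $W$.
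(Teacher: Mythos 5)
Your proposal is correct, and at its core it rests on the same two ingredients as the paper's (very terse) proof, namely positive semi-definiteness of the dissipation block $\begin{smallbmatrix} R & P_1\\ P_1^T & S_1\end{smallbmatrix}$ obtained from \eqref{Wdef} together with $P_2=0$, and a Schur-complement argument; but you execute the key step by a genuinely different route. The paper disposes of both diagonal blocks of $\mathcal R$ in one stroke: the congruence of $\begin{smallbmatrix} R & P_1\\ P_1^T & S_1\end{smallbmatrix}$ by $\begin{smallbmatrix} I & 0\\ -D_1^{-T}P_1^T & D_1^{-T}\end{smallbmatrix}$ is again positive semi-definite and its diagonal blocks are exactly $R-P_1\,\mathrm{sym}(D_1^{-1})P_1^T$ and $\mathrm{sym}(D_1^{-1})$, so no case distinction on $S_1$ is needed. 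You instead combine the generalized Schur-complement inequality $R\geq P_1S_1^\dagger P_1^T$ with the auxiliary operator bound $D_1^{-1}S_1D_1^{-T}\leq S_1^{-1}$, proved via $(I-\widetilde N^2)^{-1}\leq I$; this is valid and even quantitatively sharper when $S_1>0$, but it makes the degenerate case harder than you suggest. When $S_1$ is singular the unrestricted inequality $D_1^{-1}S_1D_1^{-T}\leq S_1^\dagger$ is simply false (try $D_1=\begin{smallbmatrix}1&1\\-1&0\end{smallbmatrix}$), and ``restricting all operations to $\range S_1$'' cannot be taken literally since $D_1^{-1}$ does not preserve that subspace. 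What rescues the step is that you only need the quadratic forms compared on vectors $v=P_1^Tx\in\range S_1$: setting $w=D_1^{-T}v$ one has $v^TS_1^\dagger v=w^TS_1w+(N_1w)^TS_1^\dagger(N_1w)\geq w^TS_1w=v^TD_1^{-1}S_1D_1^{-T}v$, which is a small but genuine computation you would need to spell out, and which the paper's single congruence avoids entirely. Your explicit verification of the input--output equivalence and of the Hamiltonian goes beyond the paper, whose proof only addresses $\mathcal R\geq 0$ and a remark on derivative terms in the time-varying case (worth adding to your argument: $\state_2$ is never differentiated, so the change of basis creates no $\dot D_1$ or $\dot P_1$ terms). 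One caveat applying equally to your ``direct block calculation'' and to the paper's displayed formulas: the off-diagonal blocks $-P_1D_1^{-1}$ and $D_1^{-1}P_1^T$ pair up skew-symmetrically only up to the term $P_1\,\mathrm{skew}(D_1^{-1})$, so $\mathcal J$ skew and $\mathcal R$ block-diagonal cannot both hold as written unless $D_1$ is symmetric; this bookkeeping slip does not affect the validity of the theorem, since the full symmetric part is the congruence image above (up to conjugation by $\diag(I,-I)$) and hence positive semi-definite.
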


\begin{proof}
	It remains to show that $\mathcal{R}$ is (pointwise) positive semi-definite. Since the positive semi-definiteness of $\mathcal{R}$ is equivalent to the positive semi-definiteness of the symmetric part  of
	\begin{displaymath}
		\begin{bmatrix}
		R - P_1D_1^{-1}P_1^T & 0\\
		0 & D_1^{-1}
	\end{bmatrix},
	\end{displaymath}
	the claim is an immediate consequence of the positive semi-definiteness of the dissipation matrix~\eqref{Wdef} and the Schur complement.
	Note that due to the special form  of the coefficient of the derivative, the changes of basis do not introduce extra derivative terms. 
\end{proof}

Thus, in the following we often assume that $S,N=0$, which then also implies $P=0$. We should be aware, however, that the matrix $D_1$ may be ill-conditioned with respect to inversion, so from a numerical point of view the removal of the feedthrough term may be not advisable.

\begin{remark}\label{rem:addorrem}
The extension of the \pHDAE system by algebraic equations and variables that do not contribute to the
Hamiltonian is the counterpart to 
\Cref{rem:Qnotfull}, where equations that do not contribute to the Hamiltonian can be separated from the system.
\end{remark}

\section{Applications and examples}
\label{sec:examples}

In this section, we illustrate the generality and wide applicability of the model class of \pHDAE systems introduced in \Cref{sec:phdae} with several examples from different application areas. For further examples we refer to \cite{RasCSS20} and the references therein.

\subsection{RLC circuit} \label{sec:circuit}
An RLC circuit can be modeled as a directed graph with incidence matrix
\begin{displaymath}
    \calA = \begin{bmatrix} 	\calA_r & \calA_c & \calA_\ell & \calA_v & \calA_i \end{bmatrix}
\end{displaymath}
conveniently partitioned into components associated with resistors, capacitors,  inductors, voltage sources, and current sources; see~\cite{Fre11} for further details. Let $V$ denote the vector of voltages at the nodes (except for the ground node at which the voltage is zero). Furthermore, let $I_\ell$, $I_v$, $I_i$ denote the vectors of currents along the edges for the inductors, voltage sources, and current sources, respectively, while $V_v$ and $V_i$ denote the vectors of voltages across the edges for the voltage sources and current sources. Using Kirchhoff's current and voltage law combined with the so-called branch constitutive relations yield a \pHDAE (in the spirit of \Cref{def:pHDAE:LTI} with $Q = I_n$, $P=0$, $S=N=0$)
\begin{align*}
	\begin{bmatrix}
		\calA_c\mathsf{C}\calA_c^T & 0 & 0\\
		0 & \mathsf{L} & 0\\
		0 & 0 & 0
	\end{bmatrix}\begin{bmatrix}
		\dot{V}\\
		\dot{I}_{\ell}\\
		\dot{I}_v
	\end{bmatrix} &= \begin{bmatrix}
		-\calA_r \mathsf{R}^{-1} \calA_r^T & -\calA_\ell & -\calA_v\\
		\calA_\ell & 0 & 0\\
		\calA_v & 0 & 0
	\end{bmatrix}\begin{bmatrix}
		V\\
		I_{\ell}\\
		I_v
	\end{bmatrix} + \begin{bmatrix}
		\calA_i & 0\\
		0 & 0\\
		0 & -I
	\end{bmatrix}\begin{bmatrix}
		-I_i\\
		V_v
	\end{bmatrix}\\
	\begin{bmatrix}
		V_i\\
		-I_v
	\end{bmatrix} &=  \begin{bmatrix}
		\calA_i & 0\\
		0 & 0\\
		0 & -I
	\end{bmatrix}^T \begin{bmatrix}
		V\\
		I_{\ell}\\
		I_v
	\end{bmatrix}
\end{align*}
and Hamiltonian
\begin{displaymath}
	\hamiltonian(V,I_\ell,I_v) = V^T \calA_c\mathsf{C}\calA_c^T V + I_\ell^T \mathsf{L} I_\ell,
\end{displaymath}
which is associated with the stored energy in the capacitors and inductors.
Here, the positive definite matrices $\mathsf{R}$, $\mathsf{C}$, and $\mathsf{L}$ are defined via the defining properties of the resistors, capacitors, and inductors.
Note that in this case we have an \LTI \pHDAE with quadratic Hamiltonian, $Q=I$ and no feedthrough term.

For general circuits, recently in \cite{NedPS21} a new \pHDAE formulation has been suggested, which allows, in particular, for a structural index analysis and for more efficient and structurally robust implementations than classical modified nodal analysis. Another recent development is the formulation of dynamic iteration schemes for coupled \pHDAE systems  and their use in circuit simulation in \cite{GunBJR21}.

\subsection{Power networks}
\label{sec:power}
A major application of \pHDAE modeling arises in power network applications. Consider the following simple model of an electrical circuit in \Cref{fig:circuit}, which is presented in \cite{MehM19}.
In this model $L>0$ is an inductor, $C_1,C_2>0$ are capacitors $R_\mathrm{G},R_\mathrm{L},R_\mathrm{R}>0$ are resistances, and $E_\mathrm{G}$ a controlled voltage source. This circuit can serve as a surrogate model of a DC generator ($E_\mathrm{G}$,$R_\mathrm{G}$), connected to a load ($R_\mathrm{R}$ with a transmission line and given by $C_1,C_2,L,R_\mathrm{L}$). In real-world power networks one would have a large number of generators (including wind turbines and solar panels) and loads representing customers.
\begin{figure}[ht]
  \centering
  \begin{circuitikz}[scale=1.5,/tikz/circuitikz/bipoles/length=1cm]
    \draw (0,0) node[ground] {} to[american controlled voltage source,invert,v>=$E_\mathrm{G}$] (0,1) to[R=$R_\mathrm{G}$,i>=$I_\mathrm{G}$] (0,2) to (1,2) to[L=$L$,i>=$I$] (2,2) to[R=$R_\mathrm{L}$] (3,2) to (4,2) to[R=$R_\mathrm{R}$,i<=$I_\mathrm{R}$] (4,0) node[ground] {};
    \draw (1,2) to[C=$C_1$,v>=$V_1$,i<=$I_1$,*-] (1,0) node[ground] {};
    \draw (3,2) to[C,l_=$C_2$,v^>=$V_2$,i<_=$I_2$,*-] (3,0) node[ground] {};
  \end{circuitikz}
  \caption{Simple DC power network example}\label{fig:circuit}
\end{figure}
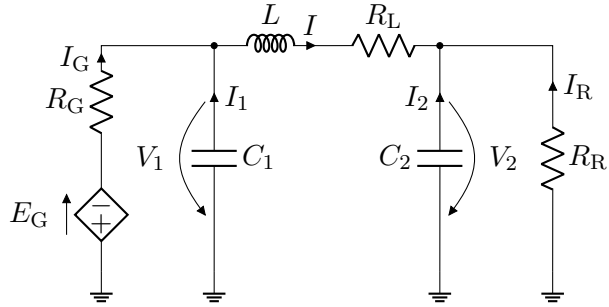
With a quadratic Hamiltonian describing the energy stored in the inductor and the two capacitors
\begin{equation}
  \mathcal H(I,V_1,V_2) = \tfrac{1}{2} LI^2 + \tfrac{1}{2} C_1V_1^2 + \tfrac{1}{2} C_2V_2^2,
\end{equation}
a formulation  as  \LTI \pHDAE has  the form
\begin{subequations}\label{eq:circuit2}
  \begin{align}
    E\dot{\state} &= (J-R)\state + G\inpVar, \\
    \outVar &= G^T \state,
  \end{align}
\end{subequations}
with $\state=\begin{bmatrix} I &V_1&V_2&I_\mathrm{G}&I_\mathrm{R}\end{bmatrix}^T $, $\inpVar=E_\mathrm{G}$, $\outVar=I_\mathrm{G}$, $E=\diag(L,C_1,C_2,0,0)$, and $G=e_4=\begin{bmatrix} 0 & 0 & 0 &1 & 0\end{bmatrix}^T$, 
\begin{equation*}
  J = \begin{bmatrix} 0 & -1 & 1 & 0 & 0 \\ 1 & 0 & 0 & -1 & 0 \\ -1 & 0 & 0 & 0 & -1 \\ 0 & 1 & 0 & 0 & 0 \\ 0 & 0 & 1 & 0 & 0\end{bmatrix}, \quad
  R = \begin{bmatrix}R_\mathrm{L} & 0 & 0 & 0 & 0 \\ 0 & 0 & 0 & 0 & 0 \\ 0 & 0 & 0 & 0 & 0 \\ 0 & 0 & 0 & R_\mathrm{G} & 0 \\ 0 & 0 & 0 & 0 & R_\mathrm{R}\end{bmatrix}.
\end{equation*}
If the generator is shut down (\ie $E_\mathrm{G}=0$), then the system approaches an equilibrium solution  for which $\ddt\hamiltonian(\state)=0$, so that  $I=I_\mathrm{G}=I_\mathrm{R}=0$, and then $\state=0$.

This system can also be considered as control problem that has the control task that a consumer (represented by a resistance $R_\mathrm{R}$) receives a fixed amount of power $P=R_\mathrm{R} I_\mathrm{R}^2$. This can be achieved by controlling the voltage of the generator $E_\mathrm{G}$, so that the solution converges to the values $I_\mathrm{R}=
-\sqrt{P/R_\mathrm{R}}$,
$I=I_\mathrm{G}\equiv-I_\mathrm{R}$,
$V_1\equiv(R_\mathrm{R}+R_\mathrm{L})I_\mathrm{R}$,
$V_2\equiv R_\mathrm{R}I_\mathrm{R}$,
and $E_\mathrm{G}\equiv-(R_\mathrm{R}+R_\mathrm{L}+R_\mathrm{G})I_\mathrm{R}$.

\subsection{Stokes and Navier-Stokes equation}
\label{sec:navierstokes}
A classical example of a partial differential equation which, after proper space discretization leads to a \pHDAE, see \eg~\cite{EmmM13}, are the incompressible or nearly incompressible Navier-Stokes equations describing the flow of a Newtonian fluid in a domain $\Omega$,
\begin{align*}
\tfrac{\partial v}{\partial_t} - \nu \Delta v + (v\cdot\nabla) v + \nabla p &=
f \qquad \text{in } \Omega \times \mathbb T ,\\
\nabla^T v &=0 \qquad \text{in } \Omega \times \mathbb T,
\end{align*}
together with suitable initial and boundary conditions, see \eg \cite{Tem77}.
When one linearizes around a prescribed stationary vector field $v_{\infty}$, then one
obtains the linearized Navier-Stokes equations,
\begin{align*}
\tfrac{\partial v}{\partial_t}  - \nu \Delta v + (v_{\infty}\cdot\nabla) v +
(v\cdot\nabla) v_{\infty} + \nabla p &= f
\qquad \text{in } \Omega \times \timeInt,
\nonumber \\
\nabla^T v &=0 \qquad \text{in } \Omega \times \timeInt.
\end{align*}
If $v_{\infty}$ is also constant in space then $(v\cdot\nabla) v_{\infty}=0$ and one obtains the Oseen equations. If also the term $(v_{\infty}\cdot\nabla) v$ is neglected one obtains the Stokes equation. 
Performing a finite element discretization in space, see for instance \cite{Lay08},
a Galerkin projection leads to a \dHDAE of the form
\begin{equation}
    \label{eq:P-instat-op-general}
    \begin{bmatrix} M & 0 \\ 0 & 0 \end{bmatrix}
    \begin{bmatrix} \dot{v} \\ \dot{p} \end{bmatrix} =
    \left(\begin{bmatrix} A_S & B\\ -B^T & 0\end{bmatrix} -
    \begin{bmatrix} -A_H & 0\\ 0 & -C\end{bmatrix}\right)
    \begin{bmatrix} v\\p\end{bmatrix}+\begin{bmatrix}f\\ 0\end{bmatrix},
\end{equation}
where $M=M^T>0$ is the mass matrix, $A_S=-A_S^T$, $-A_H=-A_H^T\geq 0$ are the skew-symmetric and the symmetric part of the discretized and linearized convection-diffusion operator, $B^T$ is the discretized divergence operator, which we assume to be normalized so that it is of full row rank, and $-C=-C^T>0$ is a stabilization term, typically of small norm, that is needed for some
finite element spaces, see \eg 
\cite{Ran00}. 
The variables $v$ and $p$ denote the discretized velocity and pressure, respectively, and $f$ is a forcing or control term.

This becomes a \pHDAE by adding an output equation $\outVar = f^T v$ and an appropriate Hamiltonian, see \cite{AltS17}.
Other possible inputs and outputs that are not necessarily co-located can be chosen, \eg by different boundary conditions (added to the system via the trace operator and suitable Lagrange multipliers) or measurement points for the velocities or pressures.

\subsection{Multiple-network poroelasticity}\label{sec:poro}
Biot's poroelasticity model for quasi-static deformation, \cite{Bio41}, describes porous materials fully saturated by a viscous fluid. Typical applications include geomechanics \cite{Zob10}, and biomedicine \cite{SobEWC12}. The effect of different fluid compartments can be accounted for with the theory of multiple-network poroelasticity \cite{BaiER93}.
For instance, in the investigation of cerebral edema, see \cite{TulV11}, one distinguishes different blood cycles (arterial, arteriole/capillary, venous) and a cerebrospinal fluid, giving a total of $m=4$ fluid compartments.
The complete model is given by a coupled system of (nonlinear) partial differential equations (\PDEs)
\begin{subequations}
\label{eqn:poro}
\begin{align}
  -\nabla \cdot \big( \sigma (u) \big) + \sum_{i=1}^m \nabla (\alpha_i p_i) &= f,\\
  \tfrac{\partial}{\partial_t}\Big(\alpha_i \nabla \cdot u + \tfrac{1}{M} p_i \Big)- \nabla \cdot \Big( \tfrac{\kappa_i(\nabla\cdot u)}{\nu_i} \nabla p_i \Big) - \sum_{j\neq i} \beta_{ij} (p_i-p_j) &= g_i,
\end{align}
with unknown displacements $u$, unknown pressure variables $p_i$ ($i=1,\ldots,m$) for the different fluid compartments, the Biot-Willis fluid-solid coupling coefficients $\alpha_i$, Biot modulus $M$,  fluid viscosities $\nu_i$, (nonlinear) hydraulic conductivities $\kappa_i = \kappa_i(\nabla\cdot u)$,  network transfer coefficients $\beta_{ij}$, volume-distributed external forces $f$, and injection $g_i$. The stress-strain relation is given by
\begin{equation*}
  \sigma(u) \vcentcolon= 2\mu\, \varepsilon (u) + \lambda\, (\nabla \cdot u)\, \mathcal{I}, \qquad
  \varepsilon(u) \vcentcolon= \tfrac{1}{2}\, \big(\nabla u + (\nabla u)^T \big)
\end{equation*}
with the Lam\'e coefficients $\mu$ and $\lambda$ and the identity tensor~$\mathcal{I}$.  For simplicity, we consider the system with Dirichlet boundary conditions
\begin{align}
	u = u_\mathrm{b}\qquad\text{and}\qquad p_i = p_{i,\mathrm{b}}\qquad\text{on } \timeInt \times \partial\Omega.
\end{align}
\end{subequations}
Following \cite{AltMU21c} (see also \cite{EggS21}),  a \pHDAE of the mixed finite-element discretization of~\eqref{eqn:poro} is given as
\begin{align*}
	\begin{bmatrix}
		0 & 0 & 0 & 0 & 0\\
		0 & K_\mathrm{u} & 0 & 0 & 0\\
		0 & 0 & M_\mathrm{p} & 0 & 0\\
		0 & 0 & 0 & 0 & 0\\
		0 & 0 & 0 & 0 & 0
	\end{bmatrix}\begin{bmatrix}
		\dot{w}_h\\
		\dot{u}_h\\
		\dot{p}_h\\
		\dot{\lambda}_{\mathrm{u},h}\\
		\dot{\lambda}_{\mathrm{p},h}
	\end{bmatrix} = \begin{bmatrix}
		0 & -K_\mathrm{u} & D^T & B_\mathrm{u}^T & 0\\
		K_\mathrm{u} & 0 & 0 & 0 & 0\\
		-D & 0 & -K_\mathrm{p}(u_h) &  0 & B_\mathrm{p}^T\\
		-B_\mathrm{u} & 0 & 0 & 0 & 0\\
		0 & 0 & -B_\mathrm{p} & 0 & 0
	\end{bmatrix}\begin{bmatrix}
		w_h\\
		u_h\\
		p_h\\
		\lambda_{\mathrm{u},h}\\
		\lambda_{\mathrm{p},h}
	\end{bmatrix} + \begin{bmatrix}
		f_h\\0\\g_h\\\dot{u}_{\mathrm{b},h}\\p_{\mathrm{b},h}
	\end{bmatrix},
\end{align*}
with positive definite mass and stiffness matrices $M_\mathrm{p}$, $K_\mathrm{u}$, and $K_\mathrm{p}(u_h)$. Let us emphasize that in this representation we may use the boundary conditions as additional inputs (added to the system via the trace operator and suitable Lagrange multipliers) such that the system may be controlled via its boundary.

\subsection{Pressure waves in gas network}
\label{sec:gasnetwork}
The propagation of pressure waves on acoustic time scales through a network of  gas pipelines is modeled in \cite{BroGH11}, see also \cite{EggK18,EggKLMM18}, via a linear infinite-dimensional \pHDAE system on a finite directed and connected graph $\G=(\V,\E)$
with vertices $v \in \V$ and edges $e \in \E$ that correspond to the
pipes of the physical network. Denote by $p^e(t,x)$ the pressure and by $f^e(t,x)$ the mass flux in pipe $e$, 
and using equations for the conservation of mass and the balance of momentum 
\begin{align*}
a^e \tfrac{\partial}{\partial t} p^e + \tfrac{\partial}{\partial x} f^e &= 0             && \text{on } e \in \E, \ t>0,   \\
b^e \tfrac{\partial}{\partial t} f^e + \tfrac{\partial}{\partial x} p^e + d^e q^e &= 0   && \text{on } e \in \E, \ t>0, 
\end{align*}
where the coefficients $a^e$, $b^e$ encode properties of the fluid and the pipe, and~$d^e$ models the damping due to friction at the pipe walls.
The coefficients are assumed to be positive and, for ease of presentation, constant on every pipe $e$.
To model the conservation of mass and momentum at the junctions at inner vertices $v \in \Vi$ of the graph, where several pipes $e \in \E(v)$ are connected, one requires Kirchhoff's law for the flow as well as continuity of the pressure, \ie
\begin{align*}
\sum\nolimits_{e \in \E(v)}  n^e(v) f^e(v) &= 0 && \text{for all } v \in \Vi, \ t>0,\\
p^e(v) &= p^{e'}(v) && \text{for all } e,e' \in \E(v), \ v \in \Vi, \ t>0.
\end{align*}
Here $n^e(v)\in \{+1,- 1\}$, depending on whether the pipe $e$ starts or ends at the vertex $v$. The time dependent quantities $m^e(v)$ and $p^e(v)$ denote the respective functions evaluated at the vertex $v$.
At the boundary vertices $v \in \Vb = \V \setminus \Vi$, we define co-located  ports of the network by using the pressure %
\begin{align*}
    p^e(v) &= u_v \qquad  \text{for } v \in \Vb, \ e \in \E(v), \ t>0
\end{align*}
as input at $v \in \Vb$, and the mass flux 
\begin{align*}
y_v &= -n^e(v) f^e(v), \qquad v \in \Vb, \ e \in \E(v), \ t>0
\end{align*}
as output. We further define initial functions
\begin{align*}
p(0)=p_0, \qquad  f(0)=f_0 \qquad \text{on } \E.
\end{align*}
Note that typically gas is only inserted at some nodes of the network and extracted at other ends. Thus one could also use different input and output variables at the external nodes that may not necessarily be co-located.  The discussed formulation however, allows easy interconnection, and still the variables have a physical interpretation. 

In \cite{EggK18} several important properties have been shown. These include the existence of unique classical solutions for sufficiently smooth initial data $p_0$, $f_0$, as well as global conservation of mass, and that this system has \pHDAE structure.
Space-discretization via a structure-preserving mixed finite element method leads to a block-structured linear time-invariant \pHDAE system
\begin{subequations}\label{eq:ph1}
\begin{align}
E\dot{\state} &=(J-R)\state+G\inpVar, \qquad \state(t_0)= \state_0, \\ 
\outVar&=G^T \state, 
\end{align}
\end{subequations}
with
\begin{gather*}
    P\vcentcolon=0,\quad S-N\vcentcolon=0,\quad G\vcentcolon=\begin{bmatrix} 0 \\ G_2 \\ 0 \end{bmatrix},\quad \state \vcentcolon= \begin{bmatrix}\state_1 \\ \state_2 \\ \state_3 \end{bmatrix},\\
E\vcentcolon=\begin{bmatrix} M_1 & 0 & 0 \\ 0 & M_2 & 0 \\ 0 & 0 & 0 \end{bmatrix},\quad
J\vcentcolon=\begin{bmatrix} 0 & -J_{12} & 0 \\ J_{12}^T & 0 & J_{32}^T \\ 0 & -J_{32} & 0\end{bmatrix}, \quad R\vcentcolon=\begin{bmatrix} 0 & 0 & 0 \\ 0 & R_{22} & 0 \\ 0 & 0 & 0\end{bmatrix}.
\end{gather*}
Here  $\state_1\colon \R\to \R^{n_1}$ represents the discretized pressures, $\state_2\colon\R\to \R^{n_2}$  the discretized fluxes, while $\state_3\colon\R\to \R^{n_3}$ is a Lagrange multiplier vector introduced to penalize the violation of the space-discretized constraints. The coefficients $M_1=M_1^T$, $M_2=M_2^T$, and $R_{22}=R_{22}^T$ are positive definite, the matrix $J_{32}$ has full row rank and $\begin{bmatrix} J_{12}^T & J_{32}^T \end{bmatrix}$ has full column rank. The discretized Hamiltonian is given by $\hamiltonian(\state)=\tfrac{1}{2} \state^T E\state= \tfrac{1}{2} (\state_1^T M_1 \state_1+\state_2^T M_2 \state_2)$. Note that the Lagrange multiplier does not contribute to the Hamiltonian.

\subsection{Multibody systems}
\label{sec:robot}
Another natural class of applications arises in multibody dynamics. In \cite{Hou94a} the model of a two-dimensional three-link mobile manipulator was derived, see also \cite{BunBMN99} for details. After linearizing around a stationary solution one obtains a control system
\begin{equation*}
    \label{eq:mblin}
    \begin{aligned}
	M \ddot{p}  &= - D\dot{p} - Kp +Z^T \lambda+ G_1 \inpVar, \\
	0 & = Zp ,
    \end{aligned}
\end{equation*}
where $p$ is the vector of positions, $Zp=0$ is the linearized position constraint, its violation is penalized by a Lagrange multiplier vector $\lambda$, and $G_1\inpVar$ is the control force applied at the actuators. The mass and stiffness matrices $M=M^T, K=K^T$ are positive definite and the damping matrix $D=D^T$ is positive semi-definite.

This \DAE has the first and second time derivative of $Zp=0$ as hidden algebraic constraints and it is typically necessary to use a regularization procedure to make the system better suited for numerical simulation and control, see \eg \cite{EicF98,KunM06,RabR00,Sim13}. One possibility is to replace the original constraint by its time derivative $0=-Z \dot p$. By adding a tracking output  $y=G_1^T \dot{p}$, see \eg \cite{HouM94}, and transforming to first order form by introducing
\begin{displaymath}
    \state = \begin{bmatrix} \state_1 \\ \state_2 \\ \state_3 \end{bmatrix} \vcentcolon= \begin{bmatrix} \dot{p} \\ p \\ \lambda  \end{bmatrix},
\end{displaymath}
one obtains a linear time-invariant\ \pHDAE system of the form~\eqref{eqn:pHDAE:LTI} with 
\begin{align*}
    E &\vcentcolon= \begin{bmatrix} M & 0 & 0 \\ 0 & K & 0 \\ 0 & 0 & 0 \end{bmatrix}, &
    R &\vcentcolon= \begin{bmatrix} D &  0 & 0 \\
0 & 0 & 0 \\ 0 & 0  & 0 \end{bmatrix}, &
    G &\vcentcolon= \begin{bmatrix} G_1 \\ 0 \\ 0 \end{bmatrix}, \\
    Q &\vcentcolon= \begin{bmatrix} I & 0 & 0 \\ 0 & I & 0 \\ 0 & 0 & I \end{bmatrix}, &
    J &\vcentcolon= \begin{bmatrix} 0 & -K &  Z^T \\ K & 0 & 0 \\ -Z & 0 & 0 \end{bmatrix}, &
    P &\vcentcolon=0,\ S-N\vcentcolon=0.
\end{align*}
The quadratic Hamiltonian~\eqref{eqn:Hamiltonian:LTI} is given by 
\begin{displaymath}
    \hamiltonian(\state)= \frac{1}{2} \begin{bmatrix} z_1 \\ z_2 \end{bmatrix}^T \begin{bmatrix} M & 0 \\ 0 &  K \end{bmatrix} \begin{bmatrix}z_1 \\ z_2\end{bmatrix}.
\end{displaymath}
Note that the Lagrange multiplier does not contribute to the Hamiltonian.

\subsection{Brake squeal}\label{sec:brake}

Disc brake squeal is a frequent and annoying phenomenon. 
\begin{figure}	
	\centering
	\includegraphics[width=0.4\linewidth]{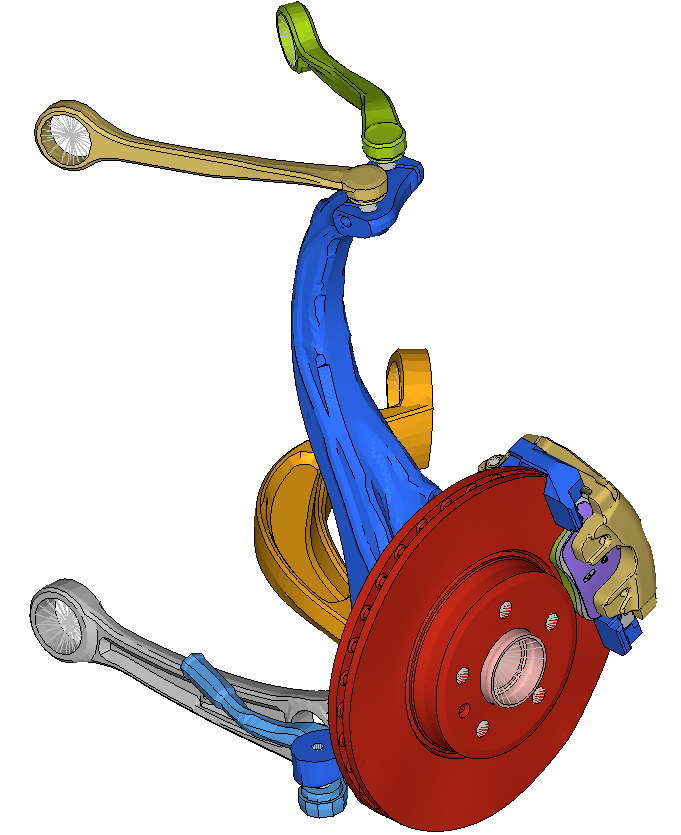}\quad \includegraphics[width=0.4\linewidth]{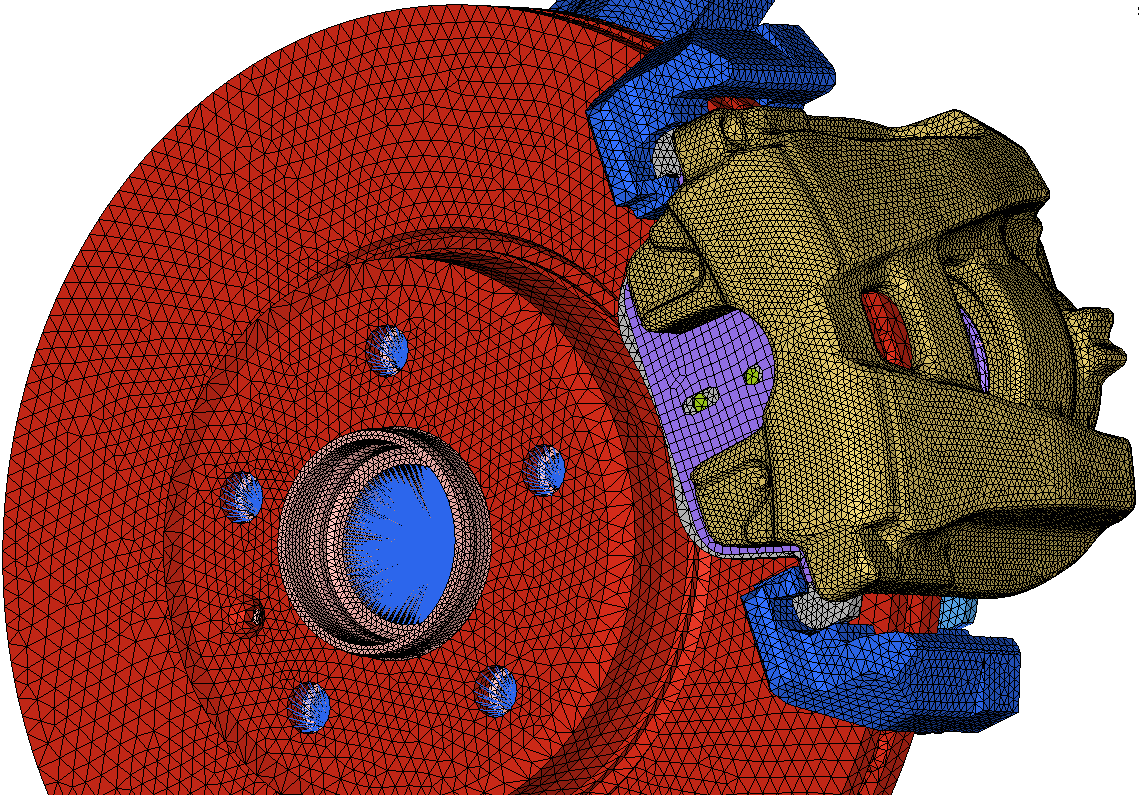}
	\caption{Finite element element model of disk brake}
	\label{fig:breakSqueal}
\end{figure}
In \cite{GraMQSW16} a very large finite element model of a brake system was derived that includes friction as well as circulatory and gyroscopic effects. This has the form
\begin{displaymath}
    M\ddot{{q}}+\big(C_1+\tfrac{\omega_{\mathrm{ref}}}{\omega}C_R+\tfrac{\omega}{\omega_{\mathrm{ref}}} C_G\big)\dot{q}+\big(K_1+K_R+\big(\tfrac{\omega}{\omega_{\mathrm{ref}}}\big)^2 K_{G}\big){q}=f,
\end{displaymath}
where $q$ is a vector of finite element  coefficients,
$M=M^T\geq 0$ is a singular mass matrix, $C_{1}=C_{1}^T$ models material damping, $C_{G}=-C_{G}^T$ models gyroscopic effects, $C_{R}=C_{R}^T$ models friction-induced damping and is typically generated from measurements, $K_{1}=K_1^T\geq 0$ is a stiffness matrix,
$K_{R}$ has no symmetry structure and models circulatory effects, while $K_{G}=K_G^T$ is a geometric stiffness matrix. One of many parameters is $\omega$, the rotational speed of the disk scaled by a reference velocity $\omega_{\mathrm{ref}}$. 

Experiments indicate that there is a  
subcritical Hopf bifurcation (in the parameter $\omega$), when eigenvalues  of the associated quadratic parametric eigenvalue problem
\begin{displaymath}
(\lambda(\omega)^{2}M+\lambda(\omega) (G(\omega)+D(\omega))+(K(\omega)+N(\omega))v(\omega)=0
\end{displaymath}
cross the imaginary axis.
Here $C_1+\tfrac{\omega_{\mathrm{ref}}}{\omega}C_R+\tfrac{\omega}{\omega_{\mathrm{ref}}} C_G= G(\omega) +D(\omega)$ and $ K_1+K_R+(\tfrac{\omega}{\omega_{\mathrm{ref}}})^2 K_{G}=K(\omega)+N(\omega)$ are split into their symmetric and skew-symmetric parts.

By writing the system in first order formulation, it can be expressed as a 
perturbed \dHDAE system $E \dot{\state} =(J-R_{\mathrm{D}})\state-R_{\mathrm{N}}\state$, with
\begin{align*}
E &= \begin{bmatrix} M & 0 \\ 0 & K\end{bmatrix}, &  J &= \begin{bmatrix} -G  &  -(K+\tfrac{1}{2}N)\\ (K+\tfrac{1}{2}N^T ) & 0 \end{bmatrix}, \\ 
    R_{\mathrm{D}} &= \begin{bmatrix} D  & 0  \\ 0 & 0 \end{bmatrix}, &
    R_{\mathrm{N}} &= \begin{bmatrix} 0  &  -\tfrac{1}{2}N  \\ -\tfrac{1}{2}N^T & 0 \end{bmatrix}.
\end{align*}
Instability and squeal arise only from the perturbation term $R_{\mathrm{N}}\state$, which is associated with the brake force restricted to the finite element nodes on the brake pad. 

Performing a linear stability analysis by solving the eigenvalue problem for an industrial problem,  incorporating the perturbation term $R_{\mathrm{N}} $ via a homotopy parameter $E \dot{\state} = (J-R_{\mathrm{D}})\state-\alpha R_{\mathrm{N}}\state$, $\alpha \in [0,1]$, it has been determined in \cite{Bec18} that for $\alpha=0$ the spectral abscissa, \ie, the maximal real part of all eigenvalues, is $-5.0462e-06$ and for $\alpha=.1$ it is already $2.0336e-05$, \ie, the unperturbed problem is already close to a problem with positive real part eigenvalues. 
The application task then is to design the brake in such a way (\eg~by including damping devices, so-called shims) that the unperturbed problem $E \dot{\state} = (J-R_{\mathrm{D}})\state$ is such that the perturbation $R_{\mathrm{N}}\state$ does not lead to eigenvalues in the right half plane, or at least make sure that they have a small real part.

\section{Properties of pHDAE systems}
\label{sec:properties}

In this section we discuss several general properties of the model class of \pHDAE systems and show why they are a very good candidate for our modeling  wishlist.

\subsection{Power balance equation and dissipation inequality}\label{sec:PBEDI}
A key property of \pHDAE systems that shows the strong rooting in the underlying physical principles is the power balance equation and the associated dissipation inequality, see also \Cref{sec:passive}. 
\begin{theorem}\label{th:pbe}
The \pHDAE~\eqref{eqn:pHDAE} satisfies the power balance equation
\begin{equation*}
		\ddt \hamiltonian(t,\state(t)) = - \begin{bmatrix}
			\eta(t,\state)\\
			\inpVar
		\end{bmatrix}^T \dissipationMatrix(t,\state) \begin{bmatrix}
			\eta(t,\state)\\
			\inpVar
		\end{bmatrix} + \outVar^T \inpVar
\end{equation*}
along any behavior solution of the \pHDAE~\eqref{eqn:pHDAE}.
\end{theorem}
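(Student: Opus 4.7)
The plan is to compute $\ddt\hamiltonian(t,\state(t))$ by the chain rule and substitute both the state equation in~\eqref{eqn:pHDAE} and the two energy conditions~\eqref{eqn:energyCondition}. By the chain rule,
\begin{equation*}
\ddt\hamiltonian(t,\state(t)) = \tfrac{\partial}{\partial t}\hamiltonian(t,\state) + \bigl(\tfrac{\partial}{\partial \state}\hamiltonian(t,\state)\bigr)^T\dot{\state}.
\end{equation*}
Using~\eqref{eqn:energyCondition} along a solution, the right-hand side becomes $\eta^T r + \eta^T E\dot{\state}$, and substituting $E\dot{\state} = -r + (J-R)\eta + (G-P)\inpVar$ from the state equation, the $\eta^T r$ terms cancel, leaving
\begin{equation*}
\ddt\hamiltonian(t,\state(t)) = \eta^T(J-R)\eta + \eta^T(G-P)\inpVar.
\end{equation*}

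Next I would exploit the skew-symmetry of $\structureMatrix$. Since $J=-J^T$, the scalar $\eta^T J\eta$ equals its own negative, hence vanishes. Similarly, since $N=-N^T$, we have $\inpVar^T N\inpVar=0$. Thus the previous display simplifies to $-\eta^T R\eta + \eta^T G\inpVar - \eta^T P\inpVar$.

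It remains to show this equals the asserted right-hand side. Expanding the quadratic form with $\dissipationMatrix$ gives
\begin{equation*}
\begin{bmatrix}\eta\\ \inpVar\end{bmatrix}^T \dissipationMatrix \begin{bmatrix}\eta\\ \inpVar\end{bmatrix} = \eta^T R\eta + 2\eta^T P\inpVar + \inpVar^T S\inpVar,
\end{equation*}
while the output equation together with $\inpVar^T N\inpVar=0$ yields
\begin{equation*}
\outVar^T \inpVar = \eta^T(G+P)\inpVar + \inpVar^T S\inpVar.
\end{equation*}
Subtracting the first from the second produces $-\eta^T R\eta - \eta^T P\inpVar + \eta^T G\inpVar$, matching the expression derived above.

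This is essentially a direct verification, so no genuine obstacle is expected; the only care required is the systematic use of the two skew-symmetry identities $\eta^T J\eta=0$ and $\inpVar^T N\inpVar=0$ and correctly tracking the signs of the $P$-contributions when separating the supply term $\outVar^T\inpVar$ from the quadratic dissipation term.
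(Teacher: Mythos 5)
Your proof is correct and follows essentially the same route as the paper: chain rule on $\hamiltonian$, substitution of the energy conditions~\eqref{eqn:energyCondition} and the state equation, and then the skew-symmetry of $J$ and $N$ together with the output equation to identify the dissipation and supply terms. Your version is slightly more explicit in verifying the final algebraic identity, but there is no substantive difference.
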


\begin{proof}
Let $\begin{bmatrix} \state^T & \inpVar^T & \outVar^T\end{bmatrix}^T$ be a behavior solution of the \pHDAE~\eqref{eqn:pHDAE}. Using the structural properties of the \pHDAE system we obtain
\begin{align*}
		\ddt \hamiltonian(t,\state)
		&= \tfrac{\partial}{\partial t} \hamiltonian(t,\state) + \left(\tfrac{\partial}{\partial {\state}} \hamiltonian(t,\state)\right)^T \dot{\state}
		= \eta(t,\state)^T\left( E(t,\state)\dot{\state} + r(t,\state)\right)\\
		&= \eta(t,\state)^T \left(\left(J(t,\state)-R(t,\state)\right)\state + (B(t,\state)+P(t,\state) - 2P(t,\state))u\right)\\
		&= -\begin{bmatrix}
			\eta(t,\state)\\
			u
		\end{bmatrix}^T \begin{bmatrix}
			R(t,\state) & P(t,\state)\\
			P(t,\state)^T & S(t,\state)
		\end{bmatrix}\begin{bmatrix}
			\eta(t,\state)\\
			u
		\end{bmatrix} + y^T u\\
		&= -\begin{bmatrix}
			\eta(t,\state)\\
			u
		\end{bmatrix}^T W(t,z)\begin{bmatrix}
			\eta(t,\state)\\
			u
		\end{bmatrix} + y^T u		.
\end{align*}
\end{proof}
Using the fact that  $\dissipationMatrix\geq 0$ along any solution of~\eqref{eqn:descriptorSystem}, we immediately obtain  that the \pHDAE system satisfies the \emph{dissipation inequality}
\begin{equation}\label{eq:dissIneq}
    \mathcal H(t_1,z(t_1)) - \mathcal H(t_0,z(t_0)) \leq \int_{t_0}^{t_1}y(s)^Tu(s)\,\mathrm{d}s.
\end{equation}

The power balance equation and the dissipation inequality are obvious in all the examples described in \Cref{sec:examples}. In all cases the dissipation term $R$ is positive semi-definite. In the disk brake for the unforced system, without applying the brake force, cf.~\Cref{sec:brake}, this is also the case. The perturbation term with moves eigenvalues to the right half plane is due to the external force that can be interpreted as a supplied energy via the term  $\outVar^T\inpVar$.

\subsection{Invariance under transformations and projection}
\label{sec:invariance}

Another essential property of the class of \pHDAE systems is the invariance under different equivalence transformations, see \cite{BeaMXZ18,MehM19,Mor22}.  

Let us begin  with  general state space-transformations and consider $\widetilde{\stateSpace}\subseteq \R^{\widetilde{\stateDim}}$ and define $\widetilde{\extSpace}\vcentcolon= \timeInt \times \widetilde{\stateSpace}$ with elements $(t,\widetilde{\state})\in\widetilde{\extSpace}$. Let $\varphi\in \calC^1(\widetilde{\extSpace},\stateSpace)$ and let $U\in\calC(\widetilde{\extSpace},\R^{\ell,\ell})$ be invertible. Define
\begin{align*}
	\psi\colon \widetilde{\extSpace} \to \extSpace,\qquad (t,\widetilde{\state}) \mapsto (t,\varphi(t,\widetilde{\state}))
\end{align*}
and
\begin{align*}
	\widetilde{E} &\vcentcolon= U^T(E\circ \psi)\tfrac{\partial}{\partial {\state}} \varphi, & \widetilde{J} &\vcentcolon= U^T (J\circ \psi) U, & \widetilde{R} &\vcentcolon= U^T (R\circ \psi)U,\\
	\widetilde{G} &\vcentcolon= U^T(G \circ \psi),  & \widetilde{P} &\vcentcolon= U^T(P \circ \psi),  & \widetilde{\eta} &\vcentcolon= U^T(\eta \circ \psi), \\
	\widetilde{S} &\vcentcolon= S\circ \psi, & \widetilde{N} &\vcentcolon= N\circ \psi, & \widetilde{r} &\vcentcolon= U^T(r\circ \psi) + (E\circ \psi)\tfrac{\partial}{\partial t} \varphi),
\end{align*}
together with a transformed Hamiltonian
\begin{equation}
	\label{eqn:Hamiltonian:transformed}
	\widetilde{\hamiltonian} \vcentcolon= \hamiltonian \circ \psi.
\end{equation}
We have the following transformation result taken from~\cite{MehM19}.
\begin{theorem}
	Consider a \pHDAE~\eqref{eqn:pHDAE} together with the transformed descriptor system
	\begin{subequations}
		\label{eqn:pHDAE:transformed}
		\begin{align}
			\widetilde{E}(t,\widetilde{\state})\dot{\widetilde{\state}} + \widetilde{r}(t,\widetilde{\state}) &= (\widetilde{J}(t,\widetilde{\state})-\widetilde{R}(t,\widetilde{\state}))\widetilde{\eta}(t, \widetilde{\state}) + (\widetilde{G}(t,\widetilde{\state}) - \widetilde{P}(t,\widetilde{\state}))\inpVar,\\
			\outVar &= (\widetilde{G}(t,\widetilde{\state}) + \widetilde{P}(t,\widetilde{\state}))^T \widetilde{\eta}(t, \widetilde{\state}) + (\widetilde{S}(t,\widetilde{\state}) - \widetilde{N}(t,\widetilde{\state}))\inpVar,
		\end{align}
	\end{subequations}
and transformed Hamiltonian $\widetilde{\hamiltonian}$ in~\eqref{eqn:Hamiltonian:transformed}. Then the following properties hold:
	\begin{enumerate}
		\item[(i)] The descriptor system~\eqref{eqn:pHDAE:transformed} is a \pHDAE.
		\item[(ii)] If $\varphi(t,\cdot)$ is a local diffeomorphism for all $t\in\timeInt$, then to any behavior solution $(\widetilde{\state},\inpVar,\outVar)$ of \eqref{eqn:pHDAE:transformed} there corresponds a behavior solution $(\state,\inpVar,\outVar)$ of \eqref{eqn:pHDAE} with $\state(t)=\varphi(t,\widetilde{\state}(t))$.
		\item[(iii)] If $\varphi(t,\cdot)$ is a global diffeomorphism for all $t\in\timeInt$, then there is a one-to-one correspondence between a behavior solution $(\widetilde{\state},\inpVar,\outVar)$ of \eqref{eqn:pHDAE:transformed} and a behavior solution $(\state,\inpVar,\outVar)$ of \eqref{eqn:pHDAE} with $\state(t)=\varphi(t,\widetilde{\state}(t))$.
	\end{enumerate}
\end{theorem}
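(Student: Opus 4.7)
The plan is to check each of the three claims in order, reducing (i) to the axioms of \Cref{def:pHDAE} and reducing (ii), (iii) to the chain rule combined with the invertibility of~$U$ and, in (iii), of $\varphi(t,\cdot)$.

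For (i) I would first dispose of the algebraic structural conditions by writing
\begin{equation*}
    \widetilde{\structureMatrix} = \Xi^T (\structureMatrix\circ\psi)\,\Xi, \qquad \widetilde{\dissipationMatrix} = \Xi^T (\dissipationMatrix\circ\psi)\,\Xi, \qquad \Xi \vcentcolon= \begin{bmatrix} U & 0 \\ 0 & I_m \end{bmatrix},
\end{equation*}
which exhibits $\widetilde{\structureMatrix}$ and $\widetilde{\dissipationMatrix}$ as pointwise congruence transforms of the original structure and dissipation matrices; since~$\Xi$ is invertible, skew-symmetry of $\structureMatrix$ and positive semi-definiteness of $\dissipationMatrix$ transfer to their tilded counterparts pointwise. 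For the Hamiltonian conditions \eqref{eqn:energyCondition} I would apply the chain rule to $\widetilde{\hamiltonian} = \hamiltonian \circ \psi$ to obtain
\begin{equation*}
    \tfrac{\partial \widetilde{\hamiltonian}}{\partial \widetilde{\state}} = \bigl(\tfrac{\partial \varphi}{\partial \widetilde{\state}}\bigr)^T \bigl(\tfrac{\partial \hamiltonian}{\partial \state}\bigr)\!\circ\!\psi, \qquad
    \tfrac{\partial \widetilde{\hamiltonian}}{\partial t} = \bigl(\tfrac{\partial \hamiltonian}{\partial t}\bigr)\!\circ\!\psi + \bigl(\tfrac{\partial \hamiltonian}{\partial \state}\!\circ\!\psi\bigr)^T \tfrac{\partial \varphi}{\partial t},
\end{equation*}
substitute the original identities $\tfrac{\partial \hamiltonian}{\partial \state} = E^T \eta$ and $\tfrac{\partial \hamiltonian}{\partial t} = \eta^T r$, and read off $\widetilde{E}^T \widetilde{\eta}$ and $\widetilde{\eta}^T \widetilde{r}$ from the definitions. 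The extra summand $(E\circ\psi)\tfrac{\partial \varphi}{\partial t}$ built into $\widetilde{r}$ is exactly what absorbs the chain-rule contribution from the explicit time dependence of $\varphi$.

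For (ii), given a behavior solution $(\widetilde{\state},\inpVar,\outVar)$ of \eqref{eqn:pHDAE:transformed}, I set $\state(t) = \varphi(t,\widetilde{\state}(t))$, use $\dot{\state} = \tfrac{\partial \varphi}{\partial \widetilde{\state}}\dot{\widetilde{\state}} + \tfrac{\partial \varphi}{\partial t}$, and substitute the definitions of $\widetilde{E},\widetilde{J},\widetilde{R},\widetilde{G},\widetilde{P},\widetilde{r},\widetilde{\eta}$ into the transformed state equation; stripping off the pointwise-invertible left factor $U^T$ recovers \eqref{eqn:pHDAE} evaluated at $\state(t)$. The output equation transfers verbatim because the transformation does not change $\outVar$ and acts trivially on the lower port block. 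For (iii), global invertibility of $\varphi(t,\cdot)$ lets one run the same argument with $\widetilde{\state}(t) \vcentcolon= \varphi(t,\cdot)^{-1}(\state(t))$, yielding the inverse assignment and hence a bijection between the two behavior sets. The main obstacle is the bookkeeping in (i): one must carefully reconcile how $U$ (acting on the equation from the left) interacts with $\tfrac{\partial \varphi}{\partial \widetilde{\state}}$ (acting on the state from the right), so that the congruence defining $\widetilde{\structureMatrix},\widetilde{\dissipationMatrix}$ and the chain-rule expression for $\nabla_{\widetilde{\state}}\widetilde{\hamiltonian}$ line up exactly with the definitions of $\widetilde{E}$ and $\widetilde{\eta}$. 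Once that identification is in place, the remainder is routine substitution.
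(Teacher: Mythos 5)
Your proposal is correct and follows essentially the same route as the paper's proof: the structural conditions are verified via the pointwise congruence $\widetilde{\dissipationMatrix}=\Xi^T(\dissipationMatrix\circ\psi)\Xi$ (and likewise for $\widetilde{\structureMatrix}$), the Hamiltonian conditions via the chain rule with the $(E\circ\psi)\tfrac{\partial}{\partial t}\varphi$ term in $\widetilde{r}$ absorbing the explicit time dependence of $\varphi$, and (ii)--(iii) by direct substitution $\state=\varphi(t,\widetilde{\state})$ and stripping off the invertible factor $U^T$. You spell out a few steps the paper leaves implicit (the skew-symmetry transfer for $\widetilde{\structureMatrix}$ and the solution correspondence, which the paper dismisses as clear), but there is no substantive difference.
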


\begin{proof}
The transformed DAE system is obtained from \eqref{eqn:pHDAE} by setting $\state=\varphi(t,\tilde{\state})$, pre-multiplying with $U^T$ and inserting $UU^{-1}$ in front of $\state$ in the first equation.
It is clear that if $(\widetilde{\state},\inpVar,\outVar)$ is a solution of \eqref{eqn:pHDAE:transformed}, then $(\state,\inpVar,\outVar)$ is a behavior solution of the original system and if $\varphi(t,\cdot)$ is a global diffeomorphism, then we can apply the inverse transformation 
to get a behavior solution $(\widetilde{\state},\inpVar,\outVar)$ for any  behavior solution $(\state,\inpVar,\outVar)$ of the original system.

To show that \eqref{eqn:pHDAE:transformed} is still a \pHDAE, we must check the defining conditions.
By substitution, we get
\begin{align*}
    \widetilde{\dissipationMatrix} 
    &= \begin{bmatrix}\widetilde{R} & \widetilde{P} \\ \widetilde{P}^T & \widetilde{S}\end{bmatrix} 
    = \begin{bmatrix} U^T (R\circ\varphi)U & U^T(P\circ\varphi) \\ (P\circ\varphi)^T U & S\circ\varphi\end{bmatrix}\\
    &= \begin{bmatrix} U & 0 \\ 0 & I\end{bmatrix}^T (W\circ\varphi) \begin{bmatrix} U & 0 \\ 0 & I\end{bmatrix} \geq 0,
\end{align*}
since positive semi-definiteness is invariant under congruence. 
We also get 
\begin{align*}
    \tfrac{\partial}{\partial \widetilde{\state}}{\widetilde{\hamiltonian}}(t,\widetilde{\state}) &= (\tfrac{\partial}{\partial \widetilde{\state}}\varphi)^T\left (\tfrac{\partial}{\partial \state} \hamiltonian\circ\varphi\right )
      = (\tfrac{\partial}{\partial \widetilde{\state}}{\varphi})^T \left (E^T \state\circ\varphi\right ) \\
      &= (\tfrac{\partial}{\partial \widetilde{\state}}{\varphi})^T(E^T\circ\varphi)UU^{-1}(\state\circ\varphi)
      = \widetilde{E}^T\widetilde{\state},
\end{align*}
and
\begin{align*}
    \frac{\partial {\widetilde{\hamiltonian}}}{\partial t}(t,\widetilde{\state}) &=
      \frac{\partial {\hamiltonian}(t)}{\partial t}\circ\varphi + \left( \tfrac{\partial}{\partial \state} \hamiltonian \circ\varphi\right)^T \frac{\partial{\varphi}}{\partial t}
      = \state^T r\circ\varphi + \left (\state^T E\circ\varphi \right )\frac{\partial \varphi}{\partial t}\\
      &= (\state\circ\varphi)^T\left (r\circ\varphi + (E\circ\varphi)\frac{\partial{\varphi}}{\partial t}\right )
      = \widetilde{\state}^T U^T U^{-T}\tilde r = \widetilde{\state}^T\tilde{r}.\qedhere
\end{align*}
\end{proof}

For linear \pHDAE systems with quadratic Hamiltonian this invariance takes the following form,
see \cite{BeaMXZ18}. 
\begin{theorem}
\label{th:pHDAEinv}
Consider a linear \pHDAE system of the form~\eqref{eqn:pHDAE:linear} with  quadratic Hamiltonian \eqref{eqn:Hamiltonian:linear}. Let $U\in \calC(\timeInt,\R^{\ell,\ell})$ and $V\in \calC^1(\timeInt,\R^{\stateDim,\stateDim})$ be pointwise nonsingular in $\timeInt$.
Then the transformed \DAE
\begin{align*}
    \tilde{E} \dot{\tilde{\state}} &= [(\tilde{J} -\tilde{R})\tilde{Q} -\tilde{E}\tilde{K}]\tilde{\state}  + (\tilde{G} -\tilde{P})\inpVar, \\
    \outVar &= (\tilde{G}+\tilde{P})^T\tilde{Q} \tilde{\state} + (S+N)\inpVar,
\end{align*}
with
\begin{align*}
    \tilde{E} &\vcentcolon=U^T E V , &
    \tilde{Q} &\vcentcolon=U^{-1} Q V ,&
    \tilde{J} &\vcentcolon=U^T J U ,\\
    \tilde{R} &\vcentcolon= U^T R U ,&
    \tilde{G} &\vcentcolon= U^T G ,&
    \tilde{P} &\vcentcolon= U^T P ,\\
    \tilde{K} &\vcentcolon= V^{-1} K V +{V}^{-1}\dot{V},&
    \state &\phantom{\vcentcolon}= V\tilde{\state}
\end{align*}
is still a \pHDAE system with the same quadratic Hamiltonian \begin{displaymath}
    \tilde{\hamiltonian}(\tilde{\state})\vcentcolon=\frac{1}{2} \tilde{\state}^T \tilde{Q}^T \tilde{E} \tilde{\state} = \hamiltonian(\state).
\end{displaymath}
\end{theorem}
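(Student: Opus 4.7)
The plan is to verify the two structural requirements of Definition~\ref{def:pHDAE:linear} for the transformed coefficients together with preservation of the Hamiltonian. As a preliminary smoothness check I note that, since $V\in\calC^1(\timeInt,\R^{\stateDim,\stateDim})$ and $Q^T E\in\calC^1(\timeInt,\R^{\stateDim,\stateDim})$ (the latter by hypothesis on the original system), the product $\tilde{Q}^T \tilde{E} = V^T Q^T E V$ lies in $\calC^1$, which is what is required; notably, no $\calC^1$ regularity of $U$ is needed for this key quantity. Preservation of the Hamiltonian is then immediate: $\tfrac12 \tilde{\state}^T \tilde{Q}^T \tilde{E} \tilde{\state} = \tfrac12 (V\tilde{\state})^T Q^T E (V\tilde{\state}) = \hamiltonian(\state)$.

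The easier of the two structural checks is positive semi-definiteness of the transformed dissipation matrix. Substituting $\tilde{Q} = U^{-1}QV$, $\tilde{R} = U^T R U$, and $\tilde{P} = U^T P$ into the defining formula makes all the $U$ factors cancel, leaving
\[
\tilde{W} = \begin{bmatrix} V & 0 \\ 0 & I_m \end{bmatrix}^{T} W \begin{bmatrix} V & 0 \\ 0 & I_m \end{bmatrix},
\]
which is a pointwise congruence transformation of the original $W\geq 0$, so $\tilde{W}\geq 0$ follows at once.

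The symmetry half of the skew-adjointness condition, $\tilde{Q}^T \tilde{E} = \tilde{E}^T \tilde{Q}$, is immediate from $Q^T E = E^T Q$ by direct substitution. The derivative identity
\[
\ddt\bigl(\tilde{Q}^T \tilde{E}\bigr) = \tilde{Q}^T\bigl[\tilde{E}\tilde{K} - \tilde{J}\tilde{Q}\bigr] + \bigl[\tilde{E}\tilde{K} - \tilde{J}\tilde{Q}\bigr]^{T}\tilde{Q}
\]
is the main obstacle, and the definition $\tilde{K} = V^{-1} K V + V^{-1}\dot{V}$ is evidently tailored precisely for this step. My plan is to first expand $\tilde{E}\tilde{K} - \tilde{J}\tilde{Q} = U^T(EK - JQ)V + U^T E\dot{V}$, substitute throughout, and then group the resulting terms into two classes: those sandwiched as $V^T(\cdots)V$, which reproduce $V^T\ddt(Q^T E) V$ via the skew-adjointness of $\calL$ for the untransformed system, and the boundary terms $V^T Q^T E\dot{V} + \dot{V}^T E^T Q V$, which match the cross-terms generated on the left by the product rule applied to $V^T Q^T E V$, using once more $Q^T E = E^T Q$. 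Tracking the $U$-$U^{-1}$ cancellations and isolating the $\dot{V}$ contributions is the part that requires real care; once arranged, everything collapses back to the original derivative identity and the verification is complete.
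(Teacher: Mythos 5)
Your proposal is correct and follows essentially the same route as the paper's proof: the dissipation matrix is handled by the identical congruence $\tilde{W} = \diag(V,I)^T W \diag(V,I)$, and the skew-adjointness reduces to the same three cancellation identities $\tilde{Q}^T\tilde{E}=V^TQ^TEV$, $\tilde{Q}^T\tilde{J}\tilde{Q}=V^TQ^TJQV$, and $\tilde{Q}^T\tilde{E}V^{-1}\dot{V}=V^TQ^TE\dot{V}$, with the $\dot{V}$ boundary terms absorbed exactly as you describe. Your remark that only continuity of $U$ is needed is consistent with the theorem's hypotheses.
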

\begin{proof} 
The transformed \DAE system is obtained from the original \DAE system by setting  $\state=V\tilde{\state}$, pre-multiplying with $U^T$, and by inserting $U U^{-1}$ in front of $Q$.
The transformed operator corresponding to $\calL$ in \eqref{skewL} is given by
\begin{displaymath}
    \calL_V \vcentcolon= \tilde{Q}^T \tilde{E} \ddt-\tilde{Q}^T(\tilde{J}\tilde{Q}-\tilde{E}\tilde{K}).
\end{displaymath}
Because
\begin{displaymath}
    \tilde{Q}^T\tilde E=V^T Q^T EV, \quad
    \tilde{Q}^T\tilde{J}\tilde{Q} = V^T Q^T JQV, \quad
    \tilde{Q}^T\tilde{E}V^{-1}\dot{V} = V^T Q^T E\dot{V},
\end{displaymath}
we have that $\calL_V$ is again skew-adjoint. 
It is then straightforward to show that $\tilde{\hamiltonian}(\tilde{\state}) = \hamiltonian(\state)$.
\begin{align*}
    \tilde{W} &= \begin{bmatrix}\tilde{Q}^T \tilde{R}\tilde{Q} & \tilde{Q}^T\tilde{P}\\
    \tilde{P}^T\tilde{Q} & S\end{bmatrix}
    = \begin{bmatrix} V&0\\0&I \end{bmatrix}^T \dissipationMatrix\begin{bmatrix} V&0\\0&I \end{bmatrix},
\end{align*}
where $\dissipationMatrix$ is as defined in \eqref{Wdef}. Because $\dissipationMatrix(t)$ is positive semi-definite for all $t\in \timeInt$, so is $\tilde{W}(t)$.
\end{proof}

Note that even if $K=0$ in an \LTV \pHDAE system with quadratic Hamiltonian, after the transformation given in \Cref{th:pHDAEinv} 
the extra term $-\tilde{E}\tilde{K}$ with $\tilde{K}=V^{-1}\dot{V}$ will appear, and if an orthogonal change of basis is carried out in a system with $K=0$ then 
$\tilde{K}=V^{-1}\dot{V}$ is skew-symmetric. However, even if $K\neq0$, it has been shown in \cite{BeaMXZ18} that this term can be removed via a change of basis transformation that does not change the quadratic Hamiltonian.
\begin{lemma}
\label{pHDAErev}
Consider a \pHDAE system
\begin{align*}
    \tilde{E} \dot{\tilde{\state}} &= [(\tilde{J} -\tilde{R})\tilde{Q} -\tilde{E}\tilde{K})]\tilde{\state}  + (\tilde{G} -\tilde{P})\inpVar, \\
    \outVar &= (\tilde{G}+\tilde{P})^T\tilde{Q}\tilde{\state}     + (S+N)\inpVar
\end{align*}
with Hamiltonian $\tilde{\hamiltonian}(\tilde{\state}) = \tfrac{1}{2} \tilde{\state}^T \tilde{Q}^T \tilde{E} \tilde{\state}$ and $\tilde{K}\in \calC(\timeInt,\R^{\stateDim,\stateDim})$.
If $V_{\tilde{K}} \in \calC^1(\timeInt,\R^{\stateDim,\stateDim})$ is a pointwise invertible solution of the matrix differential equation
$\dot{V} = V \tilde{K}$ with the initial condition $V(t_0)=I$, then setting $\tilde{\state} = V_K^{-1}\state$ and  defining
\[
    E \vcentcolon= \tilde{E} V_K^{-1}, \
    Q \vcentcolon= \tilde{Q} V_K^{-1}, \
    G \vcentcolon= \tilde{G},\
    J \vcentcolon= \tilde{J}, \ 
    R \vcentcolon= \tilde{R}, \
    P \vcentcolon= \tilde{P}, 
\]
then the system
\begin{align*}
    E\dot{\state} &= ( J - R ) Q \state + (G-P)\inpVar, \\
    \outVar &= (G+ P)^T Q\state + (S+N)\inpVar
\end{align*}
is again a \pHDAE with the same Hamiltonian $\hamiltonian(\state) = \tilde{\hamiltonian}(\tilde{\state})$.
\end{lemma}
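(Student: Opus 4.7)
\medskip

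\noindent\textbf{Proof proposal.} The plan is to substitute $\tilde{\state} = V_{\tilde K}^{-1}\state$ into the given \pHDAE, observe that this exactly cancels the $-\tilde{E}\tilde{K}\tilde{\state}$ term, and then verify that the resulting system is again a \pHDAE with the same Hamiltonian. First I would compute $\dot{\state} = \dot{V}_{\tilde K}\tilde{\state} + V_{\tilde K}\dot{\tilde{\state}} = V_{\tilde K}\tilde K\tilde{\state} + V_{\tilde K}\dot{\tilde{\state}}$, so that $V_{\tilde K}\dot{\tilde{\state}} = \dot{\state} - V_{\tilde K}\tilde K\tilde{\state}$. Plugging this into the state equation, multiplying by $V_{\tilde K}^{-1}$ appropriately, and using $\tilde Q V_{\tilde K}^{-1} \state = Q\state$ together with the definitions of $E, J, R, G, P$, the term $-\tilde{E}\tilde K\tilde{\state}$ on the left balances $-\tilde{E}\tilde K\tilde{\state}$ on the right, and we arrive at the claimed equation $E\dot{\state} = (J-R)Q\state + (G-P)\inpVar$. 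The output equation transforms by direct substitution using $\tilde Q\tilde{\state} = Q\state$. The Hamiltonian identity follows at once from $\hamiltonian(\state) = \tfrac{1}{2}\state^T Q^T E\state = \tfrac{1}{2}\state^T V_{\tilde K}^{-T}\tilde Q^T\tilde E V_{\tilde K}^{-1}\state = \tfrac{1}{2}\tilde{\state}^T\tilde Q^T\tilde E\tilde{\state} = \tilde{\hamiltonian}(\tilde{\state})$.

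It then remains to check the two defining conditions of \Cref{def:pHDAE:linear} for the new system (with $K=0$). For the dissipation matrix,
\begin{equation*}
\dissipationMatrix = \begin{bmatrix} V_{\tilde K}^{-1} & 0 \\ 0 & I_m\end{bmatrix}^T \tilde{\dissipationMatrix}\begin{bmatrix} V_{\tilde K}^{-1} & 0 \\ 0 & I_m\end{bmatrix} \geq 0
\end{equation*}
by congruence with the (pointwise) positive semi-definite $\tilde{\dissipationMatrix}$. The symmetry $Q^T E = E^T Q$ is immediate from $Q^T E = V_{\tilde K}^{-T}\tilde Q^T\tilde E V_{\tilde K}^{-1}$ and the symmetry of $\tilde Q^T\tilde E$.

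The main obstacle is verifying the differential identity in the skew-adjointness of $\calL = Q^T E\,\ddt - Q^T JQ$, namely $\ddt(Q^T E) = -Q^T(J+J^T)Q$. Differentiating $Q^T E = V_{\tilde K}^{-T}\tilde Q^T \tilde E V_{\tilde K}^{-1}$ and using $\ddt V_{\tilde K}^{-1} = -\tilde K V_{\tilde K}^{-1}$ and $\ddt V_{\tilde K}^{-T} = -V_{\tilde K}^{-T}\tilde K^T$ (both derived from $\dot V_{\tilde K} = V_{\tilde K}\tilde K$), I would combine the three resulting terms with the skew-adjointness identity for the original operator $\tilde{\calL}$,
\begin{equation*}
\ddt(\tilde Q^T\tilde E) = \tilde Q^T\tilde E\tilde K + \tilde K^T\tilde Q^T\tilde E - \tilde Q^T(\tilde J + \tilde J^T)\tilde Q,
\end{equation*}
and observe that the $\tilde K$-terms telescope. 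Specifically, the contributions $+V_{\tilde K}^{-T}\tilde Q^T\tilde E\tilde K V_{\tilde K}^{-1}$ and $+V_{\tilde K}^{-T}\tilde K^T\tilde Q^T\tilde E V_{\tilde K}^{-1}$ from differentiating the middle factor cancel exactly against the boundary contributions $-V_{\tilde K}^{-T}\tilde Q^T\tilde E\tilde K V_{\tilde K}^{-1}$ and $-V_{\tilde K}^{-T}\tilde K^T\tilde Q^T\tilde E V_{\tilde K}^{-1}$ coming from $\ddt V_{\tilde K}^{-1}$ and $\ddt V_{\tilde K}^{-T}$. What remains is precisely $-V_{\tilde K}^{-T}\tilde Q^T(\tilde J + \tilde J^T)\tilde Q V_{\tilde K}^{-1} = -Q^T(J+J^T)Q$, which is the required identity since $J = \tilde J$. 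This completes the verification and establishes that the transformed system is a \pHDAE with the claimed Hamiltonian.
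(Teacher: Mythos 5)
Your proposal is correct and follows essentially the same route as the paper: the paper's proof simply says to reverse the proof of \Cref{th:pHDAEinv} with $U=I$ using $\dot{V}_K V_K^{-1} = -V_K\ddt(V_K^{-1})$, and your explicit computation (cancellation of the $\tilde{E}\tilde{K}\tilde{\state}$ term under the substitution $\state = V_{\tilde K}\tilde{\state}$, congruence for the dissipation matrix, and the telescoping of the $\tilde{K}$-terms in the skew-adjointness identity) is exactly that reversal carried out in detail. The only cosmetic remark is that no left multiplication by $V_{\tilde K}^{-1}$ is actually needed in the state equation, since the transformation acts with $U=I$; the substitution alone suffices.
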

\begin{proof}
For a given matrix function $\tilde{K}$, the system $\dot{V} = V\tilde{K}$ always has a solution $V_K$ that is pointwise invertible.
The remainder of the proof follows by reversing the proof of \Cref{th:pHDAEinv} with $U=I$ and using that $\dot{V}_K V_K^{-1} = -V_K\ddt (V_K^{-1})$.
\end{proof}

Another important observation for linear \pHDAE systems of the form \eqref{eqn:pHDAE:linear} (with  $Q=I$) that is  important in the context of space discretization and model reduction is that the \pHDAE structure is invariant under Galerkin projection. 
\begin{corollary}
    \label{cor:pHDAEinv}
    Consider a \pHDAE system of the form \eqref{eqn:pHDAE:linear}  with  quadratic Hamiltonian \eqref{eqn:Hamiltonian:linear} and assume that $K=0$. If $V\in\R^{\stateDim, k}$ for some $k\in\mathbb{N}$, then the projected system in the variable $z=V\tilde z$,
\begin{align*}
    \widetilde{E} \dot{\widetilde{\state}} &= (\widetilde{J} -\widetilde{R})\widetilde{\state} + (\widetilde{G} -\widetilde{P})\inpVar, \\
    y &= (\widetilde{G}+\widetilde{P})^T \widetilde{\state} + (S +N)\inpVar,
\end{align*}
with projected matrix functions
\begin{align*}
    \widetilde{E} &\vcentcolon= V^T E V, &
    \widetilde{J} &\vcentcolon= V^T J V, &
    \widetilde{R} &\vcentcolon= V^T R V, &
    \widetilde{G} &\vcentcolon= V^T G, &
    \widetilde{P} &\vcentcolon= V^T P
\end{align*}
is still a \pHDAE with projected Hamiltonian $\widetilde{\hamiltonian}(\widetilde{\state}) = \tfrac{1}{2}\widetilde{\state}^T \widetilde{E}\widetilde{\state}$.
\end{corollary}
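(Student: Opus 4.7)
The plan is to follow the proof of Theorem~\ref{th:pHDAEinv}, specialized to the setting of a constant, possibly rectangular projection matrix $V\in\R^{\stateDim,k}$. Substituting $\state=V\widetilde{\state}$ into the state equation of~\eqref{eqn:pHDAE:linear} (with $Q=I_{\stateDim}$ and $K=0$) and pre-multiplying by $V^T$ directly yields the reduced state and output equations claimed in the corollary. The assumption $K=0$ is essential here: it eliminates the term involving $V^{-1}\dot V$ that appeared in Theorem~\ref{th:pHDAEinv} through the modified $\widetilde K$, which is not defined for a rectangular $V$. It therefore remains only to verify that the projected coefficients still satisfy the defining structural conditions of a linear \pHDAE in the sense of Definition~\ref{def:pHDAE:linear}, with $\widetilde Q=I_k$.

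First I would check the skew-adjointness of the differential operator~\eqref{skewL} associated with the projected system. Since the original system has $Q=I_{\stateDim}$ and $K=0$, this condition reduces to $E=E^T$ and $J=-J^T$. Both symmetries are preserved under the congruence $M\mapsto V^TMV$:
\begin{displaymath}
\widetilde E^{\,T}=V^T E^T V=V^T E V=\widetilde E,\qquad \widetilde J^{\,T}=V^T J^T V=-V^T J V=-\widetilde J,
\end{displaymath}
so the corresponding operator for the reduced system is again skew-adjoint, with no differentiability obstruction because $V$ is constant.

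Next, the dissipation matrix of the projected system factorizes through the original one,
\begin{displaymath}
\widetilde{\dissipationMatrix}=\begin{bmatrix}\widetilde R&\widetilde P\\ \widetilde P^{\,T} & S\end{bmatrix}=\begin{bmatrix}V&0\\ 0&I_m\end{bmatrix}^{T}\dissipationMatrix\begin{bmatrix}V&0\\ 0&I_m\end{bmatrix},
\end{displaymath}
and is therefore positive semi-definite as a congruence of $\dissipationMatrix\geq 0$. The projected quadratic Hamiltonian $\widetilde{\hamiltonian}(\widetilde{\state})=\tfrac12\widetilde{\state}^T\widetilde E\,\widetilde{\state}$, which coincides with $\hamiltonian(V\widetilde{\state})$, is well defined thanks to $\widetilde E=\widetilde E^{\,T}$, and its gradient satisfies $\tfrac{\partial}{\partial\widetilde{\state}}\widetilde{\hamiltonian}(\widetilde{\state})=\widetilde E\,\widetilde{\state}=\widetilde E^{\,T}\widetilde{\state}$, completing the verification.

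I do not anticipate a serious obstacle. The argument is essentially a direct congruence verification and exploits only the invariance of skew-symmetry and positive semi-definiteness under $M\mapsto V^TMV$, together with the fact that, with $V$ constant, no spurious time-derivative term appears in the projected coefficients. The one subtle point worth flagging in the write-up is the precise role of the hypothesis $K=0$, which is exactly what allows the rectangular Galerkin case to be handled by the same congruence argument as the square invertible change of basis of Theorem~\ref{th:pHDAEinv}.
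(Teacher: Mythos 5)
Your proposal is correct and is essentially the argument the paper intends: the corollary is presented as a direct consequence of \Cref{th:pHDAEinv}, and your congruence verification (symmetry of $\widetilde{E}$, skew-symmetry of $\widetilde{J}$, positive semi-definiteness of the projected dissipation matrix via $\widetilde{\dissipationMatrix}=\diag(V,I_m)^T\dissipationMatrix\,\diag(V,I_m)$, and the absence of any $\dot V$ term because $V$ is constant) is exactly that specialization to a rectangular Galerkin projection with $K=0$. The only imprecision is that for genuinely time-varying coefficients the skew-adjointness of~\eqref{skewL} with $Q=I$ and $K=0$ amounts to $E=E^T$ together with $\ddt E = -(J+J^T)$ rather than $J=-J^T$ alone; this derivative condition is likewise preserved under the constant congruence $M\mapsto V^TMV$, so the argument goes through unchanged.
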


\begin{remark}\label{rem:galerkin}
Not that  for  \LTV \pHDAE systems with $K\neq 0$, strictly speaking, the invariance under Galerkin projection is violated, except if one incorporates a term $ VV^T$ between $E$ and $K$ 
so that the projected coefficient $K$  is approximated  by $\widetilde K= V^T ( K-\dot V) V$.
\end{remark}
The discussed invariance of a \pHDAE system under transformations allows to simplify the representation further. These simplifications are discussed in \Cref{sec:normalForm} for linear time-varying \pHDAE systems.

\subsection{Geometric description of \pHDAE systems}\label{sec:dirac}
Port-Hamiltonian systems are often described through differential geometric structures known as Dirac structures, see \eg \cite{SchJ14}.
This viewpoint is extremely helpful in many aspects, in particular in the construction of time-discretization methods, see \Cref{sec:timedisc}. 
%
\begin{definition}
  Let $\mathcal F$ be a Euclidian vector space and $\mathcal E\vcentcolon=\mathcal F^*$ its dual space. Let $\bilp {\cdot,\cdot}$ be a bilinear form on $\mathcal F\times\mathcal E$ defined via
  \begin{equation*}
    \bilp{(f_1,e_1),(f_2,e_2)} \vcentcolon= \dualp{e_1}{f_2} + \dualp{e_2}{f_1},
  \end{equation*}
 where $\dualp{\cdot}{\cdot}$ is the standard duality pairing.
  Then a linear subspace $\mathcal D\subseteq\mathcal F\times\mathcal E$, such that $\mathcal D=\mathcal D^{\independent}$ with respect to $\bilp{\cdot\,,\cdot}$ is called \emph{Dirac structure} on $\mathcal F\times\mathcal E$.
If $(f,e)\in\mathcal D$, then $f$ and $e$ are called \emph{flow} and \emph{effort}, respectively.
\end{definition}
In finite dimension, one has a Dirac structure if $\dim\mathcal D=\dim\mathcal F$ and 
\begin{displaymath}
    \dualp{e}{f}=0 \qquad\text{ for all $(f,e)\in\mathcal D$},
\end{displaymath} 
see \cite{SchJ14}.

The concept of a Dirac structure can also be generalized to vector bundles. Let for this $\oplus$ denote the Whitney sum between vector bundles, see \eg \cite{Lan12}.

\begin{definition}
  Consider a state space $\mathcal Z$ and a vector bundle $\mathcal V$ over $\mathcal Z$ with fibers $\mathcal V_z$.
  A \emph{Dirac structure} over $\mathcal V$ is a subbundle $\mathcal D \subseteq \mathcal V \oplus \mathcal V^*$ such that, for all $z\in\mathcal Z$, $\mathcal D_z\subseteq\mathcal V_z\times\mathcal V_z^*$ is a linear Dirac structure.
\end{definition}
This definition generalizes the \emph{modulated Dirac structures}, see \cite{SchJ14},  where $\mathcal V=T\mathcal Z$ is the tangent bundle to $\mathcal Z$.
To associate a Dirac structure to the \pHDAE system~\eqref{eqn:pHDAE}, we assume that the system is autonomous, cf.~\Cref{rem:autono}, and first show the following lemma.
\begin{lemma}\label{lem:dirac}
Consider an autonomous  \pHDAE system~\eqref{eqn:pHDAE} with skew-symmetric $J:\mathcal Z\to\mathcal L(\mathcal V_z^*,\mathcal V_z)$. If $\mathcal D\subseteq\mathcal V\oplus\mathcal V^*$ is a vector subbundle with fibers defined by
  \begin{equation}\label{eqn:diracstructure}
    \mathcal D_z=\{(f,e)\in\mathcal V_z\times\mathcal V_z^*:f+J(x)e=0\},
  \end{equation}
then $\mathcal D$ is a Dirac structure.
\end{lemma}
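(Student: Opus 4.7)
The plan is to verify the two defining conditions of a Dirac structure fiberwise (linearity/dimension plus isotropy), exploiting the finite-dimensional characterization stated just after the definition: $\mathcal{D}_z$ is a linear Dirac structure in $\mathcal{V}_z\times\mathcal{V}_z^*$ whenever $\dim\mathcal{D}_z=\dim\mathcal{V}_z$ and $\dualp{e}{f}=0$ for every $(f,e)\in\mathcal{D}_z$.

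First, I would observe that, for each fixed $z\in\mathcal{Z}$, the fiber
\begin{displaymath}
    \mathcal{D}_z = \{(f,e)\in \mathcal{V}_z\times\mathcal{V}_z^* \,:\, f = -J(z)\,e\}
\end{displaymath}
is precisely the graph of the linear map $-J(z)\colon\mathcal{V}_z^*\to\mathcal{V}_z$. Hence $\mathcal{D}_z$ is a linear subspace of $\mathcal{V}_z\times\mathcal{V}_z^*$ with $\dim \mathcal{D}_z = \dim \mathcal{V}_z^* = \dim \mathcal{V}_z$, which is the required dimension condition.

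Next, I would verify isotropy with respect to $\bilp{\cdot,\cdot}$. Given any $(f,e)\in\mathcal{D}_z$, the defining relation yields
\begin{displaymath}
    \dualp{e}{f} \;=\; \dualp{e}{-J(z)\,e} \;=\; -\dualp{e}{J(z)\,e}.
\end{displaymath}
Since $J(z)\in\mathcal{L}(\mathcal{V}_z^*,\mathcal{V}_z)$ is skew-symmetric in the sense that $\dualp{e_1}{J(z)e_2}=-\dualp{e_2}{J(z)e_1}$ for all $e_1,e_2\in\mathcal{V}_z^*$, the choice $e_1=e_2=e$ gives $\dualp{e}{J(z)e}=0$, and therefore $\dualp{e}{f}=0$. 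Combined with the dimension count, this proves that each fiber $\mathcal{D}_z$ is a linear Dirac structure on $\mathcal{V}_z\times\mathcal{V}_z^*$.

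Finally, to conclude that $\mathcal{D}$ is a Dirac structure \emph{over the vector bundle} $\mathcal{V}$, it remains to check that $\mathcal{D}$ is itself a (smooth) subbundle of $\mathcal{V}\oplus\mathcal{V}^*$. This follows from the graph description: a local frame for $\mathcal{D}$ is obtained by taking a local frame $\{e_i(z)\}$ of $\mathcal{V}^*$ and forming the sections $z\mapsto(-J(z)e_i(z),\,e_i(z))$, which are as regular as $J$. Since the fiber dimension is constant and equal to $\dim\mathcal{V}_z$, the subbundle property holds and we obtain the desired Dirac structure. The only mildly subtle point is the skew-symmetry pairing identity $\dualp{e}{J(z)e}=0$; everything else is bookkeeping.
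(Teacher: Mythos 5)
Your proof is correct, but it takes a genuinely different route from the paper's. The paper verifies the defining property $\mathcal D_z=\mathcal D_z^{\independent}$ directly: for $(f,e)\in\mathcal D_z$ and arbitrary $(f',e')$ it computes $\bilp{(f,e),(f',e')}=\dualp{e}{f'+Je'}$ using skew-symmetry, and then argues both inclusions — in particular, for $(f',e')\notin\mathcal D_z$ it produces an $e$ with $\dualp{e}{f'+Je'}=1$ while $(-Je,e)\in\mathcal D_z$, showing $(f',e')\notin\mathcal D_z^{\independent}$. You instead invoke the finite-dimensional characterization stated immediately after the definition ($\dim\mathcal D_z=\dim\mathcal V_z$ plus isotropy $\dualp{e}{f}=0$), observing that $\mathcal D_z$ is the graph of $-J(z)$, hence of the right dimension, and that isotropy reduces to $\dualp{e}{J(z)e}=0$, which is exactly what skew-symmetry gives. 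Both arguments hinge on the same identity; yours is shorter because it delegates the maximality half of $\mathcal D_z=\mathcal D_z^{\independent}$ to the cited dimension criterion, whereas the paper's is self-contained and does not rely on that characterization. One small remark: your closing paragraph about $\mathcal D$ being a smooth subbundle is harmless but unnecessary, since the lemma's hypothesis already assumes $\mathcal D$ is a vector subbundle with the stated fibers; only the fiberwise linear Dirac property needs to be checked.
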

\begin{proof}
  For generic $(f,e)\in\mathcal D_z$ and $(f',e')\in\mathcal V_z\times\mathcal V_z^*$, we have
  \begin{align*}
    \bilp{(f,e),(f',e')} &= \dualp{e}{f'} + \dualp{e'}{f} = \\
    &= \dualp{e}{f'}-\dualp{e'}{J e} = \dualp{e}{f'+Je'}.
  \end{align*}
  We show that $(f',e')\in\mathcal D_z$ if and only if $\dualp{e}{f'+Je'}=0$ for all $(f,e)\in\mathcal D_z$.
  If $(f',e')\in\mathcal D_z$, then $\dualp{e}{f'+Je'}=0$ holds for any $e\in\mathcal E$.
  If $(f',e')\notin\mathcal D_z$, then $f'+Je'\neq0$ and so there exists $e\in\mathcal E$ such that $\dualp{e}{f'+Je'}=1$, but $(f,e)\in\mathcal D_z$ with $f=-Je$.
\end{proof}
A Dirac structure for general \pHDAE systems is then constructed as follows, see \cite{MehM19}.
\begin{theorem}
\label{thm:diracStructure}
    Consider an autonomous  \pHDAE system of the form~\eqref{eqn:pHDAE}. Define the flow fiber $\mathcal{V}_{\state} \vcentcolon= \mathcal{F}_{\state}^\mathrm{s}\times\mathcal{F}_{\state}^{\mathrm{p}}\times\mathcal{F}_{\state}^{\mathrm{d}}$ for all $\state\in\mathcal{Z}$, where 
    \begin{align*}
        \mathcal{F}_{\state}^{\mathrm{s}} &\vcentcolon= E(\state)T_{\state} \mathcal{Z}\subseteq\R^\ell \text{ is the storage flow fiber},\\
        \mathcal{F}_{\state}^{\mathrm{p}} &\vcentcolon=\R^m \text{ is the port flow fiber, and}\\ \mathcal{F}_{\state}^{\mathrm{d}} &\vcentcolon=\R^{\ell+m} \text{ is the dissipation flow fiber.}
    \end{align*}
  Let us partition $f=(f_{\mathrm{s}},f_{\mathrm{p}},f_{\mathrm{d}})\in\mathcal{V}$ and $e=(e_\mathrm{s},e_\mathrm{p},e_\mathrm{d})\in\mathcal{V}^*$.
  Then the subbundle $\mathcal{D}\subseteq\mathcal{V}\oplus\mathcal{V}^*$ with
  \begin{align*}
    \mathcal{D}_{\state} = \left \{ (f,e)\in\mathcal{V}_{\state}\times\mathcal{V}_{\mathrm{\state}}^* \mid
    f + \begin{bmatrix}\Gamma(z) & I_{\ell+m} \\ -I_{\ell+m} & 0\end{bmatrix} e = 0
    \right \}
  \end{align*}
  is a Dirac structure over $\mathcal{V}$.
  Furthermore, the system of equations
  \begin{equation}\label{eq:feCond}
    \begin{aligned}
      f_{\mathrm{s}} &= -E(z)\dot{\state}, &\ e_{\mathrm{s}} &= \eta(z), \\
      f_{\mathrm{p}} &= y, &\ e_\mathrm{p} &= u, \\
      e_{\mathrm{d}} &= -W(z)f_d, &\ (f,e)&\in\mathcal{D}_{\state}
    \end{aligned}
  \end{equation}
  is equivalent to the original \pHDAE, and $\dualp{e}{f}=0$ represents the power balance equation.
\end{theorem}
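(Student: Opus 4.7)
The plan is to first verify that the operator-valued block matrix
\[
\tilde{\Gamma}(\state) \vcentcolon= \begin{bmatrix} \Gamma(\state) & I_{\ell+m} \\ -I_{\ell+m} & 0 \end{bmatrix}
\]
is pointwise skew-symmetric, which follows immediately from $\Gamma=-\Gamma^T$ guaranteed by property~(i) of \Cref{def:pHDAE}. Then \Cref{lem:dirac}, applied fiberwise with $\tilde{\Gamma}$ in place of the lemma's $J$, shows at once that $\mathcal{D}$ with fibers as defined in the theorem is a Dirac subbundle of $\mathcal{V}\oplus\mathcal{V}^*$. The only subtle point here is that $\mathcal{F}^{\mathrm{s}}_{\state}=E(\state)T_{\state}\mathcal{Z}$ is in general a proper subspace of $\R^\ell$; I would need to note that this is compatible with the construction because along any behavior trajectory the identification $f_{\mathrm{s}}=-E(\state)\dot{\state}$ automatically places $f_{\mathrm{s}}$ in $\mathcal{F}^{\mathrm{s}}_{\state}$, since $\dot{\state}\in T_{\state}\mathcal{Z}$ by construction.

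For the equivalence with~\eqref{eqn:pHDAE}, I would split $e_{\mathrm{d}}=[(e_{\mathrm{d}}^1)^T,\,(e_{\mathrm{d}}^2)^T]^T$ with $e_{\mathrm{d}}^1\in\R^\ell$, $e_{\mathrm{d}}^2\in\R^m$, matching the block structure of $W$, and unfold the relation $f+\tilde{\Gamma}(\state)e=0$ into the three block equations
\[
f_{\mathrm{s}}+Je_{\mathrm{s}}+Ge_{\mathrm{p}}+e_{\mathrm{d}}^1=0,\quad
f_{\mathrm{p}}-G^Te_{\mathrm{s}}+Ne_{\mathrm{p}}+e_{\mathrm{d}}^2=0,\quad
f_{\mathrm{d}}=\begin{bmatrix} e_{\mathrm{s}} \\ e_{\mathrm{p}} \end{bmatrix}.
\]
Substituting the identifications in \eqref{eq:feCond}, the last equation gives $f_{\mathrm{d}}=[\eta(\state)^T,u^T]^T$, so that $e_{\mathrm{d}}=-W(\state)f_{\mathrm{d}}$ evaluates to $e_{\mathrm{d}}^1=-R\eta-Pu$ and $e_{\mathrm{d}}^2=-P^T\eta-Su$. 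Plugging these back into the first two block equations yields exactly $E\dot{\state}=(J-R)\eta+(G-P)u$ and $y=(G+P)^T\eta+(S-N)u$, i.e.~the state and output equations of \eqref{eqn:pHDAE} (under the convention $r\equiv 0$ natural for genuinely autonomous systems, consistent with $\partial_t\mathcal{H}=\eta^T r=0$ in~\eqref{eqn:energyCondition}).

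For the power balance assertion, I would evaluate the duality pairing on an arbitrary element of $\mathcal{D}_{\state}$:
\[
\dualp{e}{f}=e_{\mathrm{s}}^T f_{\mathrm{s}}+e_{\mathrm{p}}^T f_{\mathrm{p}}+e_{\mathrm{d}}^T f_{\mathrm{d}}.
\]
Using the identifications and $e_{\mathrm{d}}^T f_{\mathrm{d}}=-f_{\mathrm{d}}^T W(\state)f_{\mathrm{d}}$, this becomes
\[
\dualp{e}{f}=-\eta^T E\dot{\state}+u^T y-\begin{bmatrix}\eta\\u\end{bmatrix}^T W(\state)\begin{bmatrix}\eta\\u\end{bmatrix}.
\]
Since \eqref{eqn:energyCondition} gives $\eta^T E\dot{\state}=\tfrac{\mathrm{d}}{\mathrm{d}t}\mathcal{H}(\state)$ in the autonomous case, the condition $\dualp{e}{f}=0$ is exactly the power balance equation of \Cref{th:pbe}. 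Moreover, $\dualp{e}{f}=0$ holds automatically on $\mathcal{D}$ by the Dirac property, so this recovers the power balance without additional work.

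The main obstacle is the careful bookkeeping in Step~1, namely ensuring that the subbundle restriction to $\mathcal{F}^{\mathrm{s}}_{\state}=E(\state)T_{\state}\mathcal{Z}$ is consistent with the matrix-based construction of \Cref{lem:dirac}, which a priori lives on the trivial product bundle; the remaining algebra is a direct block computation.
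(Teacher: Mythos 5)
Your proposal is correct and follows essentially the same route as the paper's proof: skew-symmetry of the extended structure matrix plus \Cref{lem:dirac} for the Dirac property, block-wise unfolding of $f+\tilde{\Gamma}(\state)e=0$ together with the constitutive relations~\eqref{eq:feCond} to recover the state and output equations, and a direct evaluation of $\dualp{e}{f}$ for the power balance. Your treatment is in fact slightly more explicit than the paper's (the split of $e_{\mathrm{d}}$ according to the blocks of $W$, the sign in $e_{\mathrm{s}}^Tf_{\mathrm{s}}=-\eta^TE\dot{\state}$, and the remark on the storage fiber $\mathcal{F}^{\mathrm{s}}_{\state}$), but there is no substantive difference in method.
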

\begin{proof} 
    \Cref{lem:dirac} implies that  $\mathcal D$ is a Dirac structure. Writing  \eqref{eqn:pHDAE} in compact form 
    \begin{equation*}
        \begin{bmatrix}E(\state)\dot{\state} \\ -\outVar\end{bmatrix} = \begin{bmatrix} (\Gamma(\state)-W(\state))\end{bmatrix}\begin{bmatrix} \eta(\state) \\ \inpVar\end{bmatrix},
    \end{equation*}
    it follows that $(f,e)\in\mathcal{D}_{\state}$ can be written as
    \begin{equation*}
        \begin{bmatrix}-f_\mathrm{s} \\ -f_\mathrm{p}\end{bmatrix} = \Gamma(\state)e_\mathrm{s} + e_\mathrm{p}, \qquad
        f_\mathrm{d} = (e_\mathrm{s},e_\mathrm{p}).
    \end{equation*}
    Together with the conditions \eqref{eq:feCond} this is equivalent to
    \begin{align*}
      \begin{bmatrix} E(\state)\dot{\state} \\ -\outVar\end{bmatrix} &= (\Gamma(\state)-W(\state))\begin{bmatrix}\eta(\state) \\ \inpVar\end{bmatrix}, \\
      f &= \{-E(\state)\dot{\state},\, \outVar,\; \begin{smallbmatrix}\eta(\state) \\ \inpVar\end{smallbmatrix}\}, \\
      e &= \{\eta(\state),\; \inpVar,\; -W(\state)\begin{smallbmatrix}\eta(\state) \\ \inpVar\end{smallbmatrix}\},
    \end{align*}
    which is exactly the compact form of the \pHDAE.
    Finally, note that the equation $\dualp{e}{f}=0$ can be written as
    \begin{align*}
      0 &= \dualp{\eta(\state)}{-E(\state)\dot{\state}} + \dualp{\inpVar}{\outVar} + \dualp{-W(\state)\begin{bmatrix}\eta(\state)\\ \inpVar\end{bmatrix}}{\begin{bmatrix}\eta(\state)\\ \inpVar\end{bmatrix}} = \\
      &= \ddt\hamiltonian(\state) + \outVar^T\inpVar - \begin{bmatrix}\eta(\state)\\ \inpVar\end{bmatrix}^TW(\state)\begin{bmatrix}\eta(\state)\\ \inpVar\end{bmatrix},
    \end{align*}
    which is the power balance equation.
\end{proof}

Note that, if we conversely want to retrieve a \pHDAE system from a Dirac structure, then the additional conditions \eqref{eq:feCond} and the definition of $\hamiltonian(\state)$ are needed.

\begin{remark}
    \label{rem:geoflow}
    Since \dHDAE systems generalize classical Hamiltonian systems, an immediate question is whether the associated flow has a geometric structure, such as symplecticity or generalized orthogonality when there is no dissipation, see \cite{HaiLW02}. While for \pHODE systems this is well established,  see \eg \cite{CelH17,SchJ14}, for \LTV \dHDAE systems this has only been established recently in \cite{Sch19}, and in a more general setting in \cite{KunM22_ppt}.
\end{remark}

\begin{remark}
\label{rem:MasVanderschaft}
In the linear time-invariant case an extension that addresses different Lagrange and Dirac structures and their relation has been introduced in \cite{SchM18}. 
Ignoring the dissipation term as well as the inputs and outputs, the corresponding equation has the form
\begin{displaymath}
    KP \dot{\state} = LS \state
\end{displaymath}
where $S,P,L,S \in \R^{\stateDim,\stateDim}$ satisfy $S^TP=P^TS$ with $\rank \begin{smallbmatrix} S\\ P \end{smallbmatrix}=\stateDim$, 
\ie the columns of $\begin{smallbmatrix} S\\ P \end{smallbmatrix}$,
form a \emph{Lagrangian subspace},  
and $K^TL=-L^TK$ with $\rank \begin{smallbmatrix} K\\ L \end{smallbmatrix} = \stateDim$,
\ie the columns of $\begin{smallbmatrix} K\\ L \end{smallbmatrix}$ are associated with a Dirac structure. A further generalization is discussed in \cite{GerHR21}.

Clearly, if $K=S=I$, then $P=P^T$ and $S=-S^T$ and we are in the case of \dHDAE systems with $E=E^T=P$, $S=-S^T=J$, where the extra condition  $E\geq 0$ has to be assumed. If $K=P=I$ then we are in the classical case of \pHODE systems with $S=S^T=Q$ and $L=-L^T=J$.
\end{remark}

\begin{remark}\label{rem:telegen}
The representation of \pHODE or \pHDAE systems via a Dirac structure is an extension of Tellegen's theorem, see \eg \cite{DuiMSB09} and it also shows the relation to implicit Lagrange systems, see \eg \cite{YosM06a,YosM06b}, as well as gradient systems, see \eg
\cite{OetG97,Oet06}.
\end{remark}

\subsection{Structure-preserving interconnection}
\label{sec:interconnect}
Another key property of the \pH model class that is particularly important for modularized, network-based modeling across physical domains is that it is preserved under  interconnection.
To see this, consider two autonomous \pHDAEs (cf.~\Cref{rem:autono}) 
\begin{align*}
  E_i\dot{\state}_i &= (J_i-R_i)\eta_i + (B_i-P_i)\inpVar_i, \\
  \outVar_i &= (B_i+P_i)^T\eta_i + (S_i-N_i)\inpVar_i,
\end{align*}
of the form \eqref{eqn:pHDAE} with Hamiltonians $\mathcal H_i$, for $i=1,2$, and assume that inputs and outputs  satisfy a linear interconnection relation 
\begin{displaymath}
    \begin{bmatrix} M_{11} & M_{12} \\ M_{21} & M_{22} \end{bmatrix}\begin{bmatrix} \inpVar_1\\ \inpVar_2 \end{bmatrix} +
\begin{bmatrix} L_{11} & L_{12} \\ L_{21} & L_{22} \end{bmatrix}\begin{bmatrix} y_1\\ y_2 \end{bmatrix} = \begin{bmatrix} 0 \\ 0 \end{bmatrix}.
\end{displaymath}
Then the interconnected system can be written as a \pHDAE of the form
\begin{align*}
    \mathcal{E}\dot{\state} &= (\mathcal{J}-\mathcal{R})\eta + \mathcal{G}\inpVar,\\
    \outVar &= \mathcal{G}^\top\eta
\end{align*}
with 
\begin{align*}
    \state &= \begin{bmatrix}
        \state_1^T &
        \state_2^T &
        \state_3^T &
        \state_4^T &
        \state_5^T &
        \state_6^T
    \end{bmatrix}^T,\\
    \eta &= \begin{bmatrix}
        \eta_1^T & \eta_2^T & \eta_3^T & \eta_4^T & \eta_5^T & \eta_6^T & 0 & 0
    \end{bmatrix}^T
\end{align*}
with new state variables 
$\state_3\vcentcolon= \eta_3 \vcentcolon= \inpVar_1$, $\state_4\vcentcolon= \eta_4 \vcentcolon= \inpVar_2$,  $\state_5 \vcentcolon= \eta_5 \vcentcolon= \outVar_1$,  $\state_6 \vcentcolon= \eta_6 \vcentcolon= \outVar_2$, matrix functions 
\begin{align*}
    \mathcal{E} &= \begin{bmatrix} 
        E_1 & 0 & 0 & 0 & 0 & 0\\ 
        0 & E_2 & 0 & 0 & 0 & 0\\ 
        0 & 0 & 0 & 0 & 0 & 0\\ 
        0 & 0 & 0 & 0 & 0 & 0\\ 
        0 & 0 & 0 & 0 & 0 & 0\\ 
        0 & 0 & 0 & 0 & 0 & 0\\ 
        0 & 0 & 0 & 0 & 0 & 0\\ 
        0 & 0 & 0 & 0 & 0 & 0
    \end{bmatrix}, &
    \mathcal{G} &= \begin{bmatrix} 0 & 0 \\ 0 & 0 \\0 & 0 \\0 & 0 \\ I& 0 \\ 0& I\\ 0 & 0 \\ 0& 0\end{bmatrix},
\end{align*}
and
\begin{align*}
    \mathcal{J}-\mathcal{R} = \begin{bmatrix} \phantom{-}J_1-R_1  & 0 & G_1 -P_1&0  &0 & 0 & 0 & 0\\
   0& \phantom{-}J_2-R_2 & 0 & G_2-P_2 & 0 & 0 & 0 & 0\\ -G_1^T-P_1^T & 0 & N_{11}-S_{11} & 0 & I & 0 &-M_{11}^T & -M_{21}^T \\
   0 & -G_2-P_2^T & 0 & N_{22}-S_{22} & 0 & I & -M_{12}^T & -M_{22}^T \\
   0 & 0 & -I &0 & 0 & 0 & -L_{11}^T & -L_{21}^T \\
   0 & 0 &0 &-I & 0 & 0 & -L_{12}^T & -L_{22}^T \\
   0 & 0 & M_{11} & M_{12} & L_{11} & L_{12} & 0 & 0 \\
    0 & 0 & M_{21} & M_{22} & L_{21} & L_{22} & 0 & 0 \end{bmatrix},
\end{align*}
and combined Hamiltonian  $\hamiltonian=\hamiltonian_1+\hamiltonian_2$.
It is clear that the structural conditions of the coefficients are still satisfied. 

Unfortunately, due to the extension by extra state variables the dimension of the state space may substantially increase.
However, if we assume that the interconnection is power-preserving (\eg~if $Mu+Ny=0$ defines a Dirac structure for $(\outVar,\inpVar)$), cf.~\Cref{sec:dirac}, then index reduction and removal of certain parts, see \Cref{sec:normalForm}, can usually be applied to make the system smaller.

If we restrict ourselves to linear \pHDAEs of the form
\begin{align*}
    E_i\dot{\state}_i &= (J_i-R_i)\state_i + G_i\inpVar_i,\\
    \outVar_i &= G_i^T\state_i,
\end{align*}
see \Cref{sec:noQ,sec:nofeed}, then we can achieve the interconnection in a more condensed form. Assume an ouput-feedback of the form $\inpVar = F\outVar + w$ 
with aggregated variables $\inpVar \vcentcolon= \begin{bmatrix}\inpVar_1^T & \inpVar_2^T\end{bmatrix}^T$ and $\outVar \vcentcolon= \begin{bmatrix} \outVar_1^T & \outVar_2^T\end{bmatrix}^T$. Define $\state \vcentcolon= \begin{bmatrix} \state_1^T & \state_2^T\end{bmatrix}^T$, $J\vcentcolon= \diag(J_1,J_2)$, $R\vcentcolon= \diag(R_1,R_2)$, $G = \diag(G_1,G_2)$. Then the coupled system has the form
\begin{align*}
    E\dot{\state} &= (J-R+GFG^T)\state + Gw,\\
    \outVar &= G^T\state,
\end{align*}
which is a \pHDAE, whenever $R-GF_{\mathrm{sym}}G^T$ is positive semi-definite, where $F_{\mathrm{sym}} \vcentcolon= \tfrac{1}{2}(F+F^T)$. A sufficient condition to retain the \pH structure is thus to require that $F_{\mathrm{sym}}$ is negative semi-definite corresponding to a potentially dissipative component of the interconnection.

\section{Condensed forms for dHDAE and pHDAE systems}
\label{sec:normalForm}
To analyze the solution behavior of \dHDAE or \pHDAE systems, it is convenient to study canonical or condensed forms and to reformulate the system by removing high index and redundant parts, as it was done for general \DAE systems in \Cref{sec:DAEtheory}. But the general condensed forms do not reflect the structure and, in particular, canonical forms like the Kronecker or Weierstra{\ss} form (cf.~\Cref{th:kcf,thm:WCF}) are obtained under transformations that may be arbitrarily ill-conditioned. 

In this section, to overcome some of these disadvantages, we present condensed forms for \dHDAE and \pHDAE systems under (pointwise) orthogonal transformations that preserve the structure. The resulting condensed forms are close to normal forms and display all the important information, but they are typically not canonical.

\subsection{Condensed forms for dHDAE systems}
We first present a condensed form for linear time-varying \dHDAE systems of the form (leaving off the argument $t$)
\begin{equation}
    \label{timevardhdae}
    E \dot{\state} = (J-R - EK)\state,\qquad \state(t_0)=\state_0,
\end{equation} 
which is a special case of the  form for \pHDAE systems presented in \cite{Sch19}. 

\begin{lemma} 
\label{lem:varregul}
Under some constant rank assumptions, for a \dHDAE system of the form~\eqref{timevardhdae}, there exists a pointwise real orthogonal matrix function $Z\in\mathcal{C}(\timeInt,\R^{\stateDim,\stateDim})$, such that with $\check{\state}= Z^T\state$ and multiplying the system with $Z^T$ from the left, the transformed system
\begin{equation}
    \label{trfdhadae}
    \check{E} \dot{\check{\state}} = (\check{J} -\check{R} -    \check{E}\check{K})\check{\state},\qquad \check{\state}(t_0) = \check{\state}_0,
\end{equation}
with 
\begin{align*} 
    \check{E} &\vcentcolon= Z^T EZ, &
    \check{J} &\vcentcolon= Z^T JZ, &
    \check{R} &\vcentcolon= Z^T R Z, &
    \check{K} &\vcentcolon= Z^T K Z- Z^T \dot{Z}, 
\end{align*}
is still a \dHDAE system with matrix functions in the block form 
\begin{equation}
    \label{stcvardh}
	\begin{gathered}
        \check{E} = \begin{bmatrix}
            E_{11} & E_{12} & 0 & 0 &0\\
            E_{21} & E_{22} & 0 & 0 &0\\
            0 & 0 & 0 &0 &0\\
            0 & 0 & 0 &0 &0\\
            0 & 0 & 0 &0&0
        \end{bmatrix}, \quad \check{J}-\check{R} = \begin{bmatrix}
            J_{11}-R_{11} &  J_{12}-R_{12}& J_{13}-R_{13} & J_{14}&0\\
            J_{21}-R_{21} & J_{22}-R_{22} & J_{23}-R_{23} & 0 &0\\
            J_{31}-R_{31} & J_{32}-R_{32} & J_{33} -R_{33} & 0 & 0\\
            J_{41} & 0 & 0  & 0 & 0\\
            0& 0 &  0 & 0& 0
        \end{bmatrix},\\
        \check{K} = \begin{bmatrix}
            K_{11} & K_{12} & 0 & 0 & 0\\
            K_{21} & K_{22} & 0 & 0 & 0\\
            K_{31} & K_{32} & K_{33} & K_{34} & K_{35}\\
            K_{41} & K_{42} & K_{43} & K_{44} & K_{45}\\
            K_{51} & K_{52} & K_{53} & K_{54} & K_{55}
        \end{bmatrix}
    \end{gathered}
\end{equation}
and block sizes $n_1=n_4,n_2,n_3,n_5$. (Note that blocks may be void).
The matrix function $\begin{smallbmatrix} E_{11}& E_{12}\\ E_{21} & E_{22} \end{smallbmatrix}$ is pointwise symmetric positive
definite, 
the matrix functions $J_{33}-R_{33}$, with $R_{33}\geq 0$ and $J_{41}=-J_{14}^T$ are pointwise nonsingular, and the block 
\begin{displaymath}
\begin{bmatrix}
E_{11} & E_{12} \\
E_{21} & E_{22} \end{bmatrix}\begin{bmatrix}
K_{11} & K_{12} \\
K_{21} & K_{22} \end{bmatrix}
\end{displaymath}
is pointwise skew-symmetric.
\end{lemma}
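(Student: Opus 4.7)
The plan is to construct $Z$ as a product $Z = Z_1 \, \diag(I_{n^*}, P_2) \, \diag(U_1, I, V_1)$ of three pointwise orthogonal matrix functions (composed with an obvious block permutation to match the target ordering), each revealing one layer of the block pattern through an application of the smooth rank-revealing decomposition \Cref{thm:smoothRankRevealingDecomposition}. Since $E = E^T \geq 0$ has constant rank, a first application of that theorem yields $Z_1$ pointwise orthogonal with $Z_1^T E Z_1 = \diag(\tilde E_{11}, 0)$ and $\tilde E_{11}$ pointwise symmetric positive definite of size $n^* = \rank E$. Transforming the whole system by $Z_1$ preserves skew-symmetry of $J$, positive semi-definiteness of $R$, and the dHDAE skew-adjointness identity for $\check E$ and $\check K$, thereby cleanly separating the range and kernel of $E$.

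Next, I partition the transformed $\tilde J - \tilde R$ conformally and denote its $(2,2)$ block, restricted to the kernel of $E$, by $A_{22} = \tilde J_{22} - \tilde R_{22}$. The crucial structural observation is that $\ker A_{22} = \ker A_{22}^T$: for any $x \in \ker A_{22}$, skew-symmetry of $\tilde J_{22}$ gives $0 = x^T A_{22} x = -x^T \tilde R_{22} x$, which together with $\tilde R_{22} \geq 0$ forces $\tilde R_{22} x = 0$ and then $\tilde J_{22} x = 0$. Under a constant-rank hypothesis on $A_{22}$, \Cref{thm:smoothRankRevealingDecomposition} then provides a pointwise orthogonal $P_2$ with $P_2^T A_{22} P_2 = \diag(\hat A, 0)$ and $\hat A$ pointwise invertible; because the kernels of $A_{22}$ and $A_{22}^T$ coincide, the symmetric/skew decomposition of the vanishing off-diagonal block forces simultaneously $P_2^T \tilde J_{22} P_2 = \diag(\hat J, 0)$ and $P_2^T \tilde R_{22} P_2 = \diag(\hat R, 0)$ with $\hat R \geq 0$. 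Moreover, positive semi-definiteness of the full $\tilde R$ together with the vanishing of its principal $(2,2)$-subblock on $\ker A_{22}$ forces the entire $\tilde R$-cross-coupling from the range-of-$E$ block into $\ker A_{22}$ to vanish. Consequently the residual coupling of the range-of-$E$ block into $\ker A_{22}$ is purely through $\tilde J$. This identifies block~$3$ of the condensed form with $J_{33} = \hat J$ and $R_{33} = \hat R$, and produces the zero pattern of $R$ in block columns $4$ and $5$.

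Finally, applying \Cref{thm:smoothRankRevealingDecomposition} once more to the residual $J$-coupling $C \vcentcolon= \tilde J_{12} P_2''$, where $P_2''$ spans $\ker A_{22}$, gives pointwise orthogonal $U_1$ and $V_1$ with $U_1^T C V_1 = \diag(\hat C, 0)$ and $\hat C$ pointwise invertible. Assembling the three orthogonal factors then yields the claimed $5$-block partition with $n_1 = n_4 = \rank C$; skew-symmetry of $J$ propagates the decomposition $J_{14} = \hat C$ to $J_{41} = -\hat C^T$, pointwise invertible. Block~$5$ emerges as the totally decoupled remainder, with zero $E$-row from the first step, zero $(J-R)$-row from the second and third steps, and transpose/skew symmetry propagating these vanishings to the corresponding columns. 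The structural conditions on $\check K$ follow from the preserved identity $\dot{\check E} = \check E \check K + \check K^T \check E$: its top-right block, combined with pointwise invertibility of $\bar E \vcentcolon= \begin{smallbmatrix}E_{11} & E_{12}\\ E_{21} & E_{22}\end{smallbmatrix}$, forces $K_{ij} = 0$ for $i \in \{1,2\}$, $j \in \{3,4,5\}$, while the top-left block encodes the stated pointwise (skew-)symmetry property of $\bar E \bar K$. The main technical obstacle lies in the bookkeeping of the second step—showing that the positive semi-definiteness of the full $\tilde R$ genuinely annihilates the cross-coupling into $\ker A_{22}$, so that the third step can rely on a purely skew coupling—together with the precise identification of the constant-rank hypotheses on $E$, $A_{22}$, and $C$ that legitimize each successive invocation of \Cref{thm:smoothRankRevealingDecomposition}.
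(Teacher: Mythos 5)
Your proposal is correct and follows essentially the same route as the paper: the paper's proof is exactly this three-stage staircase of smooth rank-revealing orthogonal congruences (first on $E$, then on the $(2,2)$-block of $\check J-\check R$ restricted to $\kernel E$ using positive semi-definiteness of $R$ to kill the cross-couplings, then on the remaining purely skew coupling block), packaged as \Cref{alg:staircaseDHDAE}, with the zero rows and the structure of $\check K$ deduced from the skew-adjointness identity just as you do. Your justification of why $\kernel(\tilde J_{22}-\tilde R_{22})$ is two-sided and why the $R$-coupling into that kernel vanishes makes explicit the remarks the paper only sketches.
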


\begin{proof}
We present a constructive proof that is similar to that for the constant-coefficient case in \cite{AchAM21} and can be directly implemented as a numerical algorithm. The details are presented in \Cref{alg:staircaseDHDAE}. We make the following remarks for the algorithm. First, the smooth rank revealing decompositions can be computed using \Cref{thm:smoothRankRevealingDecomposition}. Second, the zero block structure in~$\hat{R}$ in \textbf{Step 2} follows from the positive semi-definiteness of $R$ and that the second and third block row in $\hat{K}$ do not destroy this structure since the multiplication by $\hat{E}$ puts zeros in these positions. Third, note that the transformed matrix function $K$ in \textbf{Step 3} does not destroy the structure. The extra zeros in the first two rows and the skew-symmetry of the extra term arising from the change of basis follow from the skew-adjointness of the operator $\check E\ddt -(\check J-\check E\check K)$ in \Cref{def:pHDAE:linear} (for the case that $Q=I$).
\end{proof}

\begin{algorithm}
    \caption{Staircase Algorithm for linear \dHDAE}
    \label{alg:staircaseDHDAE}
    \begin{flushleft}
    	\textbf{Input:} Pair of \dHDAE matrix functions $(E,J-R-EK)$\\
    \textbf{Output:} Matrix function $Z$ and condensed block form~\eqref{stcvardh}
    \end{flushleft}
    
    \begin{description}
        \item[Step 1] Assume that $E$ has pointwise constant rank $n_1$ in $\timeInt$. Then perform a smooth full rank decomposition
        \begin{equation}
            \label{firststep}
            E = Z_1 \begin{bmatrix} \tilde{E}_{11} & 0 \\ 0 & 0 \end{bmatrix} Z_1^T,
        \end{equation}
        with $Z_1$ pointwise orthogonal and $\tilde{E}_{11}$ pointwise positive definite of size $\tilde{n}_1 \times \tilde{n}_1$ (or $\tilde{n}_1=0$).
        Set $\tilde{E} \vcentcolon= Z_1^T E Z_1$, $\tilde{J} \vcentcolon= Z_1^T J Z_1$, $\tilde{R} \vcentcolon= Z_1^T R Z_1$, and $\tilde{K} \vcentcolon= Z_1^T(KZ_1+\dot{Z}_1)$ with
        \begin{gather*}
            \tilde{J} = \begin{bmatrix}
                \tilde{J}_{11} & -\tilde{J}_{21}^T \\
                \tilde{J}_{21} & \phantom{-}\tilde{J}_{22}
            \end{bmatrix}, \quad
            \tilde{R} = \begin{bmatrix}
                \tilde{R}_{11} & \tilde{R}_{21}^T \\
                \tilde{R}_{21} & \tilde{R}_{22} 
            \end{bmatrix}, \quad
            \tilde{K} = \begin{bmatrix}
                \tilde{K}_{11} & \tilde{K}_{12} \\
                \tilde{K}_{21} & \tilde{K}_{22} 
            \end{bmatrix}.
        \end{gather*}
        \item[Step 2] If $\tilde{n}_1<n$ then, assuming constant rank of $\tilde{J}_{22}-\tilde{R}_{22}$ in $\timeInt$, apply a full rank decomposition under pointwise orthogonal congruence 
        \begin{displaymath}
            Z_{22}^T(\tilde{J}_{22}-\tilde{R}_{22})Z_{22} = \begin{bmatrix} \tilde{\Sigma}_{22} & 0 \\ 0 & 0\end{bmatrix},
        \end{displaymath}
        with $\tilde{\Sigma}_{22}$ of size $\tilde{n}_2\times\tilde{n}_2$ pointwise invertible or $\tilde{n}_2=0$. Define the matrix functions $Z_2\vcentcolon=\diag(I,Z_{22})$, $\hat{E} \vcentcolon= Z_2^T\tilde{E}Z_2$, $\hat{J} \vcentcolon= Z_2^T\tilde{J}Z_2$, $\hat{R} \vcentcolon= Z_2^T\tilde{R}Z_2$, and $\hat{K} \vcentcolon= Z_2^T \tilde{K}Z_2 - Z_2^T\dot{Z}_2$
        with
        \begin{gather*}
            \qquad\qquad\hat{J} = \begin{bmatrix}
                \hat{J}_{11} & -\hat{J}_{21}^T & -\hat{J}_{31}^T\\
                \hat{J}_{21} & \phantom{-}\hat{J}_{22} & \phantom{-}0\\
                \hat{J}_{31} & \phantom{-}0 & \phantom{-}0
            \end{bmatrix}, \quad
            \hat{R} = \begin{bmatrix}
                \hat{R}_{11} & \hat{R}_{21}^T & \phantom{\hat{R}}0\\
                \hat{R}_{21} & \hat{R}_{22} & \phantom{\hat{R}}0\\
                0 & 0 & \phantom{\hat{R}}0
            \end{bmatrix}, \quad
            \hat{K}  \begin{bmatrix}
                \hat{K}_{11} & \hat{K}_{12} & \hat{K}_{13} \\
                \hat{K}_{21} & \hat{K}_{22} & \hat{K}_{23} \\
                \hat{K}_{31} & \hat{K}_{32} & \hat{K}_{33},
            \end{bmatrix}
        \end{gather*}
        and $\hat{J}_{22}-\hat{R}_{22}$ pointwise nonsingular.
        \item[Step 3] If $\tilde{n}_3 \vcentcolon= n -\tilde{n}_1 -\tilde{n}_2>0$ then, assuming that $\hat{J}_{31}$ has constant rank in $\timeInt$,  perform a pointwise full rank decomposition
        \begin{displaymath}
            \hat{J}_{31} = U_{31} \begin{bmatrix} \Sigma_{31} & 0\\ 0 & 0 \end{bmatrix} V^T_{31},
        \end{displaymath}
        with~$\Sigma_{31}$ of size $n_1 \times n_1$ pointwise nonsingular (or $n_1 =0$). Set
        \begin{displaymath}
            Z_3 \vcentcolon= \begin{bmatrix}
                V_{31}^T & & \\
                & I & \\
                & & U_{31}
            \end{bmatrix}
        \end{displaymath}
        Then $\check{E} \vcentcolon= Z_3^T \hat{E}Z_3$, $\check{J}\vcentcolon= Z_3^T\hat{J}Z_3$, $\check{R} \vcentcolon= Z_3^T\hat{R}Z_3$, $\check K= Z_3^T \tilde K  Z_3-  Z_3^T \dot Z_3$
have the desired form with $n_2:=\tilde n_1 -n_1$, $n_3 :=\tilde n_2$, $n_4=n_1$, $n_5:=\tilde n_3 -n_4$. 
        \item[Step 4] Set $Z \vcentcolon= Z_3Z_2Z_1$
    \end{description}
\end{algorithm} 

From the condensed form we immediately obtain a characterization of existence and uniqueness of solutions.
\begin{corollary}
Consider a \dHDAE initial value problem of the form \eqref{timevardhdae} in the  normal form \eqref{stcvardh}.
\begin{itemize}
    \item [(i)] The initial value problem \eqref{timevardhdae} is uniquely solvable (for consistent initial values)  if and only if  $n_5=0$. If $n_5\neq 0$, then ${z}_5$ can be chosen arbitrarily.
    \item [(ii)] The solution of the initial value problem is not unique (for consistent initial values) if and only if the  matrix functions $E, J, R, EK$ have a common kernel.
    \item [(iii)] The strangeness index is $0$ if and only if $n_1=n_4=0$.
    Otherwise the strangeness index is $1$ if and only if $n_1=n_4>0$.
\end{itemize}
\end{corollary}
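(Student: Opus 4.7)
The plan is to exploit the zero pattern of the condensed form~\eqref{stcvardh} to decouple the dHDAE into its five blocks, determine each block variable in turn, and then read off (i)--(iii). After pre-multiplying~\eqref{timevardhdae} by $Z^T$ and substituting $\check z = Z^T \state$, blocks 3--5 are purely algebraic (since $\check E$ has zero rows beyond the leading $2\times 2$ block), while the derivatives $\dot z_1,\dot z_2$ appear only in blocks 1--2. I would process the blocks in the order 4, 3, 2, 1, 5: block~4 reads $J_{41}z_1 = 0$ and forces $z_1 \equiv 0$ because $J_{41}$ is square ($n_1 = n_4$) and pointwise invertible; block~3 then determines $z_3 = -(J_{33}-R_{33})^{-1}(J_{32}-R_{32})z_2$; block~2 reduces to an ODE $E_{22}\dot z_2 = (J_{22}-R_{22})z_2 + \dots$ that is uniquely solvable for $z_2$ (as $E_{22}$ inherits positive-definiteness from the leading block of $\check E$); block~1, once $\dot z_1 = 0$ is used, becomes $J_{14}z_4 = E_{12}\dot z_2 - (\text{known terms})$ and determines $z_4$ via $J_{14} = -J_{41}^T$; block~5 reads $0=0$, leaving $z_5$ unconstrained. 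Claim~(i) is then immediate: the only source of freedom is $z_5$, so uniqueness holds iff $n_5 = 0$.

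For~(ii), I would characterize the pointwise common kernel in the condensed coordinates and then transport back via the pointwise orthogonal $Z$. A vector $v = (v_1,\dots,v_5)$ with $\check E v = 0$ must satisfy $v_1 = v_2 = 0$ by positive-definiteness of $\begin{smallbmatrix}E_{11} & E_{12}\\ E_{21} & E_{22}\end{smallbmatrix}$; combined with $\check J v = \check R v = 0$ in block row~3, this yields $(J_{33}-R_{33})v_3 = 0$ and hence $v_3 = 0$; block row~1 of $\check J v = 0$ then gives $J_{14}v_4 = 0$, hence $v_4 = 0$. The condition $\check E\check K v = 0$ is automatic because the $(1,5)$ and $(2,5)$ blocks of $\check K$ in~\eqref{stcvardh} vanish. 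Therefore the common kernel equals $\mathrm{span}(e_{n-n_5+1},\dots,e_n)$ of dimension $n_5$; since orthogonal congruence preserves kernels, $E,J,R,EK$ admit a common kernel iff $n_5>0$, which by~(i) is equivalent to non-uniqueness of solutions for consistent initial values.

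For~(iii), the decoupling above requires differentiating exactly one algebraic relation, namely $z_1 \equiv 0$, in order to eliminate the unknown $\dot z_1$ from block~1 and expose the hidden algebraic constraint determining $z_4$. If $n_1 = n_4 = 0$, the blocks containing $z_1$ and $z_4$ are void and the system is already in the strangeness-free shape of \Cref{thm:LTV:daered} (differential block~2, algebraic block~3, redundant block~5), so $\mu = 0$. If $n_1 = n_4 > 0$, a single differentiation suffices and no further differentiation exposes any new constraint, so $\mu = 1$.

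The main obstacle will be the kernel bookkeeping in~(ii): one must check that the seemingly additional equation $\check E\check K v = 0$ imposes no new constraint beyond those from $\check E, \check J, \check R$, which relies crucially on the zero pattern of $\check K$ in its last block column. A secondary subtlety in~(iii) is to align the informal ``one differentiation'' argument with the formal strangeness-index definition via \Cref{hyp:nonLin:nonRegular}, which amounts to verifying that appending the derivative of block~4 to the derivative array produces coefficient matrix functions satisfying the rank conditions at level $\mu = 1$ but not at level $\mu = 0$ whenever $n_1 > 0$.
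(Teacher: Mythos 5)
Your proposal is correct and follows essentially the same route as the paper: decouple the condensed form block by block (equation~4 forces $\check z_1=0$, equations~2--3 form a regular strangeness-free subsystem in $\check z_2,\check z_3$, equation~1 then fixes $\check z_4$ through the term $E_{12}\dot{\check z}_2$, and equation~5 leaves $\check z_5$ free), read off uniqueness and the common-kernel characterization from the $\check z_5$ directions, and locate the strangeness in the coupling between the algebraically determined $\check z_1$ and the derivative term in the first block equation. Your write-up is in fact more explicit than the paper's (which disposes of~(ii) with ``follows trivially from~(i)''), and correctly flags the only two points that need care, namely the vanishing last block column of $\check K$ in the kernel bookkeeping and the alignment of the informal differentiation count with \Cref{hyp:nonLin:nonRegular}.
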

\begin{proof} We prove each item separately.
\begin{enumerate}
    \item[(i)] Note that the last equation can be omitted and the variable $\state_5$ is arbitrary. So the solution is unique if and only if $n_5=0$.
    \item[(ii)] This follows trivially from (i). 
    \item[(iii)] If $n_4>0$, then the fourth equation states that $z_1=0$ and the first equation yields that $z_4$ depends via the term $E_{12} \dot z_2$ on the derivative of $z_2$ and hence if $n_4\neq 0$, then the system has strangeness index $1$. Finally the separated subsystem 
\begin{align*}
\qquad\qquad\begin{bmatrix}
E_{22} & 0 \\
0 & 0 \end{bmatrix}
    \begin{bmatrix} \dot{\state}_2 \\ \dot{\state}_3 \end{bmatrix} = 
\left (\begin{bmatrix}
 J_{22}-R_{22} & J_{23}-R_{23} \\
 J_{32}-R_{32} & J_{33} -R_{33}
\end{bmatrix}  -
\begin{bmatrix}
E_{21} K_{12} +E_{22} K_{22}&  0 \\
0 & 0 \end{bmatrix}
\right )
\begin{bmatrix} \state_2 \\ \state_3 \end{bmatrix}
\end{align*}
has $E_{22}$ positive definite and $J_{33} -R_{33}$ invertible and hence is of strangeness index zero.\qedhere
\end{enumerate}
\end{proof}

\begin{remark}\label{rem:constrank}
In the staircase Algorithm we have made several constant rank assumptions. If these are not satisfied, then we can  partition the time interval $\timeInt$ into smaller subintervals where the ranks are constants and record the points where these rank changes happen.  For a more detailed discussion of such \emph{hybrid \DAE systems} with rank changes in the characteristic values, see
\cite{HamM08,KunM06,KunM18}. 
\end{remark}
In the linear time-invariant case (with $K=0$, $Q=I$) we have the following corollary.
\begin{corollary} \label{cor:regul:pH}
For every \LTI \dHDAE system of the form 
\begin{equation}\label{dhdae}
    E\dot{\state} = (J-R)\state,
\end{equation}
with $E, J,R\in \R^{\stateDim,\stateDim}$, $J=-J^T$, $R=R^T\geq 0$, and $E^T=E \geq 0$, there exists a real orthogonal matrix $Z\in \R^{\stateDim,\stateDim}$, such that 
with 
\begin{align*}
    \check{E} &\vcentcolon= Z^TEZ, &
    \check{J} &\vcentcolon= Z^TJZ, &
    \check{R} &\vcentcolon= Z^TRZ, &
    \check{\state} &\vcentcolon= Z^T\state,
\end{align*}
the system $\check E\dot {\check z} =(\check J -\check R) \check z$ is still a \dHDAE with block matrices 
\begin{align}
    \label{stcdh}
    \check{E} = \begin{bmatrix}
E_{11} & E_{12} & 0 & 0 &0\\
E_{21} & E_{22} & 0 & 0 &0\\
0 & 0 & 0 &0 &0\\
0 & 0 & 0 &0 &0\\
0 & 0 & 0 &0&0\end{bmatrix},\quad \check{J}-\check{R} = \begin{bmatrix}
J_{11}-R_{11} &  J_{12}-R_{12}& J_{13}-R_{13} & J_{14}&0\\
J_{21}-R_{21} & J_{22}-R_{22} & J_{23}-R_{23} & 0 &0\\
J_{31}-R_{31} & J_{32}-R_{32} & J_{33} -R_{33} & 0 & 0\\
J_{41} & 0 & 0  & 0 & 0\\
0& 0 &  0 & 0& 0\end{bmatrix},
\end{align}
and block sizes $n_1=n_4,n_2,n_3,n_5$. Note that some of the blocks may be void. The matrix $\begin{smallbmatrix} E_{11}& E_{12}\\ E_{21} & E_{22} \end{smallbmatrix}$ is symmetric positive
definite, and the matrices $J_{33}-R_{33}$ and $J_{41}=-J_{14}^T$ are nonsingular.
\end{corollary}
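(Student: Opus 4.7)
The plan is to invoke \Cref{lem:varregul} with $K=0$ together with the observation that in the time-invariant case all three rank-revealing decompositions used in \Cref{alg:staircaseDHDAE} can be carried out with \emph{constant} real orthogonal factors. The constant product $Z=Z_1 Z_2 Z_3$ then satisfies $\dot Z=0$, so the correction $\check K = Z^T K Z - Z^T \dot Z$ vanishes and no $\check E \check K$ term appears in the condensed pencil, leaving exactly the structure in~\eqref{stcdh}.

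Concretely, Step 1 is a spectral decomposition of $E$: since $E=E^T\ge 0$, there is an orthogonal $Z_1$ with $Z_1^T E Z_1 = \diag(\tilde E_{11},0)$ and $\tilde E_{11}>0$ of size $\tilde n_1$. The congruence transforms $J$ into a skew-symmetric and $R$ into a symmetric positive semi-definite block matrix, conformably partitioned. In Step 2, I rank-reveal $\tilde J_{22}-\tilde R_{22}$ by an orthogonal congruence $Z_{22}$, obtaining a nonsingular leading sub-block $\tilde \Sigma_{22}$ and a trailing zero block of size~$\tilde n_3$; setting $Z_2=\diag(I,Z_{22})$, the new $(3,3)$ diagonal block of $\hat J-\hat R$ is zero, and since $\hat J_{33}$ is skew-symmetric while $\hat R_{33}$ is symmetric, the identity $\hat J_{33}=\hat R_{33}$ forces each to vanish individually. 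In Step 3, I rank-reveal the coupling $\hat J_{31}$ by a singular value decomposition $\hat J_{31}=U_{31}\diag(\Sigma_{31},0)V_{31}^T$ and put $Z_3=\diag(V_{31}^T,I,U_{31})$, which produces the invertible square block $J_{41}=\Sigma_{31}$ of size $n_1=\rank \hat J_{31}$.

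The main obstacle is verifying that all the zero blocks displayed in~\eqref{stcdh} really survive the three transformations, and that the block sizes match as claimed. Two inputs are essential here: first, $\hat R_{33}=0$ combined with $\hat R\ge 0$ forces the $(1,3)$ and $(2,3)$ blocks of $\hat R$ to vanish as well (a standard consequence of positive semi-definiteness), so only the skew part $\hat J_{13},\hat J_{23}$ can couple to the $\state_3$-component; second, the skew-symmetry of $\check J$ gives $J_{14}=-J_{41}^T$ and hence $n_1=n_4$, while the remaining zeros in the last row and column of $\check J-\check R$ come from the kernel of $\hat J_{31}$ together with the fact that $Z_3$ acts as the identity on the $\check E$-block already put into diagonal form in Step~1, so the zero pattern of $\check E$ is preserved.
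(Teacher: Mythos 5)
Your proposal is correct and follows essentially the same route as the paper: the paper's proof of this corollary is precisely the reduction to \Cref{lem:varregul} with $K=0$, and your three-step construction is exactly the staircase procedure of \Cref{alg:staircaseDHDAE} specialized to constant coefficients, including the key observations that $\hat J_{33}-\hat R_{33}=0$ forces both blocks to vanish separately and that positive semi-definiteness of $\hat R$ then kills its off-diagonal third block row and column. Your added remark that constant orthogonal factors make the derivative correction $Z^T\dot Z$ vanish is the (implicit) content of the paper's one-line proof.
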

\begin{proof}
The proof follows directly from the time varying case and setting $K=0$.
\end{proof}
We also have the corresponding existence and uniqueness result formulated with the (Kronecker) index.
\begin{corollary}
Consider a \dHDAE initial value problem of the form \eqref{dhdae} in the  staircase form \eqref{stcdh}.
\begin{itemize}
    \item[(i)] The initial value problem \eqref{dhdae} is uniquely solvable (for consistent initial values)  if and only if  $n_5=0$. If $n_5\neq 0$, then $\tilde{z}_5$ can be chosen arbitrarily.
    \item[(ii)] The pencil $\lambda E- (J-R)$ is non-regular if and only if the three coefficients $E,J,R$ have a common kernel.
    \item[(iii)] The (Kronecker) index is zero if and only if $n_1=n_4=0$ and $n_3=0$. It is one if and only if $n_1=n_4=0$ and $n_3>0$, and otherwise the (Kronecker) index is two if and only if $n_1=n_4>0$.
\end{itemize}
\end{corollary}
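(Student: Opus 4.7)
The plan is to work entirely within the staircase form \eqref{stcdh}, since the real orthogonal congruence by $Z$ preserves kernels and determinantal structure. Throughout I will exploit that the top-left block of $\check E$ is symmetric positive definite (hence its Schur complement $E_{22}$ is also positive definite) and that $J_{41}$ and $J_{33}-R_{33}$ are nonsingular by construction.

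For (i), the last block row of \eqref{stcdh} gives $n_5$ trivial equations $0=0$, so $\check z_5$ can be chosen freely. Conversely, if $n_5=0$, I will solve top down as in the time-varying analogue \Cref{lem:varregul}: the fourth row forces $\check z_1=0$ via invertibility of $J_{41}$; the third determines $\check z_3$ in terms of $\check z_2$ via $J_{33}-R_{33}$; the first determines $\check z_4$ in terms of $\check z_2$ and $\dot{\check z}_2$ via $J_{14}=-J_{41}^T$; and the second reduces to an ODE in $\check z_2$ alone with invertible coefficient $E_{22}$, uniquely solvable for each consistent initial value. For (ii), any nonzero $v$ in the common kernel of $E,J,R$ satisfies $(\lambda E-(J-R))v\equiv 0$, so the pencil is non-regular. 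Conversely, assuming $n_5=0$, I will carry out a double Laplace expansion of $\det(\lambda\check E-(\check J-\check R))$ — first along the fourth block row, which contains $-J_{41}$ followed by zeros, and then along the block column holding $-J_{14}$ above zeros. This factors out $\det(J_{41})\det(J_{14})$ up to sign, leaving the determinant of the $(n_2+n_3)\times(n_2+n_3)$ residual pencil in $(\check z_2,\check z_3)$. A Schur complement against the invertible block $J_{33}-R_{33}$ then yields a polynomial in $\lambda$ of degree $n_2$ with nonzero leading coefficient $\det(E_{22})$, so the pencil is regular. Hence non-regularity is equivalent to $n_5>0$, and in that case the last $n_5$ columns of $Z$ span a common kernel of $E,J,R$.

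For (iii), I restrict to the regular case $n_5=0$ and match block sizes in \eqref{stcdh} to Weierstraß nilpotency, using \Cref{rem:indexrel} to translate the strangeness-index count from the proof of \Cref{lem:varregul}. Index zero is equivalent to invertibility of $\check E$, which forces $n_3=n_4=0$ and hence $n_1=n_3=0$. When $n_1=n_4=0$ but $n_3>0$, the reduction in (i) is strangeness-free with a genuine algebraic constraint on $\check z_3$ carrying invertible coefficient $J_{33}-R_{33}$, which is exactly (Kronecker) index one. When $n_1=n_4>0$, the step in (i) expressing $\check z_4$ through $\dot{\check z}_2$ encodes a single extra differentiation, giving index two. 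The main obstacle is keeping this last bookkeeping clean — in particular, arguing that no further differentiations can arise and the index never exceeds two. This can be settled either by assembling the Weierstraß form of the reduced pencil explicitly from the already decoupled equations of (i), or by reusing \Cref{rem:indexrel} together with the strangeness-index count built into \Cref{lem:varregul}, which caps the strangeness index at one in the block structure \eqref{stcvardh}.
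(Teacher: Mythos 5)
Your argument is correct and follows essentially the same route as the paper: solve the staircase form \eqref{stcdh} top down, read off the strangeness index from which variables require a derivative of $\check z_2$, and translate to the Kronecker index via \Cref{rem:indexrel}. The only real difference is that for the regularity direction of (ii) you compute $\det(\lambda \check E - (\check J-\check R))$ explicitly by double Laplace expansion and a Schur complement, whereas the paper deduces regularity directly from the unique solvability established in (i); both are valid, and the small slip ``hence $n_1=n_3=0$'' in your part (iii) should of course read ``hence $n_1=n_4=0$''.
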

\begin{proof}
The proof is as in the variable coefficient case (with $K=0$), just observing the different counting between strangeness and Kronecker index, see \cite{Meh15}.
\end{proof}
\begin{remark}
\label{rem:common}
The fact that the \dHDAE system is not uniquely solvable if and only if the coefficients have a common nullspace is remarkable. It has significant importance in model evaluation since it allows to check this (pointwise) via the singular value decomposition. Moreover, in the constant-coefficient case, this allows to compute the distance to the nearest singular pencil, see \cite{GugM21_ppt,MehMW21}. For general pencils, and even for pencils with just the symmetry structure of a \dHDAE system, this property does not hold. Consider the pencil $\lambda E-J$ with 
\begin{displaymath}
    E=E^T= \begin{bmatrix} 0& 0 & 1 \\ 0 & 0 & 0 \\ 1 & 0 & 0 \end{bmatrix} \qquad\text{and}\qquad J=-J^T= \begin{bmatrix} 0 & 0 & 0 \\ 0 & 0 & 1 \\ 0 & -1 & 0\end{bmatrix}.
\end{displaymath}
The pencil is not regular, since $\det (\lambda E-J)=0 $ for any $\lambda\in\C$. However, $E$ and $J$ do not a have a common nullspace. The problem here is that $E=E^T$ is not semi-definite. 
In this case the singularity arises  from higher dimensional singular blocks in the Kronecker form.
\end{remark}

\begin{remark} 
\label{rem:index2} As the proof of \Cref{lem:regul} shows, the numerical computation of the condensed forms \eqref{stcvardh} or \eqref{stcdh} for a given \dHDAE system requires a sequence of three (smooth) full rank decompositions. Unfortunately, these  rank decisions may be sensitive under perturbations; see \eg \cite{ByeMX07} where the construction of general staircase forms and the challenges are discussed. However, in  contrast to general unstructured staircase forms, we see that for \dHDAE systems the number of steps is limited to three and often (as in all the examples discussed in \Cref{sec:examples}) the first step of performing a full rank decomposition of $E$ is not necessary.

Note also that the maximal strangeness index is one (the maximal (Kronecker) index of a \LTI \dHDAE is two), which is of great advantage in iterative solution methods, see~\cite{GudLMS21_ppt} and \Cref{sec:linsystem}, as well as time-discretization methods for \DAE systems, see \cite{HaiW96,KunM06}, and \Cref{sec:timedisc}.
\end{remark}

\begin{example}
\label{ex:flowcondensed}
To illustrate the construction of the condensed forms, consider the \dHDAE resulting from the fluid flow example discussed in \Cref{sec:navierstokes}. In more detail, consider an instationary incompressible fluid flow prescribed in terms of velocity $v\colon\Omega\times \timeInt\to\R^2$ and pressure $p\colon\Omega\times\timeInt\to\R$ on the spatial domain $\Omega=(0,1)^2$ with boundary $\partial \Omega$ for the time period $\timeInt = [0,T]$, that is driven by external forces $f\colon\Omega\times\timeInt\to\R^2$ and has dynamic viscosity $\nu>0$, see \Cref{sec:navierstokes}. The system is closed by non-slip boundary conditions and an appropriate initial value $v^0$ for the velocity. Spatial discretization by a  finite difference method on a uniform staggered grid with the semi-discretized  velocity  $v_h(t)\in\R^{n_v}$ and pressure vectors $p_h(t)\in\R^{n_p}$, $t\in \timeInt$, leads to a \pHDAE system for the state $\state = \begin{bmatrix} v_h^T & p_h^T\end{bmatrix}^T$  given by
\begin{equation}
    \label{StokesDAE}
    \begin{aligned}
    \begin{bmatrix}I & 0\\0&0\end{bmatrix}\begin{bmatrix}\dot v_h \\ \dot p_h \end{bmatrix} &=\left(\begin{bmatrix}A_S & B \\-B^T&0\end{bmatrix}-\begin{bmatrix}A_H&0\\0&0\end{bmatrix}\right)
\begin{bmatrix}v_h \\  p_h \end{bmatrix} + \begin{bmatrix}G_1\\0\end{bmatrix}\inpVar,\\
\outVar &= \begin{bmatrix}G_1^T & 0\end{bmatrix}\begin{bmatrix}v_h \\  p_h \end{bmatrix}.
\end{aligned}
\end{equation}
The matrix $B^T$ has full row rank if the freedom in the pressure is removed. The initial conditions are $v_h(0)=v_{h}^0$ and consistently $p_h(0)=p_{h}^0$. The input~$\inpVar$ with input matrix $ G_1\in\R^{n_v\times m}$ results from the external forces. System~\eqref{StokesDAE} is an \LTI \pHDAE of (Kronecker) index two. To obtain the condensed form one does not have to carry out the first and second step of \Cref{alg:staircaseDHDAE} but only \textbf{Step 3}, \ie, the splitting of the discrete divergence operator $B^T$  by performing, \eg a Leray projection as in \cite{HeiSS08} or a full rank decomposition,
\begin{align*}
    B^TV= \begin{bmatrix}B_1 & 0 \end{bmatrix}, 
\end{align*}
with nonsingular matrix $B_1$ and an orthogonal matrix $V\in\mathbb{R}^{n_p,n_p}$. Performing a congruence transformation  we get a system in condensed form
\begin{align*}
    \begin{bmatrix} I&0&0\\0&I&0\\0&0&0\end{bmatrix}
    \begin{bmatrix}\dot{\state}_1\\\dot{\state}_2\\\dot{\state}_3\end{bmatrix}
    &=\left(\begin{bmatrix}J_{11}&J_{12}& B_1\\ -J_{12}^T&J_{22}&0\\ -B_1^T&0&0\end{bmatrix}-\begin{bmatrix}R_{11}&R_{12}&0\\ R_{12}^T& R_{22}&0\\0&0&0\end{bmatrix}\right)\begin{bmatrix}\state_1\\\state_2\\\state_3\end{bmatrix} +\begin{bmatrix}
G_1\\G_2\\0\end{bmatrix}\inpVar,\\
    \outVar &=\begin{bmatrix}G_1^T&G_2^T&0\end{bmatrix}
\begin{bmatrix}\state_1\\\state_2\\\state_3\end{bmatrix}.
\end{align*}
This yields $\state_1=0$ as $B_1$ is invertible and the first equation yields the (hidden) algebraic constraint $ B_1 \state_3=(-J_{12}+R_{12})\state_2-G_1\inpVar$ as well as a consistency condition for the initial value which relates the initial condition for $\inpVar$ and $\state_2$ to that for $\state_3$.
\end{example}

\begin{example}\label{ex:poroindex}
    Consider the multiple-network poroelasticity problem discussed in \Cref{sec:poro}. Ignoring the boundary terms and permuting the rows and columns, we obtain a \dHDAE system of the form
    \begin{align}
        \label{eqn:poroStaircase}
        \begin{bmatrix}
        K_{\mathrm{u}} & 0 & 0\\
        0 & M_{\mathrm{p}} & 0\\
        0 & 0 & 0
        \end{bmatrix}\begin{bmatrix}
            \dot{\state}_1\\
            \dot{\state}_2\\
            \dot{\state}_3
        \end{bmatrix} = \begin{bmatrix}
            0 & 0 & K_{\mathrm{u}}\\
            0 & -K_{\mathrm{p}} & -D\\
            -K_{\mathrm{u}} & D^T & 0
        \end{bmatrix}\begin{bmatrix}
            \state_1\\
            \state_2\\
            \state_3
        \end{bmatrix},
    \end{align}
    with symmetric positive definite matrices $M_{\mathrm{p}}$, $K_{\mathrm{u}}$, and $K_{\mathrm{p}}$. Thus, \textbf{Step~1} and \textbf{Step~2} of \Cref{alg:staircaseDHDAE} are already done, and we see immediately that~\eqref{eqn:poroStaircase} is a \pHDAE of (Kronecker) index two. Notably, if one does not assume that the model is quasi-static, then the $(3,3)$ block in the matrix on the left-hand side in~\eqref{eqn:poroStaircase} is nonsingular and we end up with an implicit dissipative Hamiltonian \ODE. This details, that also within the \pHDAE framework, small perturbations may change the index. Nevertheless, in contrast to general \DAEs, the (Kronecker) index may be at most two.
\end{example}

\begin{example} 
    \label{ex:gasnf}
    In the gas network example presented in \Cref{sec:gasnetwork}, and more precisely for the \dHDAE obtained by setting $\inpVar=0$ in~\eqref{eq:ph1}, we know directly from the structure what the constraints are, since the system is almost in the form that would be obtained from the staircase algorithm. The first step and second step are already performed, $n_3=0$ and it remains to transform the matrix 
$\begin{smallbmatrix}0 & -J_{32}\end{smallbmatrix}$, which has full row rank, by an orthogonal transformation $\hat{U}_J$ to the form
$\begin{smallbmatrix} 0 & -J_{32}\end{smallbmatrix} \hat{U}_J= \begin{smallbmatrix} \hat{J}_{31} & 0 \end{smallbmatrix}$, with $\hat{J}_{31}$ square and nonsingular.
Setting $U_J \vcentcolon= \begin{smallbmatrix} \hat{U}_J & 0 \\ 0 & I \end{smallbmatrix}$
and forming $\hat{E}=U^T_J E U_J$, $\hat{J}=U^T_J J U_J$, 
$\hat{R}=U^T_J R U_J$ we get a \dHDAE system with $n_1=n_4=\rank J_{32}$, $n_3=n_5=0$, and $n_2=n-2n_1$. 
\end{example}

The simplified construction of the condensed forms for the other examples presented in \Cref{sec:brake} and \Cref{sec:robot} is analogous.

Although for linear time-varying \dHDAE systems we get a condensed form under pointwise orthogonal congruence transformations, in contrast to the constant-coefficient case, we would need time-varying changes of basis. Such a coordinate transformation requires derivatives of the basis transformation matrices, so a computational method needs to determine a smooth transformation. Such transformations can be determined via methods like a smooth singular value decomposition, cf.~\Cref{thm:smoothFullRankDecomposition,thm:smoothRankRevealingDecomposition}, or a $QR$ decomposition,
see \eg \cite{BunBMN91,DieE99,KunM91}. 
However, these methods require the solution of matrix differential equations (operating in the orthogonal group). This substantially increases the computational costs.

\subsection{Structure-preserving index reduction}
Although pointwise condensed forms help a  lot in the analysis, they are not practical in computational techniques. For general \DAE systems, one, therefore, proceeds differently and uses derivative arrays, see \cite{KunM96a,KunM06}, and \Cref{sec:DAEtheory}, to filter out a strangeness-free system by transformations that act only on the equations and their derivatives and avoid changes of basis. This approach will, however, in general, destroy the \dHDAE structure. 

An approach that achieves structure preservation on the basis of one smooth change of basis has been proposed in \cite{BeaMXZ18} for the case of \pHDAE systems that include the factor $Q$, using a technique that was introduced for self-adjoint systems in \cite{KunMS14}. 
We present this approach here for the case that $Q=I$ and assume that the system has a well-defined strangeness index and a unique solution for all consistent initial conditions. Under these assumptions, see \Cref{sec:DAEtheory}, from the (unstructured) derivative array, one can extract an algebraic equation of the form $\hat A_2 z=0$ that contains all the explicit or hidden constraint equations.
Then there exist a pointwise orthogonal $T \in \mathcal{C}^1(\timeInt, \R^{\stateDim,\stateDim})$ such that 
\begin{displaymath}
    \hat{A}_2 \begin{bmatrix} T_1 & T_2 \end{bmatrix} = \begin{bmatrix} 0 & \hat{A}_{22} \end{bmatrix},
\end{displaymath}
with $\hat{A}_{22}\in\mathcal{C}(\timeInt, \R^{a,a})$  pointwise nonsingular, so that the columns of the matrix function $T_1$ span the kernel of $\hat{A}_{22}$. 

Using the same proof as in \cite{KunMS14} for self-adjoint \DAE systems, it follows that the system consisting of the first $d=n-a$ rows and columns of $T^T E T$ is square nonsingular, and
therefore positive definite. 
With this in mind, the original \dHDAE can be transformed congruently with $T$, so that the \dHDAE structure is preserved and with 
\begin{gather*}
    \begin{bmatrix} E_{11} & E_{12}\\
E_{21} & E_{22} \end{bmatrix} = T^T E T,\qquad \state=T \begin{bmatrix} \state_1 \\ \state_2 \end{bmatrix},\\
\begin{bmatrix}
A_{11} & A_{12} \\ A_{21} & A_{22} \end{bmatrix} = T^T (J-R)T-(T^TET)(T^TK T-T^T \dot T),\ 
\end{gather*}
we obtain that $A_{22} \state_2=0$  so that $\state_2=0$. Inserting this, the remaining part of the first block equation 
\begin{displaymath}
    E_{11} \dot{\state}_1 =A_{11}\state_1, 
\end{displaymath}
is still a \dHDAE but now with $E_{11}$ positive definite and the Hamiltonian is unchanged since $\state_2=0$.

\begin{remark}\label{rem:nlhi}
For nonlinear pHDAE systems satisfying
\Cref{hyp:nonLin:nonRegular} with $\mu>0$, the corresponding local result follows directly via linearization and the implicit function theorem.
\end{remark}

\subsection{Condensed forms for  \pHDAE systems}\label{sec:phDAEs}
 In this section we extend the results presented in the previous section for \dHDAE systems to systems with inputs and outputs. 
 
For \LTI \pHDAEs of the form \eqref{eqn:pHDAE:LTI}, the extension of the condensed forms to \pHDAE systems was presented in \cite{BeaGM21} (unfortunately, some parts were omitted in the printed version; the correct condensed form is presented in the ArXiv version).
For a \LTV \pHDAE system~\eqref{eqn:pHDAE:linear} this result follows by a variation of the condensed form presented in \cite{Sch19}.

Let us first consider the following result that allows to remove the non-uniqueness part if there is any. Note that we again consider the case that $Q=I$, $S-N=0$, and $P=0$, cf.~\Cref{sec:noQ,sec:nofeed}.
\begin{lemma}
\label{lem:regul} 
For a \pHDAE of the form~\eqref{eqn:pHDAE:linear} with $Q=I$, $S-N=0$, and $P=0$, under some constant rank assumptions, there exists a pointwise orthogonal change of basis $V^{-1}\state=\vcentcolon\tilde{\state} = \begin{bmatrix} \tilde{\state}_1^T & \tilde{\state}_2^T & \tilde{\state}_3^T\end{bmatrix}^T$ such that the system has the form
\begin{subequations}
    \label{eqn:structstc}
    \begin{align}
    \begin{bmatrix} E_1 & 0 & 0 \\
 0 & 0 & 0\\ 0 & 0 & 0 \end{bmatrix} \begin{bmatrix} \dot{\tilde{\state}}_1 \\ \dot{\tilde{\state}}_2 \\ \dot{\tilde{\state}}_3 \end{bmatrix} & =
 \begin{bmatrix} J_1-R_1 -E_1 K_1& 0 & 0\\ 0 & 0 & 0\\ 0 & 0 & 0\end{bmatrix}\begin{bmatrix} \tilde{\state}_1 \\ \tilde{\state}_2 \\ \tilde{\state}_3 \end{bmatrix}
 + \begin{bmatrix} G_1 \\ G_2 \\ 0 \end{bmatrix} \inpVar,\\
 \outVar  &= \begin{bmatrix} G_1^T & G_2^T  & 0\end{bmatrix} \begin{bmatrix} \tilde{\state}_1 \\ \tilde{\state}_2 \\ \tilde{\state}_3 \end{bmatrix},
    \end{align}
\end{subequations}
where the system $ E_1\dot{\state}_1-(J_1-R_1-E_1K_1)\state_1+G_1 \inpVar$ has a unique solution for every sufficiently often differentiable $\inpVar$, and $G_2$ has full row rank. Furthermore, the subsystem
\begin{subequations}
    \label{eqn:subsystemstate}
    \begin{align}
    \begin{bmatrix} E_1 & 0  \\
 0 & 0   \end{bmatrix} \begin{bmatrix} \dot{\tilde{\state}}_1 \\ \dot{\tilde{\state}}_2  \end{bmatrix} & =
 \begin{bmatrix} J_1-R_1 -E_1K_1& 0 \\ 0 & 0  \end{bmatrix}\begin{bmatrix} \tilde{\state}_1 \\ \tilde{\state}_2  \end{bmatrix}
 + \begin{bmatrix} G_1 \\ G_2  \end{bmatrix}\inpVar,\\
 \outVar &= \begin{bmatrix} G_1^T & G_2^T \end{bmatrix} \begin{bmatrix} \tilde{\state}_1 \\ \tilde{\state}_2 \end{bmatrix},
    \end{align}
\end{subequations}
obtained by removing the third equation and the variable $\tilde{\state}_3$ is still a \pHDAE with the same Hamiltonian.
\end{lemma}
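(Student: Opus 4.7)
The plan is to construct a single pointwise orthogonal change of basis $V(t) = [V_1(t),\,V_2(t),\,V_3(t)]$ that simultaneously isolates (i) the state directions on which $E$, $J$, and $R$ all act trivially, and (ii) among those, the further directions on which the input/output map $G$ also vanishes. This may be viewed as the \pHDAE analogue of identifying the null-blocks in the dHDAE staircase from \Cref{lem:varregul}, augmented by an orthogonal splitting driven by $G$.

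First I would observe that, since $Q=I$, the skew-adjointness of $\mathcal{L}$ and positive semi-definiteness of the dissipation matrix function in \Cref{def:pHDAE:linear} force $E(t)=E(t)^T\geq 0$, $J(t)=-J(t)^T$, and $R(t)=R(t)^T\geq 0$ pointwise. I then define
\[
\mathcal{N}(t) \vcentcolon= \ker E(t) \cap \ker J(t) \cap \ker R(t),
\]
and split it orthogonally as $\mathcal{N}(t) = \mathcal{N}_1(t) \oplus \mathcal{N}_0(t)$, where $\mathcal{N}_0(t) \vcentcolon= \mathcal{N}(t) \cap \ker G(t)^T$. The constant rank assumptions hidden in the statement are precisely that $\dim \mathcal{N}(t)$ and $\dim \mathcal{N}_0(t)$ are independent of $t$; under these, successive application of \Cref{thm:smoothRankRevealingDecomposition} produces smooth pointwise orthonormal matrix functions $V_1,V_2,V_3$ whose columns span $\mathcal{N}(t)^\perp$, $\mathcal{N}_1(t)$, and $\mathcal{N}_0(t)$, respectively. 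Their concatenation yields the desired pointwise orthogonal $V$.

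Next, applying the change of basis $\state = V\tilde{\state}$ and invoking \Cref{th:pHDAEinv} with $U=I$, the transformed system is again a \pHDAE with the same Hamiltonian and coefficients $\tilde E = V^T E V$, $\tilde J = V^T J V$, $\tilde R = V^T R V$, $\tilde G = V^T G$, and $\tilde K = V^T K V + V^T \dot V$. By the defining property of $V_2, V_3$, each of $\tilde E, \tilde J, \tilde R$ has only its $(1,1)$-block nonzero, and because $V_3 \in \ker G^T$ the third block row of $\tilde G$ vanishes. The product $\tilde E \tilde K$ therefore has at most its first block row nonzero; the skew-adjointness identity $\ddt \tilde E = \tilde E \tilde K + (\tilde E \tilde K)^T$ coming from \Cref{def:pHDAE:linear}(i) then forces the symmetric part of $\tilde E \tilde K$ to have only its $(1,1)$-block nonzero, so the $(1,2)$ and $(1,3)$ blocks of $\tilde E \tilde K$ vanish, yielding the block structure of~\eqref{eqn:structstc}. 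Full row rank of $G_2 = V_2^T G$ is immediate: by construction $\mathcal{N}_1 \cap \ker G^T = \{0\}$, so no nonzero combination of the rows of $V_2^T G$ can be zero. Unique solvability of the first subsystem for sufficiently smooth $\inpVar$ follows because $\mathcal{N}$ is by construction the \emph{maximal} common kernel of $E, J, R$; a short argument using skew-symmetry of $J$ (to push any putative kernel element of $J_1$ applied to $V_1 x$ back into $\mathcal N \cap \mathcal N^\perp = \{0\}$) shows that $(E_1, J_1, R_1, E_1 K_1)$ has trivial common kernel, so by the classification from \Cref{lem:varregul} the reduced block has no free direction.

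Finally, the subsystem~\eqref{eqn:subsystemstate} obtained by deleting the third block row/column and the variable $\tilde{\state}_3$ verifies \Cref{def:pHDAE:linear} directly: its coefficient matrices are principal sub-blocks of the transformed \pHDAE coefficients, preserving symmetry, skew-symmetry, and positive semi-definiteness, and the Hamiltonian $\frac{1}{2}\tilde{\state}^T \tilde E \tilde{\state} = \frac{1}{2}\tilde{\state}_1^T E_1 \tilde{\state}_1$ is independent of $\tilde{\state}_3$ by the block structure of $\tilde E$. The main obstacle I foresee is producing a single smooth $V$: this requires composing two successive applications of \Cref{thm:smoothRankRevealingDecomposition} (first for the common kernel $\mathcal{N}$, then for the splitting by $\ker G^T$ inside it) without losing smoothness, which is exactly why constant rank conditions on $\dim \mathcal{N}(t)$ and $\dim(\mathcal{N}(t)\cap \ker G(t)^T)$ are needed; without such hypotheses one obtains only a hybrid form with rank-change points in the spirit of \Cref{rem:constrank}.
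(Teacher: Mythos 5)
Your proposal is correct and follows essentially the same route as the paper: both proofs first split off the directions on which $E$, $J-R$, and $EK$ act trivially and then perform a row compression of $G$ restricted to that block to separate the full-row-rank part $G_2$ from the part where $G$ also vanishes. The only difference is presentational — the paper obtains the first transformation by citing the condensed form of \Cref{lem:varregul} (merging its first four blocks), whereas you construct the common kernel $\ker E\cap\ker J\cap\ker R$ directly and re-derive the vanishing of the corresponding columns of $EK$ from the skew-adjointness identity, which is the same argument used inside the staircase construction.
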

\begin{proof}
The proof follows by computing as in \eqref{firststep} a pointwise orthogonal matrix $V_1$  such that
\begin{gather*}
    V_1^T(J-R-EK)V_1- (V^T_1 E V_1) V_1^T\dot{V}_1 =
 \begin{bmatrix} J_1-R_1 -E_1 K_1& 0 \\ 0 & 0 \end{bmatrix},\\
 V_1^T  E V_1 = \begin{bmatrix} E_1 & 0  \\
 0 & 0 \end{bmatrix},\qquad V_1^T G = \begin{bmatrix} G_1 \\ \tilde{G}_2 \end{bmatrix}.
\end{gather*}
This $V_1$ exists by the condensed form \eqref{stcvardh}, by just combining the first four rows and columns into one block. 
Then a row compression of $\tilde G_2$ via a pointwise orthogonal  matrix $\tilde V_2$ (assuming constant rank) and a congruence transformation with  $V_2=\diag(I, \tilde V_2)$ is performed, so by a congruence transformation with $V= \diag(I, V_2)V_1$, we obtain the zero pattern  in \eqref{eqn:structstc}. Updating the output equation accordingly gives the desired form. 
\end{proof}

\begin{remark} Practically the constant rank assumptions that are required for the derivation of \eqref{stcvardh} can be reduced by performing only the transformation that splits off the common nullspace part of $E,J,R, EK$.
\end{remark}
\Cref{lem:regul} shows that we can remove redundant equations and variables that do occur in the system. In the following we assume that this reduction has already been performed. 
The next result presents a condensed form, which extends the form that was obtained in \cite{BeaGM21} for \LTI \pHDAE systems with $K=0$ and for \LTV \pHDAE systems in \cite{Sch19}.
\begin{lemma}
	\label{lem:c2o2} 
    Consider a linear time-varying \pHDAE as in \eqref{eqn:subsystemstate}. Then under some constant rank assumptions,  there exists a pointwise orthogonal basis transformation $V$ in the state space and $U$ in the control space such that in the new variables 
    \begin{align*}
        \hat{\state} = \begin{bmatrix} \hat{\state}_1^T & \hat{\state}_2^T & \hat{\state}_3^T & \hat{\state}_4^T & \hat{\state}_5^T & \hat{\state}_6^T \end{bmatrix}^T &= V^T \begin{bmatrix} \tilde{\state}_1^T & \tilde{\state}_2^T \end{bmatrix}^T \qquad\text{and}\\
        \hat{\inpVar} = \begin{bmatrix} \inpVar_1^T & \inpVar_2^T & \inpVar_3^T \end{bmatrix}^T &= U^T\inpVar
    \end{align*}
    the system has the form
    \begin{align*}
        \hat{E}\dot{\hat{\state}} &= (\hat{J}-\hat{R}-\hat{E}\hat{K})\hat{\state} + \hat{G}\hat{u},\\
        \hat{\outVar} &= \hat{G}^T\hat{\state},
    \end{align*}
    with
    \begin{subequations}
        \label{fullstcout}
        \begin{align}
        \hat{E} &\vcentcolon= \begin{bmatrix}
            E_{11} & E_{12} & 0 & 0 & 0 & 0\\
            E_{21} & E_{22} & 0 & 0 & 0 & 0\\
            0 & 0 & 0 & 0 & 0 & 0\\ 
            0 & 0 & 0 & 0 & 0 & 0\\  
            0 & 0 & 0 & 0 & 0 & 0\\  
            0 & 0 & 0 & 0 & 0 & 0
        \end{bmatrix}, \qquad \hat{G} \vcentcolon= \begin{bmatrix}
            G_{11} & G_{12} & G_{13}\\
            G_{21} & G_{22} & G_{23}\\
            G_{31} & G_{32} & G_{33}\\
            0 & G_{42} & G_{43}\\
            0 & 0 & G_{53}\\
            0 & 0 & G_{63}
        \end{bmatrix} \\
        \hat{J}-\hat{R} &\vcentcolon= \begin{bmatrix}
            J_{11}-R_{11} &  J_{12}-R_{12}& J_{13}-R_{13}&J_{14} & J_{15}&0\\
            J_{21}-R_{21} &  J_{22}-R_{22}& J_{23}-R_{23}&J_{24} & 0&0\\
            J_{31}-R_{21} & J_{32}-R_{32} & J_{33}-R_{33} & 0 & 0&0\\
            J_{41} & J_{42} & 0& 0 & 0 & 0\\ 
            J_{51} & 0&0  & 0  & 0 & 0\\ 
            0& 0 &  0 & 0& 0& 0
        \end{bmatrix},\\
        \hat{K} &\vcentcolon= \begin{bmatrix}
            K_{11} & K_{12} & K_{13} & K_{14} & 0 & 0\\
            K_{21} & K_{22} & K_{23} & K_{24} & 0 & 0\\
            0 & 0 & 0 & 0 & 0 & 0\\
            0 & 0 & 0 & 0 & 0 & 0\\
            0 & 0 & 0 & 0 & 0 & 0\\
            0 & 0 & 0 & 0 & 0 & 0
        \end{bmatrix}
        \end{align}
    \end{subequations}
where $E_{22}$, $J_{33}-R_{33}$, $J_{15}$, and $G_{42}$ and $G_{63}$  are pointwise invertible.
\end{lemma}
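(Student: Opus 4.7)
My plan is a two-stage reduction: first apply the dissipative Hamiltonian staircase of \Cref{alg:staircaseDHDAE} to the state part alone (which is an orthogonal congruence on the state space and therefore only transforms the rows of $G$), and then use a pointwise orthogonal transformation in the input space together with a further state-space splitting to expose the finer six-block pattern in~\eqref{fullstcout}. Throughout, the output equation $\hat\outVar = \hat G^T \hat\state$ is maintained automatically because a state-space congruence $V$ transforms $G^T$ to $G^T V$, while an orthogonal input-space transformation $U$ transforms $G$ on the right and $G^T$ on the left by $U^T$; so the co-located port structure is invariant.

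\textbf{Step 1 (state-space staircase).} Applying \Cref{lem:varregul} to the pair $(E,J-R-EK)$ in~\eqref{eqn:subsystemstate} produces a pointwise orthogonal $V_1$ so that the transformed $(V_1^T EV_1,\; V_1^T(J-R)V_1 - (V_1^T EV_1)(V_1^T KV_1 - V_1^T\dot V_1))$ has the five-block dHDAE form~\eqref{stcvardh} with $E_{22}$, $J_{33}-R_{33}$ and $J_{41}$ pointwise invertible. Under the same congruence, $G$ is only multiplied from the left, yielding rows $(\tilde G_1,\dots,\tilde G_5)$ of sizes $n_1,n_2,n_3,n_1,n_5$.

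\textbf{Step 2 (input-space reduction and state splitting).} The trivial block row now reads $0 = \tilde G_5 \inpVar$. Using \Cref{thm:smoothRankRevealingDecomposition}, pick a smooth orthogonal $U_\star$ so that $\tilde G_5 U_\star = [\,G_{63}\;\; 0\,]$ with $G_{63}$ pointwise invertible; this identifies the sub-input $\inpVar_3$ that produces the final block~6. For the fourth dHDAE block row, which now reads $0 = J_{41}\tilde\state_1 + \tilde G_4 U_\star\bigl[\inpVar_3^T\;\tilde\inpVar^T\bigr]^T$, apply a second smooth rank-revealing decomposition on the part of $\tilde G_4$ acting on $\tilde\inpVar$, orthogonally splitting $\tilde\inpVar = [\inpVar_1^T\; \inpVar_2^T]^T$ and producing an invertible $G_{42}$. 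Simultaneously split the old block~$1$ and block~$4$ into the refined pHDAE blocks $(1,2)$ of Step~1's output and $(4,5)$ here by an orthogonal basis change in the state, chosen to align with the kernel/range decomposition induced on $J_{41}$ by the input reduction; the resulting coupling satisfies $J_{51} = -J_{15}^T$ with $J_{15}$ pointwise invertible, inherited from the nonsingularity of $J_{41}$. Composing all orthogonal factors yields the global $V$ and $U$ as required.

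\textbf{Verification and main obstacle.} Verification is routine: orthogonal congruence on the state preserves $\hat J=-\hat J^T$ and $\hat R=\hat R^T\geq 0$, and the skew-adjointness of the operator in~\eqref{skewL} is preserved by the prescribed correction $\hat K = V^T KV - V^T\dot V$ as in \Cref{th:pHDAEinv}; orthogonal transformation on the input space leaves $E,J,R,K$ untouched and only rotates the columns of $G$ (and rows of $G^T$) without breaking the co-located structure. The main obstacle is coordinating the two spaces so that \emph{all} invertibility statements ($E_{22}$, $J_{33}-R_{33}$, $J_{15}$, $G_{42}$, $G_{63}$) hold simultaneously: specifically, the input reduction of the trivial row must be done before that of the fourth row (so that $\inpVar_3$ is stripped off first), and the secondary state splitting separating pHDAE blocks~$4$ and~$5$ must be executed in the quotient of block~$4$ by the image of $G_{42}$, so that the skew-symmetric pairing of the new block~$5$ with block~$1$ remains nondegenerate. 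The required smoothness and existence of all orthogonal factors follow from the constant-rank assumptions and \Cref{thm:smoothRankRevealingDecomposition}.
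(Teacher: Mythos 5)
Your argument is correct in substance and arrives at the same condensed form, but it reorders the paper's reduction steps in a way that is worth spelling out. The paper performs only the first two rank decompositions (of $E$ and of $(J-R)$ restricted to the kernel of $E$), then immediately carries out the joint input-space/state-space reduction of the $G$-rows over all remaining kernel directions, and only as a \emph{final} step column-compresses the coupling $\bar J_{41}$ of the zero-$G$ rows in order to split the positive-definite $E$-block; at that point it must argue separately that $\bar J_{41}$ has full row rank (a rank-deficient row would be a redundancy already eliminated by \Cref{lem:regul}). You instead invoke the full staircase of \Cref{lem:varregul} up front, which already splits the kernel directions by the rank of $\hat J_{31}$ and correspondingly pre-splits the positive-definite $E$-block into pieces of sizes $n_1=n_4$ and $n_2$. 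This buys you the invertibility of $J_{15}$ essentially for free (any selection of rows of the square invertible staircase block $J_{41}$ has full row rank), but it costs a regrouping that your write-up only gestures at: after the $G$-based split of the staircase's fourth block into the part carrying an invertible $G_{42}$ and the part whose $G$-rows vanish in the $u_1,u_2$ columns, the partition of the positive-definite $E$-block must be \emph{redone} relative to the coupling of only the latter rows, and the leftover piece of the staircase's first block must be merged into the final block~$2$; the required zero patterns ($J_{52}=0$, $J_{43}=0$, and so on) are then inherited from the zero blocks $J_{42}=J_{43}=J_{44}=0$ of \eqref{stcvardh} together with the column compression. This is routine but should be made explicit, since the phrase ``split the old block~1 \dots into the refined blocks $(1,2)$ of Step~1's output'' suggests, incorrectly, that the staircase's own $1/2$ split already coincides with the final one (final block~$1$ is strictly smaller than the staircase's block~$1$ whenever $G_{42}$ has positive rank). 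With that clarification your route is a valid alternative to the proof in the paper.
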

\begin{proof}
Starting from \eqref{eqn:subsystemstate}, in the first step (similarly as in \eqref{firststep}), one determines a pointwise orthogonal matrix function $\tilde{V}_1$ such that 
\begin{displaymath}
    \tilde{V}_1^T E_1 \tilde{V}_1 = \begin{bmatrix} 
        \tilde E_{11} & 0  \\
        0 & 0 
    \end{bmatrix}
\end{displaymath}
with $\tilde{E}_{11}>0$, and then performs a congruence transformation with the matrix function ${V}_1=\diag(\tilde{V}_1,I)$, yielding $\tilde{V}_1^T G_1 = \begin{smallbmatrix} \tilde{G}_1 \\ \tilde{G}_2\end{smallbmatrix}$ and
\begin{displaymath}
    V_1^T(J-R-EK) V_1-(V_1^T EV_1) V_1^T \dot{V}_1 = \begin{bmatrix} \tilde{J}_{11}-\tilde{R}_{11}-\tilde{E}_{11} \tilde{K}_{11} & \tilde{J}_{12}-\tilde{R}_{12} \\ \tilde{J}_{21}-\tilde{R}_{21} & \tilde{J}_{22}-\tilde{R}_{22}  \end{bmatrix}.
\end{displaymath}
Next, under the assumption of a constant rank, compute a smooth full rank decomposition
\begin{displaymath}
    \tilde{V}_2^T (\tilde{J}_{22}-\tilde{R}_{22}) \tilde{V}_2 = \begin{bmatrix} \widehat{J}_{22} -\widehat{R}_{22} & 0 \\ 0& 0 \end{bmatrix},
\end{displaymath}
where $\widehat{J}_{22} -\widehat{R}_{22}$ is invertible and $\widehat{R}_{22}\geq 0$. Such a full rank decomposition exists, since $\widetilde{J}_{22}-\widetilde{R}_{22}$ has a positive semi-definite symmetric part. Then, defining $V_2 \vcentcolon= \diag(I, \tilde{V}_2, I)$, and applying an appropriate congruence transformation with $\widehat{V} \vcentcolon= V_1V_2$ yields
\begin{align*}
	\widehat{V}^T E \widehat{V} &=
\begin{bmatrix} \tilde E_{11} & 0 &0 &0 \\
 0 & 0 & 0 & 0\\ 0 & 0  &0 &0\\ 0 & 0  &0 &0
\end{bmatrix},\qquad \widehat{V}^TG = \begin{bmatrix} \widehat{G}_1 \\ \widehat{G}_2 \\ \widehat{G}_3 \\ \widehat{G}_4\end{bmatrix},\\
\widehat{V}^T(J-R-EK) \widehat{V} &= \begin{bmatrix} \widehat{J}_{11}-\widehat{R}_{11} -\widetilde E_{11} \widetilde K_{11} & \widehat{J}_{12}-\widehat{R}_{12} -\widetilde E_{11} \widetilde K_{12} & \widehat{J}_{13} -\widetilde E_{11} \widetilde K_{13}& 0\\ \widehat{J}_{21}-\widehat{R}_{21} & \widehat{J}_{22}-\widehat{R}_{22} &0 &0 \\ \widehat{J}_{31} & 0& 0 &0  \\ 0 & 0 & 0 & 0\end{bmatrix},
\end{align*}
where $\widehat{J}_{22}-\widehat{R}_{22}$ is invertible and $\widehat{G}_4$ has full row rank. Note that there is no contribution of $R$ in the third block column and row, which is due to the fact that $\widehat{V}^TR\widehat{V}$ is pointwise positive semi-definite.

Note also that the terms $\widetilde K_{12}$ and $\widetilde K_{13}$ arise due to the fact that the transformation from the right operates in  these block columns. Note further that $\widehat G_4$ has full row rank so it can be transformed by a change of of basis to be of the form $\begin{bmatrix} 0 & \bar{G}_{63}\end{bmatrix}$ with invertible $\bar{G}_{63}$. Combining this with a smooth full rank decomposition of the block $\widehat{G}_3$, one can perform a smooth transformation 
\begin{displaymath}
    \begin{bmatrix} \tilde{V}_3^T & 0 \\ 0 & I \end{bmatrix} \begin{bmatrix}\widehat{G}_3 \\ \widehat{G}_4 \end{bmatrix} U = \begin{bmatrix} 0 & \bar{G}_{42} &\bar{G}_{43}\\ 0 & 0 & \bar{G}_{53}\\
0 & 0 & \bar{G}_{63} \end{bmatrix},
\end{displaymath} 
with $\bar{G}_{42}$ and $\bar{G}_{63}$ square and pointwise nonsingular, where the number of rows in $\bar{G}_{63}$ is that of $\widehat{G}_4$. Applying an appropriate congruence transformation with $V_3=\diag(I, I,  \tilde{V}_3,I)$ one obtains block matrices
\begin{gather*}
    \begin{bmatrix} \bar{E}_{11}  & 0 & 0 & 0 &0\\
0 & 0 & 0&0& 0\\
 0 & 0 & 0 & 0 &0 \\ 0 & 0 & 0 &0  & 0\\ 0 & 0 & 0 &0  & 0\end{bmatrix},\qquad \begin{bmatrix} 
 \bar{G}_{11} & \bar{G}_{12} & \bar{G}_{13} \\
 \bar{G}_{21} & \bar{G}_{22} & \bar{G}_{23} \\
 0 & \bar{G}_{42} & \bar{G}_{43} \\
0 & 0 & \bar{G}_{53} \\
0 & 0 & \bar{G}_{63} \end{bmatrix},\\
\begin{bmatrix} \bar{J}_{11}-\bar{R}_{11}-\bar E_{11} \bar K_{11} &  \bar{J}_{12}-\bar{R}_{12}- \bar E_{11} \bar K_{12}& \bar{J}_{13}-\bar E_{11} \bar K_{13} & \bar{J}_{14}-\bar E_{11} \bar K_{14} &0\\
 \bar{J}_{21}-\bar{R}_{21} &  \bar{J}_{22}-\bar{R}_{22}& 0 & 0&0\\
 \bar{J}_{31}  & 0& 0 & 0 & 0\\  \bar{J}_{41}&0  & 0  & 0 &0\\ 0 & 0 & 0 & 0 & 0\end{bmatrix}.
\end{gather*}
As final step one computes a column compression of the full row rank matrix function~$\bar{J}_{41}$ and applies an appropriate congruence transformation. This yields the desired form.
\end{proof}

Since in the condensed form \eqref{fullstcout} the blocks $J_{51}$, $G_{42}$ and $G_{63}$ are pointwise invertible, it follows immediately that $u_3=0$ and that $\hat{\state}_1=-G_{53} u_3=0$ and $\hat{\state}_5$ is uniquely determined by  the other variables and their derivatives. These parts are associated with equations 
for which a regularization is necessary, see \Cref{sec:regulviaout}.
\begin{example} \label{ex:gasnf:2}
The construction in \Cref{ex:gasnf} can also be used to derive the condensed form for the \pHDAE, which yields a system of the form \eqref{fullstcout} in which the third, fourth, and sixth block row and the corresponding block columns do not occur. The system has (Kronecker) index two as a free system with~$\inpVar=0$. 
\end{example}

\begin{example}\label{ex:power}
The power network from \Cref{sec:power} is already in the condensed form \eqref{fullstcout}, where the first, fourth, fifth, and sixth block row and column do not occur, so the system has (Kronecker) index one as a free system with $\inpVar=0$.
\end{example}

\subsection{Regularization via output feedback}\label{sec:regulviaout}

Consider a \pHDAE system of the form \eqref{fullstcout} and denote the system that is obtained by removing the variables 
$\widehat{\state}_1$, $\widehat{\state}_4$ 
and the corresponding first and fifth equation by

\begin{subequations}\label{redsystem}
\begin{align}
\hat E\dot{\hat \state}&=(\hat J-\hat R-\hat E \hat K) \hat \state+\hat G\,\inpVar,\\
\hat \outVar&=\hat G^T \hat z.
\end{align}
\end{subequations}
System~\eqref{redsystem} can be viewed as the subsystem that is controllable and observable at $\infty$, see \Cref{def:concon}, since we have the following corollary.
\begin{corollary}\label{cor:c2o2}
For system \eqref{redsystem} there exists an output feedback
\begin{displaymath}
    \inpVar=-\hat{W}\hat{\outVar}+w
\end{displaymath}
with $\hat W+\hat{W}^T> 0 $, so that the resulting closed-loop system is a \pHDAE system
\begin{align*}
\hat E\dot{\hat \state}&= (\hat J-\hat R-\hat E \hat K-\hat G \hat W \hat G^T)\hat{\state} + \hat{G}w,\\
\hat \outVar&=\hat G^T \hat z,
\end{align*}
is regular and strangeness-free as a free system with $w=0$.
\end{corollary}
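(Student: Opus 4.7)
The plan is twofold: first verify that the closed-loop system remains a \pHDAE, then use the staircase structure inherited from \eqref{fullstcout} to establish regularity and strangeness-freeness via a kernel argument.

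For the first part, I would split $\hat W = \hat W_\mathrm{s} + \hat W_\mathrm{a}$ into symmetric and skew-symmetric parts; by hypothesis $\hat W_\mathrm{s} > 0$. The feedback correction
\begin{equation*}
-\hat G\hat W\hat G^T = -\hat G\hat W_\mathrm{s}\hat G^T - \hat G\hat W_\mathrm{a}\hat G^T
\end{equation*}
then decomposes into a symmetric positive semi-definite piece and a skew-symmetric piece. Absorbing these into $\hat R$ and $\hat J$ respectively gives a new skew $\tilde J \vcentcolon= \hat J - \hat G\hat W_\mathrm{a}\hat G^T$ and a new positive semi-definite $\tilde R \vcentcolon= \hat R + \hat G\hat W_\mathrm{s}\hat G^T$ (as a sum of two PSD matrices); the matrices $\hat E$, $\hat K$, and the Hamiltonian are unchanged, so all conditions of \Cref{def:pHDAE:linear} are preserved.

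For the second part, I would apply \Cref{alg:staircaseDHDAE} to the free closed-loop pair $(\hat E, \tilde J - \tilde R - \hat E\hat K)$. Since $\hat E$ is already in the block form of \eqref{fullstcout} with positive definite top-left block, Step~1 is trivial and strangeness-freeness (forcing $n_1 = n_4 = 0$) reduces to showing that the restriction of $\tilde J - \tilde R$ to the cokernel of $\hat E$ is pointwise invertible. I would make the concrete choice $\hat W = I$ (so $\hat W_\mathrm{s} = I$, $\hat W_\mathrm{a} = 0$) and exploit the standard criterion that a matrix of the form $J_0 - R_0$ with $J_0$ skew and $R_0 \geq 0$ is invertible iff $\ker J_0 \cap \ker R_0 = \{0\}$ (since $(J_0 - R_0)x = 0$ forces $x^T R_0 x = 0$, hence $R_0 x = J_0 x = 0$). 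The cokernel-restriction of $\hat G$ inherits from \eqref{fullstcout} the upper-triangular staircase shape
\begin{equation*}
\hat G_\perp = \begin{bmatrix} G_{31} & G_{32} & G_{33} \\ 0 & G_{42} & G_{43} \\ 0 & 0 & G_{63} \end{bmatrix}
\end{equation*}
with $G_{42}$ and $G_{63}$ pointwise invertible. For $x = (x_1, x_2, x_3)$, the condition $\hat R|_\perp x = 0$ yields $R_{33}x_1 = 0$, while $\hat G_\perp^T x = 0$ gives the cascade $G_{31}^T x_1 = 0$, $G_{32}^T x_1 + G_{42}^T x_2 = 0$, $G_{33}^T x_1 + G_{43}^T x_2 + G_{63}^T x_3 = 0$, and $\hat J|_\perp x = 0$ gives $J_{33}x_1 = 0$. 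Pointwise invertibility of $J_{33} - R_{33}$ then forces $x_1 = 0$, after which invertibility of $G_{42}$ and $G_{63}$ successively forces $x_2 = 0$ and $x_3 = 0$.

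The delicate step is the kernel computation, which crucially exploits the triangular staircase structure of $\hat G_\perp$ together with the pointwise invertibility of $J_{33} - R_{33}$. Both ingredients are precisely what \Cref{lem:c2o2} extracts as the algebraic content of impulse controllability and impulse observability for \eqref{redsystem}; without them, a single output feedback with $\hat W + \hat W^T > 0$ cannot in general regularize the system. In the \LTV setting these conclusions hold pointwise and uniformly in $t$ under the standing constant-rank hypotheses.
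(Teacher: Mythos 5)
Your proof is correct and takes essentially the same route as the paper: the paper's own proof is a one-line appeal to the condensed form (``the kernel of $\hat E$ and $\hat E^T$ can be directly read off''), and your argument supplies precisely the details that assertion points to --- splitting $\hat W$ into symmetric and skew-symmetric parts to verify the closed loop is again a \pHDAE, and using the pointwise invertibility of $J_{33}-R_{33}$, $G_{42}$, $G_{63}$ together with the kernel criterion for matrices of the form $J_0-R_0$ (skew minus positive semi-definite) to show that the block of the closed-loop coefficient associated with the kernel of $\hat E$ becomes pointwise nonsingular, which with $E_{22}$ invertible gives regularity and strangeness-freeness. No gaps.
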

\begin{proof}
This follows directly from the structure of the system, and the fact that
$\hat E$ is already in a form where the kernel of $\hat{E}$ and $\hat{E}^T$ can be directly read off.
\end{proof}

\begin{remark}\label{rem:noncontrinf}
The condensed form \eqref{fullstcout} is a structured \pHDAE version of
the condensed forms in \cite{ByeGM97,ByeKM97}, which allow to remove parts of the system that cannot be made strangeness-free, uniquely solvable, or of (Kronecker) index at most one via (output) feedback. 
\end{remark}

A similar process of removing parts from a system and that cannot be made strangeness-free by (output) feedback has been discussed for general nonlinear \DAE systems in \cite{CamKM12}. The procedure can be applied directly to \pHDAE systems. 

\subsection{Stability}\label{sec:phandstability}
In this subsection, we show that another important feature of \pHDAE systems is that physical properties like stability or passivity are directly available from the structure. Conversely, if a system is (asymptotically) stable, then it typically can be written as \pHDAE system. 

Beginning with \LTI \ODE systems, an immediate consequence of the Lyapunov characterization of stability  is the existence of a \dHDAE formulation of (asymptotically) stable \LTI \ODE systems, see 
\cite{GilMS18}.
\begin{corollary}\label{cor:daeform}
Consider the linear time-invariant system \eqref{linode} and
suppose that $X=X^T>0$ is a solution of the Lyapunov inequality $ A^T X+ X A \leq 0$, 
($ A^T X+ X A < 0$). Setting $J-R\vcentcolon= XA$ and $E\vcentcolon= X$
with $J=-J^T$ and $R=R^T$, then 
\begin{equation}\label{reformdhdae}
    E\dot{\state}= (J-R)\state    
\end{equation}
is a \dHDAE system with $R\geq 0$ ($R>0$).

Conversely every \dHDAE of the form \eqref{reformdhdae} with $E>0$, $R\geq 0$ ($R>0$) is (asymptotically) stable. 
\end{corollary}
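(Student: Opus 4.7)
The plan is to handle the two directions separately, both of which reduce to elementary linear-algebraic manipulations once the right decomposition of $XA$ is identified.

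For the forward direction, I would start from the matrix $M := XA$ and split it into its symmetric and skew-symmetric parts, namely $J := \tfrac{1}{2}(M - M^T)$ and $R := -\tfrac{1}{2}(M + M^T)$, so that $J - R = M = XA$ by construction. The skew-symmetry of $J$ and symmetry of $R$ are immediate. Setting $E := X$, we have $E = E^T > 0$ by assumption, and the key observation is that
\begin{equation*}
-2R = M + M^T = XA + A^T X \leq 0,
\end{equation*}
which is exactly the Lyapunov hypothesis and yields $R \geq 0$ (respectively $R > 0$ in the strict case). Multiplying $\dot z = Az$ by $X$ from the left recovers $E\dot z = (J-R)z$, so this is an equivalent dHDAE representation (cf.~\Cref{def:pHDAE:LTI} with $Q=I$ and no inputs/outputs), with quadratic Hamiltonian $\mathcal{H}(z) = \tfrac{1}{2}z^T E z > 0$ for $z \neq 0$.

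For the converse, the cleanest route is to use the Hamiltonian as a Lyapunov function, invoking the power balance equation (\Cref{th:pbe}) applied to the input-free system:
\begin{equation*}
\tfrac{\mathrm{d}}{\mathrm{d}t}\mathcal{H}(z(t)) = -z(t)^T R\, z(t) \leq 0,
\end{equation*}
where we have used $z^T J z = 0$ from skew-symmetry of $J$. Since $E > 0$, $\mathcal{H}$ is positive definite, so Lyapunov's theorem yields stability; strict negativity when $R > 0$ gives asymptotic stability. Equivalently, since $E$ is invertible, the dHDAE is the ODE $\dot z = Az$ with $A := E^{-1}(J-R)$, and a direct computation using $J^T = -J$, $R^T = R$, and symmetry of $E$ shows $A^T E + EA = (J-R)^T + (J-R) = -2R$, so $X := E$ satisfies the Lyapunov inequality and the classical spectral characterization (\Cref{th:daestab} or the standard ODE Lyapunov theorem) applies.

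There is no real obstacle here; the result is essentially a repackaging of the Lyapunov inequality into the dHDAE algebraic structure. The only point that requires a moment's care is verifying that the splitting $M = J - R$ actually lands in the dHDAE class, \ie that the sign convention in defining $R$ (with the minus sign) is consistent with $R \geq 0$ precisely when the Lyapunov inequality holds with $\leq 0$. Everything else is routine algebra, and the strict/non-strict cases are handled uniformly.
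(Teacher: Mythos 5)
Your proposal is correct and follows essentially the same route as the paper, which simply observes that $-R$ is the symmetric part of $XA$, i.e.\ that the Lyapunov inequality $A^TX+XA\leq 0$ is precisely the statement $R\geq 0$ after setting $E=X$ and $J-R=XA$. The paper's proof is a one-line version of your argument; your converse via the Hamiltonian as a Lyapunov function (equivalently, $A^TE+EA=-2R$) is the same reasoning the paper leaves implicit.
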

\begin{proof}
The proof follows trivially 
since $-R=A^TE+EA=A^TX+XA$.
\end{proof}

For \dHDAE systems we have the following stability characterizing spectral properties, 
see \cite{MehMW18} also for an extended result that also deals with singular and high index \dHDAE systems.
\begin{theorem}\label{thm:singind}
Consider a \dHDAE of the form \eqref{dhdae} and suppose that the pencil $\lambda E-(J-R)$ is regular and of (Kronecker) index at most one.
\begin{itemize}
\item[(i)] If $\lambda_0\in\C$ is an eigenvalue of $\lambda E-(J-R)$, then $\operatorname{Re}(\lambda_0)\leq 0$.
\item[(ii)] If $\omega\in\R$ and $\lambda_0=i\omega$ is an eigenvalue of $\lambda E-(J-R)$, then
$\lambda_0$ is semisimple. Moreover, if the columns of $V\in\C^{\stateDim,k}$ form a basis of a regular deflating subspace of $\lambda E-(J-R)$ associated with the eigenvalue $\lambda_0$, then $RV=0$.
\end{itemize}
\end{theorem}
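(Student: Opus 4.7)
The plan is as follows. For part (i), I would take a nonzero (possibly complex) eigenvector $v$ associated with a finite eigenvalue $\lambda_0$, so that $\lambda_0 E v = (J-R)v$, and multiply from the left by $v^H$ to obtain
\begin{equation*}
\lambda_0\, v^H E v \;=\; v^H J v - v^H R v.
\end{equation*}
Since $E=E^T\geq 0$ and $R=R^T\geq 0$ are real, the scalars $v^H E v$ and $v^H R v$ are real and nonnegative, while $J=-J^T$ real forces $v^H J v$ to be purely imaginary. The first substep is to rule out $v^H E v = 0$: $E\geq 0$ together with $v^H E v=0$ gives $Ev=0$, which via $(J-R)v=0$ would exhibit a common kernel vector of $E$ and $J-R$, contradicting regularity of the pencil by \Cref{rem:common}. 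Once $v^H E v > 0$ is established, comparing real parts gives $\real(\lambda_0)\, v^H E v = -\, v^H R v \leq 0$, hence $\real(\lambda_0)\leq 0$.

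For the semisimplicity claim of (ii), I would argue by contradiction and postulate a Jordan chain of length two at $\lambda_0 = i\omega$, i.e., vectors $v_1,v_2$ with $v_1\neq 0$ and
\begin{equation*}
(i\omega E - (J-R))\,v_1 = 0,\qquad (i\omega E - (J-R))\,v_2 = E v_1.
\end{equation*}
Applying the computation of (i) to $v_1$, the inequality is an equality because $\real(i\omega)=0$, so $v_1^H R v_1 = 0$ and the positive semi-definiteness of $R$ yields $Rv_1=0$; substituting back gives $J v_1 = i\omega E v_1$. The key step is then to multiply the second chain equation from the left by $v_1^H$ and rewrite the left-hand side as $\bigl((i\omega E - (J-R))^H v_1\bigr)^H v_2$. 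Using $E^H=E$, $J^H=-J$, $R^H=R$, the adjoint becomes $(-i\omega E + J + R)v_1 = -i\omega E v_1 + J v_1 + R v_1 = 0$. Hence $v_1^H E v_1 = 0$, forcing $E v_1=0$ and then $(J-R)v_1=0$, again contradicting regularity.

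For the final assertion of (ii), semisimplicity implies that any regular deflating subspace of $\lambda E-(J-R)$ associated with $\lambda_0=i\omega$ is spanned by eigenvectors, i.e., coincides with the $i\omega$-eigenspace. For every column $v$ of $V$, the identity from (i) with $\real(\lambda_0)=0$ reads $0 = -v^H R v$, and the positive semi-definiteness of $R$ upgrades this to $Rv=0$. Therefore $RV=0$, as claimed.

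The step I expect to be the principal technical obstacle is the Jordan-chain calculation in (ii): the identity $(i\omega E - (J-R))^H v_1 = 0$ is not simply the complex conjugate of the eigenvector equation and relies crucially on combining $R v_1=0$ (obtained from the equality case in (i)) with $J v_1 = i\omega E v_1$ in order to cancel the symmetric and skew-symmetric contributions separately. Everything else is bookkeeping with the $\pm$-symmetry of the coefficients, together with the regularity criterion for \dHDAE pencils recalled in \Cref{rem:common}.
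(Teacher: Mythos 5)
Your proposal is correct. Part (i) is essentially identical to the paper's argument: the same Rayleigh-quotient identity, the same exclusion of $Ev=0$ via regularity, the same comparison of real parts. For part (ii), however, you take a genuinely different route. The paper proves the ``moreover'' part \emph{first}, working with the whole deflating subspace at once: it writes $EV=W$, $(J-R)V=WT$ with $T=i\omega I_k+N$ in Jordan form, shows $V^HW=V^HEV>0$, conjugates the inequality $V^HWT+T^HW^HV=-2V^HRV\leq 0$ by the inverse Hermitian square root $M$ of $V^HW$, and observes that $\trace(M^{-1}TM+MT^HM^{-1})=\trace(N+N^H)=0$, forcing $V^HRV=0$; semisimplicity is then deduced afterwards by showing $T$ is similar to the skew-Hermitian matrix $MV^HJVM$. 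You reverse the order: you establish semisimplicity directly by a length-two Jordan-chain contradiction, exploiting that the equality case of (i) gives $Rv_1=0$ and hence $Jv_1=i\omega Ev_1$, so that $(i\omega E-(J-R))^Hv_1=-i\omega Ev_1+Jv_1+Rv_1=0$ and therefore $v_1^HEv_1=0$, contradicting regularity; the claim $RV=0$ then follows columnwise since a regular deflating subspace associated with a semisimple eigenvalue consists of eigenvectors. Your argument is more elementary (no square roots, no trace or congruence argument, no appeal to the auxiliary result on purely imaginary eigenvalues of non-dissipative systems), at the price of having to handle the deflating subspace through its individual columns; the paper's computation buys $RV=0$ for an arbitrary regular deflating subspace before semisimplicity is known, which is why it can afford the opposite order. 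Both proofs are complete and rest on the same two structural facts: the definiteness of $E$ and $R$, and the characterization of singularity via a common kernel.
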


\begin{proof} \ 
\begin{itemize}
    \item[(i)] Let $\lambda_0\in\C$ be an eigenvalue of $\lambda E-(J-R)$ and let $v\neq 0$ be an eigenvector associated with
$\lambda_0$. Then we have $\lambda_0 Ev=(J-R)v$ and thus
\[
\lambda_0 v^H   Ev=v^H  Jv-v^H Rv.
\]
Considering the real parts of both sides of this equation, we obtain
\[
\operatorname{Re}(\lambda_0) v^H  Ev=-v^H  Rv,
\]
where we used the fact that $E$ and $R$ are symmetric and $J$ is skew-symmetric. If $Ev=0$, then also $(J-R)v=0$ which would imply that the pencil is singular. 
Hence, we have $Ev\neq 0$, and since $E$ is positive semi-definite, we obtain
$v^HEv>0$, which finally implies
\[
\operatorname{Re}(\lambda_0)=-\frac{v^H Rv}{v^HEv}\leq 0.
\]
\item[(ii)] We first prove the 'moreover' part.
For this, let the columns of $V\in\C^{n,k}$ form a basis of a regular deflating subspace of $\lambda E-(J-R)$ associated with the eigenvalue $\lambda_0=i\omega$, $\omega\in\R$, \ie there exists a matrix $W\in\C^{n,k}$ with full column rank such that
$EV=W$ and $(J-R)V=WT$, where $T\in\C^{k,k}$ only has the eigenvalue $i\omega$. Without loss of generality we may assume that $T=i\omega I_k+N$ is in Jordan canonical form,
where $N$ is strictly upper triangular.  Then  $V^H  (J-R)V=V^H  WT$,
and taking the Hermitian part on both sides we obtain
\[
0\geq -2V^H  RV=V^H WT+T^HW^HV.
\]
Since $R$ is positive semi-definite, it remains to show that $V^H RV=0$, because then we also have $RV=0$. For this, we show that
\begin{displaymath}
    V^H W=W^H V> 0.
\end{displaymath}
This follows, since first $V^HW= V^H EV\geq 0$. If there exists $x\neq 0$ such that $EVx=0$, then with $y=Vx =y_1+i y_2$, $y_1,y_2$ real, one has $Ey=0$. This implies that  $Ey_1=0$ and $y_2=0$. Hence,
\[
y\in\sspan (\ker E \cup \ker (J-R) )\subseteq    \sspan \bigg( \bigcup_{\lambda\in\mathbb{S}} \ker (\lambda E- (J-R))\bigg),
\]
with $\mathbb{S} \vcentcolon= (\mathbb C\cup\{\infty\})\setminus\{\lambda_0\}$, 
which contradicts the fact that the columns of $V$ span a regular deflating subspace associated with $\lambda_0$.

If $M$ is the inverse of the Hermitian positive definite square root of $V^HW$, then 
\[
M(V^H WT+T^H V^HW)M=M^{-1}TM+MT^H M^{-1}\leq 0.
\]
Moreover, 
\begin{align*}
\operatorname{trace}(M^{-1}TM+MT^H M^{-1})&=
\operatorname{trace}(M^{-1}TM)+\operatorname{trace}(MT^H M^{-1})\\
&=\operatorname{trace}(T+T^H)=\operatorname{trace}(N+N^H)=0,
\end{align*}
because $N$ has a zero diagonal. But this implies
\begin{displaymath}
    M^{-1}TM+MT^HM^{-1}=0
\end{displaymath} 
and hence also $-2V^H R V=0$, which finishes the proof of the 'moreover' part.

To show that $i\omega$ is a semisimple eigenvalue, it remains to show that the matrix
$T=i\omega I_k+N$  is diagonal, \ie, $N=0$.  Since purely imaginary eigenvalues of the system correspond to eigenvectors of the non-dissipative system ($R=0$) that are in the kernel of $R$, see \cite{MehMS16}, with $RV=0$ we get  $EV=W$ and $JV=WT$ which implies that $V^H JV=V^H WT$.
Then 
\[
M^{-1}TM=MV^H WTM=MV^H JVM,
\]
implies that $T$ is similar to a matrix which is congruent to $J$, i.e., $T$ is similar to 
a skew-symmetric matrix, which implies that  $N=0$. Thus, $i\omega$  is a semisimple eigenvalue of  $\lambda E-(J-R)$ and assertion (ii) is proved.
See \cite{MehMW18} for further details.
\end{itemize}
\end{proof}

Note that if we consider an asymptotically stable linear system \eqref{linode} and split $A=J-R$ in its skew-symmetric and symmetric part, then in general we do not have that $R>0$ and even for singular  $R\geq 0$  the system may be asymptotically stable, as is shown by the following characterization from \cite{AchAM21}.

\begin{lemma} \label{lem:Equivalence}
Consider the \LTI system~\eqref{ODE} with $A=J-R$, $J=-J^T$, $0\leq R=R^T$. Then the following conditions are equivalent.
\begin{itemize}
\item[(i)]
There exists a nonnegative integer $m_H$ such that %
\begin{equation*}\label{condition:KalmanRank}
 \rank \begin{bmatrix} R & J{R} & \cdots & J^{m_H} {R}\end{bmatrix} = \stateDim.
\end{equation*}
\item[(ii)]
There exists a nonnegative integer $m_H$ such that
\begin{equation*}\label{Tm:J-R}
 T_{m_H} \vcentcolon=\sum_{j=0}^{m_H} J^j R (J^T)^j > 0.
\end{equation*}
\item[(iii)]
No eigenvector of~$J$ lies in the kernel of~$R$.
\item[(iv)] We have $\rank \begin{bmatrix}\lambda I-J & R\end{bmatrix} =\stateDim$ for every $\lambda \in \C$, in particular for every eigenvalue~$\lambda$ of~$J$.
\end{itemize}
Moreover, the smallest possible~$m_H$ in~(i) and (ii) coincide. 
\end{lemma}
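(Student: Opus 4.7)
The plan is to establish three pairwise equivalences, namely $(i) \Leftrightarrow (ii)$, $(i) \Leftrightarrow (iv)$, and $(iii) \Leftrightarrow (iv)$, each of which reduces to a standard observation once the symmetry $J^T = -J$ and $R = R^T \geq 0$ are exploited. Together these give $(i) \Leftrightarrow (iii)$ and close the loop.

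For $(i) \Leftrightarrow (ii)$ and the coincidence of the minimal $m_H$, I would factor $R = S S^T$ via its symmetric positive semi-definite square root $S$ and use $J^T = -J$ to rewrite
\begin{equation*}
    T_{m_H} = \sum_{j=0}^{m_H} J^j R (J^T)^j = \sum_{j=0}^{m_H} (J^j S)(J^j S)^T = \mathcal{K}_{m_H} \mathcal{K}_{m_H}^T,
\end{equation*}
where $\mathcal{K}_{m_H} \vcentcolon= \begin{bmatrix} S & JS & \cdots & J^{m_H} S \end{bmatrix}$. A Gram matrix is positive definite iff its generator has full row rank, so $T_{m_H} > 0$ iff $\rank \mathcal{K}_{m_H} = n$. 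Since $R = S^2$ with $S$ psd gives $\range R = \range S$, hence $\range J^j R = \range J^j S$ for every $j$, we obtain
\begin{equation*}
    \rank \mathcal{K}_{m_H} = \rank \begin{bmatrix} R & JR & \cdots & J^{m_H} R \end{bmatrix},
\end{equation*}
which yields both the equivalence and the coincidence of the smallest admissible $m_H$.

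For $(i) \Leftrightarrow (iv)$, I would invoke the classical Hautus--Kalman equivalence applied to the matrix pair $(J, R)$: by the Cayley--Hamilton theorem the nested column spans of $\begin{bmatrix} R & JR & \cdots & J^k R \end{bmatrix}$ stabilize once $k \geq n-1$, so $(i)$ for some $m_H$ is equivalent to the Kalman rank condition at $m_H = n-1$, which in turn is equivalent to the Hautus rank condition $(iv)$. For $(iii) \Leftrightarrow (iv)$, I would analyze when $\begin{bmatrix} \lambda I - J & R \end{bmatrix}$ drops column rank over $\C$: this requires a nonzero $v \in \C^n$ with $v^H(\lambda I - J) = 0$ and $v^H R = 0$. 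Since $R$ is real and symmetric the latter becomes $Rv = 0$, while $J^T = -J$ reduces the former to $J v = -\bar\lambda\, v$. As $\lambda$ varies over $\C$ so does $-\bar\lambda$, so $(iv)$ fails iff some eigenvector of $J$ lies in $\ker R$, which is exactly the negation of $(iii)$.

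None of the steps is technically deep, but two points require care. First, $(iii)$ must be read with \emph{complex} eigenvectors, since the Hautus test $(iv)$ ranges over $\lambda \in \C$; the real skew-symmetric $J$ is normal, which legitimises passing between right eigenvectors and conjugated left eigenvectors in the argument above. Second, the range identity $\range J^j R = \range J^j S$, which is essential for the ``smallest $m_H$ coincide'' claim, depends crucially on $R \geq 0$; without positive semi-definiteness one would only have $\range J^j R \subseteq \range J^j S$ and the two minimal values could legitimately differ.
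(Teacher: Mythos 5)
Your proposal is correct. Note, however, that the paper itself does not prove this lemma: its ``proof'' consists of the single line ``See \cite{AchAM21}'', so there is no in-paper argument to compare against; your write-up is essentially a self-contained reconstruction of the standard hypocoercivity-index argument from that reference. The three links all check out: the Gram-matrix identity $T_{m_H}=\mathcal{K}_{m_H}\mathcal{K}_{m_H}^T$ together with $\range(J^jR)=J^j\range R=J^j\range S=\range(J^jS)$ gives (i)$\Leftrightarrow$(ii) with the \emph{same} $m_H$, which is exactly what the ``moreover'' clause requires; Cayley--Hamilton plus the Hautus test for the pair $(J,R)$ gives (i)$\Leftrightarrow$(iv); and the left-null-vector analysis of $\begin{bmatrix}\lambda I-J & R\end{bmatrix}$, using that $v^HR=0$ is equivalent to $Rv=0$ for symmetric real $R$ and that $J^Tv=\bar\lambda v$ becomes $Jv=-\bar\lambda v$, gives (iii)$\Leftrightarrow$(iv). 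Two cosmetic remarks: the skew-symmetry of $J$ is not actually needed in the identity $J^jR(J^T)^j=(J^jS)(J^jS)^T$, which holds for arbitrary $J$; and your closing caveat about losing $\range(J^jR)=\range(J^jS)$ without semi-definiteness is slightly off target, since without $R\geq 0$ the symmetric square root (and hence the whole Gram-matrix device) is unavailable in the first place. Neither affects the validity of the proof.
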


\begin{proof}
See \cite{AchAM21}.
\end{proof}
The smallest possible $m_H$ in (i) and (ii) of \Cref{lem:Equivalence} is called the \emph{hypocoercivity index} of $A$ and we have the following corollary.
\begin{corollary}\label{cor:asyhyp}
Consider the \LTI system~\eqref{ODE} with $A=J-R$, $J=-J^T$, $0\leq R=R^T$. Then the system is asymptotically stable if and only if the hypocoercivity index is  finite.
\end{corollary}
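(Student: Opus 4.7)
The plan is a two-way argument that leverages the equivalent characterizations already established in Lemma~\ref{lem:Equivalence} together with the spectral information from Theorem~\ref{thm:singind}. Since $E=I$ here, the pencil $\lambda I-(J-R)$ is automatically regular and of (Kronecker) index zero, so Theorem~\ref{thm:singind} applies without modification, giving at the outset that every eigenvalue $\lambda_0$ of $A=J-R$ satisfies $\real(\lambda_0)\leq 0$, and that any eigenvalue on the imaginary axis is semisimple with the corresponding eigenvectors lying in $\ker R$.

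For the ``finite hypocoercivity index $\Rightarrow$ asymptotic stability'' direction, I would argue by contradiction. Suppose the hypocoercivity index $m_H$ is finite but asymptotic stability fails. Then there is an eigenvalue $\lambda_0=i\omega$, $\omega\in\R$, of $A$, with some eigenvector $v\neq 0$. The second part of Theorem~\ref{thm:singind} forces $Rv=0$, and then $(J-R)v=i\omega v$ reduces to $Jv=i\omega v$. Thus $v$ is an eigenvector of $J$ lying in $\ker R$, which contradicts condition (iii) of Lemma~\ref{lem:Equivalence}. Consequently all eigenvalues of $A$ have strictly negative real part, and since $E=I$, this is exactly asymptotic stability for~\eqref{linode} (or equivalently, one may invoke \Cref{th:daestab}).

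For the converse direction, I would again argue by contrapositive: if the hypocoercivity index is infinite, then by Lemma~\ref{lem:Equivalence}\,(iii) there exists an eigenvector $v\neq 0$ of $J$ with $Jv=i\omega v$ (purely imaginary since $J$ is skew-symmetric) satisfying $Rv=0$. But then $Av=(J-R)v=i\omega v$, so $i\omega$ is an eigenvalue of $A$ with $\real(i\omega)=0$, ruling out asymptotic stability.

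The only delicate point is ensuring that Theorem~\ref{thm:singind} is applicable in the form stated, but with $E=I$ the regularity and index conditions are trivially satisfied, so no extra work is needed. The main obstacle, if any, is notational rather than conceptual: one must be careful that the equivalences in Lemma~\ref{lem:Equivalence} are being used in the right direction, and that $E=I$ is positive definite (not merely semi-definite) so that no deflating-subspace subtleties of the general \dHDAE case intrude.
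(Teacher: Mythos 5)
Your proof is correct and is exactly the argument the paper leaves implicit: the corollary is stated as an immediate consequence of Lemma~\ref{lem:Equivalence}, and the intended reasoning is precisely your combination of condition~(iii) of that lemma with the spectral facts of \Cref{thm:singind} (eigenvalues in the closed left half-plane, and $Rv=0$ for eigenvectors associated with purely imaginary eigenvalues), specialized to $E=I$. Both directions are handled correctly, so nothing is missing.
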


\begin{remark}\label{rem:shortdecay}
It has been shown in \cite{AchAC21} that if the hypocoercivity index $m_H$ is finite , then for the fundamental solution $e^{A t}\in \R^{n,n}$ of~\eqref{linode}, the short-time decay in the spectral norm is given by
\begin{equation*}
\|\mathrm{e}^{At}\|_2
= 1 -ct^{2m_H+1} +\mathcal O(t^{2m_H+2})
\quad \text{for } t\to0+\,,
\end{equation*}
with a constant $c>0$.
\end{remark}

Analogous to the \ODE case, \Cref{thm:singind} implies that \dHDAE systems with regular pencils of (Kronecker) index at most one are stable, but they are not necessarily asymptotically stable. To characterize asymptotic stability, a hypocoercivity index and the corresponding Lyapunov inequality for \dHDAE systems is introduced in \cite{AchAM21} also for the \DAE case. For the proof
we use the following simplification of the staircase form~\eqref{stcdh}.
\begin{lemma}\label{lem:reduced}
Consider a \dHDAE of the form 
\eqref{dhdae} with regular matrix pencil $\lambda E-(J-R)$ in staircase form~\eqref{stcdh}. Then there exist nonsingular matrices~$L_1,L_2$ such that
\begin{align}\label{kcf:almost}
L_1 \check{E} L_2 &=
\begin{bmatrix}
\widehat{E}_{1,1} & 0 & 0 & 0  \\
0 & \widehat{E}_{2,2} & 0 & 0  \\
0 & 0 & 0 & 0  \\
0 & 0 & 0 & 0  
\end{bmatrix}, &
L_1 (\check{J} -\check{R}) L_2
&=\begin{bmatrix}
0 & 0 & 0& I  \\
0  & \widehat{J}_{2,2}-\widehat{R}_{22} & 0 & 0 \\
0 & 0 & I& 0  \\
-I & 0 & 0 & 0  
\end{bmatrix}.
\end{align}
The blocks satisfy $\widehat J_{2,2}=-\widehat J_{2,2}^T$, $\widehat E_{1,1}=\widehat E_{1,1}^T>0$, $\widehat E_{2,2}=\widehat E_{2,2}>0$, and $\widehat R_{22}=\widehat R_{22}^T\geq 0$.
\end{lemma}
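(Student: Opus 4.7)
The plan is to construct $L_1$ and $L_2$ as a composition of four successive transformations: a carefully chosen congruence that block-diagonalizes the leading positive definite sub-block of $\check{E}$ without perturbing the $(1,4)$, $(4,1)$, $(2,4)$, $(4,2)$ blocks of $\check{J}-\check{R}$; a sequence of non-congruent row and column operations that zeros out the remaining off-diagonal blocks while leaving $\check{E}$ untouched; a rescaling that normalizes the identity entries demanded by the target form; and a final left multiplication of the second block row that repairs the \dHDAE structure in the $(2,2)$ block via a Lyapunov argument.

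Since the pencil is regular, the staircase characterization forces $n_5=0$, so the fifth block row and column of \eqref{stcdh} may be dropped. First, I would block-diagonalize the symmetric positive definite $(1,2)\!\times\!(1,2)$ sub-block of $\check{E}$ via the congruence $L^{(1)T}(\cdot)L^{(1)}$, where $L^{(1)}$ is block-lower-triangular with only nontrivial entry $-E_{22}^{-1}E_{21}$ in the $(2,1)$ block. This sends $\check{E}$ to $\diag(E_{11}-E_{12}E_{22}^{-1}E_{21},\, E_{22},\, 0,\, 0)$ with both nonzero diagonal blocks symmetric positive definite. Because $L^{(1)}$ mixes block~$2$ into block~$1$ but not conversely, the second block row and column of $\check{J}-\check{R}$ are left untouched; in particular $(2,2)$ remains $J_{22}-R_{22}$, and $(1,4),(4,1),(3,3),(2,4)=0,(4,2)=0$ all retain their original values.

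Exploiting that rows and columns $3$ and $4$ of the transformed $\check{E}$ vanish, I would next apply non-congruent row and column operations that leave $\check{E}$ alone: column operations with column~$4$ (invertibility of $J_{14}$) to zero $(1,1),(1,2),(1,3)$; row operations with row~$4$ (invertibility of $J_{41}$) to zero $(2,1),(3,1)$; and a Schur-type pair of operations involving block~$3$ (invertibility of $J_{33}-R_{33}$) to simultaneously zero $(2,3)$ and $(3,2)$, at the cost of replacing the $(2,2)$ block by the Schur complement $\tilde A_{22}\vcentcolon=(J_{22}-R_{22})-(J_{23}-R_{23})(J_{33}-R_{33})^{-1}(J_{32}-R_{32})$. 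A diagonal congruence $\diag(I,I,I,J_{14}^{-1})$ rescales $(1,4),(4,1)$ to $I,-I$, and left-multiplying row~$3$ by $(J_{33}-R_{33})^{-1}$ normalizes $(3,3)$ to $I$. At this point the pencil already matches the target form in every block except $(2,2)$, which carries some positive definite $\tilde E_{22}$ in $\check{E}$ and the generic matrix $\tilde A_{22}$ in $\check{J}-\check{R}$.

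The main obstacle is the final step: the Schur-like elimination is not a congruence, so the skew-plus-negative-semi-definite decomposition of the $(2,2)$ block is generically destroyed, and $\tilde A_{22}$ need not be of the form $\widehat J_{22}-\widehat R_{22}$. The remedy is a non-congruent scaling of row~$2$ based on a Lyapunov argument. The pencil $\lambda\tilde E_{22}-\tilde A_{22}$ is regular with $\tilde E_{22}$ nonsingular, and by construction its spectrum coincides with the finite spectrum of $\lambda \check{E}-(\check{J}-\check{R})$; by \Cref{thm:singind} this spectrum lies in the closed left half-plane with purely imaginary eigenvalues semisimple, so $A'\vcentcolon=\tilde E_{22}^{-1}\tilde A_{22}$ is Lyapunov stable. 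The Lyapunov inequality then furnishes a symmetric positive definite $X$ with $A'^T X + X A' \leq 0$; multiplying row~$2$ of the whole pencil from the left by $X\tilde E_{22}^{-1}$ turns the $(2,2)$ block of $\check{E}$ into $X$ and that of $\check{J}-\check{R}$ into $XA'$. Setting $\widehat E_{11}\vcentcolon=E_{11}-E_{12}E_{22}^{-1}E_{21}$, $\widehat E_{22}\vcentcolon=X$, $\widehat J_{22}\vcentcolon=\tfrac12(XA'-A'^TX)$, and $\widehat R_{22}\vcentcolon=-\tfrac12(XA'+A'^TX)$ completes the construction, in the spirit of \Cref{cor:daeform}.
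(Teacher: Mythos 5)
Your elimination strategy is exactly the paper's: block Gaussian elimination using the positive definite $E_{22}$-block to diagonalize $\check E$, then elimination with the nonsingular blocks $J_{14}$, $J_{41}$, $J_{33}-R_{33}$ followed by scaling, and these mechanics are all sound (in particular your checks that the row/column operations through the zero rows and columns of $\check E$ leave $\check E$ untouched are correct). The problem is your diagnosis of the final step. The ``main obstacle'' you identify is not an obstacle: although the Schur-type elimination in the $2$--$3$ blocks is indeed not a congruence, the Schur complement of a matrix whose symmetric part is negative semi-definite, taken with respect to an invertible diagonal block, again has negative semi-definite symmetric part. Concretely, for $x_2$ arbitrary and $x_3=-(J_{33}-R_{33})^{-1}(J_{32}-R_{32})x_2$ one computes $x_2^T\tilde A_{22}x_2=\begin{bmatrix}x_2^T & x_3^T\end{bmatrix}\begin{smallbmatrix}J_{22}-R_{22} & J_{23}-R_{23}\\ J_{32}-R_{32} & J_{33}-R_{33}\end{smallbmatrix}\begin{smallbmatrix}x_2\\ x_3\end{smallbmatrix}\le 0$, since the symmetric part of that submatrix is a principal submatrix of $-\check R\le 0$. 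Hence $\tilde A_{22}=\widehat J_{22}-\widehat R_{22}$ with $\widehat J_{22}$ its skew part and $\widehat R_{22}\ge 0$ its negated symmetric part, and one may simply take $\widehat E_{22}=E_{22}$. This is precisely the content of \Cref{lem:schur}, which the paper invokes at exactly this point; your Lyapunov detour is unnecessary.

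Moreover, the detour as written has a genuine gap. To get a positive definite $X$ with $A'^TX+XA'\le 0$ you need $A'=\tilde E_{22}^{-1}\tilde A_{22}$ to be Lyapunov stable, and you derive this from \Cref{thm:singind} applied to the original pencil. But \Cref{thm:singind} assumes the pencil is regular \emph{and of (Kronecker) index at most one}, whereas \Cref{lem:reduced} only assumes regularity; the case where all four blocks of~\eqref{kcf:almost} are present is exactly the index-two case ($n_1=n_4>0$), so the cited theorem does not cover the situation in which you need it. (The natural way to establish the spectral property there is to observe that $\lambda\tilde E_{22}-\tilde A_{22}$ is itself a \dHDAE pencil with positive definite $\tilde E_{22}$ --- but that is the Schur-complement fact again, which makes the Lyapunov step moot.) Delete the last paragraph of your argument, replace it with the appeal to \Cref{lem:schur}, and the proof coincides with the paper's.
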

\begin{proof}
The proof follows by block Gaussian elimination to create first the block diagonal structure of~$\widehat E$,  using the positive definite diagonal block $E_{22}$. This is followed by block Gaussian elimination using the nonsingular blocks  $J_{41}=-J_{14}^T$ and $J_{33}-R_{33}$ and then scaling these nonsingular blocks. Note that $\widehat E_{22}= E_{22}$ with $E_{22}$ as in~\eqref{stcdh}, and $\widehat{J}_{22}$, $\widehat{R}_{22}$ are skew-symmetric and symmetric part of the Schur complement obtained in this way, so the semi-definiteness of $\widehat{R}_{22}$ follows as in the proof of \Cref{lem:schur}. See \cite{AchAM21} for details. 
\end{proof}
If the \dHDAE system is transformed to the form \eqref{kcf:almost} with
transformed state vector $\state= \begin{bmatrix} \state_1^T &\state_2^T & \state_3^T & \state_4^T \end{bmatrix}^T$ partitioned according to the block structure, then we immediately obtain that $\state_1=0$, $\state_3=0$ and $ \state_4=0$, which gives restrictions in the initial values. 
Using the fact that  $\widehat{E}_{22}=\widehat{E}_{22}^T>0$, in 
\cite{AchAM21} then the hypocoercivity index of \eqref{dhdae} is defined as that of the underlying \ODE
\begin{equation}\label{underlyingODE}
 \dot{\xi}_2=
   \widehat E_{22}^{-1/2}( \widehat J_{22}-\widehat R_{22})\widehat E_{22}^{-1/2} \xi_2,\  
\end{equation}
with $\xi_2=\widehat{E}_{22}^{1/2} \state_2$.  
We have the following characterization of asymptotic stability, cf.~\cite{AchAM21}.
\begin{corollary}\label{cor:hypodae}
If a \dHDAE system of the form~\eqref{dhdae} has a regular pencil $\lambda E-(J-R)$ with (Kronecker) index at most one, and non-trivial dynamics with a finite hypocoercivity index, then for every consistent initial condition the solution is asymptotically stable.
\end{corollary}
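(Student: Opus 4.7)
The plan is to reduce the \dHDAE to its non-trivial \ODE component via the simplified staircase form of \Cref{lem:reduced}, and then invoke the hypocoercivity-based asymptotic stability criterion for \ODEs (\Cref{cor:asyhyp}) on the underlying dynamics.

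First, I would apply \Cref{lem:reduced} to bring the system into the form \eqref{kcf:almost}, using nonsingular left and right transformations $L_1,L_2$ that preserve the solution set (with the appropriate change of variables $\state = L_2 \,\xi$, $\xi = \begin{bmatrix}\xi_1^T & \xi_2^T & \xi_3^T & \xi_4^T\end{bmatrix}^T$). Because the pencil is regular of (Kronecker) index at most one, the trivial algebraic rows in the third and fourth block forces $\xi_1 \equiv 0$, $\xi_3 \equiv 0$, and $\xi_4 \equiv 0$ along any solution; these restrictions are precisely the consistency conditions on the initial value. The only remaining dynamics live in $\xi_2$ and are governed by
\begin{equation*}
    \widehat E_{22}\,\dot\xi_2 = (\widehat J_{22} - \widehat R_{22})\,\xi_2,
\end{equation*}
with $\widehat E_{22}=\widehat E_{22}^T>0$, $\widehat J_{22}=-\widehat J_{22}^T$, and $\widehat R_{22}=\widehat R_{22}^T\geq 0$.

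Next, I would symmetrize via the substitution $\zeta = \widehat E_{22}^{1/2}\xi_2$ to obtain the underlying \ODE \eqref{underlyingODE}, namely
\begin{equation*}
    \dot{\zeta} = (\tilde J - \tilde R)\,\zeta, \qquad \tilde J \vcentcolon= \widehat E_{22}^{-1/2}\widehat J_{22}\widehat E_{22}^{-1/2},\quad \tilde R \vcentcolon= \widehat E_{22}^{-1/2}\widehat R_{22}\widehat E_{22}^{-1/2}.
\end{equation*}
The congruence preserves the skew-symmetry of $\tilde J$ and the positive semi-definiteness of $\tilde R$, so this is precisely an \ODE of the form considered in \Cref{lem:Equivalence}. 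By definition, the hypocoercivity index of the \dHDAE equals that of this reduced \ODE, which is finite by assumption. Therefore \Cref{cor:asyhyp} applies, and $\zeta(t)\to 0$ as $t\to\infty$; transforming back, $\xi_2(t)\to 0$ as well.

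Finally, combining the trivial components $\xi_1,\xi_3,\xi_4\equiv 0$ with the decay of $\xi_2$ gives $\xi(t)\to 0$, and hence $\state(t)=L_2\,\xi(t) \to 0$, yielding asymptotic stability for every consistent initial value. The Lyapunov-type stability (boundedness) part is already furnished by \Cref{thm:singind}\,(i)--(ii), since the eigenvalues of the regular pencil all have non-positive real part with any purely imaginary ones being semisimple; this rules out polynomial growth and ensures that the stability notion in \Cref{def:stabtraj} is met, not merely attractivity. I expect the main subtlety to be a careful bookkeeping of what ``non-trivial dynamics'' means in conjunction with the hypocoercivity index — one must justify that the index is well defined precisely on the $\xi_2$-block after the reduction of \Cref{lem:reduced}, and that the consistent initial manifold is mapped bijectively onto the state space of the reduced \ODE so that asymptotic stability transfers without loss.
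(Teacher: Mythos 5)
Your proposal is correct and follows essentially the same route as the paper: reduce via the condensed form of \Cref{lem:reduced} (where, under the index-at-most-one assumption, the blocks $\xi_1,\xi_4$ are in fact void and $\xi_3=0$), identify the remaining dynamics with the underlying \ODE~\eqref{underlyingODE} whose hypocoercivity index is by definition that of the \dHDAE, and conclude asymptotic stability from \Cref{cor:asyhyp}, transforming back through the nonsingular change of variables. The additional remarks on boundedness via \Cref{thm:singind} and on the bijection between the consistent initial manifold and the reduced state space are consistent with the paper's discussion (cf.~\Cref{rem:shorttime}) and introduce no gap.
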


\begin{remark}\label{rem:shorttime}
Using the transformation to the condensed form \eqref{kcf:almost} we see that the short-time decay is as for the \ODE case. This can be viewed as considering the decay in a semi-norm obtain by scaling the solution with the semi-definite matrix $E$.  To see this, let $Z$ be the transformation matrix to the  form~\eqref{kcf:almost}.
By assumption $n_1=n_4$ and  the solution~$\xi_2(t)$ of~\eqref{underlyingODE} is asymptotically stable for every initial value~$\state_2(0)$. The solution of the original system is then $\state =Z \begin{bmatrix} \xi_2^T & 0\end{bmatrix}^T$, hence for every consistent initial value
\begin{equation*} 
 \| \state(t)\|^2
 =\| Z \xi(t) \|^2
 \leq \sigma_{\max}(Z) \|\xi_2(0)\|^2 \mathrm{e}^{-2\mu t}
 \leq \kappa(Z) \|\state(0)\|^2 \mathrm{e}^{-2\mu t} ,
\end{equation*}
where  $\sigma_{\max}(Z)$ is the largest singular value of~$Z$, $\mu>0$ is some exponential decay rate capturing the asymptotic stability of~\eqref{underlyingODE}, and $\kappa(Z)=\|Z\| \|Z^{-1}\|$ is the condition number of~$Z$.
\end{remark}

The condensed form \eqref{kcf:almost} also allows a characterization of (asymptotic) stability via a generalized 
Lyapunov equation. 
The following theorem is a simplified and real version of a result in \cite{AchAM21}.
\begin{theorem}\label{thm:genlya}
Consider a \dHDAE of the form~\eqref{dhdae} with regular matrix pencil~$\lambda E-A$ of Kronecker index at most two and finite hypocoercivity index. Then for every~$W\in\mathbb R^{n,n}$ the generalized Lyapunov equation
\begin{equation}\label{eq:genlya}
E^T X A+ A^T X E=-E^T W E
\end{equation}
has a solution. For all solutions~$X$ of~\eqref{eq:genlya}, the matrix~$E^T XE$ is unique. 
Moreover, if~$W$ is positive (semi-)definite, then every solution~$X$ of~\eqref{eq:genlya} is positive (semi-)definite on the image of~$P_\ell$, the spectral projection onto the left deflating subspace associated with the finite eigenvalues of~$\lambda E -A$.
\end{theorem}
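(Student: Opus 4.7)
The plan is to reduce the generalized Lyapunov equation to a standard Lyapunov equation for a Hurwitz matrix, exploiting the condensed form from Lemma~\ref{lem:reduced}. I would first apply that lemma to obtain nonsingular $L_1, L_2$ with $\widetilde{E} \vcentcolon= L_1 E L_2$ and $\widetilde{A} \vcentcolon= L_1 A L_2$ in the block form \eqref{kcf:almost}. Introducing the congruent variables $\widetilde{X} \vcentcolon= L_1^{-T} X L_1^{-1}$ and $\widetilde{W} \vcentcolon= L_1^{-T} W L_1^{-1}$, equation \eqref{eq:genlya} becomes
\begin{equation*}
\widetilde{E}^T \widetilde{X} \widetilde{A} + \widetilde{A}^T \widetilde{X} \widetilde{E} = -\widetilde{E}^T \widetilde{W} \widetilde{E}.
\end{equation*}
Partitioning $\widetilde{X} = [X_{ij}]_{i,j=1}^{4}$ and $\widetilde{W} = [W_{ij}]_{i,j=1}^{4}$ conformally with $\widetilde{E}$, the equation decouples into block subequations.

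Next, I would scan these blocks. Since $\widetilde{E}$ vanishes in block rows and columns $3,4$ while $\widetilde{A}$ exchanges blocks $1$ and $4$ via $\pm I$ and has $I$ in its $(3,3)$ entry, the block equations indexed by $(4,1), (4,2), (1,3), (2,3)$ immediately force $X_{11} = 0$, $X_{12} = X_{21}^T = 0$, $X_{13} = X_{31}^T = 0$, and $X_{23} = X_{32}^T = 0$. The $(1,2)$ block then uniquely determines $X_{42}$ and hence $X_{24}$, while the $(1,1)$ block reduces to $\widehat{E}_{11} X_{14} + X_{14}^T \widehat{E}_{11} = \widehat{E}_{11} W_{11} \widehat{E}_{11}$, which admits, for instance, the solution $X_{14} = \tfrac{1}{2} W_{11} \widehat{E}_{11}$. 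The blocks $(3,3), (3,4), (4,4)$ are vacuous, so $X_{33}, X_{34}, X_{44}$ are free. The central $(2,2)$ block is a standard Lyapunov equation
\begin{equation*}
A_{\mathrm{red}}^T X_{22} + X_{22} A_{\mathrm{red}} = -W_{22}, \qquad A_{\mathrm{red}} \vcentcolon= \widehat{E}_{22}^{-1}(\widehat{J}_{22}-\widehat{R}_{22}),
\end{equation*}
and Corollary~\ref{cor:hypodae} together with the finite hypocoercivity index ensures that $A_{\mathrm{red}}$ is Hurwitz. Classical Lyapunov theory then yields a unique symmetric solution $X_{22}$ that is positive (semi-)definite whenever $W_{22}$ is; the latter is inherited from $W$ since $\widetilde{W}$ is congruent to $W$ and $W_{22}$ is a principal submatrix of $\widetilde{W}$. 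This establishes existence of a solution $X$ for arbitrary $W$.

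To finish, I would verify the remaining two claims. A direct computation with the block structure of $\widetilde{E}$ shows that $\widetilde{E}^T \widetilde{X} \widetilde{E}$ depends only on $X_{11}, X_{12}, X_{22}$, all of which are uniquely determined above; since $E^T X E = L_2^{-T}\widetilde{E}^T \widetilde{X} \widetilde{E} L_2^{-1}$, the matrix $E^T X E$ is independent of the chosen solution. For the positivity claim, I would observe that the finite eigenvalues of $\lambda \widetilde{E}-\widetilde{A}$ arise only from the second diagonal block (the others produce infinite eigenvalues coming from Jordan blocks of size $1$ and $2$ at $\infty$), so in the transformed basis the left spectral projection takes the form $\widetilde{P}_\ell = \diag(0, I, 0, 0)$ and is related to $P_\ell$ through the similarity induced by $L_1$. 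For any $x \in \range(P_\ell)$ the corresponding vector $\tilde{x} = L_1 x$ has only its second block nonzero, whence $x^T X x = \tilde{x}_2^T X_{22} \tilde{x}_2$, and positivity of $X_{22}$ yields the conclusion.

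The main obstacle is careful bookkeeping of the block expansions to ensure that every block equation is accounted for and that the free versus constrained entries of $\widetilde{X}$ are correctly identified; in particular, the uniqueness of $E^T X E$ rests on the somewhat delicate observation that $X_{11}$ is forced to vanish by the $(4,1)$ block even though it does not appear on the diagonal $(1,1)$. A secondary subtlety is the identification of $P_\ell$ in the transformed coordinates, for which one must keep track of how the defining relations of the left deflating subspace transform under the congruence.
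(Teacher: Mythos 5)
Your proof is correct in substance, but it takes a genuinely different route from the paper. The paper's own argument is a two-step reduction to the literature: it invokes \Cref{thm:singind} together with the finiteness of the hypocoercivity index (\Cref{cor:asyhyp}) to conclude that the pencil is regular, of Kronecker index at most two, and has all finite eigenvalues in the open left half-plane, and then cites Stykel's general result on generalized Lyapunov equations for index-$\le 2$ \DAEs with c-stable finite spectrum. You instead give a self-contained proof by transporting \eqref{eq:genlya} through the condensed form \eqref{kcf:almost} of \Cref{lem:reduced} and solving the resulting block system by hand. What your approach buys is transparency: it exhibits exactly which blocks of $X$ are forced ($X_{11},X_{12},X_{21},X_{13},X_{31},X_{23},X_{32}$ vanish, $X_{22}$ solves a Lyapunov equation for a Hurwitz matrix, $X_{24},X_{42}$ and the symmetrized part of $X_{14},X_{41}$ are determined) and which are free ($X_{33},X_{34},X_{43},X_{44}$), which makes both the non-uniqueness of $X$ and the uniqueness of $E^TXE$ completely explicit; the price is that you are essentially reproving a special case of the cited result and must carry out careful bookkeeping.

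A few small points you should tidy up. First, \Cref{lem:reduced} is stated for the staircase form \eqref{stcdh}, so the full transformation from \eqref{dhdae} is $M_1 E M_2$ with $M_1=L_1Z^T$, $M_2=ZL_2$, where $Z$ is the orthogonal matrix of \Cref{cor:regul:pH}; your congruence of $X$ and $W$ and the formula for $P_\ell$ should use $M_1$ rather than $L_1$. Second, the $(2,2)$ block actually reads $\widehat{E}_{22}X_{22}A_{22}+A_{22}^TX_{22}\widehat{E}_{22}=-\widehat{E}_{22}W_{22}\widehat{E}_{22}$ with $A_{22}=\widehat{J}_{22}-\widehat{R}_{22}$; after dividing by $\widehat{E}_{22}$ on both sides the governing matrix is $A_{22}\widehat{E}_{22}^{-1}$ (or, after the substitution $Y_{22}=\widehat{E}_{22}X_{22}\widehat{E}_{22}$, your $A_{\mathrm{red}}=\widehat{E}_{22}^{-1}A_{22}$ with right-hand side $-\widehat{E}_{22}W_{22}\widehat{E}_{22}$). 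Since these matrices are similar to the matrix in \eqref{underlyingODE}, Hurwitzness and the positivity conclusions are unaffected, but the equation as you wrote it is not literally the one that falls out of the block expansion. Third, your particular solution $X_{14}=\tfrac12 W_{11}\widehat{E}_{11}$ of the $(1,1)$ block presupposes $W_{11}$ symmetric and $X_{41}=X_{14}^T$; for a general $W\in\R^{n,n}$ one can simply take $X_{14}=W_{11}\widehat{E}_{11}$ and $X_{41}=0$. None of these affects the validity of the argument.
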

\begin{proof}
Due to \Cref{thm:singind}, the eigenvalues are in the closed left half-plane, the eigenvalues on the imaginary axis are semi-simple, and the pencil is of  Kronecker index at most two.
But since the pencil is regular and has a finite hypocoercivity index, its finite spectrum lies in the open left half-plane.

For general linear \DAE systems with regular matrix pencil $\lambda E -A$ of (Kronecker) index at most two whose finite eigenvalues lie in the open left half-plane, the result then follows from \cite{Sty02a}.
\end{proof}

\begin{remark}\label{rem:extlya}
For \LTI \DAE systems the characterization  of stability via different generalized Lyapunov equations and the relation to \pHDAE systems  has recently been studied in different contexts \eg in a behavior context in \cite{GerH21}, via generalized Kalman-Yakubovich-Popov inequalities in \cite{ReiRV15,ReiV15}, or via linear relations in \cite{GerHR21}.
\end{remark}

All the discussed approaches are, if at all computationally feasible,  highly involved. Fortunately, as we show in the next section, the \dHDAE structure comes to help.

\subsection{Stability and passivity of general \dHDAE and \pHDAE systems}
\label{sec:generalDAE}

In this section we show that for general \dHDAE and \pHDAE systems the stability analysis is straightforward, since it will turn out that the associated Hamiltonian is a Lyapunov function. To show this, we will use the power balance equation from \Cref{th:pbe} and for the passivity the dissipation inequality~\eqref{eq:dissIneq}.

We have already seen in \Cref{sec:properties} that every \pHDAE can be easily made autonomous by turning it to the form \eqref{eqn:pHDAE:autonomous} without changing the Hamiltonian. The dissipation inequality \eqref{eq:dissIneq} then implies that the Hamiltonian $\hamiltonian$ is locally negative semi-definite in an equilibrium point $\state^\star$ and hence $\hamiltonian$ is a Lyapunov function. 
\begin{corollary}\label{cor:stab}
Consider an autonomous \dHDAE obtained from the \pHDAE~\eqref{eqn:pHDAE} by setting $\inpVar=0$ and omitting the output equation. If the system is regular and strangeness-free, then it is stable. Furthermore, in this case a sufficient condition for the system to be asymptotically stable is that $R(t,\state)>0$.
\end{corollary}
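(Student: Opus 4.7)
The plan is to use the power balance equation from \Cref{th:pbe} and show that the Hamiltonian $\hamiltonian$ serves as a Lyapunov function. First, I would set $\inpVar=0$ in the power balance equation to obtain
\begin{equation*}
    \ddt \hamiltonian(t,\state(t)) = -\begin{bmatrix}\eta(t,\state)\\0\end{bmatrix}^T \dissipationMatrix(t,\state)\begin{bmatrix}\eta(t,\state)\\0\end{bmatrix} = -\eta(t,\state)^T R(t,\state)\,\eta(t,\state) \leq 0,
\end{equation*}
where the inequality follows from $R\geq 0$, which is implied by $\dissipationMatrix\geq 0$. Since the Hamiltonian is assumed non-negative (cf.\ \Cref{def:nonham} and the convention afterwards), this immediately yields $0\leq \hamiltonian(t,\state(t))\leq \hamiltonian(t_0,\state_0)$ along any solution.

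Next, for the stability claim I would appeal to Lyapunov's direct method, using the regularity and strangeness-freeness to reduce the analysis to the underlying dynamics on the constraint manifold. More concretely, because the system is strangeness-free and regular, one can (locally) transform to a solution-equivalent form in which a part $\state_1$ of the state satisfies an inherent \ODE and a remaining part $\state_2$ is algebraically determined by $\state_1$ (see \Cref{sec:DAEtheory} and the condensed forms in \Cref{sec:normalForm}). On the resulting constraint manifold, $\hamiltonian$ is a non-negative, non-increasing function of $\state(t)$, which, combined with continuity of the algebraic solve for $\state_2$ in terms of $\state_1$, allows us to translate an $\varepsilon$-bound on $\hamiltonian$ into an $\varepsilon$-bound on $\|\state(t)-\state^\star\|$ for consistent perturbations of a given equilibrium solution $\state^\star$, \ie, the stability notion from \Cref{def:stabtraj}.

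For the asymptotic stability part, under the stronger assumption $R(t,\state)>0$, the power balance gives
\begin{equation*}
    \ddt \hamiltonian(t,\state(t)) = -\eta(t,\state)^T R(t,\state)\,\eta(t,\state) < 0 \qquad\text{whenever } \eta(t,\state)\neq 0.
\end{equation*}
I would then invoke LaSalle's invariance principle on the (autonomous) reduced system: any forward-invariant $\omega$-limit point must satisfy $\eta(t,\state)\equiv 0$, which by the structural identity $E^T(t,\state)\eta(t,\state)=\tfrac{\partial}{\partial \state}\hamiltonian(t,\state)$ forces $\tfrac{\partial}{\partial \state}\hamiltonian=0$; together with regularity and strangeness-freeness this identifies the limit set with the equilibrium, giving asymptotic stability.

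The main obstacle is that the Hamiltonian can be degenerate in the full state~$\state$ (\eg Lagrange multipliers that do not contribute to $\hamiltonian$, as in \Cref{sec:robot,sec:gasnetwork}), so $\hamiltonian$ is in general only a Lyapunov function for the differential part of the system and not directly coercive on the whole state space. Handling this cleanly requires using the strangeness-free structure to identify the algebraic variables as continuous functions of the dynamic ones, so that bounds on $\hamiltonian$ translate into bounds on the entire state. A secondary subtlety is the explicit $t$-dependence in the non-autonomous case, which, however, can be absorbed by the autonomization trick of \Cref{rem:autono} without modifying the Hamiltonian, thereby reducing to the autonomous setting in which both the Lyapunov and LaSalle arguments apply directly.
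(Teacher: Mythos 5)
Your proposal follows essentially the same route as the paper: the power balance equation of \Cref{th:pbe} with $\inpVar=0$ shows that the (non-negative) Hamiltonian is non-increasing and hence a Lyapunov function, the autonomization of \Cref{rem:autono} removes the explicit time dependence, and regularity plus strangeness-freeness reduce the analysis to the inherent \ODE, with $R>0$ giving a strictly decreasing Hamiltonian for asymptotic stability. Your explicit acknowledgment that $\hamiltonian$ may be degenerate in the algebraic variables is a fair point that the paper's terse argument also leaves implicit, but it does not change the approach.
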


The dissipation inequality directly implies that strangeness-free \pHDAE systems are passive.
\begin{corollary}\label{cor:pass}
Consider an autonomous \pHDAE of the form~\eqref{eqn:pHDAE}. If the system is strangeness-free then it is passive. Furthermore, in this case a sufficient condition for the system to be strictly passive is that $W(t,\state)>0$ for all $(t,\state)\in \timeInt\times \mathcal{Z}$.
\end{corollary}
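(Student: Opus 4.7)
The plan is to derive passivity directly from the power balance equation of Theorem~\ref{th:pbe}, using the strangeness-free assumption only to guarantee that classical solutions exist along which the power balance holds and can be integrated.

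First, I would fix an arbitrary sufficiently smooth input $\inpVar$ and a consistent initial condition. Since the \pHDAE is strangeness-free, the discussion in \Cref{sec:DAEtheory} and the reformulation~\eqref{eqn:nonlinDAE:sfree} guarantee that there exists a unique behavior solution $(\state,\inpVar,\outVar)$ on $\timeInt$ for which the power balance equation
\begin{equation*}
  \ddt \hamiltonian(t,\state(t)) = -\begin{bmatrix}\eta(t,\state)\\ \inpVar\end{bmatrix}^T \dissipationMatrix(t,\state)\begin{bmatrix}\eta(t,\state)\\ \inpVar\end{bmatrix} + \outVar^T\inpVar
\end{equation*}
from \Cref{th:pbe} holds pointwise in $t$. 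Integrating from $t_0$ to $t_1$ then yields
\begin{equation*}
  \hamiltonian(t_1,\state(t_1)) - \hamiltonian(t_0,\state(t_0)) = \int_{t_0}^{t_1}\outVar(s)^T\inpVar(s)\ds - \int_{t_0}^{t_1}\Phi(s)\ds,
\end{equation*}
where I would define $\Phi(s) \vcentcolon= \begin{bmatrix}\eta(s,\state(s))\\ \inpVar(s)\end{bmatrix}^T \dissipationMatrix(s,\state(s))\begin{bmatrix}\eta(s,\state(s))\\ \inpVar(s)\end{bmatrix}$ along the solution.

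Next, to match the abstract framework of \Cref{def:passive}, I would identify the supply function with $\mathcal{S}(\outVar,\inpVar) \vcentcolon= \outVar^T\inpVar$, which clearly satisfies $\mathcal{S}(0,\inpVar)=0$ and $\mathcal{S}(\outVar,0)=0$. The key observation is that the defining property $\dissipationMatrix = \dissipationMatrix^T\geq 0$ of a \pHDAE (see~\eqref{eqn:pHDAE:prop1} in \Cref{def:pHDAE}) immediately gives $\Phi(s)\geq 0$, so that $\Phi$ is a non-negative dissipation function along any solution. This is precisely the storage energy balance equation~\eqref{eqn:baleq} with a positive semi-definite dissipation, and hence the \pHDAE is passive with respect to $\hamiltonian$ and $\mathcal{S}(\outVar,\inpVar)=\outVar^T\inpVar$.

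For the strict passivity claim, I would assume in addition that $\dissipationMatrix(t,\state)>0$ pointwise. Then $\Phi(s)>0$ whenever $\begin{bmatrix}\eta(s,\state(s))^T & \inpVar(s)^T\end{bmatrix}^T \neq 0$, so $\Phi$ is positive definite (in the required sense along non-trivial behaviors), and strict passivity follows directly from the same balance. The main conceptual obstacle—and the only place the strangeness-free hypothesis is used in an essential way—is ensuring that the power balance equation, which \Cref{th:pbe} states holds along behavior solutions, can actually be invoked for generic consistent initial data and sufficiently smooth inputs; without the strangeness-free assumption, solutions may fail to exist classically or the balance may only hold in a distributional sense (cf.~\Cref{ex:DAE:index2}), and the integration step that produces the dissipation inequality would be unjustified.
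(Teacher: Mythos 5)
Your proposal is correct and follows essentially the same route as the paper: the corollary is presented there as an immediate consequence of the power balance equation of \Cref{th:pbe} and the resulting dissipation inequality~\eqref{eq:dissIneq}, with the quadratic form in $\dissipationMatrix$ playing the role of the dissipation function $\Phi$ and positive (semi-)definiteness of $\dissipationMatrix$ yielding (strict) passivity. Your additional remarks on the role of the strangeness-free hypothesis in guaranteeing classical solutions along which the balance can be integrated are consistent with the paper's discussion and do not change the argument.
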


\begin{remark}\label{rem:examples}
For the examples in \Cref{sec:examples}  we directly have asymptotic stability for the circuit \eqref{eq:circuit2}, while for multi-body systems in \Cref{sec:robot}, the gas network problem in \Cref{sec:gasnetwork}, the poroelasticity problem in~\Cref{sec:poro} and the fluid-dynamics example in \Cref{sec:navierstokes} are not strangeness-free, but (asymptotic) stability is obtained after removing the algebraic parts that are associated with a strangeness index that is greater than zero, or (Kronecker) index greater than one. By an appropriate output feedback the circuit example of \Cref{sec:circuit} can be made to have (Kronecker) index one.
\end{remark}

\begin{example}\label{ex:stabbrake}
The disk brake example in \Cref{sec:brake} is in general not stable, but it may be stable if the perturbation term is small enough.
\end{example}

It is currently under investigation how to extend the results on hypocoercivity to the \LTV and nonlinear case to obtain a necessary and sufficient condition for asymptotic stability.

\section{Model-order reduction}
\label{sec:MOR}

In this section, we discuss structure-preserving \emph{model-order reduction} (\MOR) methods for \pHDAE systems. The main idea is to replace the potentially high-dimensional \pHDAE with a low-dimensional \pHDAE surrogate model, such that the output error approximation for a given input is below some given tolerance. A standard approach in the \MOR literature, see for instance \cite{Ant05,QuaMN16,HesRS16,BenCOW17,AntBG20}, is to construct the surrogate model via Galerkin or Petrov-Galerkin projection. In more detail, for a regular descriptor-system of the form~\eqref{eqn:descriptorSystem} (\ie, we assume $\ell=\stateDim$ within this section), the projection-based surrogate is given as
\begin{subequations}
	\label{eqn:ROM}
	\begin{align}
		\label{eqn:ROM:stateEq}\widehat{F}(t,\widehat{\state}(t),\dot{\widehat{\state}}(t),\inpVar(t)) &= 0,\\
		\label{eqn:ROM:output}\widehat{\outVar}(t) - \widehat{G}(t,\widehat{\state}(t),\inpVar(t)) &= 0,
	\end{align}
\end{subequations}
with 
\begin{subequations}
\label{eqn:ROM:construction}
\begin{align}
    \widehat{F}(t,\widehat{\state},\dot{\widehat{\state}},\inpVar) &\vcentcolon = V_{\mathrm{r}}^T F(t,V_{\mathrm{\ell}}\widehat{\state},V_{\mathrm{\ell}},\dot{\widehat{\state}},\inpVar),\\
    \widehat{G}(t,\widehat{\state}(t),\inpVar(t)) &\vcentcolon= G(t,V_{\mathrm{r}}\widehat{\state}(t),\inpVar(t))
\end{align}
\end{subequations}
for matrices $V_{\mathrm{\ell}},V_{\mathrm{r}}\in\R^{\stateDim,r}$. The task of \MOR is (i) to construct suitable matrices $V_{\mathrm{\ell}}, V_{\mathrm{r}}\in\R^{\stateDim,r}$ in a numerically stable way, and (ii) to ensure that $\widehat{F}$ and $\widehat{G}$ in~\eqref{eqn:ROM} can be evaluated efficiently (without the need to evaluate terms in the full model dimension~$\stateDim$). In addition, \MOR strives to quantify the error of the \emph{reduced-order model}~(\ROM)~\eqref{eqn:ROM} and preserve important properties (such as stability or passivity) within the \ROM.

For general \DAE systems, even if the original system is of (Kronecker) index zero, a Galerkin projection may change the index, the regularity, or the stability properties of the free system (with~$\inpVar=0$).

\begin{example}
\label{ex:change}
Consider the implicit \ODE system
\begin{displaymath}
    \begin{bmatrix} 
        0 & -1 \\ -1 & 1 
    \end{bmatrix} \begin{bmatrix} \dot{\state}_1 \\ \dot{\state}_2 \end{bmatrix} = \begin{bmatrix} \varepsilon & 1 \\ 1 & 0 \end{bmatrix} \begin{bmatrix} \state_1 \\ \state_2 \end{bmatrix} + \begin{bmatrix} 1 \\ 0 \end{bmatrix} \inpVar.
\end{displaymath}
Then with $V_{\mathrm{\ell}}^T \vcentcolon= V_{\mathrm{r}}^T \vcentcolon= \begin{bmatrix} 1 & 0\end{bmatrix}$ we obtain the \ROM 
\begin{displaymath}
    0 = \varepsilon \state_1 + \inpVar,
\end{displaymath}
which now has (Kronecker) index $1$ for $\varepsilon>0$ and is even singular for $\varepsilon=0$. 
\end{example}
A key advantage of modeling with \pHODE and \pHDAE systems is that effects as in \Cref{ex:change} do not occur if the structure is not altered. Since \pHDAE systems are invariant under Galerkin projection (cf.~\Cref{cor:pHDAEinv}), the model class is ideal for projection-based discretization and \MOR methods. This, together with the invariance under interconnection, allows the construction of model hierarchies ranging from fine models for simulation and parameter studies to very course or surrogate models that can be used in control and optimization. 

\begin{remark}
    \MOR for \pHODEs is discussed for instance in 
    \cite{AfkH17,AfkH19,BeaG11,BorSF21,BreMS20,BreU21,BucGH21,ChaBG16, EggKLMM18, FujK07,GugPBV09,GugPBV12,IonA13,KawS18, Lil20,MosL20,PolS11, PolS12,SatS18,SchKL19,SchU18,SchV20,WolLEK10,WuHLM14,WuHLM18}. Let us emphasize that for \LTI systems, any passive system can be recast as a \pH system, see \cite{BeaMX15_ppt,BeaMV19}. Thus, also any passivity-preserving \MOR method can be used as a structure-preserving \MOR method for \LTI \pHODEs (with a potentially necessary post-processing step to construct the low-dimensional \pH representation). We exemplarily mention positive-real balanced trunction, see \cite{DesP84,GuiO13,IonS07,ReiS10a,ReiS10b}, and interpolation methods, \eg, \cite{Ant05b,Ant08,Fre00,IonRA08,Sor05}.
\end{remark}

In the following we focus solely on \LTI \pHDAE systems, since structure-preserving \MOR methods for general \pHDAE systems are still under investigation. In the following, we discuss different \MOR techniques and their use for \LTI \pHDAE systems. One important class are methods related to the reduction of the underlying Dirac structure and the associated power conservation. These are the effort and flow constraint reduction methods discussed in \Cref{sec:power_method}. Another major class are (Galerkin) projection methods that operate in the classical differential equation domain and make sure that the corresponding transfer functions in frequency domain is well approximated. These methods are the well-known moment matching (\Cref{sec:mm}) and tangential interpolation (\Cref{sec:tangential}). Before we present these methods, we provide some general considerations in the next subsection.

\subsection{General considerations for LTI pHDAE systems}
\label{sec:MOR:LTI}

We assume that the \pHDAE has been reformulated in such a way that the free system (with $\inpVar=0$) is of (Kronecker) index at most one,
see \Cref{sec:regulviaout}, that $Q=I$ and that the system has no feedthrough term, see \Cref{sec:noQ,sec:nofeed}. This means that the system has the form
\begin{subequations}
    \label{eqn:strfreecontrol}
    \begin{align}
       E\dot{\state} &= (J-R)\state + G\inpVar, \\
       \outVar &= G^T \state,
    \end{align}
\end{subequations}
where the matrix pencil $\lambda E-(J-R)$ is regular and of (Kronecker) index at most one, $E=E^T\geq 0$, $R=R^T\geq 0$, and $J=-J^T$. In view of \Cref{cor:pHDAEinv} and the general Petrov-Galerin projection approach described above, a \ROM is constructed by choosing a suitable matrix $V\in\R^{\stateDim,r}$, setting $V_{\mathrm{r}} \vcentcolon= V_{\mathrm{\ell}} \vcentcolon= V$, and constructing the \ROM matrices as
\begin{subequations}
\label{eqn:strfreecontrol:ROM:matrices}
\begin{align}
    \widehat{E} &\vcentcolon= V^TEV \in\R^{r,r}, & \widehat{J} &\vcentcolon= V^TJV\in\R^{r,r},\\
    \widehat{R} &\vcentcolon= V^TRV \in\R^{r,r}, & \widehat{G} &\vcentcolon= V^TG\in\R^{r,m},
\end{align}
\end{subequations}
such that structure-preserving surrogate~\eqref{eqn:ROM} for~\eqref{eqn:strfreecontrol} is given as
\begin{subequations}
    \label{eqn:strfreecontrol:ROM}
    \begin{align}
       \widehat{E}\dot{\widehat{\state}} &= (\widehat{J}-\widehat{R})\widehat{\state} + \widehat{G}\inpVar, \\
       \widehat{\outVar} &= \widehat{G}^T \widehat{\state}.
    \end{align}
\end{subequations}
In particular, the \ROM for the \LTI case can be evaluated efficiently and independent of the full model dimension $\stateDim$, as soon as the matrices in~\eqref{eqn:strfreecontrol:ROM:matrices} are constructed.

\begin{remark}\label{rem:withq}
Note that the techniques that we describe below can also be extended to the case that $Q\neq I$. In this case, one uses a Petrov-Galerkin approach as described in~\eqref{eqn:ROM:construction}, \ie, different projection matrices from left and right. In more detail, if $Q$ is nonsingular, then the choice $V_{\ell} \vcentcolon= QV_{r}$ retains the \pHDAE structure in the \ROM. This strategy is also prevailing in the context of \pHODE systems, see for instance \cite{ChaBG16}, and even used to ensure stability-preservation in the context of \MOR for switched systems, see \cite{SchU18}.
\end{remark}

\begin{remark}
    Depending on the application at hand, the system matrices in~\eqref{eqn:strfreecontrol} may depend on additional parameters $\omega$. If these parameters are not fixed a-priori to a specific value, then one wants to preserve this parametric dependency in the \ROM. A standard assumption in the \MOR literature is, that the system matrices are available in a parameter-separable form, \ie,
    \begin{equation}
        \label{eqn:ROM:matrices:parameterSeparable}
        E(\omega) = \sum_{i=1}^K \gamma_i(\omega)E_i
    \end{equation}
    with scalar functions $\gamma_i$ and constant matrices $E_i\in\R^{\stateDim,\stateDim}$ for $i=1,\ldots,k$ (and similarly for the other matrices). In this case, the reduced matrices are simply obtained by reducing each $E_i$ separately. If the matrices are not in the parameter-separable form, or only with a very large $k$, then the (discrete) empirical interpolation method, see \cite{BarMNP04,ChaS10}, can be used instead. For more details, we refer to \cite{Haa17}.
\end{remark}

\begin{remark}
    One advantage of the projection-based approach is that besides the \pH structure, also the Hamiltonian is approximated with the same ansatz space. Thus, the general framework as discussed above not only preserves the \pH-structure but also retains information about the original Hamiltonian. However, reformulating the \pH system with a different Hamiltonian may be more amendable for \MOR. This is demonstrated in detail in \cite{BreMS20,BreU21} for \pHODEs and in \cite{BreS21} for \pHDAEs. Similar results are also achieved if the coefficients of the \ROM matrices are directly obtained by minimizing a suitable error function, see \cite{SchV20} for further details.
\end{remark}

Although this is not necessary in general, we often also perform another simplification that allows to clearly separate the dynamical part and the algebraic constraints. These parts have to be treated in a slightly different way and the reduction only takes place in the dynamical equations in order to assure that the model reduction does not violate the physical principles described by the constraints.
For this, let $V_0$ be an invertible matrix such that 
\begin{gather*}
    V_0^T E V _0 = \begin{bmatrix} E_{11} & 0 \\ 0 & 0 \end{bmatrix},\qquad 
    V_0^T (J-R) V_0 = \begin{bmatrix} J_{11}-R_{11} & J_{12}-R_{12}\\ J_{21}-R_{21} & J_{22}-R_{22} \end{bmatrix} \\ 
    V_0^TG = \begin{bmatrix} G_1 \\ G_2 \end{bmatrix},\qquad
    \begin{bmatrix} \state_1 \\ \state_2\end{bmatrix} = V_0^{-1}\state,
\end{gather*}
\ie, the transformed system is given as
\begin{subequations}
    \label{eqn:strfree:semiexpl}
        \begin{align}
        \begin{bmatrix}
            E_{11} & 0\\
            0 & 0
        \end{bmatrix}\begin{bmatrix}
            \dot{\state}_1\\
            \dot{\state}_2
        \end{bmatrix} &= \begin{bmatrix}
            \phantom{-}J_{11} - R_{11} & J_{12}-R_{12}\\
            -J_{12}^T - R_{12}^T & J_{22}-R_{22}
        \end{bmatrix}\begin{bmatrix}
            \state_1\\
            \state_2
        \end{bmatrix} + \begin{bmatrix}
            G_1\\
            G_2
        \end{bmatrix}\inpVar,\\
        \outVar &= \begin{bmatrix}
            G_1^T & G_2^T
        \end{bmatrix}\begin{bmatrix}
            \state_1\\
            \state_2
        \end{bmatrix}.
        \end{align}
\end{subequations}
The assumption that~\eqref{eqn:strfreecontrol} is of (Kronecker) index at most one, implies that $J_{22}-R_{22}$ is nonsingular. 
The decomposition can be easily obtained by first computing a full rank factorization of the positive semi-definite matrix $E$ using \eg
a singular value decomposition 
\begin{displaymath}
    E = U_0 \begin{bmatrix} \Sigma & 0 \\ 0 & 0 \end{bmatrix} V_0^T
\end{displaymath}
with invertible diagonal matrix $\Sigma$ and then forming
\begin{equation}
    \label{eqn:symmetricRankRevealingDecomposition}
    V_0^T E V_0= \begin{bmatrix} E_{11} & 0 \\ 0 & 0 \end{bmatrix}
\end{equation}
with $E_{11}=E^T_{11}>0$. 

\begin{remark}
    \label{rem:semiexplicit}
    If a semi-explicit representation with $E_{11} = I$ is required, then one can compute the Cholesky factorization $E_{11} = L_{11}L_{11}^T$ and perform another congruence transformation with $\widetilde{V}_0\vcentcolon= \diag(L_{11}^{-1},I)$. This yields, after renaming of the transformed matrices, the equivalent formulation
    \begin{subequations}
        \label{eqn:semiexpl}
        \begin{align}
            \label{eqn:semiexpl:state}\begin{bmatrix} I & 0 \\ 0 & 0 \end{bmatrix}
            \begin{bmatrix} \dot{\state}_1 \\ \dot{\state}_2\end{bmatrix} &= \begin{bmatrix} J_{11}-R_{11} & J_{12}-R_{12} \\ J_{21}-R_{12} & J_{22}-R_{22} \end{bmatrix}\begin{bmatrix} \state_1 \\ \state_2\end{bmatrix} +
            \begin{bmatrix} G_1 \\ G_2 \end{bmatrix} \inpVar\\
            \outVar &= \begin{bmatrix} G_1 \\ G_2 \end{bmatrix}^T \begin{bmatrix}\state_1 \\ \state_2\end{bmatrix},
        \end{align}
    \end{subequations}
    with $J_{22}-R_{22}$ nonsingular. For many \MOR techniques it is essential that the semi-explicit form~\eqref{eqn:semiexpl} is available. Fortunately, in many applications this can be done directly by exploiting the structure of the equations coming from the physical properties, see the examples in \Cref{sec:examples}. 
\end{remark}

Performing a Laplace transformation for the system~\eqref{eqn:strfreecontrol} yields the \emph{transfer function}
\begin{equation}
    \label{eqn:trfun}
    \transferFunc(s) = G^T(sE-J+R)^{-1}G,
\end{equation}
which can be used to assess the approximation quality of the \ROM via the $\mathcal{H}_2$ or $\mathcal{H}_\infty$ norm, see \eg \cite{AntBG20}. It is important to note that a singular $E$ implies that $\transferFunc$ may contain a polynomial term. In general, using the Weierstra\ss{} canonical form (cf.~\Cref{thm:WCF}), it is easy to see that the (Kronecker) index minus one defines an upper bound for the degree of the polynomial. Measuring the approximation error in the $\mathcal{H}_2$ norm thus requires that the polynomial part is matched exactly since otherwise, the error is unbounded. The situation is analogous for the $\mathcal{H}_\infty$ norm, except that the constant term in the polynomial does not need to be matched exactly. 

For simplicity of the presentation, we will only describe the single-input, single-output case, \ie, we assume $G\in \R^{\stateDim,1}$. All the algorithmic approaches can be easily extended to the multi-input multi-output case.

\subsection{Power conservation based model order reduction}
\label{sec:power_method}

Two methods that carry out a \MOR for the Dirac structure representation are the effort and flow constraint reduction methods that were introduced for standard \pHODE systems in \cite{PolS12} and extended to \pHDAE systems in \cite{HauMM19}. The basic idea of these approaches is to find a suitable transformation for the dynamic part of the state~$\state_1$ that partitions the state into a part associated to the \ROM, denoted with~$\widehat{z}_1$, and a part that does not contribute much to the input-output behavior of the system, denoted with~$\widetilde{z}_1$. In more detail, one determines a matrix $V_1$ with orthonormal columns such that $\state_1 = V_1\begin{bmatrix} \widehat{z}_1^T & \widetilde{z}_1^T\end{bmatrix}^T$. Then one cuts the interconnection between the part of the energy storage port belonging to~$\widetilde{z}_1$ and the Dirac structure, such that no power is transferred. In this way, the power is exclusively exchanged via the energy storage of~$\widehat{z}_1$ and the part associated with~$\widetilde{z}_1$ is omitted. 

In more detail, following the general discussion about Dirac structures in \Cref{sec:dirac}, the relevant constitutive equations in term of \MOR are given as 
\begin{equation}
    \label{eqn:dirac:constitutive}
    -E\dot{\state} = f_\mathrm{s},\qquad e_\mathrm{s} = \state.
\end{equation}
\begin{remark}
    Recall that in general the constitutive equation for the effort variable is $e_\mathrm{s} = \eta(\state)$ in the nonlinear case, and $e_\mathrm{s} = Q\state$ in the linear case with quadratic Hamiltonian, see \Cref{thm:diracStructure} for further details. The methods that we will discuss can also be formulated for the more general case, see \cite{HauMM19}, but for the ease of presentation, we proceed here with $Q = I$ (cf.~\Cref{sec:noQ}).  
\end{remark}
Using the semi-explicit formulation~\eqref{eqn:semiexpl} and performing a congruence transformation with $V \vcentcolon= \diag(V_1,I)$, transforms the constitutive equations~\eqref{eqn:dirac:constitutive} as
\begin{equation}
    \label{eqn:fe}
    -\begin{bmatrix}
        I & 0 & 0\\
        0 & I & 0\\
        0 & 0 & 0
    \end{bmatrix}\begin{bmatrix}
        \dot{\widehat{\state}}_1\\
        \dot{\widetilde{\state}}_1\\
        \dot{\state}_2
    \end{bmatrix} = \begin{bmatrix}
        \widehat{f}_{\mathrm{s},1}\\
        \widetilde{f}_{\mathrm{s},1}\\
        f_{\mathrm{s},2}
    \end{bmatrix}, \qquad \begin{bmatrix}
        \widehat{e}_{\mathrm{s},1}\\
        \widetilde{e}_{\mathrm{s},1}\\
        e_{\mathrm{s},2}
    \end{bmatrix} = \begin{bmatrix}
        \widehat{\state}_1\\
        \widetilde{\state}_1\\
        \state_2
    \end{bmatrix}
\end{equation}
For the model reduction we have to identify the part that is influenced by the dissipation (the resistive port). For this we apply a symmetric full rank decomposition of $V^TRV$ to compute
\begin{align}
    \label{LowRankR}
    \begin{bmatrix}
        \widehat{R}_{11} & \widetilde{\widehat{R}}_{11} & \widehat{R}_{12}\\
        \widehat{\widetilde{R}}_{11} & \widetilde{R}_{11} & \widetilde{R}_{12}\\
        \widehat{R}_{21} & \widetilde{R}_{21} & R_{22}
    \end{bmatrix} = \begin{bmatrix}Z& \hat Z\end{bmatrix}\begin{bmatrix}R_1&0\\0&0\end{bmatrix}\begin{bmatrix}
Z^T\\ \hat Z^T\end{bmatrix}=ZR_1Z^T,
\end{align}
with $0<R_1=R_1^T\in\mathbb{R}^{\ell,\ell}$ and $Z\in\mathbb{R}^{\stateDim,\ell}$. Plugging \eqref{LowRankR} into the transformed system and introducing the associated flow and effort variables accordingly, \ie 
\begin{displaymath}
    f_\mathrm{d} = -R_1e_\mathrm{d},\qquad e_\mathrm{d} =Z^TV^TV^{-1}\state= \begin{bmatrix} \widehat{Z}_1^T & \widetilde{Z}_1^T & Z_2^T\end{bmatrix} \begin{bmatrix}
\widehat{e}_{\mathrm{s},1}\\\widetilde{e}_{\mathrm{s},1}\\e_{\mathrm{s},2}
\end{bmatrix},
\end{displaymath}
yields a \pHDAE with opened resistive port. Inserting the relations \eqref{eqn:fe} and introducing the external port variables $(f_\mathrm{p},e_\mathrm{p})=(\outVar,\inpVar)$, where 
\begin{displaymath}
    \outVar = (V G)^T (V^TV^{-1})\state = (V G)^Te_\mathrm{s} = \begin{bmatrix} \widehat{G}_1^T & \widetilde{G}_1^T & G_2^T\end{bmatrix} \begin{bmatrix} \widehat{e}_{\mathrm{s},1}\\ \widetilde{e}_{\mathrm{s},1}\\e_{\mathrm{s},2}
\end{bmatrix},
\end{displaymath}
 we obtain a new representation as
\begin{multline}
    \label{WholeDAE}
    -\begin{bmatrix}
        I & 0 & 0\\
        0 & I & 0\\
        0 & 0 & I\\
        0 & 0 & 0\\
        0 & 0 & 0
    \end{bmatrix} \begin{bmatrix}
        \widehat{f}_{\mathrm{s},1}\\
        \widetilde{f}_{\mathrm{s},1}\\
        f_{\mathrm{s},2}
    \end{bmatrix} = \begin{bmatrix}
        \widehat{J}_{11} & \widetilde{\widehat{J}}_{11} & \widehat{J}_{12}\\
        \widehat{\widetilde{J}}_{11} & \widetilde{J}_{11} & \widetilde{J}_{12}\\
        \widehat{J}_{21} & \widetilde{J}_{21} & J_{22}\\
        -\widehat{G}_1^T & -\widetilde{G}_1^T & -G_2^T\\
        -\widehat{Z}_1^T & -\widetilde{Z}_1^T & -Z_2^T
    \end{bmatrix} \begin{bmatrix}
        \widehat{e}_{\mathrm{s},1}\\
        \widetilde{e}_{\mathrm{s},1}\\
        e_{\mathrm{s},2}
    \end{bmatrix} \\
    + \begin{bmatrix}
        \widehat{Z}_1\\ 
        \widetilde{Z}_1\\
        Z_2\\0\\0
    \end{bmatrix}f_\mathrm{d} + \begin{bmatrix}
0\\0\\{0}\\0\\I\end{bmatrix}e_\mathrm{d} + \begin{bmatrix}
0\\0\\{0}\\I\\0\end{bmatrix}f_{\mathrm{p}}+
\begin{bmatrix}\widehat{G}_1\\
\widetilde{G}_1\\G_2\\0\\0\end{bmatrix}e_{\mathrm{p}}.
\end{multline}

With these preparations we are now ready to formulate the energy-based \MOR methods.
The main idea is to cut the interconnection 
\begin{equation}
    \label{eqn:discardedState}
    -\dot{\widetilde{\state}}_1 = \widetilde{f}_{\mathrm{s},1},\qquad \widetilde{e}_{\mathrm{s},1} = \widetilde{\state}_1
\end{equation}
between the energy storage corresponding to $\widetilde{\state}_1$ and the Dirac
structure, in such a way that no energy is transferred. The energy flow through the interconnection \eqref{eqn:discardedState} is set equal to zero by enforcing
\begin{equation}
    \label{eqn:cutDiracStructure}
    \widetilde{e}_{\mathrm{s},1}^T\widetilde{f}_{\mathrm{s},1} = 0\qquad\text{and}\qquad
    \widetilde{\state}_1^T\dot{\widetilde{\state}}_1 = 0.
\end{equation}
This can be achieved in two canonical choices, leading to two different \MOR methods that are discussed in the remainder of this subsection.

In the \emph{Effort Constraint Reduction Method} (\ECRM), we set $\widetilde{e}_{\mathrm{s},1} = 0$, which implies $\widetilde{\state}_1=0$. This choice thus immediately yields~\eqref{eqn:cutDiracStructure}. The reduced Dirac structure is obtained by inserting this relation and removing the second row in \eqref{WholeDAE}.
This yields the reduced \pHDAE  model 
\begin{subequations}
    \label{ECRM}
    \begin{align}
        \begin{bmatrix}
            I &0\\0&0
        \end{bmatrix}\begin{bmatrix}
            \dot{\widehat{\state}}_1\\
            \dot{\state}_2
        \end{bmatrix} &= \left(\begin{bmatrix}
            \widehat{J}_{11} & \widehat{J}_{12}\\
            \widehat{J}_{21} & J_{22}
        \end{bmatrix} - \begin{bmatrix}
            \widehat{R}_{11} & \widehat{R}_{12}\\ 
            \widehat{R}_{21} & R_{22}
        \end{bmatrix}\right)\begin{bmatrix}
            \widehat{\state}_1\\
            \state_2
        \end{bmatrix} + \begin{bmatrix}\widehat{G}_1\\G_2 \end{bmatrix}\inpVar,\\
        \outVar &= \begin{bmatrix}\widehat{G}_1^T & G_2^T\end{bmatrix}\begin{bmatrix}\widehat{\state}_1\\\state_2\end{bmatrix},
\end{align}
\end{subequations}
It remains to show that~\eqref{ECRM} is indeed port-Hamiltonian, which is easily established with \Cref{cor:pHDAEinv}, since~\eqref{ECRM} can be constructed via Galerkin projection.

\begin{remark}
    Note that the \ROM~\eqref{ECRM} is obtained by standard truncation, as is common in balancing type methods; see for instance \cite{GugA04}. The situation is different if the \pHDAE~\eqref{eqn:semiexpl} features a $Q$-term that is not identical to the identity. In this case, a simple truncation may destroy the \pH structure. Nevertheless, one can proceed similarly as above and rewrite the reduced Dirac structure (obtained by setting $\widetilde{e}_{s,1}=0$ and removing the second block row) as a \pHDAE. We refer to \cite{HauMM19} for further details.
\end{remark}

In the \emph{Flow Constraint Reduction Method} (\FCRM), the energy transfer between the energy-storing elements and the Dirac structure is cut by setting $\widetilde{f}_{\mathrm{s},1}=0$, which implies $\dot{\widetilde{\state}}_1=0$, and thus also~\eqref{eqn:cutDiracStructure}. Thus, $\widetilde{\state}_1$ is constant and can particularly be chosen as $\widetilde{\state}_1=0$. The second row in \eqref{WholeDAE} is then an algebraic equation which can be resolved for $\widetilde{e}_{\mathrm{s},1}$ if $\widetilde{J}_{11}$ is invertible, \ie 
\begin{equation}
    \label{eqn:FCRM:resolveDiscardedState}
    \widetilde{e}_{\mathrm{s},1} = -\widetilde{J}_{11}^{-1}\left(\widehat{\widetilde{J}}_{11}\widehat{e}_{\mathrm{s},1} + \widetilde{J}_{12}e_{\mathrm{s},2} + \widetilde{Z}_1 f_{\mathrm{d}} + \widetilde{G}_1 e_{\mathrm{p}}\right).
\end{equation}
Substituting~\eqref{eqn:FCRM:resolveDiscardedState} into~\eqref{WholeDAE} and removing the second block row, yields
\begin{multline}
    \label{WholeDAE:FCRM}
    -\begin{bmatrix}
        I & 0\\
        0 & I\\
        0 & 0\\
        0 & 0
    \end{bmatrix} \begin{bmatrix}
        \widehat{f}_{\mathrm{s},1}\\
        f_{\mathrm{s},2}
    \end{bmatrix} = \begin{bmatrix}
        \widehat{J}_{11} - \widetilde{\widehat{J}}_{11}\widetilde{J}_{11}^{-1}\widehat{\widetilde{J}}_{11} & \widehat{J}_{12} - \widetilde{\widehat{J}}_{11}\widetilde{J}_{11}^{-1}\widetilde{J}_{12}\\
        \widehat{J}_{21} - \widetilde{J}_{21}\widetilde{J}_{11}^{-1}\widehat{\widetilde{J}}_{11} & J_{22} - \widetilde{J}_{21}\widetilde{J}_{11}^{-1}\widetilde{J}_{12}\\
        -\widehat{G}_1^T + \widetilde{G}_1^T\widetilde{J}_{11}^{-1}\widehat{\widetilde{J}}_{11} & -G_2^T + \widetilde{G}_1^T\widetilde{J}_{11}^{-1}\widetilde{J}_{11}\\
        -\widehat{Z}_1^T + \widetilde{Z}_1^T\widetilde{J}_{11}^{-1}\widehat{\widetilde{J}}_{11} & -Z_2^T + \widetilde{Z}_1^T\widetilde{J}_{11}^{-1}\widetilde{J}_{12}
    \end{bmatrix} \begin{bmatrix}
        \widehat{e}_{\mathrm{s},1}\\
        e_{\mathrm{s},2}
    \end{bmatrix} \\
    + \begin{bmatrix}
        \widehat{Z}_1 - \widetilde{\widehat{J}}_{11}\widetilde{J}_{11}^{-1}\widetilde{Z}_1\\ 
        Z_2 - \widetilde{J}_{21}\widetilde{J}_{11}^{-1}\widetilde{Z}_1\\
        \widetilde{G}_1^T\widetilde{J}_{11}^{-1}\widetilde{Z}_1\\
        \widetilde{Z}_1^T\widetilde{J}_{11}^{-1}\widetilde{Z}_1
    \end{bmatrix}f_\mathrm{d} + \begin{bmatrix}
        0\\0\\0\\I
    \end{bmatrix}e_\mathrm{d} + \begin{bmatrix}
        0\\0\\I\\0
    \end{bmatrix}f_{\mathrm{p}} + \begin{bmatrix}
        \widehat{G}_1 - \widetilde{\widehat{J}}_{11}\widetilde{J}_{11}^{-1}\widetilde{G}_1\\
        G_2 - \widetilde{J}_{21}\widetilde{J}_{11}^{-1}\widetilde{G}_1 \\
        \widetilde{G}_1^T\widetilde{J}_{11}^{-1}\widetilde{G}_1\\
        \widetilde{Z}_1^T\widetilde{J}_{11}^{-1}\widetilde{G}_1
    \end{bmatrix}e_{\mathrm{p}}.
\end{multline}
The resulting \ROM then is again a \pHDAE system, but due to the elimination, it now has a feedthrough term (see the third block row in~\eqref{WholeDAE:FCRM}. We do not present the technical formulas here. For details, we refer to \cite{HauMM19}. In contrast to \ECRM, we immediately conclude that the \ROM obtained by \FCRM is not obtained via projection.

The reduced models obtained by \ECRM and \FCRM have similar properties but also major differences. Both methods have the same number of reduced states. The \ROM in \FCRM has an extra feedthrough term and requires the skew-symmetric matrix $\widetilde{J}_{11}$ to be invertible, which is impossible if it is a square matrix of odd size. If $\widetilde{J}_{11}$ is singular, then the procedure has to be modified, but a (rather technical) construction is possible to deal with this case.

The question that remains to be answered is how to choose the coordinates $\widehat{\state}_1$ and $\widetilde{\state}_2$ in an optimal way, which in general is an open problem. Instead, we present a balancing-inspired algorithm to perform the separation, which, of course, can also be used to compute a (numerically) minimal realization for the \pHODE. 
The details are presented in \Cref{alg:pHODEbalancing}; see also \cite{HauMM19}. We emphasize that the resulting \pHDAE is not balanced in the classical sense, but only inspired from standard balancing, see \cite{BreMS20,BorSF21} for other \pH structure-preserving balancing approaches.

\begin{algorithm}
    \caption{Structure-preserving balancing for \pHODEs}
    \label{alg:pHODEbalancing}
    \begin{flushleft}
    \textbf{Input:} \pHDAE~\eqref{eqn:semiexpl}\\
    \textbf{Output:} Balanced-like \pHDAE~\eqref{eqn:fe}\\[-.8em]
        \end{flushleft}
    
    \begin{description}
        \item[Step 1] Set $A_{11} \vcentcolon= J_{11}-R_{11}$.
        \item[Step 2] Compute solutions $\mathcal{P}_{11}$, $\mathcal{O}_{11}$ of the equations
            \begin{align*}
                A_{11}\mathcal{P}_{11}\mathcal{P}_{11}^T + \mathcal{P}_{11}\mathcal{P}_{11}^TA_{11}^T + G_1G_1^T &= 0,\\
                A_{11}^T\mathcal{O}_{11}\mathcal{O}_{11}^T + \mathcal{O}_{11}\mathcal{O}_{11}^TA_{11} + G_1G_1^T &= 0.
            \end{align*}
        \item[Step 3] Compute the singular value decomposition $U\Sigma W^T = \mathcal{P}_{11}^T\mathcal{O}_{11}$, and a QR-decomposition $V_1\mathcal{R} = PU$.
        \item[Step 4] Partition $V_1 = \begin{bmatrix} \widehat{V}_1 & \widehat{V}_2 \end{bmatrix}$ and perform a congruence transformation with the matrix $V\vcentcolon= \diag(V_1,I)$ to obtain the form~\eqref{eqn:fe}.
    \end{description}
\end{algorithm}

\subsection{Moment matching} 
\label{sec:mm} 
The \emph{moment matching} (\MM) method derives the \ROM using a Galerkin projection in such a way that the leading coefficients of the Taylor series expansion of the transfer function $\transferFuncRed$
at a given shift parameter $s_0\in\C\cup\{\infty\}$ of the reduced systems match those of the full-order system $\transferFunc$ at $s_0$.
For details of the \MM methods for \LTI \DAE systems we refer to \cite{Fre05} for $s_0\in\C$ and to \cite{BenS06} for $s_0=\infty$. The adaptation of these methods for \pHODE systems was developed in \cite{PolS10,PolS11}. Since the projection space that maps the original to the reduced problem is typically a Krylov subspace, constructed by using an Arnoldi or Lanczos iteration, see \eg, \cite{Bai02,Fre00,Gri97}, the resulting \MOR method is applicable to large-scale systems and numerically stable.

For a shift $\sigma_0\in \C$, a formal expansion of the transfer function $\transferFunc$ around~$s_0$, see \cite{Ant05}, 
leads to
\begin{equation} 
    \label{eq:transfer}
     \transferFunc(s) = \sum_{i=0}^\infty m_i(\sigma_0-s)^i.
\end{equation}
The generalized moments $m_i$ can be written as
$m_i=G^T v_i$ with vectors $v_i$ that are determined recursively by solving the linear systems
\begin{subequations}
    \label{eqn:moments}
\begin{align}
    (\sigma_0 E -J+R) v_0 &= G,                        \label{eq:r0}\\
    (\sigma_0 E -J+R) v_i &= E v_{i-1},\quad i\geq 1,   \label{eq:r1}
\end{align}
\end{subequations}
and employing the Arnoldi-process \cite{Saa03} to generate an orthogonal basis for this Krylov subspace $\mathcal{V}=\text{span}\{v_0,\ldots,v_{r-1}\}$. Let the columns of~$V$ denote this orthonormal basis and construct the matrices for the \ROM as in~\eqref{eqn:strfreecontrol:ROM:matrices}. It is well-known that in this way the moments are matched up to level $r$, see \cite{Fre05,BenS06}. To ensure that the algebraic constraints are preserved in the \ROM, we exploit the semi-explicit form~\eqref{eqn:semiexpl} and construct the projection matrix only for the dynamic part, as in the following result taken from \cite{HauMM19}.

\begin{theorem}
\label{thm:momentMatching}
    Consider the \pHDAE~\eqref{eqn:strfree:semiexpl}. For given shift $\sigma_0\in\C$ compute the vectors $v_i$ for $i=0,\ldots,r-1$ as in \eqref{eqn:moments} and construct a matrix $\begin{bmatrix} V_1^T & V_2^T\end{bmatrix}^T$, partitioned accordingly to~\eqref{eqn:strfree:semiexpl} with orthonormal columns such that
    \begin{displaymath}
        \mathrm{span} \begin{bmatrix}
            V_1\\
            V_2
        \end{bmatrix} = \mathrm{span}\{v_0,\ldots,v_{r-1}\}.
    \end{displaymath}
    Then the \ROM
    \begin{align}
    \label{eqn:MM:ROM}
	\begin{aligned}
    \begin{bmatrix}
        V_1^T E_{11}V &0\\ 
        0&0
    \end{bmatrix}\begin{bmatrix}
        \dot{\widehat{\state}}_1\\
        \dot{\state}_2
    \end{bmatrix} & = \begin{bmatrix}
        V_1^T(J_{11}-R_{11})V_1 & V_1^T(J_{12}-R_{12})\\
        (J_{21}-R_{21})V_1 & J_{22}-R_{22}
    \end{bmatrix} \begin{bmatrix}   
        \widehat{\state}_1\\ 
        \state_2
    \end{bmatrix} + \begin{bmatrix}
        V_1^T G_1 \\G_2
    \end{bmatrix}\inpVar, \\
    \widehat{\outVar} &= \begin{bmatrix} G_1V_1 \\G_2
\end{bmatrix} \begin{bmatrix}\widehat{\state}_{1}\\ \state_2\end{bmatrix},\end{aligned}
    \end{align}
    retains the \pH structure and matches the first $r$ moments and the polynomial part of the transfer function.
\end{theorem}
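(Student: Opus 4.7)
The plan is to establish the three assertions of \Cref{thm:momentMatching}: (a) preservation of the \pH structure, (b) matching of the first $r$ moments at $\sigma_0$, and (c) matching of the polynomial part of $\transferFunc(s)$. The unifying observation is that the \ROM~\eqref{eqn:MM:ROM} arises as a Galerkin projection of~\eqref{eqn:strfree:semiexpl} with the block-diagonal projection matrix
\begin{displaymath}
\widetilde{V} \vcentcolon= \begin{bmatrix} V_1 & 0 \\ 0 & I_{n_2} \end{bmatrix}.
\end{displaymath}
A direct block computation verifies that $\widetilde{V}^T E \widetilde{V}$, $\widetilde{V}^T (J-R)\widetilde{V}$, and $\widetilde{V}^T G$ match the reduced coefficients in~\eqref{eqn:MM:ROM}: the algebraic block $(J_{22},R_{22},G_2)$ is preserved verbatim, while only the differential block is compressed by $V_1$. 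Without loss of generality I will assume $V_1$ has full column rank (otherwise replace it by a column compression, which does not alter $\range(\widetilde{V})$).

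For (a), \Cref{cor:pHDAEinv} applies directly to the Galerkin projection by $\widetilde{V}$ and yields that~\eqref{eqn:MM:ROM} is again a \pHDAE with quadratic Hamiltonian $\widehat{\hamiltonian}(\widehat{\state}) = \tfrac{1}{2}\widehat{\state}^T \widetilde{V}^T E \widetilde{V}\widehat{\state}$; no further work is required beyond identifying $\widetilde{V}$ as the projection matrix.

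For (b), I first verify that $v_i \in \range(\widetilde{V})$ for every $i = 0,\ldots,r-1$. By construction, $v_i$ lies in $\mathrm{span}\{v_0,\ldots,v_{r-1}\}$, which coincides with the range of $\begin{bmatrix} V_1^T & V_2^T\end{bmatrix}^T$, so there exists $c_i \in \R^r$ with $v_i = \begin{bmatrix} V_1 \\ V_2 \end{bmatrix} c_i$. The top block $V_1 c_i$ lies in $\range(V_1)$, and the bottom block $V_2 c_i$ trivially lies in $\R^{n_2} = \range(I_{n_2})$, whence $v_i \in \range(\widetilde{V})$. I then mimic the standard Krylov moment-matching argument: defining the \ROM moment vectors $\widehat{v}_i$ by the recursion~\eqref{eqn:moments} applied to the reduced coefficients, the Galerkin condition $\widetilde{V}^T[(\sigma_0 E-(J-R))\widetilde{V}\widehat{v}_i - E\widetilde{V}\widehat{v}_{i-1}] = 0$ together with invertibility of $\sigma_0\widetilde{V}^T E\widetilde{V} - \widetilde{V}^T(J-R)\widetilde{V}$ at $\sigma_0$ allows an induction showing $\widetilde{V}\widehat{v}_i = v_i$ for every $i<r$. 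Hence $\widehat{m}_i = \widehat{G}^T\widehat{v}_i = G^T\widetilde{V}\widehat{v}_i = G^T v_i = m_i$.

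For (c), the Kronecker index-at-most-one assumption guarantees that $J_{22}-R_{22}$ is invertible and that $\transferFunc(s)$ decomposes into a strictly proper part plus a constant polynomial part. A Schur complement applied to $(sE-(J-R))^{-1}$ and evaluated as $s\to\infty$ shows that this constant is $-G_2^T(J_{22}-R_{22})^{-1}G_2$, which depends solely on the algebraic block. Since $\widetilde{V}$ preserves $(J_{22},R_{22},G_2)$ and thus the invertibility of $J_{22}-R_{22}$, the projected pencil is still of Kronecker index at most one, and the identical constant appears as the polynomial part of $\transferFuncRed(s)$. The main delicate point of the argument is this invariance of the index under the projection $\widetilde{V}$; fortunately, its block-diagonal shape decouples the differential and algebraic blocks so that the Schur complement analysis carries over transparently to the \ROM.
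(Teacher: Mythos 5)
Your proof is correct and follows exactly the route the paper intends: the paper itself gives no written proof of \Cref{thm:momentMatching} (it defers to \cite{HauMM19} for the structured statement and to \cite{Fre05,BenS06} for the moment-matching claim), and your argument — identifying the \ROM as a Galerkin projection with the block-diagonal matrix $\operatorname{diag}(V_1,I)$, invoking \Cref{cor:pHDAEinv} for structure preservation, running the standard Krylov-subspace induction for the $r$ moments, and using the Schur complement of the untouched algebraic block for the polynomial part — is precisely the standard argument those references supply. The only caveats are minor and you flag them yourself (full column rank of $V_1$ and invertibility of the reduced pencil at $\sigma_0$), so nothing further is needed.
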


\begin{remark}
    \label{rem:momentMatchingProjection}
    \Cref{thm:momentMatching} presents a seemingly easy solution to structure-preserving \MOR of  \pHDAE systems of (Kronecker) index one. Nevertheless, this may not be the maximal reduction that is possible, because redundant algebraic conditions cannot be removed, see \cite{MehS05} for further details.
\end{remark}

\subsection{Tangential interpolation}
\label{sec:tangential}
A fourth and very successful \MOR method for \LTI \ODE systems is the tangential interpolation method, see \cite{AntBG20} for the general theory and application. In contrast to moment matching, the transfer function and its derivatives are not interpolated at a single point but rather at multiple points. If the system has multiple inputs and outputs, the interpolation is typically only enforced along so-called tangential directions. The main motivation for this approach is the fact that an $\mathcal{H}_2$-reduced model interpolates the full-order model at several interpolation points along tangential directions, see \cite{AntBG20}. For different classes of \LTI \pHDAE systems, the method has been introduced in detail in \cite{BeaGM21}. We discuss the method for \pHDAE systems of the form~\eqref{eqn:strfreecontrol}. 

As in the previous section, we work with single-input single-output systems to ease the presentation, \ie, we assume $m=1$. All results can be extended to the multi-input multi-output case. For a prescribed set of interpolation frequencies $\sigma_1,\ldots,\sigma_r\in\C$, the goal is to construct a reduced \pHDAE system whose transfer function interpolates the transfer function of the original model at the prescribed frequency points, \ie, we want
\begin{equation}
    \label{eqn:interpolationConditions}
    \transferFunc(\sigma_i) = \transferFuncRed(\sigma_i)\qquad\text{for } i=1,\ldots,r.
\end{equation}
Following the moment matching approach from the previous subsection, we immediately obtain the following result for \LTI \pHDAE system of (Kronecker) index one.

\begin{theorem}
    \label{thm:rationalInterpolation:index1:v1}
    Consider the index-1 \pHDAE~\eqref{eqn:strfree:semiexpl}.
    For given interpolation points $\{\sigma_1,\ldots,\sigma_r\}\subseteq\C$ construct a matrix $\begin{bmatrix} V_1^T & V_2^T\end{bmatrix}^T\in\C^{\stateDim,r}$, partitioned accordingly, that satisfies
    \begin{equation*}
        \label{eqn:rationalInterpolationMatrix}
        \mathrm{span}\begin{bmatrix}
            V_1\\
            V_2
        \end{bmatrix} = \mathrm{span}\{
            \left(\sigma_1 E - J+R\right)^{-1}G, \ldots,  \left(\sigma_r E - J+R\right)^{-1}G\}.
    \end{equation*}
    Then the \ROM~\eqref{eqn:MM:ROM} retains the \pH structure, interpolates the original model at the interpolation points, and matches the polynomial part.
\end{theorem}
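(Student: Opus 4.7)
The plan is to reduce the claim to a Galerkin-projection argument applied to the semi-explicit form (eqn:strfree:semiexpl), where only the dynamical block is compressed. Concretely, set $\mathcal{V} \vcentcolon= \diag(V_1, I_{n_2})$ and observe that (eqn:MM:ROM) is exactly $\widehat{E} = \mathcal{V}^T E\mathcal{V}$, $\widehat{J}-\widehat{R} = \mathcal{V}^T(J-R)\mathcal{V}$, $\widehat{G} = \mathcal{V}^T G$, with $\widehat{J} = \mathcal{V}^T J\mathcal{V}$ and $\widehat{R} = \mathcal{V}^T R \mathcal{V}$. Because $E$, $J$, $R$ already satisfy the defining symmetry and semi-definiteness conditions, an invocation of \Cref{cor:pHDAEinv} shows that the \ROM is a \pHDAE with projected Hamiltonian; in particular $\widehat{E}_{11} = V_1^T E_{11}V_1$ is symmetric positive (semi-)definite, the algebraic block is untouched, so $\widehat{J}_{22}-\widehat{R}_{22} = J_{22}-R_{22}$ is still nonsingular, and hence the reduced pencil inherits \LTI \pHDAE structure of (Kronecker) index at most one.

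For the interpolation property, write $w_i \vcentcolon= (\sigma_i E - (J-R))^{-1}G$ and partition $w_i = \begin{bmatrix} w_{i,1}^T & w_{i,2}^T\end{bmatrix}^T$ consistently with the semi-explicit decomposition. The hypothesis that $\mathrm{span}\{w_1,\dots,w_r\}$ is contained in the column span of $\begin{bmatrix}V_1^T & V_2^T\end{bmatrix}^T$ implies, by looking at the first block-row only, that each $w_{i,1}$ lies in the range of $V_1$, so one can write $w_{i,1} = V_1\alpha_i$ for some $\alpha_i\in\C^r$. Defining $\widehat{w}_i \vcentcolon= \begin{bmatrix}\alpha_i^T & w_{i,2}^T\end{bmatrix}^T$ and substituting into $(\sigma_i \widehat{E}-(\widehat{J}-\widehat{R}))\widehat{w}_i$, the first block-row becomes $V_1^T\bigl[(\sigma_i E_{11}-(J_{11}-R_{11}))w_{i,1} - (J_{12}-R_{12})w_{i,2}\bigr] = V_1^T G_1$, and the second block-row becomes $-(J_{21}-R_{21})w_{i,1} - (J_{22}-R_{22})w_{i,2} = G_2$; both identities hold because $w_i$ solves the full resolvent equation. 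Hence $\widehat{w}_i = (\sigma_i \widehat{E}-(\widehat{J}-\widehat{R}))^{-1}\widehat{G}$, and therefore $\widehat{\transferFunc}(\sigma_i) = \widehat{G}^T \widehat{w}_i = G_1^T V_1\alpha_i + G_2^T w_{i,2} = G_1^T w_{i,1}+G_2^T w_{i,2} = G^T w_i = \transferFunc(\sigma_i)$.

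For the polynomial part, recall that for an index-one \pHDAE the Weierstra\ss{} form shows that $\transferFunc$ decomposes as a strictly proper rational function plus a constant, the latter being $\transferFunc(\infty) = -G_2^T(J_{22}-R_{22})^{-1}G_2$, obtainable by the block inversion formula for $(sE-(J-R))^{-1}$ after letting $s\to\infty$. Since the projection does not touch the algebraic block, the reduced ``$(2,2)$ data" $J_{22}-R_{22}$ and $G_2$ are identical to the original ones, so $\widehat{\transferFunc}(\infty) = -G_2^T(J_{22}-R_{22})^{-1}G_2 = \transferFunc(\infty)$.

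The routine obstacle will be to actually carry out the block Schur-complement computation carefully enough to justify the polynomial-part claim without hand-waving, in particular to verify that the resolvent of the reduced pencil has the same $s\to\infty$ limit as that of the original pencil; the subtlety is that the algebraic variable $\state_2$ is not projected, and one must check that this asymmetry (unprojected $\state_2$ versus projected $\state_1$) is precisely what makes both interpolation at finite $\sigma_i$ and matching at infinity consistent with a single Galerkin step. Everything else (pH preservation, interpolation identity) is a direct consequence of \Cref{cor:pHDAEinv} and a short block computation.
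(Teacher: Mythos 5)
Your proof is correct and follows exactly the route the paper intends: the paper states this result without an explicit proof, remarking only that it follows "from the moment matching approach," i.e.\ a Galerkin projection with $\diag(V_1,I)$ that leaves the algebraic block untouched, \pH-invariance via \Cref{cor:pHDAEinv}, the standard subspace-containment argument for matching $\transferFunc(\sigma_i)$, and the observation that the unreduced $(2,2)$ data $J_{22}-R_{22}$ and $G_2$ force $\widehat{\transferFunc}(\infty)=-G_2^T(J_{22}-R_{22})^{-1}G_2=\transferFunc(\infty)$. Your block computations supply precisely these details (the only points glossed over — regularity of the reduced pencil at the $\sigma_i$ and the complex-valued $V$ — are glossed over by the paper as well, the latter in a subsequent remark).
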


As discussed in the previous subsection, cf.~\Cref{rem:momentMatchingProjection}, the construction in \Cref{thm:rationalInterpolation:index1:v1} suffers from the fact that possible redundant algebraic equations are not removed. We thus present an alternative approach in the next theorem. Again, the main idea is to construct the \ROM via Galerkin projection such that the interpolation conditions~\eqref{eqn:interpolationConditions} are satisfied. Since, in general, such a \ROM will not match the polynomial part of the transfer function, we follow a strategy from \cite{MayA07} (see also \cite{GugSW13}) and modify the feedthrough term without violating the interpolation conditions. The corresponding result for \pHDAE systems from \cite{BeaGM21} is presented in the following theorem.

\begin{theorem}
    \label{thm:rationalInterpolation:index1:v2}
    Consider a \pHDAE~\eqref{eqn:strfree:semiexpl} with (Kroecker) index at most one.
    For given interpolation points $\{\sigma_1,\ldots,\sigma_r\}\subseteq\C$ construct a matrix $V \vcentcolon= \begin{bmatrix} V_1^T & V_2^T\end{bmatrix}^T\in\C^{\stateDim,r}$, partitioned accordingly, as in~\eqref{eqn:rationalInterpolationMatrix}. Define the matrices
    \begin{align*}
        \widehat{E} &\vcentcolon= V_1^TE_{11}V_1, & 
        \widehat{D} &\vcentcolon= -G_2^T(J_{22}-R_{22})^{-1}G_2, &
        \widehat{B} &\vcentcolon= V^TG + \mathds{1}\widehat{D},\\
        \widehat{C} &\vcentcolon= G^TV + \widehat{D}\mathds{1}^T, &
        \widehat{A} &\vcentcolon= V^T(J-R)V-\mathds{1}\widehat{D}\mathds{1}^T, &
        \widehat{J} &\vcentcolon= \tfrac{1}{2}(\widehat{A}-\widehat{A}^T), \\
        \widehat{R} &\vcentcolon= -\tfrac{1}{2}(\widehat{A}+\widehat{A}^T), &
         \widehat{P} &\vcentcolon= \tfrac{1}{2}(\widehat{C}^T - \widehat{B}), &
        \widehat{G} &\vcentcolon= \tfrac{1}{2}(\widehat{C}^T+B), \\
        \widehat{S} &\vcentcolon= \tfrac{1}{2}(\widehat{D}+\widehat{D}^T), &
        \widehat{N} &\vcentcolon= -\tfrac{1}{2}(\widehat{D}-\widehat{D}^T),
    \end{align*}
    with $\mathds{1} \vcentcolon= \begin{bmatrix} 1 & \cdots & 1\end{bmatrix}^T\in\R^{r}$.
    Then, the \ROM
    \begin{subequations}
        \label{eqn:interpROMpH}
        \begin{align}
            \widehat{E}\dot{\widehat{\state}} &= (\widehat{J}-\widehat{R})\state + (\widehat{G}-\widehat{P})\inpVar,\\
            \widehat{\outVar} &= (\widehat{G}+\widehat{P})^T\state + (\widehat{S}-\widehat{N})\inpVar,
        \end{align}
    \end{subequations}
    satisfies the interpolation conditions~\eqref{eqn:interpolationConditions} and matches the polynomial part of $\transferFunc$. If, in addition, the matrix $\begin{smallbmatrix}
        \widehat{R} & \widehat{P}\\
        \widehat{P}^T & \widehat{S}
    \end{smallbmatrix}$ is positive semi-definite, then~\eqref{eqn:interpROMpH} is a \pHDAE system.
\end{theorem}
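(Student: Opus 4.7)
The proof strategy splits into three independent parts: verify the rational interpolation conditions, verify that the polynomial part of the transfer function is matched exactly, and verify that the resulting reduced system indeed has the \pHDAE structure of \Cref{def:pHDAE:LTI}.

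First I would isolate the polynomial part. Using the semi-explicit form \eqref{eqn:strfree:semiexpl} and the (Kronecker) index-one assumption that $J_{22}-R_{22}$ is invertible, a direct Schur-complement calculation on $(sE - (J-R))^{-1}$ gives
\begin{equation*}
\lim_{s\to\infty}\transferFunc(s) = -G_2^T(J_{22}-R_{22})^{-1}G_2 = \widehat{D}.
\end{equation*}
On the reduced side, since $V_1$ has full column rank and $E_{11}$ is symmetric positive definite, the matrix $\widehat{E} = V_1^T E_{11} V_1$ is positive definite, so $s\widehat{E}-\widehat{A}$ is invertible for large $|s|$ and the strictly proper part $\widehat{C}(s\widehat{E}-\widehat{A})^{-1}\widehat{B}$ vanishes at infinity. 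Hence $\widehat{\transferFunc}(s) \to \widehat{D}$ as $s\to\infty$, matching the polynomial part of $\transferFunc$ by construction.

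For the interpolation conditions, the starting point is the standard projection-based rational interpolation result (the same mechanism used for the moment matching \Cref{thm:momentMatching}): because the range of $V$ contains $(\sigma_i E - (J-R))^{-1}G$ for each $i$, the unperturbed Galerkin projection $\widehat{A}_0 \vcentcolon= V^T(J-R)V$, $\widehat{B}_0 \vcentcolon= V^TG$, $\widehat{C}_0 \vcentcolon= G^TV$ satisfies
\begin{equation*}
\widehat{C}_0(\sigma_i \widehat{E} - \widehat{A}_0)^{-1}\widehat{B}_0 = \transferFunc(\sigma_i),\qquad i=1,\ldots,r.
\end{equation*}
The rank-one correction by $\mathds{1}\widehat{D}\mathds{1}^T$ in $\widehat{A}$ together with the compensating rank-one updates $\mathds{1}\widehat{D}$ in $\widehat{B}$ and $\widehat{D}\mathds{1}^T$ in $\widehat{C}$ is then designed to introduce the feedthrough $\widehat{D}$ in $\widehat{\transferFunc}$ without disturbing the values at the $\sigma_i$. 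Concretely, one applies the Sherman--Morrison formula to $(\sigma_i\widehat{E} - \widehat{A}_0 + \mathds{1}\widehat{D}\mathds{1}^T)^{-1}$ and tracks the cross terms created with $\widehat{B}_0 + \mathds{1}\widehat{D}$ and $\widehat{C}_0 + \widehat{D}\mathds{1}^T$; after adding the feedthrough $\widehat{D}$ all unwanted contributions cancel, yielding $\widehat{\transferFunc}(\sigma_i) = \transferFunc(\sigma_i)$. I expect this algebraic bookkeeping to be the main obstacle, as one has to carefully show that the three simultaneous rank-one perturbations precisely offset the extra feedthrough, and that the argument is independent of the particular basis chosen for the range of $V$.

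Finally, the \pH structure follows directly from the construction. By definition $\widehat{J} = \tfrac{1}{2}(\widehat{A}-\widehat{A}^T)$ is skew-symmetric and $\widehat{R} = -\tfrac{1}{2}(\widehat{A}+\widehat{A}^T)$ is symmetric with $\widehat{A} = \widehat{J}-\widehat{R}$; analogously $\widehat{S}$, $\widehat{N}$ are the symmetric and skew-symmetric parts of $\widehat{D}$, and the splitting of the input/output matrices into $\widehat{G}\pm\widehat{P}$ is self-consistent. Skew-symmetry of the reduced structure matrix $\widehat{\structureMatrix} = \begin{smallbmatrix} \widehat{J} & \widehat{G} \\ -\widehat{G}^T & \widehat{N}\end{smallbmatrix}$ is then immediate from \Cref{def:pHDAE:LTI} with $Q=I$, and the remaining structural requirement, namely positive semi-definiteness of the dissipation matrix $\begin{smallbmatrix} \widehat{R} & \widehat{P} \\ \widehat{P}^T & \widehat{S}\end{smallbmatrix}$, is exactly the hypothesis of the theorem. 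This step is routine once the construction is in place, and it is precisely why the semi-definiteness assumption, rather than being automatic as in \ECRM, must be added here: the Mayo--Antoulas-type rank-one corrections do not preserve positive semi-definiteness in general.
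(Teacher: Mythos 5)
Your three-part decomposition (polynomial part, interpolation, \pH structure) and the overall strategy --- Galerkin projection plus a Mayo--Antoulas-type feedthrough correction --- coincide with what the paper indicates; the survey gives no proof of this theorem but defers to \cite{BeaGM21} with exactly this sketch. The Schur-complement computation of $\lim_{s\to\infty}\transferFunc(s)=\widehat{D}$ and the structural part (skew-/symmetric splitting, with positive semi-definiteness of the dissipation block added as a hypothesis rather than proved) are fine. The difficulty is that the one step you defer as ``algebraic bookkeeping'' is the entire mathematical content of the theorem, and your description of it does not establish it. First, no Sherman--Morrison argument is needed: taking the \emph{canonical} basis $Ve_i=(\sigma_iE-(J-R))^{-1}G$, one has $(\sigma_i\widehat{E}-\widehat{A}_0)e_i=\widehat{B}_0$ and $\widehat{C}_0e_i=\transferFunc(\sigma_i)$ for the uncorrected projection, and the whole verification reduces to checking $(\sigma_i\widehat{E}-\widehat{A})e_i=\widehat{B}$ and then evaluating $\widehat{C}e_i+\widehat{D}$. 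Second, actually carrying this out shows the cancellation is sign-sensitive: with the corrections exactly as displayed ($\widehat{A}=\widehat{A}_0-\mathds{1}\widehat{D}\mathds{1}^T$, $\widehat{B}=\widehat{B}_0+\mathds{1}\widehat{D}$, $\widehat{C}=\widehat{C}_0+\widehat{D}\mathds{1}^T$, feedthrough $+\widehat{D}$) the computation yields $\transferFuncRed(\sigma_i)=\transferFunc(\sigma_i)+2\widehat{D}$ rather than $\transferFunc(\sigma_i)$; the desired cancellation requires the Mayo--Antoulas pattern in which the correction to $\widehat{C}$ carries the sign opposite to the feedthrough (e.g.\ $\widehat{C}=G^TV-\widehat{D}\mathds{1}^T$ together with feedthrough $+\widehat{D}$). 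So ``all unwanted contributions cancel'' is precisely the assertion that must be proved; stating it without the computation leaves the proof empty at its core, and would not have detected the sign issue that the computation exposes.

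Two further points. Your concern about basis independence identifies a real subtlety but resolves it in the wrong direction: the rank-one corrections built from $\mathds{1}$ are \emph{not} invariant under a change of basis $V\mapsto VT$ of the interpolation space, so the argument must be carried out in the canonical basis above (or $\mathds{1}$ replaced accordingly); one does not prove basis independence, one fixes the basis. Finally, your claim that the reduced strictly proper part vanishes at infinity rests on ``$V_1$ has full column rank, hence $\widehat{E}=V_1^TE_{11}V_1>0$''; full column rank of $V=\begin{bmatrix}V_1^T & V_2^T\end{bmatrix}^T$ does not imply full column rank of the subblock $V_1$, so nonsingularity of $\widehat{E}$ (equivalently, that the reduced model is an implicit \ODE and contributes no additional polynomial part) is an assumption that must be stated or verified, not a consequence.
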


\begin{remark}
    In general, the projection matrix $V$ in \Cref{thm:rationalInterpolation:index1:v1,thm:rationalInterpolation:index1:v2} is complex, and thus also the matrices in the \ROM are complex-valued. Nevertheless, if the interpolation points are closed under complex conjugation, then a state-space transformation can be used to find a real-valued realization. In practice, this can be done a-priori by choosing $V$ appropriately. For details, we refer to \cite{AntBG20}. A similar approach also applies to the \MM approach discussed in \Cref{thm:momentMatching}.
\end{remark}

\begin{remark}
    It is possible to extend these results to the \pHDAE systems of (Kronecker) index two. Such a generalization is discussed in detail in \cite{BeaGM21}.
\end{remark}

The crucial question that remains to be answered is the choice of the interpolation points $\sigma_1,\ldots,\sigma_r$. It is well-known, see for instance \cite{AntBG20}, that an $\mathcal{H}_2$-optimal reduced model interpolates the transfer function of the full-order model at the mirror images of the poles of the \ROM. In more detail, assume that the poles $\lambda_i\in\C$ for $i=1,\ldots,r$ of $\transferFuncRed$ are semi-simple. If $\transferFuncRed$ is an $\mathcal{H}_2$-optimal approximation, then
\begin{equation}
    \label{eqn:H2optimalityCond}
    \transferFunc(-\lambda_i) = \transferFuncRed(-\lambda_i)\quad\text{and}\quad
    \transferFunc'(-\lambda_i) = \transferFuncRed'(-\lambda_i)
    \quad\text{for } i=1,\ldots,r,
\end{equation}
where $\transferFunc'$ denotes the derivative with respect to $s$.
Since, these poles are not known a-priori, they cannot be used as interpolation points in \Cref{thm:rationalInterpolation:index1:v1,thm:rationalInterpolation:index1:v2}. Instead,  \cite{GugAB08} proposed a fixed-point iteration to resolve this problem, which is known as the \emph{iterative rational Krylov algorithm} (\IRKA). The main idea is to construct a \ROM via \Cref{thm:rationalInterpolation:index1:v1} or \Cref{thm:rationalInterpolation:index1:v2}, compute the poles of the transfer function and use its mirror images as the next set of interpolation points. This is repeated until convergence. For general unstructured descriptor systems, an Hermite interpolant can be constructed similarly as in \Cref{thm:rationalInterpolation:index1:v1,thm:rationalInterpolation:index1:v2}, see for instance \cite{GugSW13}. If we however preserve the \pH-structure as in \Cref{thm:rationalInterpolation:index1:v1,thm:rationalInterpolation:index1:v2}, then, in general, only a subset of the interpolation conditions~\eqref{eqn:H2optimalityCond} is satisfied, and hence, the resulting \ROM may not be optimal with respect to the $\mathcal{H}_2$-norm. Indeed, as our forthcoming numerical examples (see \Cref{sec:MOR:examples}) show, the approximation quality can be significantly improved, if a different Hamiltonian is used. 

One way to achieve such a reformulation with a Hamiltonian that is particularly amendable for \MOR is to adapt the strategy for passive \ODE systems discussed in \cite{BreU21} to the \pHDAE setting as follows. First, consider only the differential part of the \pHDAE~\eqref{eqn:strfree:semiexpl}, \ie, the implicit \pHODE
\begin{subequations}
    \label{eqn:strfree:truncatedODE}
    \begin{align}
        E_{11}\dot{\state}_1 &= (J_{11}-R_{11})\state_1 + G_1\inpVar,\\
        \outVar &= G_1^T\state_1.
    \end{align}
\end{subequations}
If~\eqref{eqn:strfree:truncatedODE} is not (numerically) minimal, compute a structure-preserving minimal realization, for instance via \Cref{alg:pHODEbalancing} or via the method described in \cite{BreMS20}. For the sake of notation, we assume that this step has already been done, \ie, we assume that~\eqref{eqn:strfree:truncatedODE} is already (numerically) minimal. Second, set $A_{11} \vcentcolon= (J_{11}-R_{11})E_{11}^{-1}$, and compute the minimizing solution $X_{11} = X_{11}^T > 0$ of the algebraic Riccati equation
\begin{displaymath}
    -A_{11}^TX_{11} - X_{11}A_{11} - (G_{11}^T - X_{11}G_{11})D_{11}(G_{11}-G_{11}^T-X_{11}) = 0.
\end{displaymath}
Then, construct the matrices
\begin{gather*}
    \tilde{E}_{11} \vcentcolon= X_{11}^{-1}, \qquad \tilde{J}_{11} \vcentcolon= \tfrac{1}{2}(A_{11}X_{11}^{-1}-X_{11}^{-1}A_{11}^T),\\ \tilde{R}_{11} \vcentcolon= -\tfrac{1}{2}(A_{11}X_{11}^{-1}+X_{11}^{-1}A_{11}^T),
\end{gather*}
and perform \MOR for the system 
\begin{subequations}
    \label{eqn:strfree:semiexpl:optimizedHamiltonian}
    \begin{align}
        \begin{bmatrix}
            \tilde{E}_{11} & 0\\
            0 & 0
        \end{bmatrix}\begin{bmatrix}
            \dot{\state}_1\\
            \dot{\state}_2
        \end{bmatrix} &= \begin{bmatrix}
            \phantom{-}\tilde{J}_{11} - \tilde{R}_{11} & J_{12}-R_{12}\\
            -J_{12}^T - R_{12}^T & J_{22}-R_{22}
        \end{bmatrix}\begin{bmatrix}
            \state_1\\
            \state_2
        \end{bmatrix} + \begin{bmatrix}
            G_1\\
            G_2
        \end{bmatrix}\inpVar,\\
        \outVar &= \begin{bmatrix}
            G_1^T & G_2^T
        \end{bmatrix}\begin{bmatrix}
            \state_1\\
            \state_2
        \end{bmatrix},
    \end{align}
\end{subequations}
provided that the matrix
\begin{displaymath}
    \begin{bmatrix}
        \tilde{R}_{11} & R_{12}\\
        R_{21} & R_{22}
    \end{bmatrix}
\end{displaymath}
is positive semi-definite.

\begin{remark}
    \label{rem:systemIdentification}
    Interestingly, a (generalized) state-space realization is not necessary to construct an interpolatory \ROM. As demonstrated in \cite{MayA07,AntLI17}, the \ROM can be constructed solely from the interpolation points $\sigma_i$ and associated measurements of the transfer function $\transferFunc$ and its derivative. Generalizations to models with structure are proposed for instance in \cite{SchUBG18}. First attempts to use frequency measurements to construct a low-dimensional \pHDAE, \ie, to use interpolation or least-squares approaches as a structure-inducing system identification framework, are presented in \cite{AntLI17,BenGV20,CheMH19_ppt,SchV20,Sch21,SchV21}. A first approach that works with time-domain data is presented in~\cite{ShaWK22}. Methods to analyze whether the available data is generated from a passive system are presented in \cite{RomMA17,RomBKA19,vanCPT21}.
\end{remark}

\subsection{Numerical examples}
\label{sec:MOR:examples}

To illustrate the performance of the discussed structure-preserving \MOR methods, we present a numerical example using a multibody system as described in \Cref{sec:robot}. The reported $\mathcal{H}_2$-norms are computed as in \cite{Sty06b}, using the \texttt{M-M.E.S.S.} Toolbox \cite{SaaKB21}.

A holonomically constrained mass-spring-damper system is a multibody problem that describes the one-dimensional dynamics of $g$ connected mass points in terms of their positions $q\colon\timeInt\to\R^g$, velocities $v\colon\timeInt\to\R^g$ and a Lagrange multiplier $\lambda\colon\timeInt\to\R$, see \Cref{MultiBodyPic}.
\begin{figure}[t]
    \centering
    \includegraphics[scale=0.4]{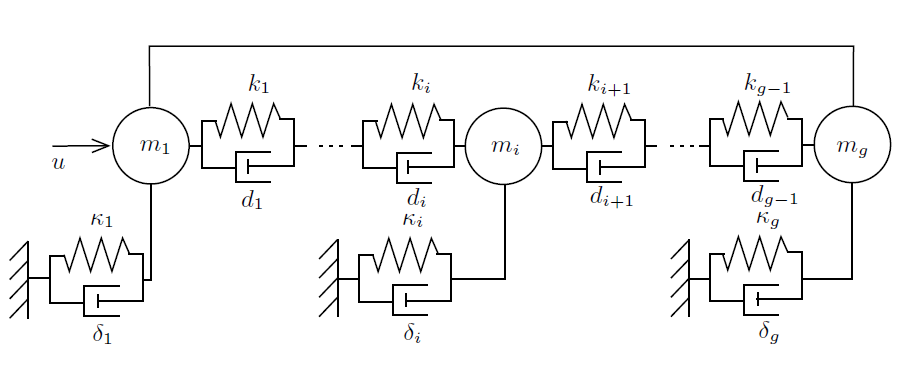}
    \caption{Damped mass-spring system with holonomic constraint taken from~\protect\cite{MehS05}.}
    \label{MultiBodyPic}
\end{figure}
Here the $i$th mass of the weight $m_i$ is connected to the $(i+1)st$ mass by a spring and a damper with constants $k_i>0$ and $d_i>0$, and also to the ground by a spring and a damper with the constants $\kappa_i>0$ and $\delta_i>0$, respectively. Furthermore, the first and the last mass points are connected by a rigid bar. The vibrations are driven by an external force $\inpVar\colon\timeInt\to\R$, the control input, acting on the first mass point. The resulting system has a mass matrix $M=\mathrm{diag}(m_1,\dots,m_g)$, symmetric positive definite tridiagonal stiffness and damping matrices $K$, $D\in \R^{g,g}$, a constraint matrix $C=\begin{bmatrix}1 & 0 & \dots & 0 & -1\end{bmatrix}\in \R^{1,g}$, and an input matrix $\tilde{G}=e_1\in \R^{g,1}$. Since $K$ and $D$ are symmetric positive definite, the problem can be formulated as a \pHDAE of (Kronecker) index two by replacing the algebraic constraint $Cq=0$ by its first derivative $Cv=0$ 
yielding the \pHDAE
\begin{align}
    \label{MultiBodyPH}
    \begin{bmatrix} K & 0 & 0\\ 0 & M & 0\\ 0&0&0\end{bmatrix} \begin{bmatrix}\dot{q}\\ \dot{v}\\ \dot{\lambda}\end{bmatrix} &= \left(\begin{bmatrix}0&K&0\\-K&0&-C^T\\
0&C&0\end{bmatrix}-\begin{bmatrix}0&0&0\\0&{D}&0\\0&0&0\end{bmatrix}\right)
\begin{bmatrix}
{q}\\v\\\lambda\end{bmatrix}+\begin{bmatrix}0\\ \tilde{G}\\0\end{bmatrix} \inpVar
\end{align}
with $\state \vcentcolon= \begin{bmatrix} q^T & v^T & \lambda^T\end{bmatrix}^T$ when adding an associated output equation $\outVar=G^T\state$.

The structure of the equations allows an easy construction of the condensed form required for the \MOR methods. Performing a full rank decomposition of $C$ as $CV = \begin{bmatrix} C_1 & 0\end{bmatrix}$ with $C_1$ invertible and an orthogonal matrix $V$, a congruence transformation yields the system
\begin{align*}
    \begin{bmatrix} K & 0 &0& 0\\ 0 & M_{11} & M_{12} & 0\\ 0& M_{12}^T&M_{22}&0\\0& 0&0&0\end{bmatrix}
    \begin{bmatrix}\dot{q}\\ \dot{v}_1\\ \dot{v}_2\\ \dot{\lambda}\end{bmatrix}&=
    \begin{bmatrix}0&K_1&K_2&0\\-K_1^T&-D_{11}&-D_{12}&-C_1^T
    \\-K_2^T&-D_{12}^T&-D_{22}&0\\0&C_1&0&0\end{bmatrix}
\begin{bmatrix}
{q}\\v_1\\v_2\\\lambda\end{bmatrix}+\begin{bmatrix}0\\G_1\\G_2\\0\end{bmatrix} \inpVar.
\end{align*}

The last equation $C_1v_1=0$ implies that $v_1=0$. Differentiating this equation and inserting it into the second equation yields the hidden constraint for the Lagrange multiplier
\begin{align*}
    {C_1}^{T}\lambda &= -M_{12}\dot{v}_2-K_1^T{q}-D_{12}v_2+G_1\inpVar,
\end{align*}
which imposes a consistency condition for the initial value.
The underlying \pHODE of size $n_1=2(g-1)$ together with the output equation are given by
\begin{subequations}
	\label{MBSuODE}
	\begin{align}
		\begin{bmatrix}
			K&0\\0&M_{22}
		\end{bmatrix}\begin{bmatrix}
			\dot{q}\\\dot{v}_2
		\end{bmatrix} &= \left(\begin{bmatrix}
			0&K_2\\-K_2^T&0
		\end{bmatrix} - \begin{bmatrix}
			0&0\\ 0&{D_{22}}
		\end{bmatrix}\right) \begin{bmatrix}
			{q}\\v_2\end{bmatrix}+\begin{bmatrix}
			0\\ G_2
		\end{bmatrix}u,\\
		y &= \begin{bmatrix}
			0 & G_2^T
		\end{bmatrix} \begin{bmatrix}{q}\\v_2\end{bmatrix}.
	\end{align}
\end{subequations}
If we permute the columns and rows such that the \pHODE~\eqref{MBSuODE} is in the leading blocks, then we can mimic the \MOR strategies from the previous subsections also for the \pHDAE systems of (Kronecker) index two, by only reducing the \pHODE~\eqref{MBSuODE}. Note that the lower-right $2\times 2$ block matrix has no skew-symmetric contribution and hence the relevant submatrix for the \FCRM is singular. We thus exclude \FCRM in the following.

For our numerical example, we choose a similar setting as in \cite{HauMM19} with parameters as listed in \Cref{tab:parametersMSD}.
\begin{table}[t]
    \centering
    \begin{tabular}{l@{\hspace{2em}}cccccccccc}
        \toprule
        \textbf{parameter} & $g$ & $m_i$ & $k_i$ & $d_i$ &  $\kappa_i$ & $\delta_i$ & $\kappa_1$ & $\kappa_g$ & $\delta_1$ & $\delta_g$\\
        \textbf{value} & $1000$ & $100$ & $2$ & $5$ & $2$ & $5$ & $4$ & $4$ & $10$ & $10$\\\bottomrule
    \end{tabular}
    \caption{Parameters for the holonomically constrained mass-spring-damper system}
    \label{tab:parametersMSD}
\end{table}
To compute a \ROM with the different methods, we pick $r\in\N$ and reduce only the \pHODE~\eqref{MBSuODE} while the algebraic part is not reduced, knowing that, in general, this is not optimal (cf.~\Cref{rem:momentMatchingProjection}). Whenever we report a reduced dimension~$r$, this means that one has to add the number of algebraic equations to the dimension of the \ROM. A frequency sweep for the different \ROMs with $r=10$ is presented in \Cref{fig:MBS:frequencyPlot} and relative $\mathcal{H}_2$-norms for different values of $r$ are reported in \Cref{fig:MBS:H2error}.
\begin{figure}[t]
    \centering
    \input{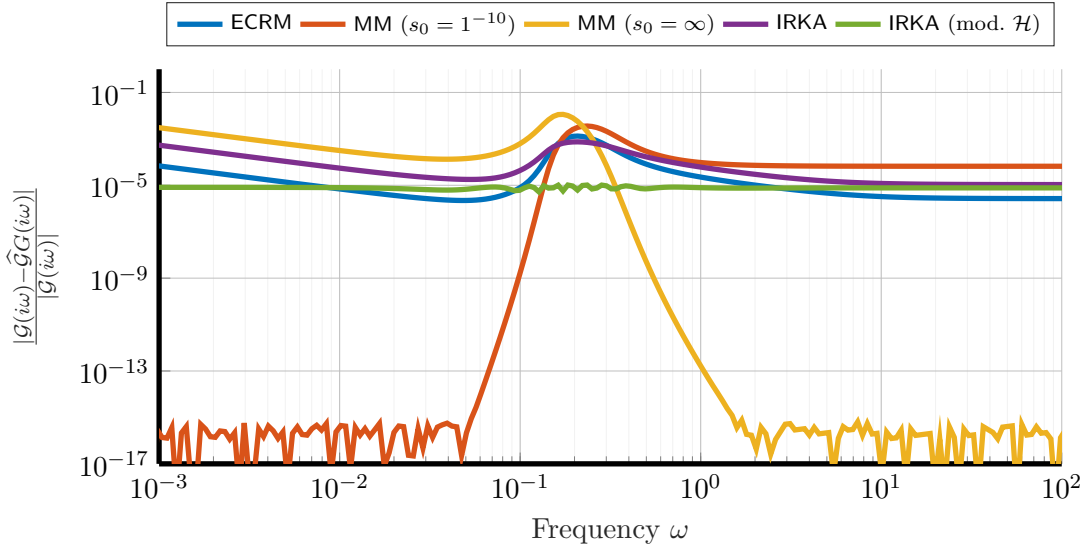}\\[-.8em]
    \caption{Relative errors of reduced transfer functions with $r=10$ plotted over the frequency for the mass-spring-damper system in the formulation~\eqref{MultiBodyPH}.}
    \label{fig:MBS:frequencyPlot}
\end{figure}
\begin{figure}[t]
    \centering
%
\definecolor{mycolor1}{rgb}{0.00000,0.44700,0.74100}%
\definecolor{mycolor2}{rgb}{0.85000,0.32500,0.09800}%
\definecolor{mycolor3}{rgb}{0.92900,0.69400,0.12500}%
\definecolor{mycolor4}{rgb}{0.49400,0.18400,0.55600}%
\definecolor{mycolor5}{rgb}{0.46600,0.67400,0.18800}%
\begin{tikzpicture}

\begin{axis}[%
width=.8\linewidth,
height=.35\linewidth,
at={(1.011in,0.642in)},
scale only axis,
grid=both,
grid style={line width=.1pt, draw=gray!10},
major grid style={line width=.2pt,draw=gray!50},
axis lines*=left,
axis line style={line width=2pt},
xmin=2,
xmax=10,
xlabel={Reduced system dimension $r$},
ymode=log,
ymin=5e-06,
ymax=4e-1,
yminorticks=true,
ylabel style={font=\color{white!15!black}},
ylabel={$\frac{\|\transferFunc-\transferFuncRed\|_{\mathcal{H}_2}}{\|\transferFunc\|_{\mathcal{H}_2}}$},
axis background/.style={fill=white},
legend style={at={(0.02,0.03)},
	anchor=south west, legend cell align=left, align=left, draw=white!15!black, font=\scriptsize},
]
\addplot [color=mycolor1, line width=\lineWidth]
  table[row sep=crcr]{%
2	0.174716715654157\\
4	0.0313718934554877\\
6	0.00966684259709893\\
8	0.00280201961514591\\
10	0.000688510021405809\\
};
\addlegendentry{\ECRM}

\addplot [color=mycolor2, line width=\lineWidth]
  table[row sep=crcr]{%
2	0.278537080405954\\
4	0.0748881769796802\\
6	0.0216644829966393\\
8	0.00639047311360533\\
10	0.00190529162445563\\
};
\addlegendentry{\MM ($s_0=1^{-10}$)}

\addplot [color=mycolor3, line width=\lineWidth]
  table[row sep=crcr]{%
2	0.180653480064026\\
4	0.057842513612914\\
6	0.0267559128564047\\
8	0.0121833740047787\\
10	0.00423101507710211\\
};
\addlegendentry{\MM ($s_0=\infty$)}

\addplot [color=mycolor4, line width=\lineWidth]
  table[row sep=crcr]{%
2	0.170268017063909\\
4	0.0286744814367013\\
6	0.00700529251579089\\
8	0.00205731690847185\\
10	0.000471613355849235\\
};
\addlegendentry{\IRKA}

\addplot [color=mycolor5, line width=\lineWidth]
  table[row sep=crcr]{%
2	0.0755181150923696\\
4	0.00565891471752187\\
6	0.000506705366750425\\
8	3.5069396992263e-05\\
10	8.33295829435264e-06\\
};
\addlegendentry{\IRKA (mod.\,$\hamiltonian$)}

\end{axis}
\end{tikzpicture}
    \caption{Relative $\mathcal{H}_2$ errors of the \ROMs for the mass-spring-damper system~\eqref{MultiBodyPH} for different reduced orders.}
    \label{fig:MBS:H2error}
\end{figure}
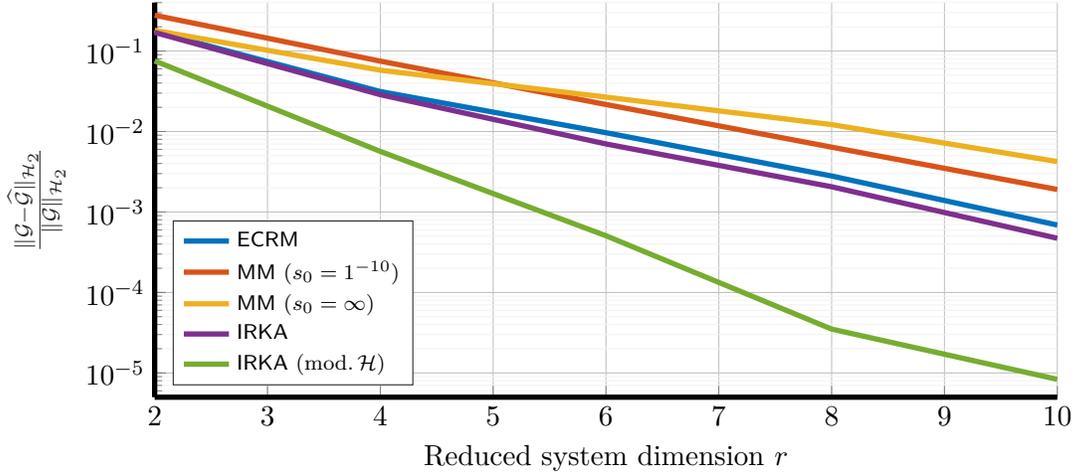
For the \ROMs with $r=10$, cf.~\Cref{fig:MBS:frequencyPlot}, we observe that \MM with shift $s_0=\infty$ and $s_0=10^{-10}$ yields outstanding approximations (errors of order $\mathcal{O}(10^{-15})$) for high and low frequencies, respectively. \ECRM and rational interpolation with interpolation points computed via~\IRKA as described in \Cref{sec:tangential}, in contrast, provides a uniformly good approximation quality of order $\mathcal{O}(10^{-3})$, independently of the chosen frequency. As already discussed before, the structure-preserving variant of \IRKA cannot satisfy all the necessary optimality conditions. To improve the situation, we also present the error for tangential interpolation via structure-preserving \IRKA with a modified Hamiltonian, denoted as \IRKA (mod.~$\hamiltonian$) in the figures, as described in~\eqref{eqn:strfree:semiexpl:optimizedHamiltonian}, which yields a significant improvement over the original formulation. This can also be seen in the relative $\mathcal{H}_2$-errors displayed in \Cref{fig:MBS:H2error}, where for $r\geq 6$, the \IRKA-reduced \pHDAE with modified Hamiltonian yields an approximation that is at least one order of magnitude better. 

\section{Temporal discretization and linear system solvers}
\label{sec:TimeDiscretization}
In this section, we discuss the time-discretization of \dHDAE and \pHDAE systems and the associated linear system solves. The numerical solution of initial and boundary value problems for general \DAE systems of the form \eqref{eqn:descriptorSystem:stateEq} has been an important research topic, see the monographs \cite{BreCP96,HaiLR89,HaiW96,LamMT13}. Following the approach discussed in \cite[Cha.~6]{KunM06}, we may assume that the \DAE is represented at every time-step in one of the strangeness-free forms \eqref{eqn:nonlinDAE:sfree} or \eqref{eqn:ltvDAE:sfree}, and then it has been shown that many implicit one-step and multi-step methods can be applied and have the same convergence order as for \ODE systems.  

\subsection{Time discretization for pHDAEs}
\label{sec:timedisc}
Most of the classical time-discretization techniques do not respect a given \dHDAE or \pHDAE structure in such a way that the time-discretized system still satisfies a power-balance equation or dissipation inequality. To analyze for which approaches this is guaranteed is an active research area that is proceeding in different directions. A very promising approach is the discretization of \pHDAE systems in such a way, that the time-discrete system satisfies a discrete version of the  power-balance equation, see \cite{CelH17,KotL18,KotML18} for \pHODE systems, and \cite{MehM19,Mor22} for \pHDAE systems. 
Another class of methods, particularly for non-dissipative \ODE methods, is based on energy preserving geometric integration, see \eg, \cite{CelMMOQW09,HaiLW02,QuiM08}.
The analysis and comparison of these techniques is a topic on its own, so we only briefly discuss the approach based on collocation methods for \pHDAEs of the form \eqref{eqn:pHDAE} in \cite{MehM19}.

Consider an autonomous \pHDAE of the form \eqref{eqn:pHDAE} with  a given input function
$\inpVar\colon\timeInt\to\R^m$, a consistent initial value $\state(t_0)=\state_0$, and suppose that we want to approximate the solution in a time interval $(t_0,t_{\mathrm{f}}=t_0+\tau)$ 
by a polynomial $\tilde{\state}(t)$
of degree at most $s$. For a collocation method, the polynomial $\tilde{\state}(t)$ is chosen such that $\tilde{\state}(t_i)=\state(t_i)$ satisfies the \pHDAE~\eqref{eqn:pHDAE} in the $s$ collocation points $t_i=t_0+\tau \gamma_i$ with $\gamma_i\in[0,1]$ for $i=1,\ldots,s$.

Let $\ell_i$ denote the $i$-th Lagrange interpolation polynomial with respect to the nodes $\gamma_1,\ldots,\gamma_s$, \ie
\begin{equation*}
  \ell_i(t) \vcentcolon= \prod_{\substack{j=1\\j\neq i}}^s\frac{t-\gamma_j}{\gamma_i-\gamma_j}.
\end{equation*}
Then for collocation methods one requires that
\begin{equation*}
    \dot{\tilde{\state}}(t_0+t\tau) = \sum_{i=1}^s\dot{\state}_i\ell_i(t), \qquad
    \tilde{\state}(t_0+t\tau) = \state_0 + \tau\sum_{j=1}^s\dot{\state}_j\int_0^t\ell_j(\sigma)\dsigma,
\end{equation*}
for the values $\dot{\state}_i\vcentcolon=\dot{\tilde{\state}}(t_i)$, and also
\begin{align*}
  \state_i &\vcentcolon=\tilde{\state}(t_i) = \state_0 + \tau\sum_{j=1}^s\alpha_{ij}\dot{\state}_j, &
  \state_f &\vcentcolon=\tilde{\state}(t_{\mathrm{f}}) = \state_0 + \tau\sum_{j=1}^s\beta_j\dot{\state}_j,
\end{align*}
where the coefficients $\alpha_{ij}\vcentcolon=\int_0^{\gamma_i}\ell_j(\sigma)\dsigma$ and $\beta_j\vcentcolon=\int_0^1\ell_j(\sigma) \dsigma$, $i,j=1\ldots s$ are the coefficients of the Butcher diagram of the associated Runge-Kutta method, see \cite{HaiLW02}.

To  preserve the \pHDAE  structure one uses  the Dirac structure $\mathcal{D}_\state$ as described in~\eqref{eqn:diracstructure} associated to the \pHDAE~\eqref{eqn:pHDAE} and defines the Dirac structure associated to the time-discretization 
\begin{align*}
  \mathcal{D}_{\state_i} &=
  \left\{(f^i,e^i)\in\mathcal V_{\state_i}\times\mathcal V_{\state_i}^* \; \left|\;
  f^i +
    \begin{bmatrix}
        \Gamma(\state_i) & I_{\ell+m} \\
        -I_{\ell+m} & 0
    \end{bmatrix} 
    e^i
    = 0
  \right.\right\},\ i=1,\ldots,s
\end{align*}
with $f^i=\begin{bmatrix}(f_\mathrm{s}^i)^T & \outVar_i^T &(f_\mathrm{d}^i)^T\end{bmatrix}^T$ and $e^i=\begin{bmatrix}(e_\mathrm{s}^i)^T,\inpVar_i^T,(e_\mathrm{d}^i)^T\end{bmatrix}^T$, where
\begin{align*}
  f_\mathrm{s}^i &= -E(\state_i)\dot{\state}_i, & 
  e_\mathrm{s}^i &= \eta(\state_i), \\
  e_\mathrm{d}^i &= -W(\state_i)f_\mathrm{d}^i, &
  \inpVar_i &= \inpVar(\state_i).
\end{align*}
In this way one obtains a system that is equivalent to the approach of applying the collocation method and then computing discrete inputs and outputs $\inpVar_i, \outVar_i$, for $i=1,\ldots,s$, and then introduces the associated collocation flows, efforts, input and output as
\begin{align*}
  \tilde{f}_\mathrm{s}(t_0+\tau t) &= \sum_{i=1}^s f_\mathrm{s}^i\ell_i(t), &
  \tilde{e}_\mathrm{s}(t_0+\tau t) &= \sum_{i=1}^s e_\mathrm{s}^i\ell_i(t), \\
  \tilde{f}_\mathrm{d}(t_0+\tau t) &= \sum_{i=1}^s f_\mathrm{d}^i\ell_i(t),  &
  \tilde{e}_\mathrm{d}(t_0+\tau t) &= \sum_{i=1}^s e_\mathrm{d}^i\ell_i(t), \\
  \tilde{\outVar}(t_0+\tau t) &= \sum_{i=1}^s \outVar_i\ell_i(t), \ &
  \tilde{\inpVar}(t_0+\tau t) &= \sum_{i=1}^s \inpVar_i\ell_i(t).
\end{align*}
In this way the discrete values are in $\mathcal D_{\tilde z}$ in all collocation points $t_i$ and the discretization preserves  the structure. To see this, let us consider the evolution of the Hamiltonian $\hamiltonian$ along the collocation polynomial $\tilde{\state}(t)$. For this, let $\tilde {\hamiltonian}(t)\vcentcolon=\hamiltonian(\tilde{\state}(t))$. Then we have
\begin{equation*}
  \tilde{\hamiltonian}(t)- \tilde{\hamiltonian}(t_0) = \int_{t_0}^t\dot{\tilde{\hamiltonian}}(\sigma)\dsigma,
\end{equation*}
and in the collocation points the power balance equation is satisfied, \ie
\begin{equation*}
    \dot{\tilde{\hamiltonian}}(t_i) = \tfrac{\partial}{\partial {\state_i}}\tilde{\hamiltonian}^T\dot{\state}_i = \eta (\state_i)^TE(\state_i)\dot{\state}_i = -\dualp{e_\mathrm{s}^i}{f_\mathrm{s}^i} = \dualp{e_\mathrm{d}^i}{f_\mathrm{d}^i} + \dualp{\outVar_i}{\inpVar_i},
\end{equation*}
for $i=1,\ldots,s$.
Applying the quadrature rule associated to the collocation method to evaluate the integral, we get
\begin{subequations}
  \begin{align*}
    \tilde{\hamiltonian}(t_{\mathrm{f}}) - \tilde{\hamiltonian}(t_0) &= \tau\sum_{j=1}^s\beta_j\dot{\tilde{\hamiltonian}}(t_j) + \mathcal{O}(\tau^{p+1}) 
    = -\tau\sum_{j=1}^s\beta_j\dualp{e_\mathrm{s}^j}{f_\mathrm{s}^j} + \mathcal{O}(\tau^{p+1})\\
    &= \tau\sum_{j=1}^s\beta_j\dualp{e_\mathrm{d}^j}{f_\mathrm{d}^j} + h\sum_{j=1}^s\beta_j\dualp{\outVar_j}{\inpVar_j} + \mathcal{O}(\tau^{p+1}),
  \end{align*}
\end{subequations}
where $p\in\N$ is the approximation order of the quadrature rule.
With the same argument we get
\begin{subequations}
  \begin{align}
    \label{eq:discDissInt}
    \tau\sum_{j=1}^s\beta_j\dualp{e_\mathrm{d}^j}{f_\mathrm{d}^j} &= \int_{t_0}^{t_{\mathrm{f}}}\dualp{\tilde{e}_\mathrm{d}(\sigma)}{\tilde{f}_\mathrm{d}(\sigma)} \dsigma + \mathcal O(\tau^{p+1}), \\
    \label{eq:discPortInt}
    \tau\sum_{j=1}^s\beta_j\dualp{\outVar_j}{\inpVar_j} &= \int_{t_0}^{t_{\mathrm{f}}}\dualp{\tilde{\outVar}(\sigma)}{\tilde{\inpVar}(\sigma)} \dsigma + \mathcal{O}(\tau^{p+1}),
  \end{align}
\end{subequations}
and hence
\begin{equation*}
  \tilde{\hamiltonian}(t_{\mathrm{f}}) - \tilde{\hamiltonian}(t_0) = \int_{t_0}^{t_{\mathrm{f}}} \Big(\dualp{\tilde{e}_\mathrm{d}(\sigma)}{\tilde{f}_\mathrm{d}(\sigma)}
  + \dualp{\tilde{\outVar}(\sigma)}{\tilde{\inpVar}(\sigma)}\Big) \dsigma
  + \mathcal{O}(\tau^{p+1}).
\end{equation*}
If $p\geq 2s-2$, then  \eqref{eq:discDissInt} and \eqref{eq:discPortInt} are even exact,
and, if $\beta_j\geq0$ for $j=1,\ldots,s$, as is the case for many collocation methods, we have that $\tau\sum_{j=1}^s\dualp{e_\mathrm{d}^j}{f_\mathrm{d}^j}\leq0$, thus the discrete system satisfies the same qualitative behaviour as the continuous problem. 

If the Hamiltonian $\hamiltonian$ is quadratic, \ie
\begin{equation*}
  \hamiltonian(\state) = \tfrac{1}{2}\state^TE\state + v^T\state + c,
\end{equation*}
for some $E=E^T\in\R^{\stateDim,\stateDim}$, $v\in\R^n$ and $c\in\R$, then 
we have that $\tilde{\hamiltonian} = \hamiltonian(\tilde{\state})$ and
$\dot{\tilde{\hamiltonian}}$ are polynomials of degree $2s$ and degree $2s-1$, respectively. 
Using the well-known fact, see \eg~\cite{HaiW96}, that the maximum degree of exactness for quadrature rules with $s$ nodes is $2s-1$, and that it is attained only 
with Gau{\ss}-Legendre collocation methods,
it follows that for these methods the integration of $\dot{\tilde{\hamiltonian}}$ is exact, \ie
\begin{equation*}
  \begin{split}
    \tilde{\hamiltonian}(t_{\mathrm{f}})-\tilde{\hamiltonian}(t_0)
    &= \tau \sum_{j=1}^s\beta_j\dualp{e_\mathrm{d}^j}{f_\mathrm{d}^j} + \tau\sum_{j=1}^s\beta_j\dualp{\outVar_j}{\inpVar_j} = \\
    &= \int_{t_0}^{t_{\mathrm{f}}}\left(\dualp{\tilde{e}_\mathrm{d}(s)}{\tilde{f}_\mathrm{d}(s)} + \dualp{\tilde{\outVar}(\sigma)}{\tilde{\inpVar}(\sigma)}\right)\dsigma.
  \end{split}
\end{equation*}
Since furthermore, for these methods
we have $\beta_j\geq0$, it follows that the dissipation term is always non-positive, and we obtain the discrete version of the dissipation inequality
\begin{equation*}
  \tilde{\hamiltonian}(t_{\mathrm{f}})-\tilde{\hamiltonian}(t_0) \leq \tau \sum_{j=1}^s\beta_j \dualp{\outVar_j}{\inpVar_j} = \int_{t_0}^{t_{\mathrm{f}}} \dualp{\tilde{\outVar}(\sigma)}{\tilde{\inpVar}(\sigma)}\dsigma,
\end{equation*}
hence for quadratic Hamiltonians the \pHDAE structure is fully preserved.

\begin{example} 
	\label{ex:circuit} 
	Consider the numerical solution of the strangeness-free \dHDAE system, given by the power network presented in \Cref{sec:power}. We use the artificial constants $E_G=0$, $L=2$, $C_1=0.01$, $C_2=0.02$, $R_\mathrm{L}=0.1$, $R_\mathrm{G}=6$ and $R_\mathrm{R}=3$, see \cite{MehM19}. For the time integration we choose $\tau=0.01$ and $t_{\mathrm{f}} = 1$ and use the implicit midpoint rule, \ie, the Gau{\ss}-Legendre collocation method with $s=1$ stages and order $p=2$. The numerical result for the consistent initial value
	\begin{displaymath}
	    \state_0 = \sqrt{\tfrac{10}{R_\mathrm{R}}}
	    \begin{bmatrix}
	        1&
	        -R_{\mathrm{R}}-R_{\mathrm{L}}&
	        -R_{\mathrm{R}}&
	        -\tfrac{R_{\mathrm{R}}-R_{\mathrm{L}}}{R_\mathrm{R}}&
	        -1
	    \end{bmatrix}^T
	\end{displaymath}
	is presented in \Cref{fig:powerNetworkSimulation}. One observes that after an initial phase the state converges to zero and the Hamiltonian decreases monotonically.
\end{example}
\begin{figure}[ht]
  \centering
  \newcommand{\figurewidth}{\linewidth}
  \newcommand{\figureheight}{.5\linewidth}
  \input{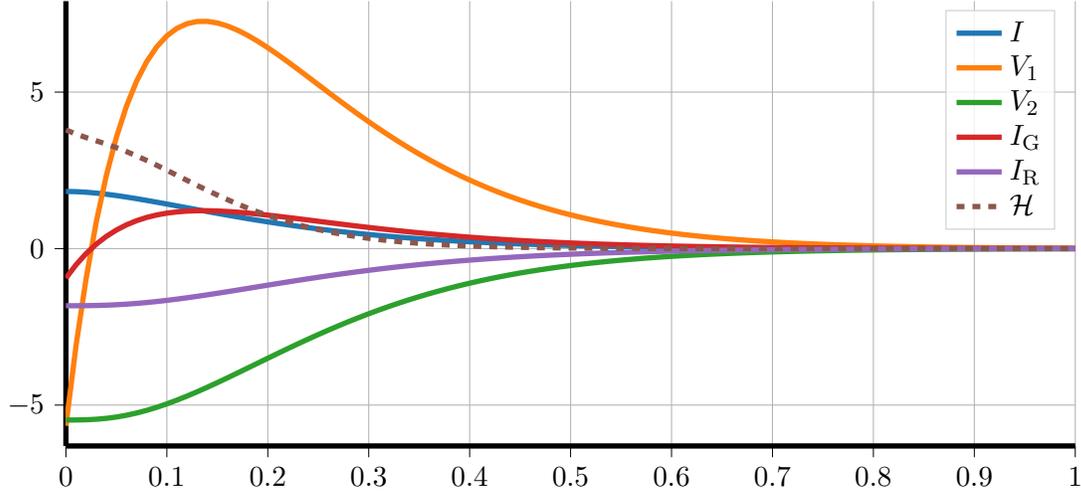}
  \caption{Evolution of state components (solid lines) and Hamiltonian (dashed line).}
  \label{fig:powerNetworkSimulation}
\end{figure}

\subsection{Linear System Solvers}\label{sec:linsystem}
In every step of an implicit time-discretization method  for a finite dimensional \pHDAE system, it is necessary to solve linear algebraic systems of the form
\begin{equation} \label{eqn:dHDAE-basic}
Ax=\Big(E+ \tau(R-J)\Big) x= b,
\end{equation}
where $\tau$ is the time-stepsize. The matrix $A$ can be  split in its symmetric and skew-symmetric part $A=H+S$, where  $H=\frac 12((E+\tau R)^T+(E+\tau R))\geq 0$ and $S=\frac 12((E+\tau R)^T-(E+\tau R))$. An analogous linear system structure occurs in discretized linear time-varying and nonlinear \pHDAE systems, in the construction of reduced models \cite{EggKLMM18},  and by multiplying some equations with $-1$ in optimization methods,see also \cite{GudLMS21_ppt,ManM19}.

The fact that  the symmetric part $H$ is positive semi-definite can be exploited in direct or iterative solution methods.
In the small and medium scale case we can make use of the following staircase form,  see~\cite{AchAM21}, which we present here in the real case.
\begin{lemma}\label{lem:sts}
Consider $A=H+S\in\mathbb R^{n,n}$, where $H=H^T\geq 0$ and $S=-S^T$. Then there exist a real orthogonal matrix $U\in\mathbb R^{n,n}$, and integers $n_1 \geq n_2 \geq \cdots \geq n_{r-1} \geq 0$ and $n_r \geq 0$, such that
\begin{equation}\label{eq:stcform}
U^T HU= \begin{bmatrix} H_{11} & 0 \\ 0 & 0\end{bmatrix},\
U^T SU =
\begin{bmatrix}
S_{11} & S_{12} &  & & 0 \\
S_{21} & S_{22} &  \ddots &  & 0 \\
&  \ddots & \ddots  & S_{r-2,r-1} &\vdots  \\
& &  S_{r-1,r-2}& S_{r-1,r-1} & 0\\
0 & \cdots & \cdots  & 0 & S_{r,r}
\end{bmatrix} ,
\end{equation}
where $H_{11}=H_{11}^T \in\R^{n_1,n_1}$ is positive definite, $S_{ii}=-S_{ii}^T\in\mathbb R^{n_i,n_i}$ for $i=1,\dots,r$,
and $S_{i,i-1}=-S_{i-1,i}^T=[\Sigma_{i,i-1}\;0]\in\R^{n_{i},n_{i-1}}$ with $\Sigma_{i,i-1}$
being nonsingular for $i=2,\dots,r-1$.
\end{lemma}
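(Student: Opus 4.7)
The plan is to construct $U$ as a product of finitely many orthogonal congruences, each extracting one more level of the staircase. The two key ingredients are the spectral decomposition of $H$ and the singular value decomposition of successive off-diagonal coupling blocks; the whole proof proceeds by induction on $n$.

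First I would use the spectral theorem on $H=H^T\geq 0$ to obtain an orthogonal $U_0$ with $U_0^T H U_0=\diag(H_{11}^{(0)},0)$, where $H_{11}^{(0)}\in\R^{n_1,n_1}$ is positive definite and $n_1=\rank H$. Under the same congruence $S$ becomes $\begin{smallbmatrix} S_{11}^{(0)} & -B^T \\ B & S_2^{(0)}\end{smallbmatrix}$, with $B\in\R^{(n-n_1),n_1}$ and the diagonal blocks skew-symmetric. Next, by the SVD of $B$, there exist orthogonal $V_B$ (of size $n_1$) and $U_B$ (of size $n-n_1$) with $U_B^T B V_B=\begin{smallbmatrix}\Sigma_{21} & 0 \\ 0 & 0\end{smallbmatrix}$, where $\Sigma_{21}$ is nonsingular of size $n_2\times n_2$, $n_2=\rank B\leq n_1$. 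Applying the orthogonal congruence $\diag(V_B,U_B)$ keeps $H$ in the form $\diag(V_B^T H_{11}^{(0)} V_B,0)$---still positive definite in the first block---while putting $S$ in the form
\[
\begin{bmatrix} S_{11}^{(1)} & -[\Sigma_{21}\;0]^T & 0 \\ [\Sigma_{21}\;0] & T_{22} & -T_{32}^T \\ 0 & T_{32} & T_{33}\end{bmatrix},
\]
with block sizes $(n_1,n_2,n-n_1-n_2)$. The first two rows and columns of the target staircase are now in place.

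The crucial observation for iterating is that any subsequent congruence of the form $\diag(I_{n_1},Q_1,Q_2)$ with $Q_1,Q_2$ orthogonal preserves both the positive definiteness of the $(1,1)$ block of $H$ and the form of the already-fixed coupling, since $Q_1^T[\Sigma_{21}\;0]=[Q_1^T\Sigma_{21}\;0]$ is again a nonsingular square block followed by a zero block. Hence I would apply the same SVD-compression scheme to the skew-symmetric submatrix $\begin{smallbmatrix} T_{22} & -T_{32}^T \\ T_{32} & T_{33}\end{smallbmatrix}$: compress $T_{32}$ to extract the next coupling $[\Sigma_{32}\;0]$ of size $n_3\times n_2$ with $n_3=\rank T_{32}\leq n_2$, revealing the third step of the staircase. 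Iterating produces the monotone sequence $n_1\geq n_2\geq n_3\geq \cdots$ automatically from the successive rank estimates.

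The process terminates at the first stage where the coupling block vanishes; the remaining skew-symmetric diagonal block, of some size $n_r\geq 0$, becomes the decoupled $S_{rr}$ (with $n_r=0$ allowed, in case the staircase exhausts all coordinates). The edge cases $H=0$ (take $r=1$, $U=I$, $S_{rr}=S$) and $H$ nonsingular (take $r=2$, $n_1=n$, $n_r=0$) fit the same framework. The main obstacle is bookkeeping rather than mathematical substance: one has to check at every recursive step that the chosen congruence is of the admissible block-diagonal form $\diag(I,Q_1,Q_2)$ so that no previously established staircase block is disturbed, and that the product of all these orthogonal factors assembles into a single real orthogonal $U$ yielding the form~\eqref{eq:stcform}.
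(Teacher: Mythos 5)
Your proposal is correct and follows essentially the same route as the paper's proof: a full rank (spectral) decomposition of $H$ to isolate the positive definite block, followed by iterated SVD compressions of the successive skew-symmetric coupling blocks, with the observation that each later congruence has block-diagonal form and therefore preserves the already-established staircase structure. The rank inequalities $n_1\geq n_2\geq\cdots$ and the treatment of the edge cases match the paper's argument as well.
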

\begin{proof} We present the proof for completeness, see also \cite{GudLMS21_ppt}.
The result is trivial when $H$ is nonsingular (and thus positive definite), since in this case it holds with $U=I$, $r=2$, $n_1=n$, and $n_2=0$.

Let $H=H^T\geq 0$ be singular. We consider a full rank decomposition of $H$ with a real orthogonal $U$ such that
\begin{equation*}
    U_1^T HU_1=\begin{bmatrix} \widehat{H}_{11} & 0\\ 0 & 0\end{bmatrix},
\end{equation*}
where we assume that $\widehat{H}_{11}=\widehat{H}_{11}^T\in\mathbb R^{n_1,n_1}$, with $0\leq n_1\leq n$, is void or positive definite.  Applying the same orthogonal congruence transformation to $S$ gives the matrix
\begin{equation}\label{eqn:sim-2}
\widehat{S}=U_1^T SU_1=\begin{bmatrix} \widehat{S}_{11} & \widehat{S}_{12}\\ \widehat{S}_{21} & \widehat{S}_{22}\end{bmatrix},
\end{equation}
where $\widehat{S}_{11}\in\R^{n_1,n_1}$, and $\widehat{S}_{21}=-\widehat{S}_{12}^T$, since $S$ is skew-symmetric. If $\widehat H_{11}$ is void or $\widehat{S}_{21}=0$, then the proof is complete. Otherwise, let
\[
\widehat{S}_{21}=W_2\begin{bmatrix}\Sigma_{21} & 0\\ 0 & 0\end{bmatrix}V_2^T
\]
be a singular value decomposition, where $\Sigma_{21}$ is nonsingular (and diagonal), and
$W_2\in\R^{n_1,n_1}$, $V_2\in\R^{n-n_1,n-n_1}$ are real orthogonal. We define $U_2\vcentcolon=\diag(V_2,W_2)\in\R^{n,n}$ and form
\[
U_2^T U_1^T HU_1U_2=\begin{bmatrix} V_2^T\widehat{H}_{11}V_2 & 0\\ 0 & 0\end{bmatrix},
\]
where $V_2^T\widehat{H}_{11}V_2\in\R^{n_1,n_1}$ is void or symmetric positive definite, and
\[
U_2^T U_1^T SU_1U_2=\begin{bmatrix} V_2^T\widehat{S}_{11}V_2 & V_2^T\widehat{S}_{12}W_2\\ W_2^T\widehat{S}_{21}V_2 & W_2^T\widehat{S}_{22}W_2\end{bmatrix}=
\begin{bmatrix}\widetilde{S}_{11} & \widetilde{S}_{12} & 0\\
\widetilde{S}_{21} & \widetilde{S}_{22} & \widetilde{S}_{23}\\
0 &\widetilde{S}_{32} & \widetilde{S}_{33}\end{bmatrix}
\]
with $\widetilde{S}_{21}=[\Sigma_{21}\;0]$. If $\widetilde{S}_{32}=0$ or is void then the proof is again complete. Otherwise we continue inductively and after finitely many steps we obtain a decomposition of the required form.
\end{proof}

When  the staircase form \eqref{eq:stcform} has been computed, then the transformed  linear system $(U^T AU)(U^T x)=U^T b$ can be solved using block Gaussian elimination.
\begin{lemma}\label{lem:schur}
Consider the matrix $U^T AU$ in \eqref{eq:stcform}.  Then there exist invertible lower and upper block bi-diagonal matrices
$L_s,R_s$, respectively,  such that
\[
L_s U^T A U R_s=\begin{bmatrix}
{H}_{11}+S_{11} & &  & &  \\
 & \mathcal{S}_1 &   &  &  \\
&  & \ddots  &  &  \\
& &  & \mathcal{S}_{r-2} & \\
 & &  & & \mathcal {S}_{r,r}
\end{bmatrix},
\]
with Schur complements $\mathcal{S}_1,\dots,\mathcal{S}_{r-2}$ that have  positive definite symmetric parts.
\end{lemma}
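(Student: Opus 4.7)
The plan is to perform a block-tridiagonal Gaussian elimination on the upper-left $(r-1)\times(r-1)$ principal block of $U^T A U$, while the decoupled last block $S_{r,r}$ contributes directly to the final diagonal without any elimination. The elimination is driven inductively by a single invariant: each successive pivot has positive definite symmetric part. The key algebraic tool is that if $N$ is a square matrix with $\mathrm{sym}(N) := \tfrac12(N+N^T) > 0$, then $N$ is invertible and
$$\mathrm{sym}(N^{-1}) \;=\; N^{-T}\,\mathrm{sym}(N)\,N^{-1} \;>\; 0,$$
so that $\mathrm{sym}(M^T N^{-1} M) = M^T\,\mathrm{sym}(N^{-1})\,M$ is positive definite whenever $M$ has full column rank.

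Setting $\mathcal{S}_0 \vcentcolon= H_{11}+S_{11}$, I will prove by induction on $k = 0, 1, \ldots, r-2$ that $\mathrm{sym}(\mathcal{S}_k) > 0$, which in particular yields invertibility. The base case is immediate from \Cref{lem:sts}: since $H_{11} > 0$ and $S_{11}$ is skew-symmetric, $\mathrm{sym}(\mathcal{S}_0) = H_{11} > 0$. For the inductive step, assuming $\mathrm{sym}(\mathcal{S}_{k-1}) > 0$, I use $\mathcal{S}_{k-1}$ as pivot and perform one block elimination step that annihilates $S_{k+1,k}$ from below and $S_{k,k+1}$ from the right. The remainder of the tridiagonal pattern is unaffected because the staircase form enforces $S_{k-1,k+1} = 0 = S_{k+1,k-1}$. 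The updated $(k+1,k+1)$ block is
$$\mathcal{S}_k \;=\; S_{k+1,k+1} - S_{k+1,k}\,\mathcal{S}_{k-1}^{-1}\, S_{k,k+1} \;=\; S_{k+1,k+1} + S_{k,k+1}^T\,\mathcal{S}_{k-1}^{-1}\,S_{k,k+1},$$
using $S_{k+1,k}^T = -S_{k,k+1}$. Since $S_{k+1,k+1}$ is skew-symmetric, its symmetric part vanishes, so
$$\mathrm{sym}(\mathcal{S}_k) \;=\; S_{k,k+1}^T\,\mathrm{sym}(\mathcal{S}_{k-1}^{-1})\,S_{k,k+1}.$$
The staircase structure gives $S_{k+1,k} = [\Sigma_{k+1,k}\;\;0]$ with $\Sigma_{k+1,k}$ nonsingular, hence $S_{k,k+1}$ has full column rank $n_{k+1}$, and the auxiliary tool yields $\mathrm{sym}(\mathcal{S}_k) > 0$.

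With the invariant in hand, each pivot is invertible and the elementary unit-lower and unit-upper triangular factors that realize the $r-2$ annihilation steps multiply to give invertible $L_s$ and $R_s$ with $L_s \,U^T A U\, R_s = \diag(H_{11}+S_{11}, \mathcal{S}_1, \ldots, \mathcal{S}_{r-2}, S_{r,r})$. The main subtlety I expect concerns the claimed block \emph{bi-diagonal} structure of $L_s$ and $R_s$: naively composing the successive single-step factors can introduce fill-in beyond the first sub- and super-diagonal. Justifying the bi-diagonal form requires a careful block-by-block inspection exploiting that, at the moment step $k$ is performed, the current matrix has vanishing blocks in positions $(j,i)$ with $|i-j|>1$, so that the factor of step $k$ interacts only with the already-updated block row and column $k$ and does not couple to rows or columns $< k$. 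The positive-definite-symmetric-part conclusion for the Schur complements is the essential content of the lemma and is what the subsequent iterative-solver analysis relies on.
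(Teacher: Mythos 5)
Your core argument is exactly the one the paper relies on (the paper itself only sketches it and defers to the cited reference): block Gaussian elimination on the tridiagonal part, with the induction hypothesis that each pivot $\mathcal{S}_{k-1}$ has positive definite symmetric part, propagated via $\mathrm{sym}(\mathcal{S}_k)=S_{k,k+1}^T\,\mathrm{sym}(\mathcal{S}_{k-1}^{-1})\,S_{k,k+1}$, the identity $\mathrm{sym}(N^{-1})=N^{-T}\mathrm{sym}(N)N^{-1}$, and the full column rank of $S_{k,k+1}=-S_{k+1,k}^T$ coming from the staircase form. That part is complete and correct, and it is the essential content of the lemma.

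The one point you flag but do not resolve — the block bi-diagonal form of $L_s$ and $R_s$ — would in fact fail along the route you suggest. Writing $L_s=L_{r-2}\cdots L_1$ with $L_k=I+E_{k+1,k}$ the elementary factor of step $k$, the product $L_{k+1}L_k$ contains the term $E_{k+2,k+1}E_{k+1,k}$, i.e.\ a generically nonzero fill-in block in position $(k+2,k)$, so the composed $L_s$ is full block lower triangular, not bi-diagonal; your claim that step $k$ ``does not couple to rows or columns $<k$'' is not true of the \emph{product} of the factors. Indeed one can check (already for a $3\times 3$ scalar tridiagonal matrix with nonzero sub-diagonal) that no bi-diagonal $L_s,R_s$ with $L_s T R_s$ equal to the stated block diagonal matrix exist: if $P\mathcal{D}Q=\mathcal{D}$ with $P$ lower and $Q$ upper block triangular, both must be block diagonal, which forces $L_s$ to be a block diagonal multiple of $\mathcal{L}^{-1}$ from the block $LDU$ factorization, hence full lower triangular. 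The correct statement is either that $L_s,R_s$ are invertible block \emph{triangular} matrices (products of elementary bi-diagonal factors), or that the factorization be written as $U^TAU=\mathcal{L}\,\mathcal{D}\,\mathcal{U}$ with unit block bi-diagonal $\mathcal{L},\mathcal{U}$. Since the paper's own proof is a one-line deferral to the reference and does not address this either, the discrepancy lies in the phrasing of the lemma rather than in your mathematics; but you should not claim the bi-diagonality can be salvaged by the block-by-block inspection you describe.
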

\begin{proof} A constructive proof via a sequence of block Gaussian elimination steps is given in \cite{GudLMS21_ppt}.
It relies on the fact that in every step (except the last one) the Schur complement has a symmetric part which is positive definite.
\end{proof}

The properties of the linear system resulting from the \dHDAE structure also have an immediate advantage in the context of iterative methods. If the symmetric part is positive definite then in~\cite{Wid78} it is suggested to solve, instead of $Ax=b$, the equivalent system
\begin{equation}\label{eqn:IpKsystem}
(I+K)x=\hat{b},\quad \mbox{where}\quad K=H^{-1}S,\quad \hat{b}=H^{-1}b.
\end{equation}
This transformation is a left preconditioning of the original system with its positive definite symmetric part, which defines the $H$-inner product 
\begin{displaymath}
    \langle x,y\rangle_H=y^T Hx.
\end{displaymath} 
This implies that one can construct optimal
Krylov subspace methods based on three-term recurrences for the system \eqref{eqn:IpKsystem}, see~\cite{Rap78,Wid78}.
If the symmetric part is semi-definite but singular, then one has to identify the nullspace,  which is actually easy in many applications.
The advantages of this approach and the fact that one obtains a rigorous convergence analysis and optimality conditions is discussed in detail in \cite{GudLMS21_ppt} and illustrated with several numerical examples including those discussed in \Cref{sec:examples}.

\begin{example}\label{ex:brake}
The finite element model of the disk brake discussed in \Cref{sec:brake} leads to a second order \DAE of the form
\[
M\ddot{p}+D\dot{p}+Kp=f,
\]
with $p$ the coefficient vector of displacements of the structure, with frequency dependent mass matrix $M=M^T>0$, damping matrix  $D=D^T\geq 0$, and stiffness matrix $K=K^T>0$, see~\cite{GraMQSW16} evaluated for $\omega_{\mathrm{ref}}=500$. For $f=0$, after  a first-order reformulation and discretization with the implicit mid-point rule one obtains a linear system with $\stateDim=9338$ and a positive definite symmetric part. 

As shown in \Cref{table:brake}, a preconditioned GMRES methods (preconditioned with the inverse of the symmetric part), even though using fewer iterations, takes a significantly longer time than the method in \cite{Wid78}, due to the full recurrences in the algorithm compared to three-term recurrences in Widlund's method. This effect becomes even more pronounced for smaller stepsizes. 
\begin{table}
    \centering
    \begin{tabular}{l@{\hspace{4em}}rr@{\hspace{4em}}rr}
        \toprule
         & \multicolumn{2}{c}{$\tau=0.001$} & \multicolumn{2}{c}{$\tau=0.0001$} \\
         method & time [s] & \# of iter. & time [s] & \# of iter\\\midrule
         Widlund	&  3.54 & 85 & 0.72 & 16  \\
		GMRES	& 31.10& 65&  10.51 &  13\\\bottomrule
    \end{tabular}
	\caption{Brake squeal problem. Run times and iteration numbers.} \label{table:brake}
\end{table}
\end{example}


\section{Control methods for pHDAE systems}
\label{sec:control}
One of the main advantages to introduce \pH descriptor systems is its direct base in control theory. In this section we therefore discuss classical control
applications for \pHDAE systems.

Consider a linear \pHDAE system as in \Cref{def:pHDAE:linear} and a linear output feedback $\inpVar=F(t)\outVar$. Then, we can write the system in behavior form
by introducing
$\xi=\begin{bmatrix} \state^T & \inpVar^T & \outVar^T \end{bmatrix}^T$
and new block matrices
\begin{align*}
    \mathcal{E} &\vcentcolon= \diag (E,0,0), &
    \mathcal{Q} &\vcentcolon= \diag (Q, I , I), &
    \mathcal{K} &\vcentcolon= \diag (K, 0 ,0),
\end{align*}
as well as
\begin{equation}
    \label{eqn:feebdack:closedLoopJR}
    \mathcal{J} = \begin{bmatrix} J & G & 0 \\ -G^T & N & I \\ 0 & -I & \tfrac{1}{2}(F-F^T) \end{bmatrix},\quad
    \mathcal{R} = \begin{bmatrix} R & P & 0 \\ P^T & S &0 \\ 0 & 0 & -\tfrac{1}{2}(F+F^T) \end{bmatrix},
\end{equation}
which gives the closed-loop descriptor system 
\begin{equation}
    \label{eq:ofdhdae}
    \mathcal{E}\dot{\xi} = ((\mathcal{J} -\mathcal{R}) \mathcal{Q} - \mathcal{E} \mathcal{K})\xi.
\end{equation}
This is a \dHDAE if and only if $-\tfrac{1}{2}(F+F^T)\geq 0$ pointwise.

Analogously, for the general nonlinear \pHDAE structure in \Cref{def:pHDAE}, we introduce
\begin{align*}
    \xi &\vcentcolon= \begin{bmatrix} \state^T &\inpVar^T & \outVar^T \end{bmatrix}^T,&
    \tilde{\eta} &\vcentcolon= \begin{bmatrix} \eta^T & \inpVar^T& \outVar^T \end{bmatrix}^T, &
    \tilde{r} &\vcentcolon= \begin{bmatrix} r^T & 0 & 0\end{bmatrix}^T,
\end{align*}
the Hamiltonian $\tilde{\hamiltonian}(t,\xi)\vcentcolon= \hamiltonian(t,\state)$ and matrix functions $\mathcal{E} \vcentcolon= \diag(E,0,0)$, and $\mathcal{J}$ and $\mathcal{R}$ as defined in~\eqref{eqn:feebdack:closedLoopJR}.
This gives the system in behavior form
\begin{equation}
    \label{eq:ofdhdaegen}
    \mathcal{E} \dot{\xi} + \tilde{r} = (\mathcal{J} -\mathcal{R})\tilde{\eta}(\xi)
\end{equation}
satisfying $\tfrac{\partial}{\partial \xi}\tilde {\mathcal H}=\mathcal E^T\tilde \eta$ and $\tfrac{\partial}{\partial t}\tilde {\mathcal H}=\tilde \eta^T\tilde r$ pointwise,
which is a \pHDAE if and only if  $\tfrac{1}{2}(F+F^T)\leq 0$ pointwise.

In this way, using a parameterization via the output feedback matrix (function) $F$, we can introduce control methods via the \dHDAE systems~\eqref{eq:ofdhdae} or~\eqref{eq:ofdhdaegen}, respectively.

\subsection{Robust stabilization/passivation} 
\label{sec:stabil}
We have seen in \Cref{sec:generalDAE} that for \dHDAE systems, stability and passivity can be easily characterized, while for asymptotic stability or strict passivity, in general, we have only sufficient conditions. 

Considering a linear \pHDAE system of the form \eqref{eqn:pHDAE:linear} with $Q=I$ that has no feedthrough term (cf.~\Cref{sec:nofeed}), and using a linear output feedback $\inpVar=F\outVar+w$  we obtain the closed-loop system
\begin{align*}
    E \dot{\state} +  E K \state &=(J-R +G FG^T)\state + Gw,\\
    \outVar &= G^T \state.
 \end{align*}
If $F=F_\mathrm{H}+F_\mathrm{S}$ is chosen to have a negative semi-definite symmetric part $F_H$, then the new dissipation coefficient becomes $\hat{R}=R- G F_\mathrm{H} G^T$ and the skew-symmetric part becomes $J-G F_\mathrm{S} G^T$. If $\hat{R}$ is positive definite then the system is asymptotically stable. The same approach can also be applied in the general nonlinear case, if an output feedback leads to a positive definite $R(t,\state)$ in \eqref{eqn:pHDAE}. We only have this sufficient condition to guarantee asymptotic stability, see \Cref{cor:stab}. At current, the problem to find a necessary and sufficient condition that guarantees that a \pHDAE system can be made asymptotically stable by output feedback is under investigation. 

The situation is much better understood in the case of \LTI \pHDAE systems, where we have a characterization via the hypocoercivity index being finite in~\Cref{cor:asyhyp}. Then in view of \Cref{thm:singind}~(ii), we have different options to obtain asymptotic stability. 

Again, if we can achieve $\hat{R}=R- G F_\mathrm{H} G^T>0$, then we have asymptotic stability. But we can also use the skew part $F_\mathrm{S}$ to change the eigenvectors of the pair $(E,J+GF_\mathrm{S} G^T)$ in such a way that no eigenvector is in the kernel of $R-G F_\mathrm{H} G^T$, or we can use a combination of both. It is clear from the classical theory of unstructured \DAE systems, see \Cref{sec:stabilization}, that if the system is strongly stabilizable and strongly detectable, then such an output feedback exists and can be computed by ignoring the structure and solving an optimal control problem.

In constructing an output feedback via optimal control methods, we have the freedom to choose the cost functional~\eqref{stabcostfunctional} and, 
furthermore, we also have some freedom in choosing the \pHDAE representation, which is not unique (see, for instance, the discussion at the end of \Cref{sec:tangential}). This flexibility can be used to make the resulting closed-loop system maximally robust against perturbations, which for general \LTI control systems recently has been an important research topic, see \eg, \cite{MehX00} and the references therein. For \pHODE systems, this topic has recently been of great importance by introducing measures like the distance to instability for the robustness of \pHODE representations, see \cite{AliMM20},  as well as their optimization \cite{GilMS18,GilS17}.
For \pHDAE systems, this is currently an active research topic.

The analogous question arises in the context of passivity. We have that a regular strangeness-free \pHDAE system is passive, but in general not necessarily strictly passive, since  $W$ in \eqref{Wdef} is only assumed positive semi-definite. To obtain strict passivity, it is necessary to consider the system in the formulation with feedthrough term and similarly we can analyze how to obtain robust representations as is done for \pHODE systems in \cite{BanMNV20,BeaMV19,MehV20}. For \pHDAE systems, this is again an active research topic.

\subsection{Optimal control}
\label{sec:OCPH}
Due to the many interesting properties of \pHDAE systems, one may investigate whether some extra advantages can be obtained also in the context of optimal control problems. 
Clearly one can just use the general results in \Cref{sec:ocpdae} and obtain the same optimality conditions. However, it has been observed in two recent papers, see \cite{PhiSFMW21,FauMPSW21}, that some surprising results arise for optimal control problems with \LTI \pHODE and \pHDAE, when as very special cost functional the supplied energy is minimized, \ie
\begin{equation}
    \label{energysupply}
    \tfrac{1}{2} \state(t_{\mathrm{f}})^T M \state(t_{\mathrm{f}}) + \tfrac{1}{2} \int^{t_{\mathrm{f}}}_{t_0} 2\outVar^T\inpVar\dt = \tfrac{1}{2} \state(t_{\mathrm{f}})^T M \state(t_{\mathrm{f}}) + \tfrac{1}{2} \int^{t_{\mathrm{f}}}_{t_0} 2\state^T G^T \inpVar\dt,
\end{equation}
subject to the constraint
\begin{equation}
    \label{eqn:pHconstraint}
    E\dot{\state} =  (J-R)\state+G\inpVar,\qquad E^\dagger E\state(t_0)  = \state_0
\end{equation}
and the output equation is $\outVar=G^T \state$. Note that in the cost functional \eqref{qcostfunct} we then have $W_{\mathrm{\state}}=0$, $W_{\mathrm{\inpVar}}=0$, and $S=G^T$.
Since we are in the \LTI case, we can insert the data into the optimality system~\eqref{optbvp} and obtain the following result.
\begin{corollary}
    \label{cor:neccon}
    Consider the optimal control problem to minimize \eqref{energysupply} subject to the constraint \eqref{eqn:pHconstraint}. Assume that the pair $(E,J-R)$ is regular and of (Kronecker) index at most one (as a free system with $u\equiv0$) and that $M$ is in $\cokernel E$. If $(\state,\inpVar) \in \mathbb{Z} \times \mathbb{U}$ is a solution to this optimal control problem, then there exists a Lagrange multiplier $\lambda \in \mathcal{C}^1_{E^\dagger E}(\timeInt,\R^{\stateDim})$, such that $(\state,\lambda,\inpVar)$ satisfy the boundary value problem
    \begin{equation}
        \label{pHneccond}
        \begin{bmatrix} 0 & E & 0 \\ -E^T & 0 & 0 \\ 0 & 0 & 0 \end{bmatrix} \begin{bmatrix} \dot{\lambda}\\ \dot{\state} \\ \dot{\inpVar} \end{bmatrix} = \begin{bmatrix} 0 & J-R & G\\
           (J-R)^T & 0 & G\\
           G^T& G^T& 0\end{bmatrix} \begin{bmatrix} \lambda \\ \state \\ \inpVar \end{bmatrix},\ 
    \end{equation}
    with boundary conditions
    \begin{equation*} 
        E^\dagger E\state(t_0) = \state_0,\qquad E E^\dagger \lambda(t_{\mathrm{f}}) = -(E^\dagger)^T M \state(t_{\mathrm{f}}).
    \end{equation*}
\end{corollary}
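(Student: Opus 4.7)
The plan is to derive~\eqref{pHneccond} as a direct specialization of \Cref{th:optsys}, which already supplies the necessary optimality conditions for quadratic-cost \DAE-constrained problems.

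First, I would match the cost functional~\eqref{energysupply} and the \pHDAE constraint~\eqref{eqn:pHconstraint} with the general templates \eqref{qcostfunct} and \eqref{lindae}. Writing the supply rate as $\outVar^T\inpVar=\state^T G\inpVar$, one reads off the data $A=J-R$, $B=G$, $f=0$, $W_{\state}=0$, $W_{\inpVar}=0$, and cross term $S=G$; since the system is \LTI, $\dot E=0$, so every occurrence of $E\,\ddt(E^\dagger E)$ or $EE^\dagger\dot E$ in~\eqref{optbvp} vanishes. The terminal penalty $M$ is unchanged, and the hypothesis $\range M\subseteq\cokernel E$ is exactly the admissibility condition on $M$ in \Cref{th:optsys}.

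Next, I would verify the hypotheses of \Cref{th:optsys}. Regularity and (Kronecker) index at most one of the pencil $\lambda E-(J-R)$ imply, via \Cref{rem:indexrel}, that the free state equation is strangeness-free; because the input enters linearly through the constant coefficient $G$ and no derivatives of $\inpVar$ appear, the associated behavior formulation $[E\ 0]\,\dot\xi=[J-R\ G]\,\xi$ inherits strangeness-freeness, so \Cref{th:optsys} applies. Substitution into~\eqref{optbvp} then proceeds row by row: \eqref{optbvp:a} collapses to $E\dot\state=(J-R)\state+G\inpVar$ with $(E^\dagger E\state)(t_0)=\state_0$; \eqref{optbvp:b} becomes $E^T\ddt(EE^\dagger\lambda)=G\inpVar-(J-R)^T\lambda$ with $(EE^\dagger\lambda)(t_{\mathrm f})=-(E^\dagger)^T M\state(t_{\mathrm f})$; and \eqref{optbvp:c} yields $0=G^T\state-G^T\lambda$. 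A final reflection $\lambda\mapsto-\lambda$, which is permitted by the Lagrangian construction underlying \Cref{thnl}, flips the signs of $(J-R)^T\lambda$ and $G^T\lambda$ so that the three rows assemble into exactly the block system displayed in~\eqref{pHneccond}, with the boundary data inherited without change.

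The main obstacle I anticipate is twofold: (i) justifying that strangeness-freeness as a free system transfers to the behavior formulation is where the index-one hypothesis is really used, and a clean way to see it is to put $E$ into a rank-revealing form as in~\eqref{eqn:symmetricRankRevealingDecomposition}, observe that the lower-right block of $J-R$ is then invertible, and argue that appending the columns of $G$ preserves this invertibility; and (ii) keeping track of the sign convention on the Lagrange multiplier so that the assembled block matrix matches~\eqref{pHneccond} precisely rather than only up to a sign flip in the second and third rows. Both are routine but easy to mismanage.
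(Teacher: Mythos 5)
Your proposal is correct and follows exactly the paper's route: the corollary is obtained by direct substitution of $A=J-R$, $B=G$, $f=0$, $W_{\state}=W_{\inpVar}=0$, $S=G$ into the optimality system \eqref{optbvp} of \Cref{th:optsys}, using time-invariance so that all $\ddt(E^\dagger E)$ and $\dot E$ terms vanish, and the index-one assumption to guarantee strangeness-freeness of the behavior formulation. The one caveat is the sign bookkeeping you already flag: under the reflection $\lambda\mapsto-\lambda$ needed to reach the symmetric block form of \eqref{pHneccond}, the terminal condition $(EE^\dagger\lambda)(t_{\mathrm f})=-(E^\dagger)^TM\state(t_{\mathrm f})$ should also change sign, so it is not literally ``inherited without change''---a discrepancy that is present in the paper's own statement as well.
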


Use a full rank decomposition $E=U_E \begin{smallbmatrix} E_{11} & 0 \\ 0 & 0 \end{smallbmatrix} U_E^T$ with $E_{11}=E_{11}^T>0$ and transform the other coefficients accordingly as
\begin{gather*}
    U^T_E (J-R)U_E = \begin{bmatrix}
        L_{11} & L_{12}\\ L_{21} & L_{22} 
    \end{bmatrix},\quad
    U^T_EMU_E = \begin{bmatrix} M_{11} & M_{12} \\ M_{21} & M_{22} \end{bmatrix},\\ 
    U^T_E G = \begin{bmatrix} G_1\\ G_2 \end{bmatrix},\qquad
    \begin{bmatrix} \hat{\state}_1 \\ \hat{\state}_2 \end{bmatrix} = U_E^T\hat{\state},\qquad
    \begin{bmatrix} \hat \lambda_1 \\ \hat \lambda_2 \end{bmatrix} = U_E^T\hat \lambda,\qquad
    \begin{bmatrix} \hat \state_{1,0} \\ \hat \state_{2,0} \end{bmatrix} =U^T_E\state_0.
\end{gather*}
After some permutations we can express \eqref{pHneccond} in the form
\begin{equation} 
    \label{eqn:BVPtrf}
    \begin{bmatrix} 
        0 & E_{11} & 0 & 0 & 0\\
        -E_{11}^T & 0 & 0 & 0 & 0 \\
        0 & 0 & 0 & 0 & 0 \\
        0 & 0 & 0 & 0 & 0 \\
        0 & 0 & 0 & 0 & 0 
    \end{bmatrix} \begin{bmatrix} \dot{\lambda}_1\\ \dot{\hat \state}_1 \\ \dot{\hat \lambda}_2 \\ \dot{\hat \state}_2 \\ \dot{\inpVar} 
\end{bmatrix} = 
\begin{bmatrix} 0 & L_{11} & 0 &L_{12} & G_1\\
L_{11}^T & 0 & L_{21}^T & 0 & G_1 \\
0 & L_{21} & 0 & L_{22} & G_2 \\
L_{12}^T & 0 & L_{22}^T & 0 & G_2 \\
G_1^T & G_1^T & G_2^T & G_2^T & 0 \end{bmatrix}
\begin{bmatrix} \hat \lambda_1\\ \hat \state_1 \\  \hat \lambda_2 \\  \hat \state_2 \\ \inpVar \end{bmatrix},
\end{equation}
with boundary conditions $\hat \state_1(t_0)=\hat \state_{1,0}$ and
$\hat \lambda_1(t_{\mathrm{f}})=-E_{11}^{-1}M_{11} \hat \state_1(t_{\mathrm{f}})$,
and consistency condition $\hat \state_{2,0}=0$. Here we have used the condition that $M=M^T$ is in $\cokernel E$, which implies that $M_{12}=0$, $M_{22}=0$.

For the structured matrix pencil associated with 
\eqref{eqn:BVPtrf} there exists a condensed form under real orthogonal congruence transformations, which has been introduced in \cite{ByeMX07}, and from which the spectral properties, the (Kronecker) index and the regularity can be read off. If this pencil is regular, then we directly obtain that the system has (Kronecker) index one if and only if
\begin{displaymath}
\hat{W}_\inpVar = \begin{bmatrix} 0 & J_{22} -R_{22} & G_2 \\ -J_{22}-R_{22} & 0 & G_2 \\ G_2^T & G_2^T & 0 \end{bmatrix}
\end{displaymath}
is invertible, see \Cref{sec:timevar}. To simplify the algebraic equations, we can perform a congruence transformation with the orthogonal matrix  
\begin{displaymath}
    U_S = \begin{bmatrix} \tfrac{1}{\sqrt{2}}I & -\tfrac{1}{\sqrt{2}}I & 0 \\ \tfrac{1}{\sqrt{2}}I &  \tfrac{1}{\sqrt{2}}I  &0 \\ 0 & 0 &  I\end{bmatrix},
\end{displaymath}
\ie, we multiply the system with $U_S^T$ from the left 
and  set
\begin{equation*}
    \begin{bmatrix} \tilde \lambda_2 \\ \tilde  \state_2 \\ \inpVar \end{bmatrix} = \begin{bmatrix} \frac1{\sqrt{2}}(\hat \state_2+\hat \lambda_2) \\ \frac1{\sqrt{2}}(\hat \state_2-\hat \lambda_2) \\ \inpVar \end{bmatrix} = U_S^T \begin{bmatrix} \hat \lambda_2 \\ \hat \state_2 \\ \inpVar \end{bmatrix}. 
\end{equation*}
Then we get that 
\begin{equation*}
U_S^T \hat W_{\mathrm{\inpVar}} U_S = \begin{bmatrix}
-R_{22} & J_{22} & G_2 \\
-J_{22} & R_{22} & 0\\
G_2^T & 0 & 0 \end{bmatrix}
\end{equation*}
Clearly, for this to be invertible, we need that $G_2$ has full column rank, which implies that $u$ is fixed as a linear combination of $\hat \lambda_2$ and $\hat \state_2$. Considering the application examples in \Cref{sec:examples}, where we typically have $G_2=0$, we cannot expect the \DAE associated with the optimality system to be of (Kronecker) index one. Thus we are in the case of a singular control problem, see \eg, \cite{BryH18}.

The case that $\hat W_{\mathrm{\inpVar}}$ is not invertible
has been analyzed in \cite{SchPFWM21} for the \pHODE case and in \cite{FauMPSW21} for the \pHDAE case, where  $\mu E-(J-R)$ is regular and of (Kronecker) index at most one. In these papers, it is assumed that the image of the matrix $G$ does not intersect with the kernel of $R$ and that even though this is a singular control problem, the optimal solution is still a feedback control that can be obtained via the solution of a Riccati equation. We refer to \cite{Aro18} for the analysis of such problems. The extension of this analysis to the \LTV \pHDAE case and the case that this assumption is not valid is currently under investigation.

\section{Summary and open problems}
\label{sec:summary}

This survey paper discusses the model class of port-Hamiltonian descriptor systems (differential-algebraic systems) for numerical simulation and control. We have demonstrated that this model class has many nice properties:  
\begin{itemize}
	\item It allows for automated modeling in a modularized and network-based fashion.
	\item It allows the coupling of mathematical models across different scales and physical domains.
	\item It incorporates the properties of the real (open or closed) physical system.
	\item It has nice algebraic, geometric, and analytical properties and allows analysis concerning existence, uniqueness, robustness, stability, and passivity.
	\item It is invariant under local variable (coordinate) transformations which leads to local canonical and condensed forms.
	\item Furthermore, it allows for structure-preserving (space-time) discretization and model reduction methods as well as fast solvers for the resulting linear and nonlinear systems of equations.
\end{itemize}

Despite the many promising results already available for \pHDAEs, there are still many open problems that are either under investigation or pose challenging problems to be tackled in the future. In the following, we present an incomplete list to stimulate further research.

Many of the control theoretical concepts presented within this paper rely on instantaneous feedback. While this is a convenient theoretical approach, it is not always possible to realize in applications, where the states or outputs first have to be measured, the control action computed, and then fed back to the system, thus resulting in a necessary intrinsic time delay; see also \Cref{rem:delayedFeedback}. Although some first results for \pHODEs with delays are available in the literature, see for instance \cite{SchFOR16}, a general model class for time-delayed \pHDAEs is not yet available and the results presented in this paper have to be extended to the time-delay case.

In terms of \MOR, the impact of the Hamiltonian on the approximation quality, see \cite{BreMS20,BreU21}, needs to be further investigated, in particular with an emphasis on nonlinear \MOR methods. 
Structure-preserving balancing methods are still under investigation; see \cite{BreS21} for some first results for \LTI systems. Another open problem in structure-preserving \MOR is the construction of optimal projection spaces, in the sense that they minimize the Kolmogorov $n$-widths, respectively the Hankel singular values, cf.~\cite{UngG19}. First gradient-based optimization procedures are discussed for \pHODEs in \cite{MosL20,SatS18,SchV20}. If the $n$-widths do not decay rapidly, then one cannot expect accurate \ROMs with a small dimension and current efforts in the reduction of transport-dominated phenomena, see for instance~\cite{BlaSU20} and the references therein, need to be adapted to the \pH framework.

A closely related topic to \MOR is identifying a \pHDAE realization from measurements. In view of large data sets, modern artificial intelligence approaches, and automatized machine learning methods used within the digital twin paradigm, this is an important topic requiring further research, with only a few available results for \LTI systems, see \Cref{rem:systemIdentification}.

It is an open problem to derive necessary and sufficient conditions under which output feedback can make a general nonlinear \pHDAE system asymptotically stable. A natural research question in this context would be to extend the concept of hypocoercivity to the \LTV and nonlinear case.

The characterization of distance measures for general \dHDAE systems, like the distance from an asymptotically stable \dHDAE system to a system which is only stable or the distance of a strictly passive \pHDAE system to the nearest system that is only passive, is an important research topic because it is a requirement for the design of real-world systems, as \eg, in \Cref{sec:brake}. Even if such a characterization is available, then one needs computational methods to compute these distances. In the large-scale setting, this is a challenging issue that can only be achieved with a clever combination with model reduction techniques; see \cite{AliMM20} for a first attempt. 

Another important research problem is the exact characterization of the relationship between passivity, positive realness, and the port-Hamiltonian structure for \pHDAEs, see  \cite{BeaMV19,BeaMX15_ppt} for the \pHODE case, as well as the characterization via Kalman-Yakobovich-Popov inequalities as it has been done for general \LTI \DAE systems in  \cite{ReiRV15,ReiV15}.

A research field that has so far not received much attention are discrete-time \pH descriptor systems. The primary research in this direction arises from discretizations of \pHDAE systems, see 
\cite{KotL18,MehM19} for the most recent approaches. Since discrete-time systems not only arise from discretization but also from sampling or realization, it is an open question how to properly define discrete-time \pH systems in a general way. This also concerns the stability and passivity analysis. An approach to modifying the concept of hypocoercivity for use in discrete-time systems is currently under investigation.

Since, in many cases, mathematical models are obtained from data via measurements, and the resulting parameters, as well as model coefficients (including the inputs and outputs), are only available with some uncertainty or randomness, it is an open question to include such uncertainties adequately into the \pHDAE framework and also to study robust control methods that deal with such uncertainties, see \cite{BreMS20,BreS21} for the modification of classical balanced truncation methods and \cite{Kar22} for the modification of robust control methods in the \pHODE case. Another research area is the perturbation theory for the problems and the error analysis of the relevant numerical methods.

As we have seen, also the area of optimal control for \pHDAE systems requires a lot of further research. This concerns, in particular, the optimal use of the structure for nonlinear systems, the choice of an appropriate cost functional, and also structure-preserving numerical methods in particular for large-scale problems.

Finally, we would like to discuss a topic that has only briefly been touched on in this survey: the extension of \pH modeling to partial differential equations that has been pursued in many different directions in recent years. One can take the approach to replace the coefficient matrices with linear differential operators, one can follow a differential geometric approach via the extension of Lagrange or Dirac structures, or one can formulate all classical partial differential equations from physics in the areas of elasticity, electromagnetism, fluid dynamics, structural mechanics, geomechanics, poroelasticity, gas or water transport, to name a few directions, in a  structure that resembles the \pHDAE structure via the given symmetries or differential forms. These efforts have been the topic of many recent research papers on modeling, numerical methods, optimization, and control. Discussing these developments would be a considerable survey on its own, mainly since the field is growing immensely fast. An essential question that is widely open is to incorporate the boundary appropriately into the structure, to obtain well-posed partial differential equations and so that they, on the one hand, can be treated as controls and, on the other hand, also be used for the interconnection of subsystems. Another important topic that is actively pursued is appropriate time space-time discretization methods like finite element or finite volume approaches that preserve the structure. 

Instead of discussing this further, we present the following incomplete list of references that describes many different research directions pursued:
\cite{AltMU21c,AltS17,AouCMA17,BaaCEJLLM09,BanSAZISW21,CarMP17,DuiMSB09, Egg19,EggK18,EggKLMM18,EnnS05,JacZ12,GayY18,GayY19,GrmO97,Kot19,KotML18, KurZSB10,LeGZM05,MacM09,MacMB05,MacSM04a,MacSM04b,MatH13,Mor22, MosZ18,MosMBM18,Oet06,OetG97,Ram19,SchM02,SchS17,SerMH19,Vil07,YosM06a,YosM06b}.

\subsection*{Acknowledgments}
The work of V.~Mehrmann has been supported by the Deutsche Forschungsgemeinschaft (DFG, German Research Foundation) CRC 910 \emph{Control of self-organizing nonlinear systems: Theoretical methods and concepts of application}: Project No.~163436311, CRC TRR 154 \emph{Mathematical modeling, simulation and optimization at the example of gas networks}: Project No.~239904186,  priority programs SPP 1984 \emph{Hybrid and multimodal energy systems: System theoretical methods for the transformation and operation of complex networks}: Project No.~361092219, and  SPP 1897, \emph{Calm, Smooth and Smart - Novel Approaches for Influencing Vibrations by Means of Deliberately Introduced Dissipation}: Project No.~273845692, DFG Excellence Cluster 2046 Math${}^+$: Project No.~390685689, as well as the BMBF (German Ministry of Education and Research) via the Project EiFer.

B.~Unger acknowledges funding from the DFG under Germany's Excellence Strategy -- EXC 2075 -- 390740016 and is thankful for support by the Stuttgart Center for Simulation Science (SimTech).

\bibliographystyle{plain-doi}
\bibliography{ph}

\end{document}